\title[Bulk-surface systems with Langmuir type adsorption]{Analysis of bulk-surface reaction-sorption-diffusion systems with Langmuir type adsorption}
\author[B.~Augner]{Björn Augner}
\address{Technische Universität Darmstadt, Fachbereich Mathematik, Fachgebiet Mathematische Modellierung und Analysis, Schlossgartenstr.\ 7, 64289 Darmstadt}
\email{augner@mma.tu-darmstadt.de}
\author[D.~Bothe]{Dieter Bothe}
\address{Technische Universität Darmstadt, Fachbereich Mathematik, Fachgebiet Mathematische Modellierung und Analysis, Alarich-Weiss-Str.\ 10, 64287 Darmstadt}
\email{bothe@mma.tu-darmstadt.de}
 \newtheorem{theorem}{Theorem}[section]
 \newtheorem{remark}[theorem]{Remark}
 \newtheorem{corollary}[theorem]{Corollary}
 \newtheorem{lemma}[theorem]{Lemma}
 \newtheorem{assumption}[theorem]{Assumption}
 \newtheorem{example}[theorem]{Example}
 \newtheorem{definition}[theorem]{Definition}
 \DeclareMathOperator{\R}{\mathbb{R}}
 \DeclareMathOperator{\N}{\mathbb{N}}
 \DeclareMathOperator{\dv}{\operatorname{div}}
 \DeclareMathOperator{\F}{\mathbb{F}}
 \DeclareMathOperator{\B}{\mathcal{B}}
 \DeclareMathOperator{\dom}{\mathrm{D}}
 \DeclareMathOperator{\sign}{\mathrm{sign}}
 \DeclareMathOperator{\BUC}{\mathrm{BUC}}
 \DeclarePairedDelimiterX{\inp}[2]{(}{)}{#1 \mid #2}
 \newcommand{\bb}[1]{\boldsymbol{#1}}
 \newcommand{\fs}[1]{\mathbb{#1}}
 \newcommand{\dd}{\, \mathrm{d}}
 \newcommand{\norm}[1]{\big\| #1 \big\|} 
 \newcommand{\seminorm}[1]{\big[ #1 \big]} 
 \newcommand{\abs}[1]{\big| #1 \big|}
 \newcommand{\Lip}{\operatorname{Lip}}
 \renewcommand{\vec}[1]{\boldsymbol{#1}}
 \newcommand{\diag}{\operatorname{diag}}
 \newcommand{\LL}{\mathrm{L}}
 \newcommand{\WW}{\mathrm{W}}
 \newcommand{\HH}{\mathrm{H}}
 \newcommand{\CC}{\mathrm{C}}
 \newcommand{\BB}{\mathrm{B}}
 \newcommand{\additionalremarks}[1]{}
 \newcommand{\ee}{\mathrm{e}}
 \newcommand{\HTcal}{\mathcal{HT}}
 \newcommand{\Ecal}{\mathcal{E}}
 \newcommand{\Dcal}{\mathcal{D}}
 \newcommand{\Tcal}{\mathcal{T}}
 \newcommand{\ausgrauen}[1]{}
 \keywords{Keywords: bulk/surface systems, reaction-diffusion systems, strong solutions, well-posedness, global existence, duality method.}
 \subjclass[2020]{35A01, 35D35, 35K57, 35Q92, 58J35}
\begin{document}
 \allowdisplaybreaks[3]
 \maketitle
 
 Version of \today.
 
 \begin{abstract}
  We consider a class of bulk-surface reaction-adsorption-diffusion systems, i.e.\ a coupled systems of reaction-diffusion systems on a bounded domain $\Omega \subseteq \R^d$ (bulk phase) and its boundary $\Sigma = \partial \Omega$ (surface phase), which are coupled via nonlinear normal flux boundary conditions.
  In particular, this class includes a heterogeneous catalysis model with Fickian bulk and surface diffusion and nonlinear adsorption of Langmuir type, i.e.\ transport from the bulk phase to the active surface, and desorption.
  For this model, we obtain well-posedness, positivity and global-in-time existence of solutions under some realistic structural conditions on the chemical reaction network and the sorption model.
 \end{abstract}

\section{Introduction}
\label{sec:introduction}

 Over the last decades, bulk/surface-systems have attracted increasing attention in Life Science and Engineering, as an increasing number of effects, especially in Chemistry and Biology have been observed that involve and can be modelled and explained by volume/surface interaction: For example, subprocesses which take place localized on some surface or interface may influence processes within a fluid phase which is in contact with the surface or interface. More generally, interaction of processes on subdomains with distinct physical or environmental properties contribute to a wide range of phenomena: They give raise to polarization and pattern formation in cell biology, may increase the spreading speeds for animals, but also for immune biology, and  in chemical engineering new catalytic materials with optimized structures facilitate an effective production of desired products and bulk/surface interaction is also the core phenomenon in wetting processes.
 When mathematically modelling such processes, such systems can be abstractly described by a open set $\Omega \subseteq \R^d$ and a d-1--dimensional manifold $\Sigma$, which is typically (part of) the boundary $\partial \Omega$ (\emph{surface}) of the domain $\Omega$ or which divides disconnected regions of the open set $\Omega$ (\emph{interface}).
 For instance, in cell biology, the formation of \emph{polarities}, i.e.\ a break in symmetry, such as front/back-polarization as a preparatory step for cell migration, is believed to be governed by a
cascade of signalling components.
 As a main contribution to cell migration, a localized peak of myosin concentration can be observed in the front of a cell. At the same time at the tail end the otherwise, i.e.\ for unpolarized cells, uniformly distributed concentration of actin is elevated.
The localisation of myosin at the front then triggers a local extension of the cell, the elevated actin concentration at the back a local contraction, so that effectively the cell moves in direction of its front, see e.g.\ \cite{RappelEdelstein-Keshet_2017} for an up-to-date review on modelling of single cell polarity, and the main targets of research connected to it.
% These are: How does a unpolarized cell develop a symmetry break? How can the polarization of a cell be robost enough to serve its purpose, e.g.\ migration of the cell, but at the same time also adaptable enough to react to, e.g., environmental changes, e.g.\ to change the direction of motion of a migrating cell? Which role does cell polarization play in the context of gradient sensing, cell motility and chemotaxis, i.e.\ the local agglomeration of signalling substances or, e.g., bacteria? What can be learned about these strategies to target defects and consequent diseases if cell polarity goes wrong? \cite{RappelEdelstein-Keshet_2017}.
As one of the from mathematical analysis viewpoint important works, let us exemplary describe the setting in M.~Castanho and M.~Fernandes \cite{CastanhoFernandes_2006}.
 These consider a disc-shaped model domain for the \emph{intracellular medium}, which is separated from the annulus-shaped \emph{extracellular medium} by the semipermeable \emph{cell membrane}.
 This means that only some particular chemical substances may cross the cell membrane. Thus, some chemical components are bound to the intracellular or to the extracellular medium, respectively: For example, a \emph{ligand}, i.e.\ a molecule with can form a receptor-ligand complex with a certain \emph{receptor}, may be present in the extracellular medium, but is not able to cross the cell membrane. The receptor, on the other hand, may be produced at the cell membrane, but may only desorp into the intercellular medium. Ligand-receptor complexes, then, can only be formed when the ligand coming from the extracellular medium attaches to a receptor which is present at the cell membrane (Eley--Rideal mechanism), or by first adsorbing at the cell membrane, and subsequent activation, diffusion and  formation of a ligand-receptor complex with a receptor (Langmuir--Hinshelwood mechanism).
The authors of \cite{CastanhoFernandes_2006} then proceed to show that a constant production rate of receptors at the cell membrane can lead to uniformly positive stationary solutions of the corresponding bulk/surface-system of reaction-diffusion equations.
For further read and related topics, we refer to references \cite{AnguigeRoeger_2017}, \cite{CuEKMaPoMa_2019}, \cite{EKHoZaDu_2013}, \cite{GGG_2014}, \cite{NRV_2020} and \cite{StGeSmHoRa_2020}.
\newline
Field-road models, on the other hand, have been proposed by Berestycki, Roquejoffre and Rossi \cite{BRR_2013} to model the propagation of wolves in certain regions of Canada, where the wolves employ men-made corridors (\emph{road}) to invade new habitats (\emph{field}). In this type of models, the field $\Omega \subseteq \R^n$ corresponds to a region where the wolves usually live, and where they are assumed to diffuse, reproduce and die at certain rates, modelled by the reaction-diffusion equation
 \[
  \partial_t u - d \Delta u
   = f(u),
   \quad
   t > 0, \, \vec x \in \Omega,
 \]
where $u(t,\vec x)$ decribes the population density at time $t \geq 0$ and position $\vec x \in \Omega$.
In the context of field-road models, the reproduction process $f(u)$ is typically modelled by a Kolmogorov--Petrovskii--Pisconov-model (\emph{KPP-model}, for short), i.e.\ a function $f \in \CC^1(\R)$ such that
 \[
  f(0) = f(1) = 0
   \quad \text{and} \quad
  0 < f(s) \leq s f'(0)
   \quad \text{for } s \in (0,1),
   \quad
  f(s) < 0 \quad \text{for } s > 1.
 \]
One particular important example is given by the choice $f(s) = s (1-s)$ from Fisher's equation which corresponds to logistic growth for the corresponding ODE.
The KPP-equation is well-studied, in particular it is known that, e.g.\ considering $\Omega = \R^n$, there is a finite \emph{asymptotic speed of propagation} $c_\mathrm{KPP} = 2 \sqrt{d f'(0)}$.
 In the field-road model, this asymptotic speed of propagation may (or may not) be affected by additional diffusion on the road $\Gamma \subseteq \partial \Omega$, where members of the species may diffuse with a surface diffusion coefficient $d_\Gamma > 0$, which is typically larger than $d$.
 The exchange rates between the field and the road are then modelled as being linear w.r.t.\ the trace of the field population and the road population, respectively, viz.\ the normal flux condition
  \[
   - d \partial_{\vec \nu}|_\Gamma
    = \nu u|_\Gamma - \mu u_\Gamma,
    \quad
    t > 0, \, \vec x \in \Gamma.
  \]
In absence of natality or mortality on the road, the surface reaction-diffusion equation on the road reads as
  \[
   \partial_t u_\Gamma - d_\Gamma \Delta_\Gamma u_\Gamma
    = \nu u|_\Gamma - \mu u_\Gamma,
    \quad
    t > 0, \, \vec x \in \Gamma
  \]
 with $u_\Gamma(t,\vec x)$ denoting the population density on the road at time $t \geq 0$ and position $\vec x \in \Gamma$.
 For the model case $\Omega = \R \times (0,\infty)$ and $\Gamma = \partial \Omega = \R \times \{0\}$ it has then been shown in \cite{BRR_2013} that for every bounded and non-negative initial datum $(u^0, u_\Gamma^0)$ there exists a unique, bounded and non-negative solution $(u, u_\Gamma)$ to the field-road KPP-reaction-diffusion model, and, moreover, the diffusion on the field road may affect the asymptotic speed $c_\ast$ of propagation in the following way:
 For surface diffusion coefficients $d_\Gamma > 0$ below some certain threshold $d_\Gamma^\mathrm{crit} > 0$, the asymptotic speed of propagation for the field-road model will be the same as for the full-space KPP-reaction-diffusion equation, whereas for values $d_\Gamma > d_\Gamma^\mathrm{crit}$ above this threshold, the asymptotic speed of propagation is elevated, at least in directions $\pm \vec e_1$ as these are the directions of the road.
 In \cite{BRR_2013a}, the same authors amended their model by a drift term and mortality on the road.
 Tellini \cite{Tellini_2016}, Rossi, Tellini and Valdinoci \cite{RoTeVa_2017} and \cite{Chipot_Zhang_2021} considered other field domains and various choices for the roads as well as different choices for boundary conditions at the non-road parts of the field's boundary.
 Whereas the former two considered existence and parameter-dependency of the asymptotic speed of spreading, the latter focussed on existence of positive stationary solutions.
 Berestycki, Ducasse and Rossi \cite{BDL_2020} consider the case, where an ecological niche on the road that is moving with some constant velocity $c_\mathrm{niche} \in \R$.

In chemical engineering, volume/surface-models are known and applied in practise for long.
Refining chemicals, polymerization as well as environmental protection \cite{Keil_2013} are only some of the tremendous applications of \emph{catalysts}. To give a definition, we may e.g.\ follow Keil \cite{Keil_2013} who defines \emph{catalysts} as "materials that \emph{accelerate} reactions and convert reactents \emph{preferably} to desired products \emph{without being consumed} themselves.''
This means that, \emph{activity}, i.e.\ the ability to accelerate chemical reactions, and \emph{selectivity}, i.e.\ the ability to produce close to none undesired by-products, are the most important key figures of a catalyst, and, moreover, the catalyst acts within \emph{catalytic circles}, in which the catalysts stands at the start and the end of such a circle.
While \emph{homogeneous} catalysts, especially those which are available at low cost, are still used in some industrial areas, the search for new, more effective, i.e.\ more active and more selective, catalysts nowadays is primarily focussed on \emph{heterogeneous} catalysts, especially catalytic surfaces, usually consisting of a support material, on which a catalytic surface is prepared.
The support material helps to regulate heat and electron transfer, so that the catalysts remains in a regime where it operates most efficiently.
Often activity and selectivity of a solid catalyst are achieved by a porous structure which contains single atoms of nanoparticles of since $< 20 \mathrm{nm}$ \cite{Keil_2013}.
Models for such a pore structure range from straight pores, curved pores, randomly oriented pores in the Johnson--Steward model \cite{JoSt_1965} up to Bethe-lattices, cf.\ \cite{Bethe_1935} which have been introduced by Beeckmann and Froment \cite{BeFr_1980} and Reyes and Jensen \cite{ReJe_1985} into the context of heterogeneous catalysis.
These structural models can then be combined with numerical methods such as Monte-Carlo simulations to derive realistic free energy models for the adsorbed species, which can then be employed in continuum-thermodynamic models and combined with fluid flow models.
For more information on heterogeneous catalysis, we refer to the overview articles \cite{ReKi-Mi_2010} by A.~Renken and L.~Kiwi-Minsker, \cite{Schloegel_2015} by R.~Schlögel, and the handbook of heterogeneous catalysis \cite{Handbook_HetCat} edited by G.~Ertl, H.~Knözinger and J.~Weitkamp as well as to the books by T.~Whirt \cite{Whirt_2008} and M.G.~White \cite{White_1990}. For an overview on recent advances in computational methods for heterogeneous catalysis, see B.W.J.~Chen, L.~Xu and M.~Mavrikakis \cite{ChXuMa_2021}.

Another highly relevant example of bulk-surface systems/processes is the field of dynamic wetting. There, a liquid is in contact with a solid support and the so-called contact line, i.e. the triple-line at which liquid, solid and the surrounding gas phase meet, moves along the solid wall. While standard engineering descriptions try to model the moving contact line via a (semi-)empirical relation between the contact angle, i.e. the angle between the liquid-gas and the liquid-solid interface, and the speed of normal displacement of the contact line due to its motion along the solid, more sophisticated models include the exchange of mass between the bulk liquid and the liquid-solid interface as well as the liquid-gas surface. See \cite{YS93} and the monograph \cite{YS-book} for this so-called Interface Formation Model. For more information see \cite{FBK} and the literature which is cited in these references.
All of the models also include the two-phase Navier-Stokes equations. This makes the overall model rather complex and mathematically demanding. While the analysis of such models is out of the scope of the present paper, a more fundamental mathematical analysis of bulk-surface reaction-sorption-diffusion systems can help to understand their properties.

 In the present manuscript, we consider bulk-surface reaction-sorption-diffusion models and mathematically analyse the well-posedness (in the sense of existence and uniqueness of solutions and continuous dependence on the initial datum) for a large class of such bulk-surface systems.
 In particular, we provide a global-in-time existence result under some structural conditions on the system, viz.\ a \emph{triangular structure} (also known as \emph{intermediate sum condition} \cite{MorTan22+}) of the chemical reaction network and \emph{polynomial boundedness} conditions on the chemical reaction network in the bulk phase and on the active surface.
 In view of mass conservation, it is natural to consider a normal flux condition for the coupling between the bulk phase and the active surface.
 The class of allowed non-linear relations will include the special case of a nonlinear \emph{Langmuir} type model for adsorption and desorption at the active surface.
 We include the local-in-time well-posedness analysis for more general classes of sorption models, which may, e.g., correspond to more general models for the surface chemical potentials.

\section{A brief literature overview}

 We next give a brief overview on some relevant papers on the mathematical theory of reaction-diffusion-systems with an emphasis on results related to local-in-time-existence, to regularity and to global-in-time existence of solutions and works which are related to the present one.
 The different branches in the theory of reaction-diffusion-systems are, therefore, so plenty that we do not even try to give a complete overview over all of them, but rather focus on the development of the theory of reaction-diffusion-systems from a mathematical analyst's point of view with a particular bias on regularity and well-posedness results (in the sense of initial-boundary value problems) both of local and global-in-time type.
 That means, we focus on the analysis of reaction-diffusion system.
 Due to their historical importance for the development of the research field, we will start with some of the milestones in the theory for pure bulk diffusion.
 Though, most of the time, no bulk/surface interaction has been considered in these works, the techniques developed there, nevertheless, provide useful orientation for the bulk/surface theory.
 Here, the fundamental works of Ladyshenskaja, Solonnikov and Ural'ceva \cite{LaSoUr68} remain up to today a tremendous corner-stone, which provide optimal regularity results in Hölder space and Sobolev space settings to linear, but possibly inhomogeneous parabolic equations and systems.
 Further results in this perspective can be found in N.D.~Alikakos' article \cite{Ali79}.
 H.~Amann extensively studied quasilinear parabolic equations, with an emphasis on reaction-diffusion systems in \cite{Amann_1990} and on global-in-time existence of solutions in \cite{Amann_1989}.
 In the context of quasilinear equations, let us also mention the seminal paper \cite{Pruess_2003} by J.~Prüss.
 R.~Denk, M.~Hieber and J.~Prüss in their works \cite{DeHiPr03}, \cite{DeHiPr07} treated the linear vector-valued case.
 Based on such $\LL_p$-maximal regularity results and boot-strapping methods, the Boundedness-Implies-Continuation property has been employed in works such as \cite{Ama83}, \cite{Ama85} by H.~Amann, \cite{Hen81} by D.~Henry, \cite{Rot84} by F.~Rothe, \cite{HolMarPie87}, \cite{MarPie91}, \cite{Pierre_2010} by M.~Pierre and co-authors as well as \cite{BoRo_2015}, \cite{QuiSou07} and \cite{Souplet_2018}, typically in combination with $\LL_p$-estimates derived by duality methods, to obtain global-in-time existence results.
 Note that for the more realistic case of a Maxwell--Stefan diffusion model for multi-component fluid mixtures, the mathematical analysis is more involved due to its quasi-linear nature and the summation constraint on the diffusive mass fluxes, see the contribution \cite{Bothe_2001} of the second author of this paper.
 
 For bulk/surface systems, some of the abstract optimal regularity results for linear parabolic systems have been generalised to the vector-valued case with bulk-surface-coupling in \cite{DePrZa08} by R.~Denk, J.~Prüss and R.~Zacher.
 For the case of a bulk reaction-diffusion system with nonlinear boundary conditions resulting from a fast surface chemistry limit, see \cite{AugBot21a} and \cite{AugBot21b} by the paper's authors.
 \newline
 In recent years, bulk-surface reaction-diffusion systems have drawn increasing attention of several research groups, see the aforemention active research areas in life science and engineering.
 Obviously this is just a short glimpse on an abundant reservoir of literature on this topic from that perspective.
 The mathematical analysts' perspective on bulk/surface systems seems to be comparably underrepresented, given the tremendous importance of the subject in view of developments in, e.g., modern bio-technologies.
 Let us mention some recently published articles, where the authors mainly consider the global-in-time well-posedness of bulk-surface systems (sometimes also called volume-surface systems).
 O.~Sou\v{c}ek et al.\ \cite{SoOrMaBo19} considered bulk/surface systems within a thermodynamically consistent framework, with particular emphasis on diffusion and sorption phenomena.
 For example, the second author, M.~Köhne, S.~Maier and J.~Saal \cite{BKMS17} consider a bulk-surface reaction diffusion system with Fickian type diffusion in a  cylindrical bulk phase and on parts of its surface, amended with surface chemical reactions.
  Additionally, they imposed inflow and outflow boundary conditions on the molar fluxes $\vec j_i$ corresponding to the molar concentrations $c_i$ on a \emph{transmissive part} $\partial \Omega \setminus \Sigma$ and no flux boundary conditions on the boundary $\partial \Sigma$ (in the sense of manifolds) between the active and transmissive part of the boundary.
 In \cite{NRV_2020}, B.~Niethammer, M.~Röger and J.J.L.~Vel\'{a}zquez¸ start from a bulk-surface reaction-diffusion-model with a Henry type sorption law and a Michaelis--Menten type surface reaction model plus an external stimulus signal.
 A main part of their research is not to stop at the question of global-in-time well-posedness of that particular system under consideration, but to derive polarisation models for suitable scaling of the parameter.
 In fact, they can show that under certain conditions these models have \emph{critical masses} for polarisation.
 Similar models have also been considered, e.g.\ in \cite{HauRoe18} and \cite{BaeRoe22}.
 On the frontier of global-in-time existence, and for a much more general class of bulk-surface reaction diffusion systems, quite recently J.~Morgan and B.Q.~Tang \cite{MorTan22+} addressed reaction-diffusion systems where the bulk and the surface dynamics are coupled via a non-linear Robin-type boundary condition. 
 Their main ingredients to obtain global-in-time existence of solutions are an assumption on \emph{mass control} and an \emph{intermediate sum condition}, similar to those conditions which we impose in this work.
 Let us in detail comment on similarities and differences in comparing their results to ours.
 Regarding the general structure of the system under consideration, we also consider a bulk-surface system of reaction-diffusion equations which are coupled by some kind of sorption mechanism. Whereas in our case, we include a (given) macroscopic flux $\vec u$ in our system, leading to an advective term $\vec u \cdot \nabla \vec c_i$ in the balance equation for each component $c_i$, Morgan and Tang consider the basic case where the macroscopic flux vanishes, i.e.\ $\vec u = 0$.
  An inspection of their technique of proof, however, indicates that they could have also included such a convective term in their analysis, essentially without changing the flow of their arguments.
  Also the general conditions on the sorption and reaction rates, which can be paraphrased as local Lipschitz continuity and quasi-positivity, correspond to each other in both works.
  The most essential differences can be observed concerning generality of the functions modelling chemical reaction rates and sorption rates, and on the regularity of the initial data.
  On the one hand, Morgan and Tang demand less structure on the sorption rates, without any assumption on a diagonal structure of the sorption rates, but they do demand that the initial data for the system lie in some $\WW_p^{2-2/p}$-Sobolev--Slobodetskii space, and -- in contrast to us -- they demand that $p > d$ which immediately implies some Hölder-regularity on the initial data, and, hence, the solution.
  In our case, on the other hand, we can also handle initial data in the class $\WW_p^{2-2/p}$ for some $p < d$.
  Therefore, already establishing local-in-time existence of strong solutions in the class $\WW_p^{(1,2)}$ requires considerable effort in our setting, whereas in \cite{MorTan22+} the initial data are only assumed to be attained in the sense that solutions lie in the class $\CC([0,\tau);\LL_p)$.
  As a by-product, we also have to impose more regularity in the formulation of the polynomial growth condition, and our polynomial growth conditions are, at first, growth conditions on the derivatives of the functions modelling chemical reaction and sorption rates, and only by the fundamental theorem of calculus these imply growth conditions on the respective functions itself, as they have been directly imposed in \cite{MorTan22+}. 
 In this sense, \cite{MorTan22+} and the present article focus on different, but closely related aspects of bulk-surface reaction-diffusion-sorption systems, and whereas the setting in \cite{MorTan22+} is more general from a structural point of view, we can handle a more notion of solutions, which allows for´ less regular initial data.
 Note that in \cite{MorTan22+}, the regularity of initial data might possibly be relaxed to initial data of class $\LL_\infty$, but even this class will in general not include all functions of class $\WW_p^{2-2/p}$ if $p < d$ is allowed for.
 This different setting is also reflected in the different proof technique for the global-in-time existence result.
 Similarly to the strategy in \cite{MorTan22+}, but also to works focusing on pure bulk reaction-diffusion systems such as \cite{Pierre_2010}, the key strategy for establishing global-in-time existence of solutions is the derivation of $\LL_q$--growth bounds, and, first for finite $q \in (1,\infty)$ and finally for the case $q = \infty$.
 As it is quite typical for such problems, we will use \emph{duality methods}, but -- in contrast to the main trend -- we will also include dual norms in the duality method based estimates that will be applied, cf.\ especially Lemma \ref{lem:dual-estimate-variant} and Lemma \ref{lem:surface-dual-estimate-variant}.
 \newline
 The manuscript is organised as follows:
 In Section~\ref{sec:introduction}, we fix the general framework and comment on some conditions on the structure and smoothness which we impose on the models for the chemical reactor, its active surface and the physical-chemical processes such as advection, diffusion, chemical reactions in the bulk and on the surface and adsorption and desorption at the boundary.
 Thereafter, we will give an overlook over the linear theory of $\LL_p$-maximal regularity for parabolic equations, on which the application to semi-linear problems is based on.
 Then, in Sections~\ref{sec:Positivity} and~\ref{sec:LIT-well-posedness}, we demonstrate that the solution to positive initial data is (just as one would expect) positive and only thereafter we show local-in-time well-posedness, i.e.\ existence and uniqueness of solutions (at least on some small time interval) and continuous dependence of the solution on the initial datum.
 To prepare a boot-strap argument, we derive several auxiliary results, including dual estimates and $\LL_p$--$\LL_q$-estimates in Section~\ref{sec:auxiliary-results}, which will become crucial when actually proving global-in-time existence of solutions in Section~\ref{sec:global-existence}.
 
\section{Modelling assumptions}
\label{sec:modelling_assmpts}

 Before introducing the concrete class of models under consideration, let us indicate how to derive this model, starting from general continuum-thermodynamic balance equations of reaction-diffusion-systems.
 Thereby, we will also introduce some notation and physical interpretation of the variables appearing in the equations.
 We consider reaction-diffusion systems both in the bulk phase, i.e.\  inside the domain $\Omega \subseteq \R^d$, and on its active surface, given by (a part of) its boundary $\Sigma \subseteq \partial \Omega$.
 For example, one may think of a chemical reactor, consisting of the interior bulk phase and a catalytic surface on the boundary.
 The general balance equations for the individual concentrations $c_i$ of a substance $A_i$ in a fluid mixture within the bulk phase read as
  \[
   \partial_t c_i + \dv(c_i \vec v_i)
    = r_i(\vec c)
    \quad
    \text{in } (0,\infty) \times \Omega,
  \]
 where, $c_i(t,\vec x)$ denotes the (molar) concentration of species $A_i$ at position $\vec x \in \Omega$ and time $t \in \R$ and $\vec v_i$ its individual continuum velocity.
Moreover, we write $\vec c = (c_1, \ldots, c_N)^\mathsf{T}$.
 The individual velocities $\vec v_i$ can, in the most general form of a model, obtained from individual momentum balance equations, .
 In the more case of a single momentum balance, also considered here, the diffusive part $\vec u_i = \vec v_i - \vec v$ will be modelled based on thermodynamic principles, where $\vec v$ the barycentric velocity of the fluid mixture.
 In the latter case, the individual velocity $\vec v_i = \vec v + \vec u_i$ of each fluid mixture component can be decomposed into an advective part $\vec v$, and a (frame-indifferent) diffusive part $\vec u_i$.
 When it comes to thermodynamic consistent models, it will be important that the diffusive part will be modelled observation frame indifferent, cf.\ \cite{BotDre15}, whereas the advective part $\vec v$ will be depend on the observation frame.
 Denoting by $\vec j_i^\mathrm{diff} := c_i \vec u_i$ the (molar-based) \emph{diffusive mass flux}, the general balance equations for reactive-fluid mixtures take the form
  \[
   \partial_t c_i + \dv (c_i \vec v + \vec j_i^\mathrm{diff})
    = r_i(\vec c)
    \quad
    \text{on } (0,\infty) \times \Omega,
  \]
 where $r_i$ models chemical reaction rates.
 We will employ Fick's model for the diffusive flux, i.e.\ we let
  \[
   \vec j_i^\mathrm{diff}
    = - d_i \nabla c_i
  \]
 with some species-dependent (constant) diffusivity $d_i > 0$.
 At least for dilute systems, i.e.\ systems where the concentration of the solvent is by far higher than the concentrations of the solutes which are dissolved in the solvent, this gives quite a good approximation to reality.
 \newline
 For modelling the advective part, i.e.\ the barycentric velocity $\vec v$, there are various approaches, such as the Navier--Stokes equations, the Stokes equations or Darcy's balance equation.
 Here, however, we will assume the macroscopic vector field $\vec v$ to be given.
 Let us note that under appropriate conditions on $\vec v_i$ the advective part can be treated as a perturbation in the analysis to come.
 \newline
 The reaction-diffusion model within the bulk phase reads
  \[
   \partial_t c_i + \dv ( c_i \vec v - d_i \nabla c_i)
    = r_i(\vec c),
    \quad
    i = 1, \ldots, N,
  \]
 which in the \emph{incompressible} case, i.e.\ $\dv v = 0$, and for constant diffusivities $d_i > 0$ can also be written as
  \[
   \partial_t c_i + \vec v \cdot \nabla c_i - d_i \Delta c_i
    = r_i(\vec c),
    \quad
    i = 1, \ldots, N.
  \]
 Note that we do \emph{not} include the solvent in the model.
 Let us remark that for higher concentrations of the solutes more delicate models also include cross-diffusion effects, such as the Fick--Onsager or the Maxwell--Stefan model, are available.
 The mathematical analysis of such cross-diffusion systems will be the topic of further research in the area of bulk-surface reaction-diffusion-sorption systems, as it is more involved due to its quasilinear nature. 
 Also note that results as in \cite{HeMePrWi17} indicate that for low concentrations, the Maxwell--Stefan diffusion model is very well approximated by the linear Fickian diffusion model.
 \newline
 Similarly, on the surface, the general balance equation for reaction-diffusion-systems reads
  \[
   \partial_t c_i^\Sigma
    + \dv_\Sigma(\vec j_i^\Sigma)
    = r_i^\Sigma + s_i^\Sigma,
  \]
 where the additional term $s_i^\Sigma$ corresponds to \emph{sorption processes}, i.e.\ the exchange of mass between the bulk phase and the boundary.
 Here, we will assume that the boundary of the chemical reactor is \emph{impermeable}, i.e.\ no particles may cross $\Sigma$, so that, identifying $A_i^\Sigma$ with an adsorbed version of the species $A_i$, we obtain the constraint
  \[
   s_i^\Sigma
    = c_i \vec v_i \cdot \vec \nu
    = c_i (\vec v \cdot \vec \nu) + \vec j_i^{\Sigma,\mathrm{diff}} \cdot \vec \nu,
  \]
 i.e.\ $s_i^\Sigma$ is the normal part of the individual mass flux near the boundary.
 We will further restrict our attention to the case where the given macroscopic velocity field $\vec v$ is subject to no-flux boundary conditions, i.e.\ we assume that
  \[
   \vec v \cdot \vec \nu
    = 0
    \quad
    \text{on } \Sigma,
  \]
 and, consequently, the constraint on the sorption rate reduces to
  \[
   s_i^\Sigma
    = \vec j_i^{\Sigma,\mathrm{diff}} \cdot \vec \nu.
  \]
 On the one hand, this may serve as a \emph{definition} of the sorption rate $s_i^\Sigma$, but, on the other hand, we may use $s_i^\Sigma$ to impose an adsorption-desorption \emph{model} and, thereby, to close the system; this will be done below.
 For the surface flux, we consider the special case of vanishing surface velocity $\vec v^\Sigma = \vec 0$ (the model will not include the solid material constituting the catalytic surface) and model the surface diffusive fluxes by $\vec j_i^\Sigma = - d_i^\Sigma \nabla_\Sigma c_i^\Sigma$ for the surface gradient $\nabla_\Sigma$, i.e.\ we employ a Fickian type diffusion model also on the surface.
 \newline
 To sum up, we consider bulk-surface reaction-sorption-diffusion models of the form
  \begin{alignat}{2}
   \partial_t c_i + \vec v \cdot \nabla c_i - \dv (d_i \nabla c_i)
    &= r_i^\Omega(\vec c)
	\qquad &
    &\text{in } (0, \infty) \times \Omega, \, i = 1, \ldots, N,
    \nonumber \\
   \partial_t c_i^\Sigma - \dv_\Sigma (d_i^\Sigma \nabla_\Sigma c_i^\Sigma)
    &= r_i^\Sigma(\vec c^\Sigma) + s_i^\Sigma(\vec c, \vec c^\Sigma)
	\qquad &
    &\text{on } (0, \infty) \times \Sigma, \, i = 1, \ldots, N,
    \label{eqn:RDSS}
    \\
   - d_i \partial_\nu c_i
    &= s_i^\Sigma(\vec c, \vec c^\Sigma)
	\qquad &
    &\text{on } (0, \infty) \times \Sigma, \, i = 1, \ldots, N,
    \nonumber
  \end{alignat}
 where $\Omega \subseteq \R^d$ is a (sufficiently regular) bounded domain, serving as a model for a (prototypical) chemical reactor.
 Moreover, $\Sigma \subseteq \partial \Omega$ denotes a (sufficiently regular) part of the boundary of $\Omega$, and which later on will be interpreted as an \emph{active part} of the reactor surface.
  In this paper, for simplicity, we will consider the special case that the active surface constitutes the whole boundary of the chemical reactor, i.e.\  $\Sigma = \partial \Omega$.
 \begin{remark}
  In the research paper \cite{BKMS17} the authors restrict themselves to the special, but in practical applications relevant case of \emph{cylindrical} domains $\Omega$ and (in chemical engineering most relevant) spatial dimension $d = 3$, whereas $\Omega$ may be any bounded $\CC^2$-domain in $\R^d$ here.
  The additional geometrical constraint helps in \cite{BKMS17} to include inflow and outflow conditions at the bottom resp.\ top of the cylindrical domain in the local well-posedness analysis, i.e.\ for $\LL_p$-maximal regularity theory.
  As for such domains a fully developed $\LL_p$-maximal regularity theory is available, the observations made in the present paper, however, will carry over to the case of inflow and outflow boundary conditions for a cylindrical domain, too.
 \end{remark}
 To get a complete and consistent model, we write the sorption rates $s_i^\Sigma$ between bulk and surface as $s_i^\Sigma(\vec c|_\Sigma, \vec c^\Sigma)$, where $s_i^\Sigma$ is a function which in general may depend on two $\R^N$-valued variables.
 These functions reflect the model we impose for the adsorption and desorption mechanisms.
 \newline
 Below, we will state the most relevant conditions on the `externally'' given `velocity field $\vec v$, the diffusion coefficients $d_i$, $d_i^\Sigma$ and the reaction and sorption models $r_i^\Sigma$ and $s_i^\Sigma$.
 For the moment, however, let us start with some realistic modelling assumptions on the sorption model and, in particular, compare these with the special case of a Langmuir adsorption model which constitutes a highly relevant model for ad- and desorption at an active surface:
  \begin{enumerate}
   \item
    As a function of $c_i$ and $c_i^\Sigma$, $s_i^\Sigma$ is continuously differentiable twice, and its first and second derivatives are uniformly bounded on $\R^2$.
    As we shall see, we have to weaken this condition to include Langmuir type adsorption.
   \item
   \label{cond:3}
    For fixed surface concentration, we expect that the higher the bulk concentration, the higher the adsorption rate of that respective component.
    That is, we demand that the function $s_i^\Sigma$ increases in the argument $c_i$.
   \item
   \label{cond:4}
    For fixed bulk concentration, the desorption rate increases by increasing the surface concentration of the respective component.
    Therefore, we assume that the functions $s_i^\Sigma$ decrease in arguments $c_k^\Sigma$.
   \item
    Moreover, we expect adsorption and desorption not to increase faster than linearly, if we increase the bulk resp.\ the surface concentration.
    Hence, we assume that the function $s_i^\Sigma$ is bounded linearly both from below and from above, in the sense that there are constants $k_i^{\mathrm{de}}, k_i^{\mathrm{ad}} > 0$ such that
     \[
      - k_i^{\mathrm{de}} z_i^\Sigma
       \leq s_i^\Sigma(\vec z, \vec z^\Sigma)
       \leq k_i^{\mathrm{ad}} z_i
       \quad \text{for} \quad
       \vec z, \vec z^\Sigma \in \R_+^N.
     \]
  \end{enumerate}
 Typically, we will write the effective sorption rate as the difference between the adsorption rate and the desorption rate, i.e.\
  \[
      s_i^\Sigma
    = s_i^{\mathrm{ad}} - s_i^{\mathrm{de}},
  \]
 where we expect $s_i^{\mathrm{ad}}$ and $s_i^{\mathrm{de}}$ to be functions which are non-negative on $(0, \infty)^2$, and conditions \ref{cond:3} and \ref{cond:4} will typically mean that $s_i^{\mathrm{ad}}(\vec z, \vec z^\Sigma)$ is bounded from above by $k_i^{\mathrm{ad}} z_i$, whereas $s_i^{\mathrm{de}}(\vec z, \vec z^\Sigma)$ is bounded from above by $k_i^\mathrm{de} z_i^\Sigma$.
 This, in particular, means that the adsorption rate is at most linearly growing w.r.t.\ the boundary trace of the bulk concentration $c_i|_\Sigma$ and, on the other side, the desorption rate is at most growing linearly w.r.t.\ the surface concentration.
 All these assumptions are obviously met by the so-called Henry model for ad- and desorption, where ad- and desorption simply depend linearly on the trace of the bulk concentration and the surface concentration, resp., i.e.
  \[
    s_i^{\mathrm{ad}}(c_i|_\Sigma, c_i^\Sigma) = k_i^{\mathrm{ad}} c_i|_\Sigma,
     \quad
    s_i^{\mathrm{de}}(c_i|_\Sigma, c_i^\Sigma) = k_i^{\mathrm{de}} c_i^\Sigma.
  \]
 The disadvantage of this model is that it does not take into account that (in the relevant case of relatively high surface concentrations and a finite capacity on the surface) for adsorption of a molecule to the surface, enough empty space on the surface is necessary.
 In this sense, it is only a realistic model for low concentrations on the active surface, whereas for higher surface concentrations it will be more realistic to assign a maximal capacity $c_i^{\Sigma,\infty} > 0$ to the surface, and to consider the well-known Langmuir model \cite{Langmuir1918}
  \[
   s_i^{\mathrm{ad}}(c_i|_\Sigma,c_i^\Sigma)
    = k_i^{\mathrm{ad}} c_i|_\Sigma \left( 1 - \theta(\vec c^\Sigma) \right),
    \quad
    c_i|_\Sigma \geq 0, \, 0 \leq c_i^\Sigma \leq c_i^{\Sigma,\infty},
  \]
 where $\theta(\vec c^\Sigma) = \frac{1}{c_S^{\Sigma,\infty}} \sum_{i=1}^N \sigma_i c_i^\Sigma$ denotes the occupancy number on the surface w.r.t.\ to some final number $c_S^\Sigma$ of available sites per area element.
 This first ansatz, however, has a serious drawback: The mass-action-kinetics models for the reaction rates $r_i^\Sigma(\vec c^\Sigma)$ and the Fickian diffusive flux model $- d_i^\Sigma \nabla_\Sigma c_i^\Sigma$ do, in general, not prevent the weighted sum of surface concentrations $\sum_{i=1}^N \sigma_i c_i^\Sigma$  to go above the threshold $c_S^{\Sigma,\infty}$, as for systems, in general, \emph{no maximum principle} is valid.
 To overcome this problem, there are at least two options available: On the one hand, we might refine the reaction-rate model and the diffusion model to prevent the case $\sum_{i=1}^N \sigma_i c_i(t,y) > c_S^{\Sigma,\infty}$.
 This can be achieved, for example, by modelling the chemical reaction rates based on the Langmuir model for the free energy and diffusive mass fluxes by a Maxwell--Stefan or Fick--Onsager model.
 Alternatively, we may extend the domain of $r_i^\mathrm{ad}$ to $\R^2$ by setting
  \[
   s_i^{\mathrm{ad}}(\vec z, \vec z^\Sigma)
    = k_i^{\mathrm{ad}} z_i \left(1 - \theta(\vec z^\Sigma) \right)^+,
    \quad
    \vec z, \vec z^\Sigma \in \R_+^N,
  \]
 where $s^+ := \max \{0, s\}$ is the positive part of a scalar $s \in \R$.
 Let us compare the properties of this particular sorption model with the ad-hoc assumptions from above:
  First of all, the function $s_i^\Sigma(\vec c, \vec c^\Sigma)$ depends on (the boundary trace of) the bulk concentration $c_i$ and the surface concentration $c_i^\Sigma$ of the $i$-th species and it is increasing in the first and decreasing in the second component.
  The upper and lower linear bounds $- k_i^{\mathrm{de}} c_i^\Sigma \leq s_i^\Sigma(c_i, c_i^\Sigma) \leq k_i^{\mathrm{ad}} c_i$ hold true for all non-negative concentrations $c_i, c_i^\Sigma \geq 0$ as well. However, concerning regularity, in the Langmuir sorption model, the function $s_i^\Sigma$ is neither a $\CC^2$-function, nor are its first and second derivative (which exist almost everywhere) uniformly bounded on $\R^2$.
 We therefore aim to prove local-in-time existence and, thereafter, global existence results under less restrictive growth and regularity conditions.
 This will allow us to handle the Langmuir adsorption without, say, replacing the Langmuir adsorption by more regular functions as it has been done, e.g., in \cite{BKMS17}.
 Furthermore, we allow for chemical reactions in the bulk phase, constituting an additional source or sink term $r^\Omega_i(\vec c)$ on the right-hand side of the reaction-diffusion-advection equations in the bulk phase, viz.\
  \[
   \partial_t c_i + \vec v \cdot \nabla c_i - \dv (d_i \nabla c_i)
    = r^\Omega_i(\vec c)
    \quad
    \text{in } (0, \infty) \times \Omega, \, i = 1, \ldots, N.
  \]
 Similar to reaction-diffusion-systems in the bulk phase with, say, no-flux boundary conditions, the semilinear term can be handled using maximal regularity results to establish local-in-time existence of solutions.
 However, it may very well influence the long-time behaviour of the reaction-diffusion-system, so that we have to impose additional structural conditions to prevent a blow-up of the solution in finite time.  
    
\section{On the $\LL_p$-theory for the linearised problem}
\label{sec:Lp-theory}

The reaction-sorption-diffusion systems under consideration are of semilinear nature: The diffusive fluxes in the bulk phase ($\vec j_i = - d_i \nabla c_i$) and on the surface ($\vec j_i^\Sigma = - d_i^\Sigma \nabla_\Sigma c_i^\Sigma$) depend linearly on the (gradient of the) bulk resp.\ surface concentrations $c_i$ resp.\ $c_i^\Sigma$.
The effective sorption rate $s_i^\Sigma$ at the boundary constitute a, in general nonlinear, coupling between the two dynamics.
The models for the chemical reaction rates $r_i^\Omega$ and $r_i^\Sigma$ in the bulk phase and on the surface, respectively, depend (nonlinearly) on the bulk resp.\ surface concentrations.
Therefore, as for bulk reaction-diffusion-advection systems with, say, no-flux boundary conditions, we first revisit the $\LL_p$-maximal regularity theory of a linearised version of the system.
In fact, by the diagonal nature of the diffusion matrices $\bb D = \diag(d_i) \in \R^{N \times N}$ and $\bb D^\Sigma = \diag(d_i^\Sigma) \in \R^{N \times N}$, this problem can easily be reduced to the question of $\LL_p$-maximal regularity for the respective scalar equations, for which a complete theory is available.

 We use the following notation: For $p \in (1, \infty)$, we denote
  \begin{align*}
   \fs E_p^\Omega(T)
    &= \WW^1_p(0,T; \LL_p(\Omega;\R^N)) \cap \LL_p(0,T; \WW^2_p(\Omega;\R^N))
    \\
    &= \WW^{(1,2)}_p(\Omega_T;\R^N)
	\qquad &
    &
    \hspace{-2cm}
    \text{(max.\ reg.\ space in the bulk phase)}
    \\
   \fs E_p^\Sigma(T)
    &= \WW^1_p(0,T; \LL_p(\Sigma;\R^N)) \cap \LL_p(0,T; \WW^2_p(\Sigma;\R^N))
    \\
    &= \WW^{(1,2)}_p(\Sigma_T;\R^N)
	\qquad &
    &
    \hspace{-2cm}
    \text{(max.\ reg.\ space on the surface)}
    \\
   \fs F_p^\Omega(T)
    &= \LL_p(0,T;\LL_p(\Omega;\R^N))
    \\
    &\cong \LL_p((0,T) \times \Omega)
    = \LL_p(\Omega_T;\R^N)    
	\qquad &
    &
    \hspace{-2cm}
    \text{(base space in the bulk phase)}
    \\
   \fs F_p^\Sigma(T)
    &= \LL_p(0,T;\LL_p(\Sigma;\R^N))
    \\
    &\cong \LL_p((0,T) \times \Sigma;\R^N)
    = \LL_p(\Sigma_T;\R^N)
	\qquad &
    &
    \hspace{-2cm}
    \text{(base space on the surface)}
    \\
   \fs G_p^\Sigma(T)
    &= \WW^{\nicefrac{1}{2} - \nicefrac{1}{2p}}_p(0,T; \LL_p(\Sigma;\R^N))
     \cap \LL_p(0,T; \WW^{1-\nicefrac{1}{p}}_p (\Sigma;\R^N))
     &
    \\
    &= \WW^{(1,2)\cdot(\frac{1}{2} - \frac{1}{2p})}_p(\Sigma_T;\R^N)
	\qquad &
	&
    \hspace{-2cm}
	\text{(Neumann trace space)}
    \\
   \fs H_p^\Sigma(T)
    &= \WW^{1-\nicefrac{1}{2p}}_p(0,T;\LL_p(\Sigma;\R^N))
     \cap \LL_p(0,T; \WW^{2-\nicefrac{1}{p}}_p(\Sigma;\R^N))
    \\
    &= \WW^{(1,2) \cdot (1 - \frac{1}{2p})}_p(\Sigma_T;\R^N)
	\qquad &
    &
    \hspace{-2cm}
    \text{(Dirichlet trace space)}
    \\
   \fs I_p^\Omega
    &=\big( \LL_p(\Omega;\R^N), \WW^2_p(\Omega;\R^N) \big)_{1-\nicefrac{1}{p},p}
    \\
    &= \WW^{2-\nicefrac{2}{p}}_p(\Omega;\R^N),
	\qquad &
    &
    \hspace{-2cm}
    \text{(phase space in the bulk phase)}
    \\
   \fs I_p^\Sigma
    &= \big( \LL_p(\Sigma;\R^N), \WW^2_p(\Sigma;\R^N) \big)_{1-\nicefrac{1}{p},p}
    \\
    &= \WW^{2-\nicefrac{2}{p}}_p(\Sigma;\R^N)
	\qquad &
    &
    \hspace{-2cm}
    \text{(phase space on the surface)}
  \end{align*}
 with the short hand notations $\Omega_T := (0,T) \times \Omega$ and $\Sigma_T := (0,T) \times \Sigma$ and $\cong$ denotes equality up to isometric isomorphisms; here, by identifying $f(t)(\vec x)$ with $f(t,\vec x)$.
 We use standard notation, i.e.\ we denote by $\LL_p(\Omega)$, $\WW_p^k(\Omega)$, $\WW_p^s(\Omega)$ Lebesgue, Sobolev and Sobolev--Slobodetskii spaces, resp., and by $\LL_p(\Omega;E)$, $\WW_p^k(\Omega;E)$ and $\WW_p^s(\Omega;E)$ ($E$ any real or complex Banach space) their respective vector-valued versions.
 Also $\LL_p(\Sigma)$, $\WW_p^k(\Sigma)$, $\WW_p^s(\Sigma)$ denote the corresponding spaces w.r.t.\ the surface measure $\sigma$ on the boundary $\Sigma$, which is a (sufficiently regular) submanifold of dimension $d-1$ without boundary (in the sense of manifolds).
 We refer, e.g., to \cite{HebRob08} for an overview on (the more general case of) Sobolev spaces over Riemannian manifolds and density, embedding and compactness theorems.
\newline
Note that in the linearised version of the bulk-surface reaction-diffusion-advection systems, the problems in the bulk phase (inhomogeneous Neumann problem) and on the surface are decoupled.
The Neumann problem is classical and can be handled even in the vector-valued case; see \cite[Theorem 2.1]{DeHiPr07} for a very general result on parabolic initial-boundary value problems in domains and \cite[Theorem 6.4.3]{PruSim16} for a version for compact hypersurfaces without boundary (also see \cite[Theorem 1.23]{Ama17} for a much more general result on uniformly regular Riemannian manifolds).
The following $\LL_p$-maximal regularity result can, therefore, easily be derived from the well-known scalar theory.

 \begin{lemma}[$\LL_p$-maximal regularity]
 \label{lem:maximal_regularity_scalar_case}
  Let $\Omega \subseteq \R^d$ be a bounded $\CC^2$-domain, $p \in (1, \infty)$, $T_0 > 0$, $T \in (0, T_0]$, $\vec v \in \LL_s((0,T_0);\WW_r^1(\Omega))$ with $r,s \in (1, \infty)$ such that $1 -\frac{2}{s} - \frac{d}{r} > 0$ and $d_i \in \CC^1(\overline{\Omega})$, $d_i^\Sigma \in \CC^1(\Sigma)$\additionalremarks{If only $\LL_\infty$-regularity, the regularity spaces have to be adjusted, e.g.\ $\fs E_p^\Omega(T) = \{ \vec u \in \WW_p^1(\Omega_T;\R^N): \, d_i \nabla u_i \in \LL_p(\WW_p^1) \}$. Question: If $d_i$, $d_i^\Sigma \in \LL^\infty$, does one have $\LL_p$-maximal regularity with the adjusted maximal regularity space above? Is such a result available from the literature? Dieter: I doubt this" Some Hoelder rgularity may be needed! Possibly relevant reference: W.~Arendt, A.F.M.~ter~Elst: Gaussian Estimates for second order elliptic operators with boundary conditions, J.~ Operator Theory 38 (1997), 87--130.} be such that $d_i > 0$ and $d_i^\Sigma > 0$.
  The quasi-autonomous, linear problem
   \begin{alignat*}{2}
    \partial_t c_i + \vec v \cdot \nabla c_i - \dv( d_i \nabla c_i)
     &= f_i
	\qquad &
     &\text{on } (0,T) \times \Omega, \, i = 1, \ldots, N,
     \\
    \partial_t c_i^\Sigma - \dv_\Sigma( d_i^\Sigma c_i^\Sigma)
     &= f_i^\Sigma
	\qquad &
     &\text{on } (0,T) \times \Sigma, \, i = 1, \ldots, N,
     \\
    - d_i \nabla c_i \cdot \vec \nu
     &= g_i^\Sigma
	\qquad &
     &\text{on } (0,T) \times \Sigma, \, i = 1, \ldots, N,
     \\
    (c_i(0,\cdot), c_i^\Sigma(0,\cdot))
     &= (c^0_i, c^{\Sigma,0}_i)
   \end{alignat*}
  has a unique solution $(\vec c, \vec c^\Sigma) \in \fs E_p^\Omega(T) \times \fs E_p^\Sigma(T)$ if and only if $(\vec f, \vec f^\Sigma, \vec g^\Sigma) \in \fs F_p^\Omega(T) \times \F_p^\Sigma(T) \times \fs G_p^\Sigma(T)$, $(\vec c^0, \vec c^{\Sigma,0}) \in \fs I_p^\Omega \times \fs I_p^\Sigma$, and, additionally in case $p > 3$, $- d_i \nabla c_{0,i} \cdot \vec \nu = g_i^\Sigma|_{t=0}$ (in the trace sense) on $\Sigma$ for $i = 1, \ldots, N$.
  Moreover, there is a constant $C_{p,T_0} > 0$, which is independent of the data $(\vec f, \vec f^\Sigma, \vec g^\Sigma)$, $\vec v^0$ and $\vec v^{\Sigma,0}$ and of $T \in (0,T_0]$, such that
   \[
    \norm{(\vec c, \vec c^\Sigma)}_{\fs E_p^\Omega(T) \times \fs E_p^\Sigma(T)}
     \leq C_{p,T_0} \big( \norm{\vec f}_{\fs F_p^\Omega(T)} + \norm{\vec f^\Sigma}_{\fs F_p^\Sigma(T)} + \norm{\vec g^\Sigma}_{\fs G_p^\Sigma(T)} + T^{-1/p} \norm{(\vec c^0, \vec c^{\Sigma,0})}_{\fs I_p^\Omega \times \fs I_p^\Sigma} \big).
   \]
  In particular, the operators $A_i: \dom(A_i) \subseteq \LL_p(\Omega) \rightarrow \LL_p(\Omega)$ and $A_i^\Sigma: \dom(A_i^\Sigma) \subseteq \LL_p(\Sigma) \rightarrow \LL_p(\Sigma)$ defined by
   \begin{alignat*}{2}
    A_i
     &= - \dv(d_i \nabla \cdot),
	\qquad &
     &\dom(A_i) = \{ c_i \in \WW^2_p(\Omega): \, - d_i \nabla c_i \cdot \vec \nu = 0 \, \text{ on } \partial \Omega \}
     \subseteq \LL_p(\Omega),
     \\
    A_i^\Sigma
     &= - \dv_\Sigma(d_i^\Sigma \nabla_\Sigma \cdot),
	\qquad &
     &\dom(A_i^\Sigma) = \WW^2_p(\Sigma)
     \subseteq \LL_p(\Sigma)
   \end{alignat*}
  generate bounded analytic and strongly continuous semigroups on $\LL_p(\Omega)$ and $\LL_p(\Sigma)$.
 \end{lemma}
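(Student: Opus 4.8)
The plan is to reduce the vector-valued, coupled system to $2N$ decoupled scalar linear parabolic problems and then invoke the known scalar $\LL_p$-maximal regularity theory. The key structural observation is that the diffusion matrices $\bb D = \diag(d_i)$ and $\bb D^\Sigma = \diag(d_i^\Sigma)$ are diagonal, so the only potential coupling between the components $c_i$ and between the bulk and surface equations would enter through the data $(\vec f, \vec f^\Sigma, \vec g^\Sigma)$ and the initial values; in the \emph{linearised} problem these are prescribed, hence the equations genuinely decouple. For each fixed $i$, the bulk equation with the inhomogeneous Neumann boundary condition $-d_i \nabla c_i \cdot \vec\nu = g_i^\Sigma$ is a scalar quasi-autonomous parabolic problem on $\Omega$, and the surface equation is a scalar parabolic problem on the closed hypersurface $\Sigma = \partial\Omega$ with no boundary condition. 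Thus it suffices to prove the scalar versions and then assemble the $N$ copies.

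First I would treat the bulk part: apply the scalar $\LL_p$-maximal regularity result for parabolic initial-boundary value problems with inhomogeneous Neumann data, e.g.\ \cite[Theorem 2.1]{DeHiPr07}, to the operator $-\dv(d_i \nabla \cdot)$ with the lower-order drift term $\vec v \cdot \nabla$. Since $d_i \in \CC^1(\overline\Omega)$ with $d_i > 0$, the principal part satisfies the required ellipticity and smoothness hypotheses, and the Lopatinskii--Shapiro condition for the Neumann boundary operator is classical. The drift term $\vec v \cdot \nabla c_i$ must be handled as a lower-order perturbation: the condition $1 - \tfrac{2}{s} - \tfrac{d}{r} > 0$ on the integrability exponents of $\vec v \in \LL_s(0,T_0;\WW^1_r(\Omega))$ is exactly what is needed so that, by mixed-norm Sobolev embedding and an interpolation estimate, the map $\vec c \mapsto \vec v \cdot \nabla \vec c$ is bounded from $\fs E_p^\Omega(T)$ into $\fs F_p^\Omega(T)$ with a norm that can be made small for small $T$ (or absorbed by a Neumann-series / fixed-point argument on $[0,T_0]$ partitioned into short subintervals), so that the perturbed operator still enjoys maximal regularity on $(0,T_0]$. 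For the surface part, the operator $A_i^\Sigma = -\dv_\Sigma(d_i^\Sigma \nabla_\Sigma \cdot)$ on the compact boundaryless manifold $\Sigma$ has $\LL_p$-maximal regularity by \cite[Theorem 6.4.3]{PruSim16} (or \cite[Theorem 1.23]{Ama17}); here $d_i^\Sigma \in \CC^1(\Sigma)$, $d_i^\Sigma > 0$ suffices, and there is no boundary and hence no compatibility condition. The necessity direction (``only if'') and the trace/compatibility conditions on the data --- including $(\vec c^0,\vec c^{\Sigma,0}) \in \fs I_p^\Omega \times \fs I_p^\Sigma$ by the standard trace characterisation of the maximal regularity class, and the extra Neumann compatibility $-d_i \nabla c_{0,i}\cdot\vec\nu = g_i^\Sigma|_{t=0}$ when $p > 3$ (so that the Neumann trace space $\fs G_p^\Sigma(T)$ has time-regularity exponent $\tfrac12 - \tfrac1{2p} > \tfrac13$, forcing a well-defined time trace) --- are read off directly from the cited scalar theorems.

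Next I would establish the $T$-uniform estimate with the explicit constant $C_{p,T_0}$ independent of $T \in (0,T_0]$. The standard device is a retraction/extension argument: extend the data from $\Omega_T$ (resp.\ $\Sigma_T$) to $\Omega_{T_0}$ (resp.\ $\Sigma_{T_0}$) preserving norms up to a $T_0$-dependent constant, solve on the fixed interval $(0,T_0)$ where the maximal regularity constant is fixed, and restrict; the only $T$-dependent term is the contribution of the initial data, which produces the factor $T^{-1/p}$ because the extension of a function in the trace space $\fs I_p$ to $\fs E_p^\Omega(T)$ cannot be done uniformly in $T$ --- this is the well-known $T^{-1/p}$ blow-up of the norm of the solution operator acting on initial data as $T \to 0$. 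Care must be taken that the extension of the Neumann data respects the compatibility condition; one handles this by first subtracting a suitable fixed reference solution generated by the initial data alone (via the analytic semigroups $\ee^{-tA_i}$, $\ee^{-tA_i^\Sigma}$, which exist by the same maximal regularity), reducing to the case of zero initial data where extension in $T$ is harmless.

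The main obstacle I anticipate is \emph{not} the scalar maximal regularity itself --- that is entirely classical --- but rather the careful bookkeeping needed to absorb the first-order drift term $\vec v \cdot \nabla$ uniformly on $(0,T_0]$ while simultaneously keeping the constant $C_{p,T_0}$ independent of $T$. The subtlety is that a naive small-time perturbation argument gives smallness only on tiny subintervals whose length depends on $\vec v$, and one must then patch these together; because the perturbation constant depends only on $\|\vec v\|_{\LL_s(0,T_0;\WW^1_r)}$ and not on $T$, the number of subintervals is bounded in terms of $T_0$ alone, and the $T$-uniformity survives the patching. Verifying that the patched solution operator still produces the clean single factor $T^{-1/p}$ on the initial data (and not, say, an exponentially growing prefactor) requires the reference-solution reduction mentioned above. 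The generation of bounded analytic $\CC_0$-semigroups by $A_i$ and $A_i^\Sigma$ on $\LL_p$ then follows from maximal regularity of the autonomous problems (with $\vec v$ frozen or absent) via the standard equivalence between $\LL_p$-maximal regularity and $\mathcal{R}$-sectoriality, together with the fact that these operators have dense range issues only trivially since $\Omega$, $\Sigma$ are bounded and the resolvent set contains a right half-plane.
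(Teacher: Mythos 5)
Your proposal is correct and follows essentially the same route as the paper, which gives no detailed proof but derives the lemma exactly as you do: by observing that the diagonal diffusion matrices decouple the linearised problem into scalar parabolic problems, invoking \cite[Theorem 2.1]{DeHiPr07} for the inhomogeneous Neumann problem in the bulk and \cite[Theorem 6.4.3]{PruSim16} (or \cite[Theorem 1.23]{Ama17}) for the surface equation on the compact boundaryless hypersurface, and reading off the trace and compatibility conditions from that theory. Your additional bookkeeping (absorbing the drift term under the subcritical condition $1-\tfrac{2}{s}-\tfrac{d}{r}>0$, the reference-solution reduction giving the $T$-uniform constant with the $T^{-1/p}$ factor on the initial data, and semigroup generation via maximal regularity of the autonomous problems) is consistent with what the paper leaves implicit.
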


As a consequence of Lemma~\ref{lem:maximal_regularity_scalar_case}, the block-diagonal operators
 \begin{align*}
  A
   = \diag(A_i)_{i=1}^N:
   &\dom(A)
   = \prod_{i=1}^N \dom(A_i)
   \subseteq \LL_p(\Omega;\R^N)
   \rightarrow \LL_p(\Omega;\R^N),
   \\
  A^\Sigma
   = \diag(A_i^\Sigma)_{i=1}^N:
   &\dom(A_i^\Sigma)
   = \prod_{i=1}^N \dom(A_i^\Sigma)
   \subseteq \LL_p(\Sigma;\R^N)
   \rightarrow \LL_p(\Sigma;\R^N)
 \end{align*}
generate bounded analytic $\CC_0$-semigroups on their respective underlying Banach spaces, for every $p \in (1, \infty)$.
 
\section{Assumptions on the Semilinear Problem}
\label{sec:Assmpt-on-Semilinear-Problem}

We list assumptions to be imposed on the reaction and sorption models and draw first conclusions, e.g.\ concerning properties of the associated Nemytskii-operators on $\LL_q$-spaces.
 
 \begin{assumption}[Regularity and Growth Bounds]
 \label{assmpt:general}
  Let $p \in (1, \infty)$ be fixed.
  We group the assumptions in those on the (macroscopic) velocity $\vec v$, on the surface reaction rates $r_i^\Sigma$, on the bulk reaction rates $r^\Omega_i$ and on the sorption model $s_i^\Sigma$.
   \begin{enumerate}
    \item
     Assumptions on the velocity field $\vec v$ of the fluid mixture\additionalremarks{It seems like the incompressibility and the no-flux boundary condition on the macroscopic velocity vector field $\vec v$ are crucial to ensure that the system is dissipative.
     Dieter: Ja, der Sinn einer Stoffmengenerhaltung. Aber nicht zwingend.} (regularity, incompressibility, no-flux boundary condition):
      \begin{equation}
       \tag{$A^{\mathrm{advec}}$}
       \label{A^vel}
       \vec v \in \fs U_{\tilde p}^\Omega(T)
        := \big\{ \vec u \in \WW^{(1,2)}_{\tilde p}(\Omega_T;\R^d): \, \dv 
        \vec u = 0 \text{ in } \Omega_T, \, \vec u \cdot \vec \nu = 0 \text{ on } \Sigma_T \big\},
      \end{equation}
     for some $\tilde p \in (d+2, \infty)$.
     Then, in particular, $\vec v \in \CC([0,T];\CC^1(\overline{\Omega};\R^d))$.
    \item
     Assumptions on the chemical surface reaction rates $\vec r^\Sigma$ (regularity, quasi-positivity and polynomial bounds):
       \begin{align}
        &r_i^\Sigma
         = r_i^\Sigma(\vec c^\Sigma)
         \quad \text{for some function} \quad
         r_i^\Sigma \in \Lip_\mathrm{loc}([0, \infty)^N; \R)
         \quad
         \text{for each }
         i = 1, \ldots, N.
         \tag{$A_F^{\mathrm{ch}}$}
         \label{A_F^ch}
         \\
        &r_i^\Sigma(\vec z) \geq 0
         \text{ for each } \vec z \in [0, \infty)^N \text{ with } z_i = 0,
          \quad
          i = 1, \ldots, N.
          \tag{$A_N^{\mathrm{ch}}$}
          \label{A_N^ch}
          \\
         \intertext{There are constants $M^\Sigma > 0$ and $\gamma^\Sigma \geq 1$ with $\begin{cases} \gamma^\Sigma \geq 1 \text{ arbitrary}, & \text{if } p \geq \tfrac{d+1}{2}, \\ \gamma^\Sigma \leq (1 - \tfrac{2p}{d+1})^{-1}, &\text{if } p \in (1, \tfrac{d+1}{2}) \end{cases}$ s.t.}
         &\abs{(\vec r^\Sigma)'(\vec y)}
          \leq M^\Sigma (1 + \abs{\vec y}^{\gamma^\Sigma -1}) \quad \text{for all } \vec y \in [0, \infty)^N.
          \tag{$A_P^{\mathrm{ch}}$}
          \label{A_P^ch}
       \end{align}
    \item
     Assumptions on the chemical bulk reaction rates $\vec r^\Omega$ (regularity, quasi-positivity and polynomial growth bounds):
      \begin{align}
       &r^\Omega_i
        = r^\Omega_i(\vec c)
        \quad \text{for some function} \quad
        r^\Omega_i \in \Lip_\mathrm{loc}([0,\infty)^N;\R)
        \text{ for each }
         i = 1, \ldots, N.
         \tag{${A_F^{\mathrm{ch}}}'$}
         \label{A_F^bulk}
         \\
       &r^\Omega_i(\vec z)
        \geq 0
        \quad
        \text{for each } \vec z \in [0, \infty)^N \text{ with } z_i = 0, \quad i = 1, \ldots, N.
        \tag{${A_N^{\mathrm{ch}}}'$}
        \label{A_N^bulk}
      \end{align}
      Moreover, there are constants $M^\Omega > 0$ and $\gamma^\Omega \geq 1$ with \[ \begin{cases} \gamma^\Omega \geq 1 \text{ arbitrary}, &\text{if } p \geq \frac{d+2}{2}, \\ \gamma^\Omega \leq (1 - \tfrac{2p}{d+2})^{-1}, &\text{if } p \in (1, \tfrac{d+2}{2}) \end{cases} \] such that
      \begin{equation}
       \abs{(\vec r^\Omega)'(\vec y)}
        \leq M^\Omega (1 + \abs{\vec y}^{\gamma^\Omega - 1})
        \quad
        \text{for all } \vec y \in [0, \infty)^N.
        \tag{${A_P^{\mathrm{ch}}}'$}
        \label{A_P^bulk}
      \end{equation}
    \item
     Assumptions on the \emph{ad- and desorption} rates $\vec r^{\mathrm{sorp}}$ (regularity, monotonicity and partial linear upper and lower bounds):
     for all $i = 1, \ldots, N$ it holds that
      \begin{alignat*}{2}
       s_i^\Sigma
        &= s_i^\Sigma(\vec c, \vec c^\Sigma)
        \quad \text{for some function }
        s_i^\Sigma \in \Lip_\mathrm{loc}(\R_+^{2N}; \R)
        \quad
        \text{ such that}
       \tag{${A_F^{\mathrm{sorp}}}$}
        \label{A_F^sorp}
        \\
         &\begin{cases}
          \abs{s_i^\Sigma(\vec{\check{a}}_i, \vec{\check{b}}_i)}
           \leq C \big( 1 + \abs{\vec{\check{a}}_i}^{K^\Omega} \abs{\vec{\check{b}}_i}^{K^\Sigma} \big),
           \\
          \abs{\partial_{a_i} s_i^\Sigma(\vec a, \vec b)}
           \leq C \big( 1 + \abs{\vec a}^{K^\Omega-1} \abs{\vec b}^{K^\Sigma} \big)
           &\text{and}
           \\
          \abs{\partial_{b_i} s_i^\Sigma(\vec a, \vec b)}
           \leq C \big( 1 + \abs{\vec a}^{K^\Omega} \abs{\vec b}^{K^\Sigma - 1} \big)
           &\text{for all } \vec a, \vec b \in \R_+^N,
         \end{cases}
        \tag{$A_P^\mathrm{sorp}$}
        \label{A_P^sorp}
        \\
        \intertext{for some $K^\Omega, K^\Sigma \in \N$ such that}
       d+1
        &\geq K^\Omega (d+1-p)^+  + K^\Sigma (d-p)^+
        \quad \text{and}
        \\
       p
        &\geq \min \left\{ \max \big\{ \frac{d+1}{2}, \frac{(K^\Omega - 1)^+}{(2K^\Omega - 1)^+} (d+2) \big\}, \frac{(K^\Omega + K^\Sigma - 1)(d+1) - 1}{2 (K^\Omega + K^\Sigma) - 1} \right\},
        \\
        \intertext{where we write}
         \vec{\check{v}}_i
          &:= (v_1, \ldots, v_{i-1}, 0, v_{i+1}, \ldots, v_N)
          \quad
         \text{for } \vec v \in \R^N \text{ and } i = 1, \ldots, N.
       \\
       s_i^\Sigma(\vec a, \vec b)
        &\text{ increases in the variable } a_i \text{ and decreases in the variable } b_i.
        \tag{$A_M^{\mathrm{sorp}}$}
        \label{A_M^sorp}
        \\
       &\intertext{There are constants $k_i^{\mathrm{de}}, k_i^{\mathrm{ad}} > 0$ such that}
        - k_i^{\mathrm{de}} (1 + \abs{\vec c^\Sigma})
         &\leq s_i^\Sigma(\vec c, \vec c^\Sigma)
         \leq k_i^{\mathrm{ad}} (1 + \abs{\vec c})
         \\
         s_i^\Sigma(\vec{\check{c}}_i, \vec c^\Sigma)
          &\leq 0,
          \quad
         s_i^\Sigma(\vec c, \vec{\check{c}}_i^\Sigma)
          \geq 0
         \quad
         \text{for all } \vec c, \vec c^\Sigma \in \R_+^N.
         \tag{$A_B^{\mathrm{sorp}}$}
         \label{A_B^sorp}
      \end{alignat*}
   \end{enumerate}
 \end{assumption}
 
For the particular case of a Langmuir sorption model, we may then identify integrability parameters $p \in (1, \infty)$ which fulfill the conditions stated in Assumption \ref{assmpt:general} on the sorption terms.
 
 \begin{example}
 \label{exa:Langmuir}
  The standard multi-component Langmuir sorption model
   \[
    s_i^\Sigma(\vec c, \vec c^\Sigma)
     := k_i^\mathrm{ad} c_i (1 - \theta)^+ - k_i^\mathrm{de} c_i^\Sigma,
     \quad
     \text{where }
     \theta := \frac{1}{c_S^\Sigma} \sum_{j=1}^N \sigma_j c_j^\Sigma
   \]
  for some $\sigma_j > 0$, $j = 1, \ldots, N$, and $c_S^\Sigma > 0$ fits into the class considered in assumption~\ref{assmpt:general} by choosing functions $f_1$, $f_2$ as follows:
   \begin{align*}
    \frac{\partial s_i^\Sigma(\vec c, \vec c^\Sigma)}{\partial c_i}
     &= k_i^\mathrm{ad} (1-\theta)^+
     \geq 0,
     \\
    \frac{\partial s_i^\Sigma(\vec c, \vec c^\Sigma)}{\partial c_i^\Sigma}
     &= - k_i^\mathrm{ad} c_i \chi_{[\theta \leq 1]} \frac{\sigma_i}{c_S^\Sigma} - k_i^\mathrm{de}
     \leq 0
     \quad
     \text{for all } (\vec c, \vec c^\Sigma) \in \R_+^{2N}.
   \end{align*}
  From the first term (where $K^\Omega = K^\Sigma = 1$), we obtain that the conditions on the sorption rate model in Assumption \ref{assmpt:general} are met, provided
   \[
    p
     \geq \max \{\frac{d}{2}, \frac{d}{3} \}
     = \frac{d}{2}.
   \]
 \end{example}
 
 \begin{example}[Further Sorption models]
  Several sorption rates resulting from different models for the surface chemical potential, cf.\ \cite[Table 7.4]{KrDaDe08}, fall in the class considered in the present paper, i.e.\ satisfy assumption~\ref{assmpt:general}:
   \begin{enumerate}
    \item
     Henry model: $s_i^\Sigma = k_i^\mathrm{ad} c_i - k_i^\mathrm{de} c_i^\Sigma$;
    \item
     Volmer model: $s_i^\Sigma = k_i^\mathrm{ad} c_i \big( 1 - \theta \big)^+ \exp \big( - \beta \frac{\theta}{1 - \theta} \big) - k_i^\mathrm{de} c_i^\Sigma$ for some $\beta > 0$;
    \item
     Frumkin model: $s_i^\Sigma = k_i^\mathrm{ad} c_i \theta^{\sigma_i} \exp \big( - \sigma_i \beta \theta \big) - k_i^\mathrm{de} c_i^\Sigma$ for some $\sigma_i \geq 1$ and $\beta > 0$;
    \item
     Van-der-Waals model: $s_i^\Sigma = k_i^\mathrm{ad} c_i \exp \big( - \beta \theta \big) \exp \big( - \sigma_i \frac{\theta}{1 - \theta} \big) - k_i^\mathrm{de} c_i^\Sigma$ for some $\beta, \sigma_i > 0$,
   \end{enumerate}
  where, whenever $\theta$ appears, it is defined as $\theta := \frac{1}{c_S^\Sigma} \sum_{j=1}^N \sigma_j c_j^\Sigma$.
  In all of these models we may choose $K^\Omega = K^\Sigma = 1$, so that from Assumption \ref{assmpt:general} we get the constraint $p \geq \frac{d}{2}$.
  Also note that all of these models have the general form $s_i^\Sigma = s_i^\Sigma(c_i, c_i^\Sigma, \theta)$, so that for checking the monotonicity condition it suffices to check that
   \[
    \frac{\partial s_i^\Sigma(c_i, c_i^\Sigma, \theta)}{\partial c_i}
     \geq 0,
      \quad
    \frac{\partial s_i^\Sigma(c_i, c_i^\Sigma, \theta)}{\partial c_i^\Sigma}
     \leq 0
     \quad \text{and} \quad
     \frac{\partial s_i^\Sigma(c_i, c_i^\Sigma, \theta)}{\partial \theta}
      \leq 0
   \]
  since $\frac{\partial \theta}{\partial c_i^\Sigma} \geq 0$.
 \end{example}
 
 \begin{remark}
  Note that, by the (weak form of the) fundamental theorem of calculus, the polynomial growth conditions on the derivatives of $\vec r^\Omega$, $\vec r^\Sigma$ and $\vec s^\Sigma$ in \eqref{A_P^ch}, \eqref{A_P^bulk} and \eqref{A_P^sorp} directly imply polynomial growth bounds on the functions themselves.
  For example, for the surface chemical reaction rates, we obtain
   \begin{align*}
    \abs{\vec r^\Sigma(\vec y)}
     &= \abs{\vec r^\Sigma(\vec 0) + \int_0^1 \vec y \cdot \nabla \vec r^\Sigma(s \vec y) \dd s}
     \leq \abs{\vec r^\Sigma(0)} + \abs{\vec y} \int_0^1 \abs{\nabla \vec r^\Sigma(s \vec y)} \dd s
     \\
     &\leq \abs{\vec r^\Sigma(\vec 0)} + \abs{\vec y} \int_0^1 M^\Sigma (1 + s \abs{\vec y})^{\gamma^\Sigma - 1} \dd s
     \leq \abs{\vec r^\Sigma(\vec 0)} + \frac{M^\Sigma}{\gamma^\Sigma} (1 + \abs{\vec y})^{\gamma^\Sigma}
   \end{align*}
  for all $\vec y \in [0,\infty)^N$, and, similarly, for the sorption rates
   \begin{align*}
    \abs{s_i^\Sigma(\vec c, \vec c^\Sigma)}
     &= \abs{s_i^\Sigma(\vec{\check{c}}_i, \vec{\check{c}}_i^\Sigma)
      + \int_0^{c_i} \partial_{a_i} s_i^\Sigma(\vec{\check{c}}_i + a_i \vec{e}_i, \vec{\check{c}}_i^\Sigma) \, \dd a_i
      + \int_0^{c_i^\Sigma} \partial_b s_i^\Sigma(\vec c, \vec{\check{c}}_i^\Sigma + b_i \vec{e}_i) \, \dd b_i}
     \\
    &\leq C \big( 1 + \abs{\vec c}^{K^\Omega} \abs{\vec c^\Sigma}^{K^\Sigma} \big).
   \end{align*}
 \end{remark}
 
Absolutely continuous functions with polynomial growth bounds on their derivatives can always be written as a product of Lipschitz continuous functions, as the following auxiliary result shows.

 \begin{lemma}
 \label{lem:polynomial_bds_imply_structure}
   Let $\psi \in \WW^1_{1,\mathrm{loc}}(\R^m;\R)$ be polynomially bounded with polynomially bounded derivatives, i.e.\ there are $\alpha_1, \ldots, \alpha_m \geq 0$ such that
   \begin{equation}
    \abs{\psi(\vec s)}, \abs{\partial_i \psi(\vec s)}
     \leq C \prod_{j=1}^m (1 + s_j^2)^{\alpha_j/2}
     \quad
     \text{for all } \vec s \in \R^m, \, i = 1, \ldots, m
     \label{eqn:lemma_polynomial-bound}
   \end{equation}
  for some $C > 0$.
  Then $\psi$ has the representation
   \begin{equation}
    \psi(\vec s)
     = \psi_0(\vec s) \prod_{j=1}^m (1 + s_j^2)^{\alpha_j/2}
     \quad
     \text{for all } \vec s \in \R^m,
    \label{eqn:representation_psi}
   \end{equation}
  where $\psi_0$ is bounded and globally Lipschitz-continuous.
 \end{lemma}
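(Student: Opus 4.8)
The plan is to \emph{divide out the polynomial weight}. First I would set
\[
 w(\vec s) := \prod_{j=1}^m (1 + s_j^2)^{\alpha_j/2},
\]
which is a smooth, strictly positive function on $\R^m$ satisfying $w \geq 1$ (each factor is $\geq 1$ since $\alpha_j \geq 0$), and define $\psi_0 := \psi / w$. Then $\abs{\psi_0(\vec s)} \leq C$ for a.e.\ $\vec s$ is an immediate consequence of the bound on $\abs{\psi}$ in \eqref{eqn:lemma_polynomial-bound}, and the representation \eqref{eqn:representation_psi} holds by construction. It therefore remains to show that $\psi_0$ admits a globally Lipschitz continuous representative; since $\R^m$ is convex, this will follow once we know $\psi_0 \in \WW^1_{1,\loc}(\R^m)$ with essentially bounded weak gradient.

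Next I would observe that $1/w \in \CC^\infty(\R^m)$ is bounded (by $1$) with bounded derivatives, so that $\psi_0 = \psi \cdot (1/w)$ again lies in $\WW^1_{1,\loc}(\R^m)$, and the product rule yields, for a.e.\ $\vec s \in \R^m$ and each $i$,
\[
 \partial_i \psi_0(\vec s)
  = \frac{\partial_i \psi(\vec s)}{w(\vec s)}
   - \psi(\vec s) \, \frac{\partial_i w(\vec s)}{w(\vec s)^2}.
\]
The first term is bounded by $C$ thanks to the bound on $\abs{\partial_i \psi}$ in \eqref{eqn:lemma_polynomial-bound}. For the second term, the key computation is that $\partial_i w = w \, \partial_i \log w$ with $\partial_i \log w(\vec s) = \alpha_i \frac{s_i}{1 + s_i^2}$, hence $\abs{\partial_i w / w} = \alpha_i \abs{s_i}/(1 + s_i^2) \leq \alpha_i / 2$ by the elementary inequality $2 \abs{t} \leq 1 + t^2$; combined with $\abs{\psi}/w \leq C$ this gives $\abs{\psi \, \partial_i w / w^2} \leq C \alpha_i / 2$. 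Altogether $\abs{\partial_i \psi_0(\vec s)} \leq C(1 + \alpha_i/2)$ for a.e.\ $\vec s$ and all $i$, i.e.\ $\nabla \psi_0 \in \LL_\infty(\R^m;\R^m)$.

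Finally, I would invoke the standard fact that a function $u \in \WW^1_{1,\loc}(\R^m)$ with $\nabla u \in \LL_\infty$ has a globally Lipschitz representative: mollifying, $u_\varepsilon := u * \rho_\varepsilon$ is smooth with $\norm{\nabla u_\varepsilon}_{\LL_\infty} \leq \norm{\nabla u}_{\LL_\infty}$, hence the family $(u_\varepsilon)_\varepsilon$ is equi-Lipschitz and, being also locally equibounded, converges locally uniformly (along a subsequence) to a Lipschitz function which must agree with the $\LL_{1,\loc}$-limit $u$ almost everywhere. Applying this to $u = \psi_0$ finishes the proof. I do not expect a genuine obstacle here: the only points requiring (routine) care are the product/quotient rule in the Sobolev setting and this last passage from a bounded weak gradient to Lipschitz continuity, so the lemma is essentially a bookkeeping exercise built around the pointwise estimate $\abs{s_i}/(1+s_i^2) \leq 1/2$.
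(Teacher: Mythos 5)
Your proposal is correct and follows essentially the same route as the paper's proof: divide $\psi$ by the weight $\prod_j (1+s_j^2)^{\alpha_j/2}$, use the product/quotient rule in the weak sense, and bound the resulting derivative uniformly via $\abs{s_i}/(1+s_i^2) \leq \nicefrac{1}{2}$ to conclude Lipschitz continuity. If anything, you are slightly more careful than the paper, which states the derivative bound tersely (with a small typographical slip in the displayed formula) and leaves implicit the standard step from an essentially bounded weak gradient to a globally Lipschitz representative, which you justify by mollification.
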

 
 \begin{remark}
  Note that, on the other hand, every function of the form \eqref{eqn:representation_psi} is polynomially bounded in the sense of equation \eqref{eqn:lemma_polynomial-bound}.
 \end{remark}

 \begin{proof}[Proof of Lemma \ref{lem:polynomial_bds_imply_structure}]
 We have to verify that
   \[
    \psi_0(\vec s)
     := \psi(\vec s) \cdot \prod_{j=1}^m (1 + s_j^2)^{-\alpha_j/2},
     \quad
     \vec s \in \R^m,
   \]
  is a Lipschitz-continuous function.
  Since $\psi$ is weakly differentiable and each of the functions $(1 + s_j^2)^{-\alpha_j/2}$ is classically continuously differentiable, $\psi_0$ is weakly differentiable, so that it remains to check that its derivatives are uniformly bounded.
  In fact,
   \[
    \abs{\partial_i \psi_0(\vec s)}
     = \abs{\partial_i \psi(\vec s) \cdot \prod_{j=1}^m (1 + s_j^2)^{\alpha_j/2}
      - \sum_{i=1}^m \frac{\alpha_i s_i}{(1 + s_i^2)^{1/2}} \psi(\vec s) \prod_{j=1}^m (1 + s_j^2)^{\alpha_j/2}}
     \leq C (m + 1)
     < \infty
   \]
  for all $\vec s \in \R^m$.
 \end{proof}

Similarly, we may conclude the following structural property of the sorption rates $s_i^\Sigma$.

 \begin{corollary}
  \label{cor:sorption_structure}
  Each of the sorption terms $s_i^\Sigma$, $i = 1, \ldots, N$, may be written as a sum of functions $f$ of the form
   \begin{equation}
    f(\vec u, \vec u^\Sigma)
     = \psi_0(\vec u, \vec u^\Sigma) \prod_{k=1}^{K^\Omega} \psi_k(u_i) \cdot \prod_{k=1}^{K^\Sigma} \psi^\Sigma_k(u_i^\Sigma),
    \label{A_S^sorp}
    \tag{$A_S^{\mathrm{sorp}}$}
   \end{equation}
  where each of the functions $\psi_k$, $\psi_k^\Sigma: \R \rightarrow \R$ is  Lipschitz continuous and $\psi_0: \R^2 \rightarrow \R$ is bounded and Lipschitz continuous.
 \end{corollary}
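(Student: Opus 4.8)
The plan is to reduce the statement to the fundamental-theorem-of-calculus representation of $s_i^\Sigma$ already recorded in the Remark preceding the corollary, and then to adapt the argument from the proof of Lemma~\ref{lem:polynomial_bds_imply_structure} in order to split off the polynomially growing factors one variable at a time. The key preliminary observation is that the sign conditions in \eqref{A_B^sorp} force the ``corner value'' to vanish: evaluating $s_i^\Sigma(\vec{\check c}_i, \vec c^\Sigma) \le 0$ at $\vec c^\Sigma = \vec{\check c}_i^\Sigma$ and $s_i^\Sigma(\vec c, \vec{\check c}_i^\Sigma) \ge 0$ at $\vec c = \vec{\check c}_i$ yields $s_i^\Sigma(\vec{\check c}_i, \vec{\check c}_i^\Sigma) = 0$. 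Hence the boundary term in the integral representation drops out and, writing $\vec c = \vec{\check c}_i + c_i\vec e_i$ and $\vec c^\Sigma = \vec{\check c}_i^\Sigma + c_i^\Sigma\vec e_i$,
\[
 s_i^\Sigma(\vec c, \vec c^\Sigma)
  = c_i\int_0^1 \partial_{a_i} s_i^\Sigma(\vec{\check c}_i + t c_i\vec e_i, \vec c^\Sigma)\dd t
   + c_i^\Sigma\int_0^1 \partial_{b_i} s_i^\Sigma(\vec{\check c}_i, \vec{\check c}_i^\Sigma + t c_i^\Sigma\vec e_i)\dd t
  =: T_1 + T_2 .
\]

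The two summands are then treated separately and symmetrically; consider $T_1 = c_i\,G_1$ with $G_1(\vec c, \vec c^\Sigma) := \int_0^1 \partial_{a_i} s_i^\Sigma(\vec{\check c}_i + t c_i\vec e_i, \vec c^\Sigma)\dd t$. The explicit prefactor $c_i$ is globally Lipschitz and plays the role of one factor $\psi_1$. By \eqref{A_P^sorp}, using $\abs{\vec{\check c}_i + t c_i\vec e_i} \le \abs{\vec c}$ for $t \in [0,1]$, the function $G_1$ obeys a polynomial bound of order $K^\Omega - 1$ in the bulk and of order $K^\Sigma$ in the surface concentration; moreover, being a difference quotient of the locally Lipschitz function $s_i^\Sigma$, it is itself locally Lipschitz with polynomially controlled local Lipschitz constant (again via \eqref{A_P^sorp}). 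I would then introduce the globally Lipschitz factors $\psi_k(c_i) := (1 + c_i^2)^{1/2}$ for $k = 2, \ldots, K^\Omega$ and $\psi_k^\Sigma(c_i^\Sigma) := (1 + (c_i^\Sigma)^2)^{1/2}$ for $k = 1, \ldots, K^\Sigma$, and define $\psi_0$ as the quotient of $G_1$ by the product of these factors: boundedness of $\psi_0$ is precisely the matching of the polynomial degrees, and its Lipschitz continuity follows from the same product-/quotient-rule estimate as in the proof of Lemma~\ref{lem:polynomial_bds_imply_structure}. This exhibits $T_1$ in the form \eqref{A_S^sorp}; the term $T_2 = c_i^\Sigma\,G_2$ is handled identically, with the roles of the bulk and surface variables (and of $K^\Omega$ and $K^\Sigma$) interchanged, so that $s_i^\Sigma = T_1 + T_2$ is a sum of two functions of the required form. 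For the Langmuir model of Example~\ref{exa:Langmuir}, where $K^\Omega = K^\Sigma = 1$, this is particularly transparent, as then $T_1 = k_i^{\mathrm{ad}} c_i (1 - \theta)^+$ and $T_2 = - k_i^{\mathrm{de}} c_i^\Sigma$ are already of the stated form.

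The step I expect to be the main obstacle is verifying that the residual factor $\psi_0$ is genuinely bounded and globally Lipschitz. Boundedness requires tracking the polynomial degrees in \eqref{A_P^sorp} precisely enough that the $K^\Omega$ factors in $c_i$ and the $K^\Sigma$ factors in $c_i^\Sigma$ absorb all the growth in these two distinguished variables, the remaining dependence on the other components being genuinely bounded --- this is where the structural assumptions \eqref{A_P^sorp} and \eqref{A_B^sorp} are used in full. The Lipschitz bound additionally needs control of the weak derivatives of $G_1$ and $G_2$; since these are difference quotients of $s_i^\Sigma$, the estimates again reduce to \eqref{A_P^sorp}, but the bookkeeping of the cross terms produced by the product rule is the technically most delicate part.
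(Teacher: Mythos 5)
Your skeleton is sound and matches what the paper (tacitly) intends: the corner value $s_i^\Sigma(\vec{\check c}_i,\vec{\check c}_i^\Sigma)=0$ does follow from \eqref{A_B^sorp}, the two-term fundamental-theorem split $s_i^\Sigma=c_iG_1+c_i^\Sigma G_2$ is the representation from the remark preceding the corollary, and the paper's own ``proof'' is nothing more than ``argue as in Lemma~\ref{lem:polynomial_bds_imply_structure}'', i.e.\ divide by the Lipschitz weights and check the quotient. The genuine gap is exactly the step you flag and then assert rather than prove: boundedness of $\psi_0$ is \emph{not} ``precisely the matching of polynomial degrees''. The bounds \eqref{A_P^sorp} control $\partial_{a_i}s_i^\Sigma$ and $\partial_{b_i}s_i^\Sigma$ by $1+\abs{\vec a}^{K^\Omega-1}\abs{\vec b}^{K^\Sigma}$ resp.\ $1+\abs{\vec a}^{K^\Omega}\abs{\vec b}^{K^\Sigma-1}$, i.e.\ by norms of the \emph{full} vectors, whereas the factors you divide by, $c_i$, $(1+c_i^2)^{1/2}$ and $(1+(c_i^\Sigma)^2)^{1/2}$, absorb growth only in the distinguished components; growth of $G_1$, $G_2$ in $c_j$, $c_j^\Sigma$ with $j\neq i$ is simply not divided out. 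The same issue hits the Lipschitz claim: \eqref{A_P^sorp} gives no quantitative bound at all on $\partial_{a_j}s_i^\Sigma$, $\partial_{b_j}s_i^\Sigma$ for $j\neq i$, which you would need to control $\nabla\psi_0$ as in Lemma~\ref{lem:polynomial_bds_imply_structure}.

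This is not a bookkeeping matter that \eqref{A_P^sorp} and \eqref{A_B^sorp} ``used in full'' will repair: for $N\geq 2$, $K^\Omega=2$, $K^\Sigma=1$ the function $s_1^\Sigma(\vec a,\vec b)=\min\{a_1a_2,\,a_1+a_2\}\,\frac{b_2}{1+b_2}-b_1$ satisfies \eqref{A_F^sorp}, \eqref{A_P^sorp}, \eqref{A_M^sorp} and \eqref{A_B^sorp}, yet $\partial_{a_1}s_1^\Sigma$ grows linearly in $a_2$, so your $\psi_0$ is unbounded; worse, at fixed $(a_1,b_1)=(1,0)$ this $s_1^\Sigma$ equals $a_2\frac{b_2}{1+b_2}$ and is unbounded in the remaining variables, while every finite sum of terms of the form \eqref{A_S^sorp} is bounded once $u_i$ and $u_i^\Sigma$ are frozen (since $\psi_0$ is bounded). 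So the representation cannot be derived from the listed assumptions read literally; it encodes the additional structure, present in the Langmuir model of Example~\ref{exa:Langmuir} and in the Henry, Volmer, Frumkin and Van-der-Waals models, that the other components enter only through a bounded Lipschitz factor (of $\theta$), i.e.\ that the polynomial bounds hold with $a_i$, $b_i$ in place of $\abs{\vec a}$, $\abs{\vec b}$ --- which is presumably why \eqref{A_S^sorp} is listed again as a separate hypothesis in Theorem~\ref{thm:local_existence}. Under that stronger reading your decomposition together with the quotient argument does give boundedness, but you would still have to supply derivative bounds in the $j\neq i$ variables (or assume them) to get the Lipschitz continuity of $\psi_0$.
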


Thanks to the polynomial growth bounds on the weak derivatives of $\vec r^\Omega$ and $\vec r^\Sigma$, these functions act as continuous Nemytskii-operators between suitable $\LL_q$-spaces.
 
 \begin{remark}[Reaction terms acting as Nemytskii Operators]
  \label{rem:Nemytskii_Operators}
  By the polynomial growth bounds \eqref{A_P^ch} and \eqref{A_P^bulk} imposed on the bulk and surface chemistry reaction rate models, the Nemytskii operators corresponding to the functions $\vec r^\Sigma$ and $\vec r^\Omega$ and their derivatives induce nonlinear maps as follows:
  For any $p \in [1,\infty)$, we have
   \begin{align*}
    \vec r^\Sigma: \LL_{p \gamma^\Sigma}(\Sigma_T; \R^N) &\rightarrow \LL_p(\Sigma_T; \R^N),
     \\
    (\vec r^\Sigma)': \LL_{p \gamma^\Sigma}(\Sigma_T; \R^N) &\rightarrow \LL_{p \gamma^\Sigma / (\gamma^\Sigma - 1)}(\Sigma_T; \R^{N \times N}),
     \\
    \vec r^\Omega: \LL_{p \gamma^\Omega}(\Omega_T; \R^N) &\rightarrow \LL_p(\Omega; \R^N),
     \\
    (\vec r^\Omega)': \LL_{p \gamma^\Omega}(\Omega_T; \R^N) &\rightarrow \LL_{p \gamma^\Omega / (\gamma^\Omega - 1)}(\Omega_T; \R^{N \times N}).
   \end{align*}
  Moreover, for each $\delta, T > 0$, there are constants $k^\Sigma(\delta,T), k^\Omega(\delta,T) > 0$ such that
   \begin{align*}
     \norm{({\vec r^\Sigma)}'(\vec u^\Sigma)}_{\LL_p(\Sigma_T)}
      &\leq k^\Sigma(\delta,T)
	  \\
      \intertext{for all $\vec u^\Sigma \in \LL_{p \gamma^\Sigma/(\gamma^\Sigma - 1)}(\Sigma_T;\R^N)$ with $\norm{\vec u^\Sigma}_{\LL_{p\gamma^\Sigma /(\gamma^\Sigma - 1)}(\Sigma_T)} \leq \delta$,}
     \norm{({\vec r^\Omega})'(\vec u)}_{\LL_p(\Omega_T)}
      &\leq k^\Omega(\delta,T)
	  \\
      \intertext{for all $\vec u \in \LL_{p\gamma^\Omega/(\gamma^\Omega-1)}(\Omega_T;\R^N)$ with $\norm{\vec u}_{\LL_{p\gamma^\Omega/(\gamma^\Omega-1)}(\Omega_T)} \leq \delta$.}
   \end{align*}
  Hence,
   \begin{align*}
    &\norm{\vec r^\Sigma(\vec c^\Sigma) - \vec r^\Sigma(\vec z^\Sigma)}_{\LL_p(\Sigma_T;\R^N)}
     \\
     &\leq \sup_{\vec v^\Sigma \in B_\delta(0)} \norm{ (\vec r^\Sigma)'(\vec v^\Sigma)}_{\LL_{p \gamma^\Sigma / (\gamma^\Sigma - 1)}(\Sigma_T; \R^{N \times N})} \norm{\vec c^\Sigma - \vec z^\Sigma}_{\LL_{p \gamma^\Sigma}(\Sigma_T; \R^N)}
     \\
     &\leq k^\Sigma(\delta,T) \norm{\vec c^\Sigma - \vec z^\Sigma}_{\LL_{p \gamma^\Sigma}(\Sigma_T; \R^N)},
     \quad
     \vec c^\Sigma, \vec z^\Sigma \in B_\delta(0) \subseteq \LL_{p\gamma^\Sigma}(\Sigma_T; \R^N),
     \\
     \intertext{and}
    &\norm{\vec r^\Omega(\vec c) - \vec r^\Omega(\vec z)}_{\LL_p(\Omega_T;\R^N)}
     \\
     &\leq \sup_{\vec v \in B_\delta(0)} \norm{ (\vec r^\Omega)'(\vec v)}_{\LL_{p \gamma^\Omega / (\gamma^\Omega - 1)}(\Omega_T; \R^{N \times N})} \norm{\vec c - \vec z}_{\LL_{p \gamma^\Omega}(\Omega_T; \R^N)}
     \\
     &\leq k^\Omega(\delta,T) \norm{\vec c - \vec z}_{\LL_{p \gamma^\Omega}(\Omega_T; \R^N)},
     \quad
     \vec c, \vec z \in B_\delta(0) \subseteq \LL_{p\gamma^\Omega}(\Omega_T; \R^N),
   \end{align*}
  i.e.\ the Nemytskii operators associated to $\vec r^\Sigma$ and $\vec r^\Omega$ are Lipschitz continuous on each bounded subset of these spaces, respectively.
 \end{remark}
 \begin{proof}
 Let us consider the model for the surface reaction rates $\vec r^\Sigma$. Properties of the model for the bulk reaction rates $\vec r^\Omega$ can be derived analogously.
 Observe that for every $\vec c^\Sigma \in \LL_{p \gamma^\Sigma}(\Sigma_T;\R^N)$, by using the polynomial boundedness of $\vec r^\Sigma$, we may estimate
  \begin{align*}
   \norm{\vec r^\Sigma(\vec c^\Sigma)}_{\LL_p(\Sigma_T)}^p
    &= \int_{\Sigma_T} \abs{\vec r^\Sigma(\vec c^\Sigma)}^p \dd(t,\sigma(\vec x))
    \leq M^\Sigma \int_{\Sigma_T} \big( 1 + \abs{\vec c^\Sigma}^{\gamma^\Sigma} \big)^p \dd(t,\sigma(\vec x))
    \\
    &\leq 2^p M^\Sigma \int_{\Sigma_T} \big(1 + \abs{\vec c^\Sigma}^{p\gamma^\Sigma} \big) \dd(t,\sigma(\vec x))
    = 2^p M^\Sigma \big( T \sigma(\Sigma) + \norm{\vec c^\Sigma}_{\LL_{p\gamma^\Sigma}(\Sigma_T)}^{p\gamma^\Sigma} \big),
  \end{align*}
 where $\sigma(\Sigma)$ denotes the (finite) surface measure of $\Sigma$.
 This shows that the Nemytskii operator associated to $\vec r^\Sigma$ maps $\LL^{p\gamma^\Sigma}(\Sigma_T;\R^N)$ into $\LL_p(\Sigma_T;\R^N)$.
 Similarly,
  \begin{align*}
   \norm{(\vec r^\Sigma)'(\vec c^\Sigma)}_{\LL_{\nicefrac{p\gamma^\Sigma}{\gamma^\Sigma - 1}}}^{\nicefrac{p\gamma^\Sigma}{\gamma^\Sigma - 1}}
    &= \int_{\Sigma_T} \abs{(\vec r^\Sigma)'(\vec c^\Sigma)}^{\nicefrac{p\gamma^\Sigma}{\gamma^\Sigma-1}} \dd(t,\sigma(\vec x))
    \leq M^\Sigma \int_{\Sigma_T} \big( 1 + \abs{\vec c^\Sigma} \big)^{p\gamma^\Sigma} \dd(t,\sigma(\vec x))
    \\
    &\leq 2^p M^\Sigma \big( T \sigma(\Sigma) + \norm{\vec c}_{\LL_{p\gamma^\Sigma}}^{p\gamma^\Sigma} \big),
  \end{align*}
 so that $(\vec r^\Sigma)'$ has the claimed mapping property.
 From here, the remaining estimates follow easily by the fundamental theorem of calculus, Tonelli's theorem and Hölder's inequality:
  \begin{align*}
   &\norm{\vec r^\Sigma(\vec c^\Sigma) - \vec r^\Sigma(\vec {\tilde c}^\Sigma)}_{\LL_p(\Sigma_T;\R^N)}^p
    = \int_{\Sigma_T} \abs{\vec r^\Sigma(\vec c^\Sigma) - \vec r^\Sigma(\vec {\tilde c}^\Sigma)}^p \dd(t,\sigma(\vec x))
    \\
    &= \int_{\Sigma_T} \abs{ \int_0^1 (\vec r^\Sigma)'(s \vec c^\Sigma + (1-s) \vec {\tilde c}^\Sigma) \cdot (\vec c^\Sigma - \vec {\tilde c}^\Sigma) \dd s }^p \dd(t,\sigma(\vec x))
    \\
    &\leq \int_{\Sigma_T} \int_0^1 \abs{(\vec r^\Sigma)'(s \vec c^\Sigma + (1-s) \vec {\tilde c}^\Sigma) \cdot (\vec c^\Sigma - \vec {\tilde c}^\Sigma)}^p \dd s \dd (t, \sigma(\vec x))
    \\
    &= \int_0^1 \int_{\Sigma_T} \abs{(\vec r^\Sigma)'(s \vec c^\Sigma + (1-s) \vec {\tilde c}^\Sigma) \cdot (\vec c^\Sigma - \vec {\tilde c}^\Sigma)}^p \dd (t, \sigma(\vec x)) \dd s
    \\
    &\leq \int_0^1 \norm{\vec c - \vec {\tilde c}}_{\LL_{p\gamma^\Sigma}(\Sigma_T;\R^N)}^p \norm{(\vec r^\Sigma)'(s \vec c + (1-s) \vec {\tilde c})}_{\LL_{\nicefrac{p\gamma^\Sigma}{\gamma^\Sigma-1}}(\Sigma_T;\R^{N \times N})}^p \dd s
    \\
    &\leq \big( k^\Sigma(\delta,T) \norm{\vec c - \vec {\tilde c}}_{\LL_{p\gamma^\Sigma}(\Sigma_T;\R^N)} \big)^p.
  \end{align*}
 \end{proof}
 
\section{Positivity of solutions}
\label{sec:Positivity}

In this section, we are concerned with an aspect which is quite important from the modelling perspective:
We show that strong solutions to the reaction-diffusion-advection-sorption system
   \begin{alignat*}{2}
    \partial_t c_i + \vec v \cdot \nabla c_i - \dv (d_i \nabla c_i)
     &= r^\Omega_i(\vec c)
	 \qquad &
     &\text{on } (0,T) \times \Omega,
     \\
    \partial_t c_i^\Sigma - \dv_\Sigma (d_i^\Sigma \nabla_\Sigma c_i^\Sigma)
     &= s_i^\Sigma(c_i, c_i^\Sigma) + r_i^\Sigma(\vec c^\Sigma)
     \qquad
     &&\text{on } (0,T) \times \Sigma,
     \tag{RDASS}
     \label{eqn:RDASS}
     \\
    - d_i \nabla c_i \cdot \nu
     &= s_i^\Sigma(c_i, c_i^\Sigma)
     \qquad &
     &\text{on } (0,T) \times \Sigma,
     \\
    (c_i(0,\cdot), c_i^\Sigma(0,\cdot))
     &= (c^0_i, c^{\Sigma,0}_i)
     \qquad &
     &\text{for } i = 1, \ldots, N
   \end{alignat*}
  will be positive, provided the initial data are positive.
  Let us start with fixing the solution concept used in this manuscript.
 
 \begin{definition}
  Let $p \in [1, \infty)$.
  A pair $(\vec c, \vec c^\Sigma)$ of functions $\vec c \in \WW_p^{(1,2)}(\Omega_T;\R^N)$ and $\vec c^\Sigma \in \WW_p^{(1,2)}(\Sigma_T;\R^N)$ is called \emph{strong $\WW_p^{(1,2)}$--solution} of \eqref{eqn:RDASS}, if $(\vec c, \vec c^\Sigma)|_{t=0} = (\vec c^0, \vec c^{\Sigma,0})$ in the trace sense and the differential equations in \eqref{eqn:RDASS} are satisfied for a.e.\ $(t, \vec x) \in (0,T) \times \Omega$ and $(t, \vec x) \in (0,T) \times \Sigma$, respectively.
 \end{definition}
 
 \begin{remark}
  Note that in related papers different solution concepts have been used, e.g.\ \emph{classical solutions} in \cite{MorTan22+}, where the authors demand that
   \begin{align*}
    (\vec c, \vec c^\Sigma)
     &\in \CC([0,T]; \LL_p(\Omega;\R^N) \times \LL_p(\Sigma; \R^N))
      \cap \LL_\infty( (0,T); \LL_\infty(\Omega;\R^N) \times \LL_\infty(\Sigma;\R^N))
      \\
      &\quad
      \cap \big( \CC^{(1,2)}( (0,T) \times \Omega; \R^N) \times \CC^{(1,2)}( (0,T) \times \Sigma; \R^N) \big).
   \end{align*}
  Away from zero, they thus demand more regularity (classical instead of weak derivatives), whereas close the initial time $t = 0$ they only demand continuity w.r.t.\ values in $\LL_p$, but the strong solutions will at least be continuous w.r.t.\ values in the trace space $\WW_p^{2-2/p}$.
 \end{remark}
 
 We have not yet shown that such strong solutions exist, but we nevertheless can give a first result on positivity of solutions.

 \begin{lemma}[Positive solutions]
  Let $p \in (1, \infty)$ and $T > 0$ be given.
  Let assumption~\ref{assmpt:general} hold true.
  If all components of the initial data $(\vec c^0, \vec c^{\Sigma,0}) \in \fs I_p^{\Omega,+} \times \fs I_p^{\Sigma,+}$ are (a.e.) non-negative and $(\vec c, \vec c^\Sigma) \in \fs E_p^\Omega(T) \times \fs E_p^\Sigma(T)$ form a strong $\WW_p^{(1,2)}$-solution of the reaction-diffusion-advection-sorption system \eqref{eqn:RDASS}, then $c_i \geq 0$ and $c_i^\Sigma \geq 0$ a.e.\ on $(0,T) \times \Omega$ and $(0, T) \times \Sigma$, resp.
 \end{lemma}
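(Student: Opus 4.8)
The plan is to establish positivity by an $\LL_2$--energy estimate for the \emph{negative parts} of the components, using only the sign/monotonicity structure of the reaction and sorption models; no Gronwall argument is needed. Since a strong solution of \eqref{eqn:RDASS} a priori takes values in spaces of real-valued functions, the nonlinearities are implicitly understood as globally Lipschitz extensions to all of $\R^N$, resp.\ $\R^{2N}$; I would take the extensions obtained by precomposition with the componentwise positive part $\vec z \mapsto \vec z^+ = (z_1^+,\dots,z_N^+)$, which preserves the (local) Lipschitz continuity and, for all real arguments, the quasi-positivity conditions \eqref{A_N^bulk}, \eqref{A_N^ch} and the sign conditions of \eqref{A_M^sorp}, \eqref{A_B^sorp}: e.g.\ if $z_i \le 0$ then the $i$-th slot of $\vec z^+$ vanishes, so $r_i^\Omega(\vec z)=r_i^\Omega(\vec z^+)\ge 0$; similarly the extended $s_i^\Sigma$ is $\le 0$ at any point with $c_i|_\Sigma \le 0$ and $\ge 0$ at any point with $c_i^\Sigma \le 0$. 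Writing $c_i^- := (-c_i)^+ \ge 0$ and $(c_i^\Sigma)^- := (-c_i^\Sigma)^+ \ge 0$, it suffices to show $c_i^- \equiv 0$ on $\Omega_T$ and $(c_i^\Sigma)^- \equiv 0$ on $\Sigma_T$ for every $i$.

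Fix $i$. The first step is to test the $i$-th bulk equation of \eqref{eqn:RDASS} with $-c_i^-$ and integrate over $\Omega$, test the $i$-th surface equation with $-(c_i^\Sigma)^-$ and integrate over $\Sigma$, and add. The time derivatives give $\tfrac12\tfrac{\dd}{\dd t}\big(\norm{c_i^-}_{\LL_2(\Omega)}^2 + \norm{(c_i^\Sigma)^-}_{\LL_2(\Sigma)}^2\big)$; the advection term vanishes after integration by parts because $\dv \vec v = 0$ in $\Omega$ and $\vec v\cdot\vec\nu = 0$ on $\Sigma$ by \eqref{A^vel} (so $\int_\Omega(-c_i^-)\vec v\cdot\nabla c_i = \int_\Omega \vec v\cdot\nabla\tfrac12(c_i^-)^2 = 0$); and the diffusion terms produce the nonnegative quantities $\int_\Omega d_i\abs{\nabla c_i^-}^2$ and $\int_\Sigma d_i^\Sigma\abs{\nabla_\Sigma(c_i^\Sigma)^-}^2$, using $\nabla c_i^-\cdot\nabla c_i = -\abs{\nabla c_i^-}^2$ a.e.\ and that $\Sigma=\partial\Omega$ is closed. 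Integration by parts in the bulk also produces the boundary term $\int_\Sigma c_i^-\, d_i\nabla c_i\cdot\vec\nu = -\int_\Sigma c_i^-\, s_i^\Sigma(c_i,c_i^\Sigma)$ by the Neumann condition, while the surface source contributes $-\int_\Sigma(c_i^\Sigma)^-\, s_i^\Sigma(c_i,c_i^\Sigma)$, and the reaction terms contribute $-\int_\Omega c_i^-\, r_i^\Omega(\vec c)$ and $-\int_\Sigma(c_i^\Sigma)^-\, r_i^\Sigma(\vec c^\Sigma)$. After discarding the nonnegative diffusion terms this yields
\[
 \tfrac12\tfrac{\dd}{\dd t}\Big(\norm{c_i^-}_{\LL_2(\Omega)}^2 + \norm{(c_i^\Sigma)^-}_{\LL_2(\Sigma)}^2\Big)
  \le \int_\Sigma c_i^- s_i^\Sigma - \int_\Sigma (c_i^\Sigma)^- s_i^\Sigma - \int_\Omega c_i^- r_i^\Omega(\vec c) - \int_\Sigma (c_i^\Sigma)^- r_i^\Sigma(\vec c^\Sigma),
\]
with the abbreviation $s_i^\Sigma = s_i^\Sigma(c_i,c_i^\Sigma)$.

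The core of the argument is that each term on the right-hand side is $\le 0$. On $\{c_i|_\Sigma < 0\}$ the extended sorption rate satisfies $s_i^\Sigma(c_i,c_i^\Sigma) = s_i^\Sigma(0,(c_i^\Sigma)^+) \le 0$ by \eqref{A_B^sorp}, so $c_i^- s_i^\Sigma \le 0$; on $\{c_i^\Sigma < 0\}$ one has $s_i^\Sigma(c_i,c_i^\Sigma) = s_i^\Sigma((c_i|_\Sigma)^+,0) \ge 0$, so $-(c_i^\Sigma)^- s_i^\Sigma \le 0$; and on $\{c_i<0\}$ (resp.\ $\{c_i^\Sigma<0\}$) quasi-positivity \eqref{A_N^bulk} (resp.\ \eqref{A_N^ch}) gives $r_i^\Omega(\vec c)=r_i^\Omega(\vec c^+)\ge 0$ (resp.\ $r_i^\Sigma(\vec c^\Sigma)\ge 0$), so the remaining integrands are $\le 0$. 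Hence $\tfrac{\dd}{\dd t}\big(\norm{c_i^-}_{\LL_2(\Omega)}^2 + \norm{(c_i^\Sigma)^-}_{\LL_2(\Sigma)}^2\big) \le 0$; since $(\vec c^0,\vec c^{\Sigma,0}) \in \fs I_p^{\Omega,+}\times\fs I_p^{\Sigma,+}$ makes this quantity vanish at $t=0$, it vanishes for all $t\in[0,T]$, which forces $c_i\ge 0$ a.e.\ on $\Omega_T$ and $c_i^\Sigma\ge 0$ a.e.\ on $\Sigma_T$.

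The only real obstacle is technical: justifying that $c_i^-$ and $(c_i^\Sigma)^-$ are admissible test functions and that $t\mapsto\norm{c_i^-(t)}_{\LL_2(\Omega)}^2$ and $t\mapsto\norm{(c_i^\Sigma)^-(t)}_{\LL_2(\Sigma)}^2$ are absolutely continuous with $\tfrac{\dd}{\dd t}\tfrac12\norm{c_i^-}_{\LL_2}^2 = \dual{\partial_t c_i}{-c_i^-}$, which requires some care in the maximal regularity class $\fs E_p^\Omega(T)\times\fs E_p^\Sigma(T)$ when $p$ is small (so that a priori $c_i(t)$ need not lie in $\HH^1(\Omega)$ or even $\LL_2(\Omega)$ uniformly in $t$). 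I expect this to be handled by the standard chain rule for positive parts of Sobolev functions, combined with an approximation of $s\mapsto s^-$ by smooth globally Lipschitz functions, exploiting the continuity $\vec c\in\CC([0,T];\fs I_p^\Omega)$, $\vec c^\Sigma\in\CC([0,T];\fs I_p^\Sigma)$ and parabolic smoothing for $t>0$; all the sign conclusions above are stable under this approximation, so the integrations by parts and the final limit are legitimate.
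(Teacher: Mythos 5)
Your structural sign analysis is exactly the one the paper uses: reduce positivity to the observation that, at points where $c_i<0$ resp.\ $c_i^\Sigma<0$, the monotonicity \eqref{A_M^sorp} and the sign conditions \eqref{A_B^sorp} force $s_i^\Sigma\le 0$ resp.\ $s_i^\Sigma\ge 0$, while quasi-positivity handles $r_i^\Omega$ and $r_i^\Sigma$; the paper (following \cite[Lemma 5.2]{BKMS17}) organises the same bookkeeping by splitting $\Sigma$ into the sets $[\sign c_i=\sign c_i^\Sigma]$, $[c_i<0\le c_i^\Sigma]$ and $[c_i^\Sigma<0\le c_i]$, which is equivalent to your term-by-term treatment, and your explicit positive-part extension of the nonlinearities is a point the paper leaves implicit. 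Where you genuinely deviate is the functional: you run a Stampacchia-type $\LL_2$ estimate, testing with $-c_i^-$ and $-(c_i^\Sigma)^-$, whereas the paper works with an $\LL_1$-type functional and tests with the \emph{bounded}, monotone regularisations $\phi_\varepsilon'(c_i)$, $\phi_\varepsilon'(c_i^\Sigma)$, where $\phi_\varepsilon(r)=-r\ee^{\varepsilon/r}\chi_{[r\le 0]}$ approximates the negative part.

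This deviation is precisely where your argument has a genuine gap. The lemma is asserted for \emph{all} $p\in(1,\infty)$, and for small $p$ the objects you manipulate need not exist: from $(\vec c,\vec c^\Sigma)\in\fs E_p^\Omega(T)\times\fs E_p^\Sigma(T)$ one gets neither $\nabla c_i\in\LL_2$ (needed for $\int_{\Omega_\tau} d_i\abs{\nabla c_i^-}^2$ and the integration by parts), nor $c_i^-\in\LL_{p'}$ (needed to pair with $\partial_t c_i\in\LL_p$), nor finiteness of $\norm{c_i^-(t)}_{\LL_2(\Omega)}$ uniformly in $t$; for $p$ close to $1$ and $d\ge 2$ the anisotropic embeddings simply do not reach these exponents. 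Your proposed repair via parabolic smoothing is not available here and is in fact circular within the paper's architecture: the smoothing result (Lemma~\ref{lem:smoothing_effect}) comes much later, requires the triangular-structure Assumption~\ref{assmpt:triangular_structure} (not assumed in this lemma), and its proof runs through the $\LL_p$--$\LL_q$ estimates of Lemma~\ref{lem:L_p-L_q-estimates}, which use the comparison principle on the already-known-nonnegative solution, i.e.\ the very positivity you are proving. The robust fix is the paper's device: replace $-c_i^-$ by the uniformly bounded test functions $\phi_\varepsilon'(c_i)$, $\phi_\varepsilon'(c_i^\Sigma)$ (so all pairings are finite for every $p>1$ and the diffusion terms have a sign by monotonicity of $\phi_\varepsilon'$), carry out your sign analysis verbatim, and pass to the limit $\varepsilon\to 0+$ by dominated convergence. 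With that modification — or if you restrict to $p\ge 2$ — your proof is correct.
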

 
 \begin{proof}
  A similar result can be found in \cite[Lemma 5.2]{BKMS17}, where, however, a more restrictive regularity condition than \eqref{A_F^sorp} is demanded of the sorption model.
  Nevertheless, the proof of \cite[Lemma 5.2]{BKMS17} essentially carries over, except for the following steps, where we have to employ the adjusted condition \eqref{A_F^sorp}.\newline
  The structural and regularity assumptions on $s_i^\Sigma$ can be used to prove that
   \[
    \limsup_{\varepsilon \rightarrow 0+} \int_\Sigma (\phi_\varepsilon'(c_i^\Sigma) - \phi_\varepsilon'(c_i)) s_i^\Sigma(\vec c, \vec c^\Sigma) \dd \sigma
     \leq 0,
   \]
  where $\phi_\varepsilon(r) := - r \ee^{\varepsilon / r} \chi_{[r \leq 0]}$ is an $\varepsilon$-regularised ($\varepsilon > 0$) version of the function $- r \chi_{[r  \leq 0]}$ and has the derivative $\phi_\varepsilon'(r) = (\tfrac{\varepsilon}{r} - 1) \ee^{\varepsilon / r} \chi_{[r < 0]}$.
  In particular, its derivative $\phi_\varepsilon'$ is monotone and $\phi_\varepsilon'(r) \rightarrow - \chi_{[r < 0]}$ converges pointwise as $\varepsilon \rightarrow 0+$.
  For any fixed time $t \in (0,T)$ and representative functions $\vec c$ and $\vec c^\Sigma$, we decompose the surface $\Sigma$ into the three pairwise disjoint sets $[\sign c_i(t,\cdot) = \sign c_i^\Sigma(t,\cdot)]$, $[c_i(t,\cdot) < 0 \leq c_i^\Sigma(t,\cdot)]$ and $[c_i^\Sigma(t,\cdot) < 0 \leq c_i(t,\cdot)]$, where we set $\sign(s) = 1$ if $s \geq 0$ and $\sign(s) = -1$ otherwise.
  Recall that, by assumption~\ref{assmpt:general}, $s_i^\Sigma(\vec a, \vec b)$ is increasing in the variable $a_i$ and decreasing in the variable $b_i$, respectively.
  We also recall the notation $\vec{\check{v}}_i := (v_1, \ldots, v_{i-1}, 0, v_{i+1}, \ldots, v_N)$ for vectors $\vec v \in \R^N$ and $i = 1, \ldots, N$.
  Employing Lebesgue's theorem on dominated convergences (and using that $\norm{\phi_\varepsilon'}_\infty = \lim_{s \rightarrow -\infty} \abs{\phi_\varepsilon'(s)} = 1$ for all $\varepsilon > 0$), we obtain by letting $\varepsilon > 0$ tend to zero that
   \begin{align*}
    &\int_{[\sign c_i(t,\cdot) = \sign c_i^\Sigma(t,\cdot)]} (\phi_\varepsilon'(\vec c_i^\Sigma) - \phi_\varepsilon'(c_i)) s_i^\Sigma(\vec c, \vec c^\Sigma)(t,\vec x) \dd \sigma(\vec x)
     \xrightarrow{\varepsilon \rightarrow 0+} 0,
     \\
    &\int_{[c_i(t,\cdot) < 0 \leq c_i^\Sigma(t,\cdot)]} (\phi_\varepsilon'(c_i^\Sigma(t,\vec x)) - \phi_\varepsilon'(c_i(t,\vec x))) s_i^\Sigma(\vec c(t,\vec x), \vec c^\Sigma(t,\vec x)) \dd \sigma(\vec x)
     \\
     &\quad
     \xrightarrow{\varepsilon \rightarrow 0+} \int_{[c_i(t,\cdot) < 0 \leq c_i^\Sigma(t,\cdot)]} s_i^\Sigma(\vec c(t,\vec x), \vec c^\Sigma(t,\vec x)) \dd \sigma(\vec x)
     \\
     &\quad
     \leq \int_{[c_i(t,\cdot) < 0 \leq c_i^\Sigma(t,\cdot)]} s_i^\Sigma(\vec{\check{c}}_i(t,\vec x), \vec c^\Sigma(t,\vec x)) \dd \sigma(\vec x)
     \leq 0,
     \\
    &\int_{[c_i^\Sigma(t,\cdot) < 0 \leq c_i(t,\cdot)]} (\phi_\varepsilon'(c_i^\Sigma(t,\vec x)) - \phi_\varepsilon'(c_i(t,\vec x))) s_i^\Sigma(c_i(t,\vec x), c_i^\Sigma(t,\vec x)) \dd \sigma(\vec x)
     \\
     &\quad
     \xrightarrow{\varepsilon \rightarrow 0+} - \int_{[c_i^\Sigma(t,\cdot) < 0 \leq c_i(t,\cdot)]} s_i^\Sigma(\vec c(t,\vec x), \vec c^\Sigma(t,\vec x)) \dd \sigma(\vec x)
     \\
     &\quad
     \leq - \int_{[c_i(t,\cdot) < 0 \leq c_i^\Sigma(t,\cdot)]} s_i^\Sigma(\vec c(t,\vec x), \vec{\check{c}}_i^\Sigma(t,\vec x)) \dd \sigma(\vec x)
     \leq 0.
   \end{align*}
  Therefore, the proof of \cite[Lemma 5.2]{BKMS17} almost literally extends to the situation considered here, provided we neglect bulk chemistry, i.e.\ $\vec r^\Omega(\vec c) = \vec 0$.
  In the case $\vec r^\Omega \not= \vec 0$, the same techniques as have been used for the $\vec r^\Sigma$-term can be employed to show that for the additional terms we have the estimate
   \[
    \limsup_{\varepsilon \rightarrow 0+} \int_\Omega \phi_\varepsilon'(c_i) r^\Omega_i(\vec c) \dd \vec x
     \leq 0.
   \]
Therefore, strong solutions to positive initial data are positive.
 \end{proof}

\section{Local-in-time well-posedness}
\label{sec:LIT-well-posedness}

Let $p \in (1,\infty)$, initial data $(\vec c^0, \vec c^{\Sigma,0}) \in \fs I_p^\Omega \times \fs I_p^\Sigma$ and a finite time horizon $T > 0$ be given.
We now consider local-in-time well-posedness for the initial-boundary value problem \eqref{eqn:RDASS}.
To do so, we formally rewrite \eqref{eqn:RDASS} as an abstract semilinear evolution equation of the form
 \begin{equation}
  \begin{cases}
  \mathcal{L}_{T,i}(c_i,c_i^\Sigma)
   &= \mathcal{N}_{T,i}(\vec c, \vec c^\Sigma)
    \quad\text{subject to the constraint}
   \\
  (c_i(0,\cdot), c_i^\Sigma(0, \cdot))
   &= (c^0_i, c^{\Sigma,0}_i)
   \quad
   \text{for } i = 1, \ldots, N,
   \end{cases}
   \label{eqn:ASLP}
 \end{equation}
where the maps $\mathcal{L}_{T,i}$ (linear part) and $\mathcal{N}_{T,i}$ (nonlinear part) are defined as follows:
 \begin{align*}
  \mathcal{L}_T:
  \fs E_p^\Omega(T) \times \fs E_p^\Sigma(T)
   &\rightarrow \fs F_p^\Omega(T) \times \fs F_p^\Sigma(T) \times \fs G_p^\Sigma(T),
   \\
  (\vec c, \vec c^\Sigma)
   &\mapsto (\partial_t c_i + \vec v \cdot \nabla c_i - \dv (d_i \nabla c_i), \partial_t c_i^\Sigma - \dv_\Sigma (d_i^\Sigma \nabla_\Sigma c_i^\Sigma), - d_i \nabla c_i|_\Sigma \cdot \vec \nu)_{i=1,\ldots,N},
   \\
  \mathcal{N}_T:
  \fs E_p^\Omega(T) \times \fs E_p^\Sigma(T)
   &\rightarrow \fs F_p^\Omega(T) \times \fs F_p^\Sigma(T) \times \fs G_p^\Sigma(T),
    \\
  (\vec c, \vec c^\Sigma)
   &\mapsto (\vec r^\Omega(\vec c), \vec s^\Sigma(\vec c|_\Sigma, \vec c^\Sigma) + \vec r^\Sigma(\vec c^\Sigma), \vec s^\Sigma(\vec c|_\Sigma, \vec c^\Sigma)).
 \end{align*}
 Observe that $\mathcal{L}_T$ combines the action of the linear diffusion-advection operators in the bulk phase and on the surface, as well as the Neumann trace map, corresponding to the non-reactive case, whereas $\mathcal{N}_T$ contains the non-linear contributions from bulk and surface chemical reactions and the sorption near the boundary.

As a first step, we reduce this semilinear problem with inhomogeneous initial data to a zero time-trace problem, i.e.\ to the special case where $(c^0_i|_\Sigma, c^{\Sigma,0}_i) = 0$ and $s_i^\Sigma(c^0_i|_\Sigma, c^{\Sigma,0}_i) = 0$.
To this end, we solve the quasi-autonomous problems
 \[
  \begin{cases}
  \mathcal{L}_{T,i}(c_i^\ast, c_i^{\Sigma,\ast})
   &= (0, 0, s_i^\ast)
   \\
  (c_i(0,\cdot), c_i^\Sigma(0,\cdot))
   &= (c^0_i, c^{\Sigma,0}_i),
  \end{cases}
  \quad
  i = 1, \ldots, N,
 \]
where the function $s_i^\ast$ on the right-hand side,
 \begin{equation}
  s_i^\ast(t,\cdot)
   = \mathcal{R}_{\R^d \rightarrow \Sigma} \ee^{t A_i^\Sigma} \mathcal{E}_{\Sigma \rightarrow \R^d} s_i^\Sigma(\vec c^0, \vec c^{\Sigma,0}),
   \quad
   t > 0,
   \label{eqn:riast}
 \end{equation}
is defined via the analytic semigroup $(\ee^{tA_i^\Sigma})_{t \geq 0}$ generated by the scalar differential operator $A_i^\Sigma$ on the surface, cf.\ Lemma~\ref{lem:maximal_regularity_scalar_case}, and where $\Ecal_{\Sigma \rightarrow \R^d} \in \B(\WW^{1-\nicefrac{3}{p}}_p(\Sigma); \WW^{1-\nicefrac{2}{p}}_p(\R^d))$ is some fixed continuous extension operator from $\Sigma$ to $\R^d$ and $\mathcal{R}_{\R^d \rightarrow \Sigma}$ is the surface trace operator which can be considered as an operator in the class $\B(\WW^{(1,2) \cdot \frac{1}{2}}_p(\R^d_T); \WW^{(1,2)\cdot(\frac{1}{2}-\frac{1}{2p})}_p(\Sigma_T))$ of bounded linear operators between the spaces $\WW^{(1,2) \cdot \frac{1}{2}}_p(\R^d_T)$ and $\WW^{(1,2)\cdot(\frac{1}{2}-\frac{1}{2p})}_p(\Sigma_T))$, cf.\ \cite{Rychkov_1999}.
For each component $i = 1, \ldots, N$, the elliptic surface differential operator in divergence form, $A_i^\Sigma = \dv_\Sigma (d_i^\Sigma \nabla_\Sigma \cdot)$, defined on the domain $\dom(A_i^\Sigma) = \WW^2_p(\Sigma)$ generates an analytic semigroup on $\LL_p(\Sigma)$.
Moreover, by $\LL_p$-maximal regularity of the corresponding parabolic initial-boundary value problem, it follows that $\vec s^\ast \in \fs G_p^\Sigma(T) = \WW^{(1,2)\cdot(\frac{1}{2}-\frac{1}{2p})}_p(\Sigma_T;\R^N)$ if and only if
 \[
  s_i^\Sigma(\vec c^0|_\Sigma, \vec c^{\Sigma,0})
   \in \WW^{1-\nicefrac{3}{p}}_p(\Sigma)
   \quad
   \text{for } i = 1, \ldots, N.
 \]
Indeed, if $s_i^\Sigma(\vec c^0|_\Sigma, \vec c^{\Sigma,0}) \in \WW_p^{1-3/p}(\Sigma)$, then $(I - A_i^\Sigma)^{-(2p+1)/2p} s_i^\Sigma(\vec c^0|_\Sigma, \vec c^{\Sigma,0}) \in \WW_p^{2-2/p}(\Sigma)$.
Hence,
 \begin{align*}
  \big[ \big( (t, \vec x)
  &\mapsto \ee^{t A_i^\Sigma} (I - A_i^\Sigma)^{-(2p+1)/2p} s_i^\Sigma(\vec c^0|_\Sigma, \vec c^{\Sigma,0})
   \\
   &= (I - A_i^\Sigma)^{-(2p+1)/2p} \ee^{t A_i^\Sigma} s_i^\Sigma(\vec c^0|_\Sigma, \vec c^{\Sigma,0}) \big) \big] \in \WW_p^{(1,2)}(\Sigma_T)
 \end{align*}
 for every $T > 0$, and in this case $\big( t \mapsto \ee^{t A_i^\Sigma} s_i^\Sigma(\vec c^0|_\Sigma, \vec c^{\Sigma,0}) \big) \in \WW_p^{(1,2) \cdot (\frac{1}{2} - \frac{1}{2p})}(\Sigma_T)$.
 \newline
In our case, we are given initial data $(\vec c^0|_\Sigma, \vec c^{\Sigma,0}) \in \WW^{2-\nicefrac{3}{p}}_p(\Sigma;\R^N) \times \WW^{2-\nicefrac{2}{p}}_p(\Sigma;\R^N)$ and the general sorption model is of class $s_i^\Sigma \in \WW^1_{\infty,\mathrm{loc}}(\R^{2N}; \R^N)$.
Unfortunately this may, in general, not be enough to ensure that $s_i^\Sigma(\vec c^0|_\Sigma, \vec c^{\Sigma,0}) \in \WW^{1-\nicefrac{3}{p}}_p(\Sigma)$.
  \begin{remark}
   In general, the continuous embedding $\WW_p^{2-2/p}(\Omega) \hookrightarrow \LL_q(\Sigma)$ (by which we mean that the trace operator is a continuous linear operator between the first and the latter space) is only true for $q \in [1,q^\ast)$ below some critical value $q^\ast < \infty$, and, thus, the function $(u|_\Sigma)^k$ will in general not be in the class $\LL_p(\Sigma)$ (choose, e.g., $k \in \N$ such that $\frac{1}{p} < \frac{k}{q*}$).
   Therefore, without further structural conditions we cannot infer from $\vec u \in \WW^{2-2/p}_p(\Omega; \R^N)$, $\vec u^\Sigma \in \WW_p^{2-2/p}(\Sigma; \R^N)$ and $S \in \WW^1_{\infty,\mathrm{loc}}(\R^{2N})$ that $S(\vec u|_\Sigma, \vec u^\Sigma) \in \LL_p(\Sigma)$, or, even, $S(\vec u|_\Sigma, \vec u^\Sigma) \in \WW^{1-3/p}_p(\Sigma)$.
 \newline
Also note that in the critical case $p \leq \frac{d+2}{2}$ the space $\WW_p^{2-2/p}$ will not embed into $\LL_\infty$, in general, in contrast to the -- in this regard -- simpler situation, e.g.\ in \cite{MorTan22+}, where always $p > d$, so that in particular $\WW_p^{2-2/p}(\Omega)$ continuously embeds into some Hölder space $\CC^\alpha(\overline{\Omega})$ for certain $\alpha > 0$.
  \end{remark}
 However, as the general assumptions imply that $s_i^\Sigma$ is of some \emph{generalised polynomial structure}, such a property can be ensured by using the following (special case of a more general) result of Amann \cite[Theorems 2.1 \& 4.1]{Amann_1991}, and which has been extended by Köhne and Saal, cf.\ e.g.\ \cite[Theorem 1.2]{KoehneSaal_2017+}, to functions mapping into a Banach space of class $\HTcal$, cf.\ \cite{KalWei01}.

 \begin{theorem}
 \label{thm:algebra-full_space}
  Let a natural number $m \in \N$, scalars $s_1, \ldots, s_m, s \in (0, \infty)$, $p_1, \ldots, p_m \in (1, \infty)$ and Banach spaces $X_1, \ldots, X_m, X$ of class $\mathcal{HT}$ be given.
  Assume that there exists a continuous multiplication
   \[
    \cdot: X_1  \times \cdots \times X_m \rightarrow X.
   \]
  This induces the pointwise defined continuous multiplication operator, acting as
   \[
    \CC_0(\R \times \R^d;X_1) \times \cdots \times \CC_0(\R \times \R^d;X_m) \rightarrow \CC_0(\R \times \R^d;X).
   \]
  We denote by
   \begin{align*}
    \operatorname{ind}_j
     &:= \operatorname{ind} (\WW^{(1,2) \cdot s_j}_{p_j}(\R \times \R^d;X_j)) = 2 s_j - \frac{d+2}{p_j},
     \\
    \operatorname{ind}
     &:= \operatorname{ind} (\WW^s_p)(\R \times \R^d; X) = 2 s - \frac{d+2}{p}
   \end{align*}
  the \emph{anisotropic Sobolev indices} of the Sobolev--Slobodetskii spaces $\WW^{(1,2) \cdot s_j}_{p_j}(\R \times \R^d;X)$ and $\WW^s_p(\R \times \R^d;X)$, respectively.
  There is a unique continuous extension of the multiplication operator introduced above to a multiplication operator 
   \[
    \cdot: \WW^{(1,2) \cdot s_1}_{p_1}(\R \times \R^d;X_1) \times \cdots \times \WW^{(1,2) \cdot s_m}_{p_m}(\R \times \R^d;X_m)
     \rightarrow \WW^{(1,2) \cdot s}_p(\R \times \R^d;X),
   \]
  if the following conditions are satisfied:
   \begin{alignat*}{2}
    s
     &\leq \min_{j=1, \ldots, m} s_j
	 \qquad &
     &(\text{regularity constraint})
     \\
    \frac{1}{p}
     &\geq \sum_{j=1}^m \frac{1}{p_j}
     \qquad &
     &(\text{integrability constraint})
     \\
    \operatorname{ind} 
     &\leq \begin{cases} \min_{j=1, \ldots, m} \operatorname{ind}_j, &\text{if all } \operatorname{ind}_j \geq 0 \\ \sum_{j: \operatorname{ind}_j < 0} \operatorname{ind}_j, &\text{otherwise} \end{cases}
	 \qquad &
     &(\text{index constraint})
   \end{alignat*}
  with a strict inequality in the index constraint if at least one of the Sobolev indices $\operatorname{ind}_j$ equals zero.
 \end{theorem}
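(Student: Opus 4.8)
This is the parabolically anisotropic, $\HTcal$-valued analogue of the classical multiplication theorems for Sobolev--Slobodetskii spaces. The plan is to run a Littlewood--Paley/paraproduct argument directly for $m$ factors --- an induction on $m$ is not available, since only the $m$-fold multiplication $X_1 \times \cdots \times X_m \to X$, and not the partial products, is assumed to take values in an $\HTcal$-space --- replacing Euclidean dilations by parabolic ones throughout, and transferring the scalar argument to the vector-valued setting by substituting $\abs{\cdot}$ with the norms $\norm{\cdot}_{X_j}$ and scalar multiplication by the given continuous multilinear map.

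First I would fix a dyadic resolution of unity on the frequency side adapted to the parabolic length $(\tau,\xi) \mapsto \abs{\tau}^{1/2} + \abs{\xi}$, giving projections $\Delta_j$ and partial sums $S_j = \sum_{l \leq j} \Delta_l$. The first ingredient is the equivalence of the $\WW^{(1,2)\cdot s}_p(\R \times \R^d;X)$-norm with a dyadic block norm: a Besov-type $\ell^p$-norm of $\big( 2^{2js}\norm{\Delta_j f}_{\LL_p} \big)_j$ for non-integer smoothness, and the corresponding Triebel--Lizorkin-type square-function norm for integer smoothness. Here it is essential that $X_1, \ldots, X_m, X$ are of class $\HTcal$ (i.e.\ UMD): this is exactly what makes the parabolic Littlewood--Paley projections bounded on $\LL_p(\R \times \R^d;X)$ and the above norm equivalences valid, via the Mikhlin-type Fourier multiplier theorem in the UMD setting; for general Banach spaces both fail. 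I would also record the Bernstein inequalities for frequency-truncated $X_j$-valued functions, the embedding $\WW^{(1,2)\cdot s_j}_{p_j}(\R \times \R^d;X_j) \hookrightarrow \CC_0(\R \times \R^d;X_j)$, which holds precisely when $\operatorname{ind}_j > 0$ and fails at the endpoint $\operatorname{ind}_j = 0$, and the quantitative substitute $\norm{S_j f}_{\LL_\infty} \lesssim 2^{-j\,\operatorname{ind}_j}\norm{f}_{\WW^{(1,2)\cdot s_j}_{p_j}}$ (with exponent $-\operatorname{ind}_j > 0$) when $\operatorname{ind}_j < 0$.

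Next I would expand the $m$-fold pointwise product --- first for inputs in $\CC_c^\infty(\R \times \R^d;X_j)$, where the product is classical and agrees with the map built from $\cdot: X_1 \times \cdots \times X_m \to X$ --- into a paraproduct sum over frequency tuples $(j_1, \ldots, j_m)$, grouped by which factor carries the top frequency and whether there is a near-tie among the leading frequencies. In a term with a single dominant frequency $2^{j_0}$, the product is frequency-localized at scale $2^{j_0}$, so applying $\Delta_n$ retains only $n \sim j_0$; Hölder's inequality, with exponents compatible with the integrability constraint $\tfrac1p \geq \sum_j \tfrac1{p_j}$, and continuity of the multiplication on $X_1 \times \cdots \times X_m$ bound $\norm{\Delta_n(\prod_j f_j)}_{\LL_p}$ by $\norm{\Delta_{j_0} f_{j_0}}_{\LL_q}\prod_{j\neq j_0}\norm{S_{j_0} f_j}_{\LL_{r_j}}$ for suitable $q, r_j$. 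Estimating the high block through the $\WW$-block sequence of $f_{j_0}$ (which costs only the harmless factor $2^{-2j_0(s_{j_0}-s)}\leq 1$, since $s \leq s_{j_0}$) and the low blocks by Bernstein together with the embedding/decay bounds above, one is left with summing, as an $\ell^p$-sequence in $n$, a geometric-type series whose summability is exactly the index constraint: $\operatorname{ind}-\operatorname{ind}_{j_0}-\sum_{j\neq j_0}\min\{\operatorname{ind}_j,0\}$ turns out to be the exponent that must be non-positive for every admissible choice of the dominant index $j_0$, which gives $\operatorname{ind}\leq\min_j\operatorname{ind}_j$ when all $\operatorname{ind}_j\geq 0$ and $\operatorname{ind}\leq\sum_{j:\operatorname{ind}_j<0}\operatorname{ind}_j$ otherwise. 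The remaining "resonant" terms, where two or more leading frequencies coincide up to a bounded factor, are not frequency-localized; there one sums the block norms by the triangle inequality and a Young/Minkowski inequality in the summation index, which closes under the same constraints. It is precisely these resonant borderline contributions that force $\operatorname{ind}_j \neq 0$ --- the embedding into $\LL_\infty$ fails at the endpoint --- and hence the strict inequality in the index constraint (equivalently, replace the offending $s_j$ by $s_j - \varepsilon$ and use monotonicity of the scale).

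Finally, collecting these estimates shows that the multilinear pointwise product is bounded from $\CC_c^\infty(\R \times \R^d;X_1) \times \cdots \times \CC_c^\infty(\R \times \R^d;X_m)$ into $\WW^{(1,2)\cdot s}_p(\R \times \R^d;X)$, hence extends by density and continuity to the full product of spaces, the extension being unique because $\CC_c^\infty$ is dense in each factor. The step I expect to be the main obstacle is the vector-valued Littlewood--Paley machinery itself: the norm equivalences on the integer-smoothness (Bessel-potential) scale and the boundedness of the anisotropic frequency projections are available only because the spaces are of class $\HTcal$, so one must invoke operator-valued (rather than scalar) multiplier theorems throughout; a secondary, purely bookkeeping nuisance is the treatment of the endpoint cases $\operatorname{ind}_j = 0$, which is what produces the strictness in the index constraint. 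The parabolic anisotropy, by contrast, is essentially notational once Euclidean dilations are replaced by parabolic ones.
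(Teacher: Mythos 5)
There is no in-paper proof to compare against: the paper does not prove Theorem~\ref{thm:algebra-full_space} at all, but imports it as a (special case of a) known result of Amann \cite{Amann_1991} and of Köhne--Saal \cite{KoehneSaal_2017+} for the $\HTcal$-valued anisotropic setting. So the relevant question is whether your sketch would stand on its own as a proof of the cited result, and as written it does not yet.

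Two concrete gaps. First, the Littlewood--Paley foundation is stated in a form that is false for general vector-valued functions: for an arbitrary space of class $\HTcal$ (UMD) the $\LL_p(\R\times\R^d;X)$-norm is \emph{not} equivalent to a Triebel--Lizorkin square-function norm of the dyadic blocks, and $\WW^{(1,2)\cdot s}_p(X)$ with integer exponents is not characterized by such a norm; the correct substitutes are randomized (Rademacher) Littlewood--Paley decompositions and $R$-bounded operator-valued Mikhlin theorems, and the identification $\WW^{k}_p(X)=\HH^{k}_p(X)$ itself is a UMD theorem that has to be invoked, not a notational convenience. Your paraproduct bookkeeping (Hölder between blocks, Bernstein, geometric summation) must therefore be rerun with randomized norms, where the "sum the blocks in $\ell^p$" step is exactly the place that needs care; this is the technical core of the Köhne--Saal extension and cannot be waved through by saying the anisotropy is "essentially notational". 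Second, the part of the argument that actually produces the stated hypotheses --- that the summability exponent for each choice of dominant frequency is precisely $\operatorname{ind}-\operatorname{ind}_{j_0}-\sum_{j\neq j_0}\min\{\operatorname{ind}_j,0\}$, that the resonant terms close under the same constraints, and that the endpoint $\operatorname{ind}_j=0$ forces the strict inequality --- is asserted ("turns out to be") rather than derived; since the theorem's content \emph{is} this list of sharp conditions, a proof must exhibit these computations, including the case distinctions between all $\operatorname{ind}_j\geq 0$ and mixed signs. The overall roadmap (an $m$-fold paramultiplication with parabolic dilations, justified first on smooth compactly supported data and extended by density) is consistent with the strategy in the cited literature, but in its present state the sketch is a plan for a proof, not a proof.
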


By means of localisation, flattening of the boundary and interpolation, we obtain the following result for sufficiently smooth $(d-1)$-dimensional manifolds, which, in particular, includes the case of boundaries $\partial \Omega$ of regular bounded domains $\Omega$.

 \begin{corollary}
 \label{cor:algebra-property-manifold}
  Let $T > 0$, $d \in \N$, $d^\mathcal{M} \leq d$, and $\mathcal{M} \subseteq \R^d$ be a compact $d^\mathcal{M}$-dimensional $\CC^{2\alpha}$-manifold for some $\alpha > 0$.
  Let $m \in \N$ and $s_1, \ldots, s_m, s \in (0, \alpha]$, $p_1, \ldots, p_m, p \in (1, \infty)$ and Banach spaces $X_1, \ldots, X_m, X$ of class $\HTcal$ such that there is a continuous multiplication
   \[
    \cdot: X_1  \times \cdots \times X_m \rightarrow X.
   \]
  Set $\mathcal{M}_T := [0,T] \times \mathcal{M}$ and let
   \begin{align*}
    \operatorname{ind}^\mathcal{M}_j
     &:= \operatorname{ind} (\WW^{(1,2) \cdot s_j}_{p_j}(\mathcal{M}_T;X_j)) = 2 s_j - \frac{d^\mathcal{M} + 2}{p_j},
     \\
    \operatorname{ind}^\mathcal{M}
     &:= \operatorname{ind} (\WW^{(1,2) \cdot s}_p(\mathcal{M}_T; X)) = 2s - \frac{d^\mathcal{M}+2}{p}.
   \end{align*}
  There is a (unique) continuous extension of the multiplication operator to a multiplication operator
   \[
    \cdot: \WW^{(1,2) \cdot s_1}_{p_1}(\mathcal{M}_T;X_1) \times \cdots \times \WW^{(1,2) \cdot s_m}_{p_m}(\mathcal{M}_T;X_m)
     \rightarrow \WW^{(1,2) \cdot s}_p(\mathcal{M}_T;X),
   \]
  if the following conditions are satisfied:
   \begin{alignat*}{2}
    s
     &\leq \min_{j=1, \ldots, m} s_j
     \qquad &
     &(\text{regularity constraint})
     \\
    \frac{1}{p}
     &\geq \sum_{j=1}^m \frac{1}{p_j}
	 \qquad &
     &(\text{integrability constraint})
     \\
    \operatorname{ind}^\mathcal{M}
     &\leq \begin{cases} \min_{j=1, \ldots, m} \operatorname{ind}^\mathcal{M}_j, &\text{if all } \operatorname{ind}^\mathcal{M}_j \geq 0 \\ \sum_{j: \operatorname{ind}^\mathcal{M}_j < 0} \operatorname{ind}^\mathcal{M}_j, &\text{otherwise} \end{cases}
	 \qquad &
     & (\text{index constraint})
   \end{alignat*}
  with a strict inequality in the index constraint if at least one of the Sobolev indices $\operatorname{ind}^\mathcal{M}_j$ equals zero.
 \end{corollary}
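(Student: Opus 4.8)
\emph{Proof strategy for Corollary~\textup{\ref{cor:algebra-property-manifold}}.} The plan is to reduce the claim to Theorem~\ref{thm:algebra-full_space} by a standard localisation-and-flattening argument, exploiting that the anisotropic Sobolev--Slobodetskii spaces $\WW^{(1,2)\cdot s}_p(\mathcal{M}_T;X)$ with $s\in(0,\alpha]$ are, by definition, characterised chart-wise and are invariant under $\CC^{2\alpha}$-coordinate changes. The one structural observation to make up front is that the chart images have \emph{intrinsic} dimension $d^\mathcal{M}$, so Theorem~\ref{thm:algebra-full_space} will be invoked with $d^\mathcal{M}$ in place of its ambient dimension $d$; this is exactly why the relevant indices are $\operatorname{ind}^\mathcal{M}_j = 2s_j - \tfrac{d^\mathcal{M}+2}{p_j}$ and $\operatorname{ind}^\mathcal{M} = 2s - \tfrac{d^\mathcal{M}+2}{p}$, and why the three constraints of the corollary translate verbatim into the hypotheses of the theorem (strict inequality in the index constraint included).

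Concretely, I would proceed as follows. Since $\mathcal{M}$ is compact, fix a finite atlas $\{(U_k,\varphi_k)\}_{k=1}^K$ of $\CC^{2\alpha}$-charts $\varphi_k: U_k\to V_k\subseteq\R^{d^\mathcal{M}}$, a subordinate partition of unity $\{\chi_k\}$ of sufficient finite smoothness with $\operatorname{supp}\chi_k\subseteq U_k$ and $\sum_k\chi_k\equiv 1$ on $\mathcal{M}$, and auxiliary cut-offs $\tilde\chi_k$ with $\tilde\chi_k\equiv 1$ near $\operatorname{supp}\chi_k$ and $\operatorname{supp}\tilde\chi_k\subseteq U_k$; in addition fix a bounded extension operator $\mathcal{E}_t$ in the time variable from $[0,T]$ to $\R$ that acts boundedly on each of the anisotropic spaces occurring. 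Given $u_j\in\WW^{(1,2)\cdot s_j}_{p_j}(\mathcal{M}_T;X_j)$, the localised, transported and time-extended functions $u_{j,k}:=\mathcal{E}_t\big[(\tilde\chi_k u_j)\circ(\mathrm{id}\times\varphi_k^{-1})\big]$ (extended by zero outside $[0,T]\times V_k$) then lie in $\WW^{(1,2)\cdot s_j}_{p_j}(\R\times\R^{d^\mathcal{M}};X_j)$ with norms dominated by $\norm{u_j}_{\WW^{(1,2)\cdot s_j}_{p_j}(\mathcal{M}_T;X_j)}$ uniformly in $k$. Applying Theorem~\ref{thm:algebra-full_space} (with $d^\mathcal{M}$ in place of $d$) to the tuple $(u_{1,k},\dots,u_{m,k})$ yields $u_{1,k}\cdots u_{m,k}\in\WW^{(1,2)\cdot s}_p(\R\times\R^{d^\mathcal{M}};X)$ with norm controlled by $\prod_j\norm{u_{j,k}}$, hence by $\prod_j\norm{u_j}_{\WW^{(1,2)\cdot s_j}_{p_j}(\mathcal{M}_T;X_j)}$. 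Transporting back via $\varphi_k$, restricting the time variable to $[0,T]$, using $\tilde\chi_k\equiv 1$ on $\operatorname{supp}\chi_k$, and summing over $k$ produces a well-defined product $u_1\cdots u_m:=\sum_{k=1}^K\chi_k\,(u_1\cdots u_m)\in\WW^{(1,2)\cdot s}_p(\mathcal{M}_T;X)$ together with the multiplicative bound
\[
 \norm{u_1\cdots u_m}_{\WW^{(1,2)\cdot s}_p(\mathcal{M}_T;X)}
  \le C\prod_{j=1}^m\norm{u_j}_{\WW^{(1,2)\cdot s_j}_{p_j}(\mathcal{M}_T;X_j)}.
\]
Since on the dense set of smooth tuples this operation agrees with the pointwise product, one obtains both consistency with the given multiplication $\cdot: X_1\times\cdots\times X_m\to X$ and uniqueness of the continuous extension.

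The main obstacle is not the (routine) bookkeeping with the partition of unity, but the functional-analytic input used in the second step: one must know that the Banach-space-valued anisotropic Sobolev--Slobodetskii spaces $\WW^{(1,2)\cdot s}_p$ with $\HTcal$-class targets are stable under the merely $\CC^{2\alpha}$-regular coordinate changes, under multiplication by finite-regularity cut-offs, and under extension in the temporal variable, with norm bounds that are uniform over the finite atlas. This is precisely where the restriction $s\le\alpha$ enters -- matching the parabolic order-two scaling of the spatial variable against the $\CC^{2\alpha}$-regularity of the transition maps -- and where the $\HTcal$-property of $X_1,\dots,X_m,X$ is needed, so that the Littlewood--Paley / square-function descriptions of these spaces on which the change-of-variables and trace-type estimates rest are available (this being exactly the extension of Amann's theory carried out by Köhne and Saal). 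Once this invariance package is in place, the corollary follows by the transcription just described.
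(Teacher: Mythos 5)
Your proposal is correct and follows essentially the same route as the paper: localise over a finite atlas with a partition of unity, flatten via the $\CC^{2\alpha}$-charts, apply Theorem~\ref{thm:algebra-full_space} with $d^{\mathcal{M}}$ in place of the ambient dimension, reassemble, and obtain consistency with the pointwise product and uniqueness by density of continuous functions. The only cosmetic difference is the localisation bookkeeping: you work with pairs $(\chi_k,\tilde\chi_k)$ and multiply by $\chi_k$ after forming the product (and add an explicit temporal extension operator), whereas the paper distributes $m$-th roots $\psi_\nu^{1/m}$ of the partition functions over the $m$ factors so that their product reassembles $\psi_\nu$ directly.
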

 
 \begin{proof}
  Since $\mathcal{M}$ is a compact, $d^\mathcal{M}$-dimensional $\CC^{2\alpha}$-manifold, we find a partition of unity $(\psi_\nu)_{\nu=1}^n$ of $\mathcal{M}$, where each function $\psi_\nu$ is of class $\CC_c^\infty(\R^d)$, subordinate to a finite open (in $\R^d$) covering $\bigcup_{\nu=1}^n U_\nu \supset \mathcal{M}$ of the manifold, and such that there exist $\CC^{2\alpha}$-isomorphisms $\Phi_\nu: U_\nu \rightarrow V_\nu$ such that $U_\nu \cap \mathcal{M} = \{ \vec x \in U_\nu: \, \Phi_\nu(\vec x) \in \R^{d^\mathcal{M}} \times \{ \vec 0 \} \subseteq \R^d \}$.
  Additionally, we may and will assume that not only $\psi_\nu \in \CC_c^\infty(\R^d)$, but also $\psi_\nu^{1/m} \in \CC_c^\infty(\R^d)$ (this will be needed below).
  By definition, (the equivalence class of) a function $u: \mathcal{M} \rightarrow E$ then lies in the anisotropic Sobolev space $\WW_q^{(1,2) \cdot \sigma}(\mathcal{M}_T;E)$ for some Banach space $E$ and parameters $q \in (1,\infty)$ and $\sigma \in (0, \alpha]$, if and only if for every $\nu = 1, \ldots, n$ the function $v_\nu(t,\vec x') := (\psi_\nu u)(t, \Phi_i^{-1}((\vec x', \vec 0)))$ with $\vec x' \in V_\nu' := \{ \vec x' \in \R^{d^\mathcal{M}}: \, (\vec x', \vec 0) \in V_\nu \}$ defines a function in $\WW_q^{(1,2) \cdot \sigma}((V_\nu')_T)$, and which, therefore, can be extended by zero to a function (still denoted by $v$) in $\WW_q^{(1,2) \cdot \sigma}((\R^{d^\mathcal{M}})_T$.
  \newline
  As we chose the functions $\psi_\nu$ such that, additionally, $\psi_\nu^{1/m}  \in \CC_c^\infty(\R^d)$, we infer that
   \[
    \psi_\nu^{1/m} u
     \in \WW_q^{(1,2) \cdot \sigma}(\mathcal{M}_T)
     \quad \text{with} \quad
    \norm{\psi_\nu^{1/m} u}_{\WW_q^{(1,2) \cdot \sigma}}
     \leq C_{\nu,m,q,\sigma} \norm{u}_{\WW_q^{(1,2) \cdot \sigma}}.
   \]
  Employing Theorem~\ref{thm:algebra-full_space}, we conclude that there exists a continuous extension of the multiplication operator to a multiplication on the full space $M^\mathrm{fs}: \, \WW_{p_1}^{(1,2) \cdot s_1}(\R^d_T) \times \cdots \times \WW_{p_m} ^{(1,2) \cdot s_m}(\R^d_T) \rightarrow \WW_p^{(1,2) \cdot s}(\R^d_T)$.
  We may, therefore, define a candidate for the desired extension of the multiplication operator by setting
   \[
    (M^\mathcal{M} (u_1,\ldots,u_m))(t, \vec x)
     := \sum_{ \{\nu: \vec x \in U_\nu \} } M^\mathrm{fs} ((\psi_\nu^{1/m} u_1)^\ast_\nu, \ldots, (\psi_\nu^{1/m} u_m)^\ast_\nu)(t, \Phi_\nu(\vec x))
   \]
  for $(t, \vec x) \in \mathcal{M}_T$,
  where we use the notation $(\psi_\nu^{1/m} u_i)^\ast_\nu(t, \Phi_\nu(\vec x)) := (\psi_\nu^{1/m} u_i)(t, \vec x)$ for $\nu = 1, \ldots, n$, $t \in [0,T]$ and $\vec x \in \mathcal{M}$.
  This defines a map $M^\mathcal{M}: \WW_{p_1}^{(1,2) \cdot s_1}(\mathcal{M}_T) \times \cdots \times \WW_{p_m} ^{(1,2) \cdot s_m}(\mathcal{M}_T) \rightarrow \WW_p^{(1,2) \cdot s}(\mathcal{M}_T)$ which is continuous thanks to $M^\mathrm{fs}$ being continuous and continuity of the push-forward and pull-back through the maps $\Phi_\nu$.
  It remains to check that this map actually extends the multiplication operator $\cdot: \, X_1 \times \ldots \times X_m \rightarrow X$.
  To this end, let us consider functions $u_i \in \WW_{p_i}^{(1,2) \cdot s_i}(\mathcal{M}_T)$ which, additionally, are continuous.
  Then each function $(\psi_\nu^{1/m} u_i)_\nu^\ast$ is continuous.
  Since $M^\mathrm{fs}$ is an extension of the pointwise multiplication for continuous functions, we infer that
   \begin{align*}
    &M^\mathrm{fs} ((\psi_\nu^{1/m} u_1)^\ast_\nu, \ldots, (\psi_\nu^{1/m} u_m)^\ast_\nu)(t, \Phi_\nu(\vec x))
     \\
     &= (\psi_\nu^{1/m} u_1)^\ast_\nu(t, \Phi_\nu(\vec x)) \cdot \ldots (\psi_\nu^{1/m} u_m)^\ast_\nu(t, \Phi_\nu(\vec x))
      \\
     &= \psi_\nu(\vec x) \big( u_1(t, \vec x) \cdot \ldots \cdot u_m(t, \vec x) \big).
   \end{align*}
  Summing this identity over those $\nu = 1, \ldots, n$ for which $\vec x \in U_\nu$, we conclude that
   \begin{align*}
    M^{\mathcal{M}}(u_1,\ldots,u_m)(t, \vec x)
     &= \sum_{ \{\nu: \vec x \in U_\nu \} } \psi_\nu(\vec x) \big( u_1(t, \vec x) \cdot \ldots \cdot u_m(t, \vec x) \big)
     \\
     &= u_1(t, \vec x) \cdot \ldots \cdot u_m(t, \vec x)
     \quad
     \text{for }
     (t, \vec x) \in \mathcal{M}_T,
   \end{align*}
  since $(\psi_\nu)_{\nu=1}^n$ is a partition of unity subject to the open covering $(U_\nu)_{\nu=1}^n \supset \mathcal{M}$.
  This shows that, for continuous functions, $M^{\mathcal{M}}$ does indeed act as a pointwise multiplication.
  \newline
  On the other hand, uniqueness of the continuous extension follows from density of the space $\WW^{(1,2)\cdot s_j}_p(\mathcal{M}_T) \cap \CC(\mathcal{M}_T)$ in $\WW^{(1,2)\cdot s_j}_p(\mathcal{M}_T)$.
 \end{proof}
 
 We will apply this result to the particular case where $\mathcal{M} = \Sigma = \partial \Omega$ is the $\CC^2$-boundary of a bounded domain $\Omega \subseteq \R^d$, which then has dimension $d^\mathcal{M} = d^\Sigma = d - 1$.
 
 \begin{corollary}
 \label{cor:algebra_boundary}
  Let $\Sigma = \partial \Omega$ be the boundary of a bounded $\CC^2$-domain $\Omega \subseteq \R^d$ and let $p \in (1, \infty)$, $K^\Omega, K^\Sigma \in \N$.
  Assume that one of the following cases is valid:
   \begin{itemize}
    \item
     $p > d$,
    \item
     $p \geq d - \frac{d+1-K^\Omega}{K^\Omega + K^\Sigma}$ and $p \geq \frac{d+2}{2}$ and $p \geq \frac{K^\Omega - 1}{2 K^\Omega - 1} (d + 2)$,
    \item
     $p \geq d - \frac{d+1-K^\Omega}{K^\Omega + K^\Sigma}$ and $p \geq \frac{(K^\Omega + K^\Sigma - 1)(d + 2) - K^\Sigma}{2 (K^\Omega + K^\Sigma) - 1}$.
   \end{itemize} 
  Then, for all Lipschitz continuous functions $\psi^\Omega_1, \ldots, \psi^\Omega_{K^\Omega}, \psi^\Sigma_1, \ldots, \psi^\Sigma_{K^\Sigma} \in \Lip(\R^N;\R)$, the function
   \[
    f: (\R^N)^{K^\Omega} \times (\R^N)^{K^\Sigma} \rightarrow \R,
     \quad
     f(\vec u, \vec u^\Sigma)
      = \prod_{k=1}^{K^\Omega} \psi^\Omega_k(\vec u_k) \cdot \prod_{k=1}^{K^\Sigma} \psi^\Sigma_k(\vec u_k^\Sigma)
   \]
  acts as a continuous Nemytskii operator
   \[
    F: \quad [\WW^{(1,2) \cdot (1 - \frac{1}{2p})}_p(\Sigma_T;\R^N)]^{K^\Omega} \times [\WW^{(1,2)}_p(\Sigma_T;\R^N)]^{K^\Sigma}
     \rightarrow \WW_p^{(1,2) \cdot (\frac{1}{2} - \frac{1}{2p})}(\Sigma_T),
   \]
  via
   \[
     [F(\vec u,\vec u^\Sigma)](t, \vec x)
     = f(\vec u(t, \vec x), \vec u^\Sigma(t, \vec x))
     \quad \text{for a.e.\ } (t,\vec x) \in \Sigma_T.
   \]
 \end{corollary}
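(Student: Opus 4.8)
The plan is to combine the behaviour of Lipschitz functions under composition with anisotropic Slobodetskii functions with the multiplication result of Corollary~\ref{cor:algebra-property-manifold}, applied on the manifold $\mathcal M=\Sigma=\partial\Omega$, which has dimension $d^{\mathcal M}=d-1$ and hence parabolic dimension $d^{\mathcal M}+2=d+1$.

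\emph{Step 1 (lowering the smoothness of the arguments).} The inputs $\vec u_k$ lie in $\fs H_p^\Sigma(T)=\WW_p^{(1,2)\cdot(1-\frac{1}{2p})}(\Sigma_T;\R^N)$ and the inputs $\vec u_k^\Sigma$ in $\fs E_p^\Sigma(T)=\WW_p^{(1,2)}(\Sigma_T;\R^N)$, both of spatial smoothness strictly above $1$; since the $\psi_k^\Omega,\psi_k^\Sigma$ are merely Lipschitz, composition with them cannot be expected to preserve this regularity, and in the subcritical regime $p>d$ these spaces embed into spaces of continuous (Hölder) functions, in which case Step~1 is elementary. In general I would first pass, by the index-preserving Sobolev embedding for anisotropic Slobodetskii spaces on $\Sigma_T$ (smoothness $s'\le s$, integrability $q\ge p$, admissible as long as $2s'-\tfrac{d+1}{q}\le 2s-\tfrac{d+1}{p}$), to the spaces
\[
 \fs H_p^\Sigma(T)\hookrightarrow\WW_{q^\Omega}^{(1,2)\cdot(\frac12-\frac{1}{2p})}(\Sigma_T;\R^N),
  \qquad
 \fs E_p^\Sigma(T)\hookrightarrow\WW_{q^\Sigma}^{(1,2)\cdot(\frac12-\frac{1}{2p})}(\Sigma_T;\R^N),
\]
with $q^\Omega,q^\Sigma$ as large as index preservation allows, i.e.\ $q^\Omega=\tfrac{(d+1)p}{(d+1-p)^+}$ and $q^\Sigma=\tfrac{(d+1)p}{(d-p)^+}$ (when a denominator vanishes the embedding holds for every finite $q$). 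On these target spaces the spatial smoothness equals $1-\tfrac1p<1$, so composition with a Lipschitz function is admissible: using the intrinsic difference-quotient characterisation of the $\WW^{(1,2)\cdot s'}$-seminorms in the time and the space variable separately, together with $|\psi(y)-\psi(z)|\le\Lip(\psi)\,|y-z|$, one obtains
\[
 \psi_k^\Omega(\vec u_k)\in\WW_{q^\Omega}^{(1,2)\cdot(\frac12-\frac{1}{2p})}(\Sigma_T),
  \qquad
 \psi_k^\Sigma(\vec u_k^\Sigma)\in\WW_{q^\Sigma}^{(1,2)\cdot(\frac12-\frac{1}{2p})}(\Sigma_T),
\]
with the induced Nemytskii maps bounded on bounded sets and continuous (continuity being obtained, if desired, by uniformly approximating the $\psi_k$ by smooth functions on bounded sets).

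\emph{Step 2 (the product and the three cases).} Now $f(\vec u,\vec u^\Sigma)$ is the pointwise product of $K^\Omega+K^\Sigma$ functions, each lying in one of the spaces produced in Step~1, all of smoothness $s'=\tfrac12-\tfrac{1}{2p}$, so I would apply Corollary~\ref{cor:algebra-property-manifold} with $\mathcal M=\Sigma$, $m=K^\Omega+K^\Sigma$, all input smoothness exponents equal to $s'$, integrability exponents $p_j\in\{q^\Omega,q^\Sigma\}$ ($K^\Omega$ of them $q^\Omega$, $K^\Sigma$ of them $q^\Sigma$), and target space $\WW_p^{(1,2)\cdot(\frac12-\frac{1}{2p})}(\Sigma_T)=\fs G_p^\Sigma(T)$. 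The regularity constraint $s\le\min_j s_j$ then holds with equality; the integrability constraint $\tfrac1p\ge\tfrac{K^\Omega}{q^\Omega}+\tfrac{K^\Sigma}{q^\Sigma}$ becomes, after inserting the above values, exactly $d+1\ge K^\Omega(d+1-p)^+ + K^\Sigma(d-p)^+$, i.e.\ the first inequality in~\eqref{A_P^sorp}, equivalently $p\ge d-\tfrac{d+1-K^\Omega}{K^\Omega+K^\Sigma}$ in the relevant range and vacuous for $p>d$; and for the index constraint one computes $\operatorname{ind}_j^\Sigma=2-\tfrac{d+2}{p}$ for the $K^\Omega$ bulk-trace factors, $\operatorname{ind}_j^\Sigma=2-\tfrac{d+1}{p}$ for the $K^\Sigma$ surface factors, and $\operatorname{ind}^\Sigma=1-\tfrac{d+2}{p}$ for the target, so that if all of these are non-negative, i.e.\ $p\ge\tfrac{d+2}{2}$, the constraint $\operatorname{ind}^\Sigma\le\min_j\operatorname{ind}_j^\Sigma$ is automatic, while otherwise one uses the branch $\operatorname{ind}^\Sigma\le\sum_{j:\operatorname{ind}_j^\Sigma<0}\operatorname{ind}_j^\Sigma$, which---summing only over the bulk-trace factors, resp.\ over all factors---unravels to $p\ge\tfrac{(K^\Omega-1)(d+2)}{2K^\Omega-1}$, resp.\ $p\ge\tfrac{(K^\Omega+K^\Sigma-1)(d+2)-K^\Sigma}{2(K^\Omega+K^\Sigma)-1}$, precisely the remaining inequalities in the second and third case of the statement. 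In the borderline situations where some $\operatorname{ind}_j^\Sigma$ vanishes a strict index inequality is required, recovered by choosing $q^\Omega,q^\Sigma$ slightly below their maximal values and $s'$ slightly below $\tfrac12-\tfrac{1}{2p}$, which is legitimate because the relevant inequalities on $p$ are then also strict. Continuity of $F$ finally follows by composing the continuous multiplication of Corollary~\ref{cor:algebra-property-manifold} with the continuous Nemytskii maps of Step~1.

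\emph{Main obstacle.} The crux is Step~1 together with the bookkeeping of Step~2: because the $\psi_k$ are only Lipschitz, one must trade spatial smoothness for integrability \emph{before} composing, and keeping this exchange compatible with all three constraints of Corollary~\ref{cor:algebra-property-manifold}---above all the split of the index constraint into the ``all $\operatorname{ind}_j^\Sigma\ge 0$'' branch and the ``otherwise'' branch, with the sub-cases of the latter according to which factors have negative index---is exactly what forces the three cases on $p$. The remainder is routine, the only point of genuine care being to upgrade boundedness of the Nemytskii operators to continuity near the non-smooth points of the $\psi_k$.
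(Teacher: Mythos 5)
Your proposal is correct and follows essentially the same route as the paper's own proof: index-preserving anisotropic Sobolev embeddings of the two input spaces into $\WW^{(1,2)\cdot(\frac{1}{2}-\frac{1}{2p})}$ with integrability parameters $q^\Omega,q^\Sigma$ (the paper's $q,r$), composition with the Lipschitz functions at smoothness at most $\frac12$, and then Corollary~\ref{cor:algebra-property-manifold} on $\mathcal{M}=\Sigma$, whose integrability and index constraints unravel to exactly the three cases on $p$. The only cosmetic differences are that the paper first treats the pure product ($\psi\equiv\mathrm{id}$, $N=1$) and appends the Lipschitz composition at the very end, while you additionally spell out the borderline situation where some Sobolev index vanishes and a strict inequality is needed.
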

 
\begin{proof}
 We start with the special case $N = 1$ and, for the moment, assume that each function $\psi^\Omega_k = \psi_k^\Sigma \equiv 1$ is the identity on $\R$, for all $k = 1, \ldots, K^\Omega$ resp.\ $k = 1, \ldots, K^\Sigma$.
 We may w.l.o.g.\ assume that $K^\Omega + K^\Sigma \geq 2$; otherwise, the assertion is trivial in view of the anisotropic version of the Sobolev embedding theorem.
 Employing Corollary~\ref{cor:algebra-property-manifold} for $\alpha = 1$, we check the validity of the regularity, the integrability and the index constraint:
  \newline
  \textit{Regularity constraint:} Obviously, $\min \{1, 1 - \frac{1}{2p}\} > \frac{1}{2} - \frac{1}{2p}$, thus there is nothing to check here.
  \newline
  \textit{Integrability constraint:} Naively, we would get the integrability constraint $\frac{K^\Omega + K^\Sigma}{p} \leq \frac{1}{p}$, which is not valid, in general, under the imposed conditions.
  However, we may use anisotropic Sobolev embeddings into spaces with lower regularity, but higher integrability parameters, provided by the continuous embeddings $\WW^{(1,2) \cdot (1 - \frac{1}{2p})}_p(\Sigma_T) \hookrightarrow \WW_q^{(1,2)\cdot(\frac{1}{2}-\frac{1}{2p})}(\Sigma_T)$ and $\WW_p^{(1,2)}(\Sigma_T) \hookrightarrow \WW_r^{(1,2)\cdot(\frac{1}{2}-\frac{1}{2p})}(\Sigma_T)$, where we look for integrability parameters $q, r \in (1, \infty)$ such that (in the best case) the anisotropic (parabolic) Sobolev index of the spaces involved in each of these embeddings is the same.
  That is, we look for $q,r \in (p,\infty)$ such that
   \[
    2 - \frac{d+2}{p}
     = 1 - \frac{1}{p} - \frac{d+1}{q}
     \quad \text{and} \quad
    2 - \frac{d+1}{p}
     = 1 - \frac{1}{p} - \frac{d+1}{r}
   \]
  which is equivalent to the condition that
   \begin{equation}
    1 - \frac{d+1}{p}
     = - \frac{d+1}{q}
     \quad \text{and} \quad
    1 - \frac{d}{p}
     = - \frac{d+1}{r}.
     \label{eqn:index_condition}
   \end{equation}
  The latter two equations have solutions $q,r \in (p, \infty)$ if (and only if) $p < d$. However, one may see that choosing $q,r \in (p, \infty)$ sufficiently large in case $p \geq d$, the index constraint will still hold true for the spaces into which we embed, so that Corollary~\ref{cor:algebra-property-manifold} is still applicable.
  We, therefore, focus on the critical case $p \in (1, d)$.
  In this case, the parameters $q,r \in (p, \infty)$ can be defined by the equalities in equation \eqref{eqn:index_condition}, i.e.\
   \[
    \frac{1}{q}
     = \frac{(d + 1 - p)^+}{p (d + 1)}
     \quad \text{and} \quad
    \frac{1}{r}
     = \frac{(d-p)^+}{p(d+1)}.
   \]
  For these $q, r$, the spaces $\WW_q^{(1,2) \cdot (\frac{1}{2} - \frac{1}{2p})}(\Sigma_T)$ (which has the anisotropic Sobolev index $2 - \frac{d+2}{p}$) and $\WW_r^{(1,2)\cdot(\frac{1}{2}-\frac{1}{2p})}(\Sigma_T)$ (which has the anisotropic Sobolev index $2 - \frac{d+1}{p}$) then satisfy the integrability constraint if and only if
   \[
    \frac{1}{p}
     \geq \frac{K^\Omega}{q} + \frac{K^\Sigma}{r}
     = \frac{K^\Omega (d+1-p)^+ + K^\Sigma (d-p)^+}{p(d+1)},
   \]
 i.e.\ $K^\Omega (d+1-p)^+ + K^\Sigma (d-p)^+ \leq d+1$.
  We may, therefore, conclude that this is a sufficient condition to satisfy the integrability constraint for the existence of a continuous extension of the multiplication to a map
   \[
    [\WW_q^{(1,2)\cdot(\frac{1}{2}-\frac{1}{2p})}(\Sigma_T)]^{K^\Omega}
     \times [\WW_r^{(1,2)\cdot(\frac{1}{2}-\frac{1}{2p})}(\Sigma_T)]^{K^\Sigma}
     \rightarrow \WW_p^{(1,2)\cdot(\frac{1}{2}-\frac{1}{2p})}(\Sigma_T).
  \]
  \textit{Index constraint:}
  We employ the same Sobolev embeddings which we have used for the integrability constraint, i.e.\ the embeddings $\WW_p^{(1,2)}(\Omega_T) \hookrightarrow \WW_q^{(1,2) \cdot (\frac{1}{2} - \frac{1}{2p})}(\Sigma_T)$ and $\WW_p^{(1,2)}(\Sigma_T) \hookrightarrow \WW_r^{(1,2)\cdot(\frac{1}{2}-\frac{1}{2p})}(\Sigma_T)$.
  \newline
  If $p \geq d$, then $p,r \in (1,\infty)$ can be chosen arbitrarily large and the Sobolev indices of $\WW_q^{(1,2) \cdot (\frac{1}{2} - \frac{1}{2p})}(\Sigma_T)$ and $\WW_r^{(1,2)\cdot(\frac{1}{2}-\frac{1}{2p})}(\Sigma_T)$, thus, can be chosen to be arbitrarily close to $1 - \frac{1}{p} > 0$, i.e., in particular, positive.
  In this case, both these indices are strictly greater than $1 - \frac{d+2}{p}$, which is the Sobolev index of $\WW_p^{(1,2) \cdot (\frac{1}{2} - \frac{1}{2p})}(\Sigma_T)$, and the index constraint can be satisfied.
  \newline
  Thus, let us consider the case $p \in (1, d)$ next. Then, the Sobolev indices are not affected by the Sobolev embedding, thanks to the choice \eqref{eqn:index_condition}.
  The index constraint is trivial in case $p \geq \frac{d+2}{2}$, since then $\WW^{(1,2)}_p(\Omega_T)$ and $\WW^{(1,2)}_p(\Sigma_T)$ have non-negative indices ($2 - \frac{d+2}{p} \geq 0$ and $2 - \frac{d+1}{p} > 0$) strictly greater than the index ($1 - \frac{d+2}{p}$) of the space $\WW^{(1,2) \cdot (\frac{1}{2} - \frac{1}{2p})}(\Sigma_T)$.
 \newline
  We may, thus, focus on the case $p < \frac{d+2}{2}$.
  In the case $p \in [\frac{d+1}{2}, \frac{d+2}{2})$, it holds that $2 - \frac{d+2}{p} < 0$ whereas $2 - \frac{d+1}{p} \geq 0$, thus, for the index constraint to be satisfied, we have to ensure that
   \[
    K^\Omega (2 - \frac{d+2}{p}) \geq 1 - \frac{d+2}{p}
     \quad \Leftrightarrow \quad
     p \geq \frac{(K^\Omega - 1)^+}{(2K^\Omega - 1)^+} (d+2).
   \]
  As the right-hand side tends to $\frac{d+2}{2} > \frac{d+1}{2}$ as $K^\Omega \rightarrow \infty$, this, in general, constitutes an additional constraint on $p$.
  \newline
  If $p < \frac{d+1}{2}$, both $2 - \frac{d+2}{2}$ and $2 - \frac{d+1}{2}$ are strictly negative, thus, we need to ensure that
   \begin{align*}
    K^\Omega (2 - \frac{d+2}{p}) + K^\Sigma (2 - \frac{d+1}{p})
     \geq 1 - \frac{d+2}{p}
     \\
     \quad \Leftrightarrow \quad
     p
      \geq \frac{(K^\Omega + K^\Sigma - 1)(d+2) - K^\Sigma}{2(K^\Omega + K^\Sigma) - 1}.
   \end{align*}
  To finish the proof, the case of general Lipschitz continuous functions $\psi_i$ and $\psi_i^\Sigma$ follows from the observation that for all $\sigma \in [0,\frac{1}{2}]$ any Lipschitz continuous function $\psi$ continuously maps functions $u$ in $\WW^{(1,2) \cdot \sigma}$ to functions in $\psi \circ u \in \WW^{(1,2) \cdot \sigma}$, thus we may interpret functions in $\WW^{(1,2)}_p(\Omega_T)$ and $\WW^{(1,2)}(\Sigma_T)$ by the continuous embedding from above as functions in $\WW^{(1,2) \cdot (\frac{1}{2} - \frac{1}{2p})}_q(\Sigma_T)$ and $\WW^{(1,2)\cdot (\frac{1}{2} - \frac{1}{2p})}_r(\Sigma_T)$, respectively, then apply the Nemytskii operator corresponding to the functions $\psi_i$ and $\psi_i^\Sigma$, respectively, and finally use the previous considerations  to deduce the general assertion from the special case $\psi_i = \psi_i^\Sigma \equiv 1$.
\end{proof}

\begin{remark}
 Note that the particular case $p > d$ covered in \cite{MorTan22+} is also admissible in our situation, but we may actually allow for lower integrability index $p \in (1, \infty)$, provided the exponents $K^\Omega$ and $K^\Sigma$ for the polynomial growth bounds are not too large.
 \newline
 In fact, for the standard multicomponent Langmuir sorption model, we can choose $K^\Omega = K^\Sigma = 1$, hence, all $p \geq \frac{d+2}{2}$ become admissible for Corollary \ref{cor:algebra_boundary}.
\end{remark}

To include an additional bounded and Lipschitz continuous prefactor, let us note the following special case of an extension lemma for the multiplication: 
 
\begin{lemma}
\label{lem:special_case_multiplication}
 Let $\mathcal{M} \subseteq \R^d$ be a compact, regular Riemannian manifold of dimension $d^\mathcal{M}$ and let $T > 0$ and $p \in ( \frac{d^\mathcal{M} + 3}{2}, \infty)$ be given.
 Moreover, let functions $\varphi \in \LL_\infty(\mathcal{M}_T) \cap \WW_p^{(1,2) \cdot (1 - \frac{1}{2p})}(\mathcal{M}_T)$ and $f \in \WW_p^{(1,2) \cdot (\frac{1}{2} - \frac{1}{2p})}(\mathcal{M}_T)$ be given.
 Then the function $f \varphi$ belongs to the class $\WW_p^{(1,2) \cdot (\frac{1}{2} - \frac{1}{2p})}(\mathcal{M}_T)$ as well and there is a constant $C > 0$ independent of $\varphi$ and $f$ such that
  \[
   \norm{f \varphi}_{\WW_p^{(1,2) \cdot (\frac{1}{2} - \frac{1}{2p})}}
    \leq C \norm{\varphi}_{\LL_\infty \cap \WW_p^{(1,2) \cdot (1 - \frac{1}{2p})}} \norm{f}_{\WW_p^{(1,2) \cdot (\frac{1}{2} - \frac{1}{2p})}}
  \]
 for all $\varphi \in \LL_\infty \cap \WW_p^{(1,2) \cdot (1 - \frac{1}{2p})}(\mathcal{M}_T)$ and $f \in \WW_p^{(1,2) \cdot (\frac{1}{2} - \frac{1}{2p})}(\mathcal{M}_T)$.
 In other words, the bilinear form
  \[
   (\LL_\infty \cap \WW_p^{(1,2) \cdot (1 - \frac{1}{2p})})(\mathcal{M}_T) \times \WW_p^{(1,2) \cdot (\frac{1}{2} - \frac{1}{2p})}(\mathcal{M}_T)
    \ni (\varphi, f)
    \mapsto \varphi f
    \in \WW_p^{(1,2) \cdot (\frac{1}{2} - \frac{1}{2p})}(\mathcal{M}_T)
  \]
 is continuous.
\end{lemma}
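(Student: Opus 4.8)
The plan is to obtain the estimate by real interpolation, using as endpoint spaces for the linear multiplication operator $M_\varphi \colon g \mapsto \varphi g$ the base space $\LL_p(\mathcal{M}_T)$ — on which $\LL_\infty$-multipliers act trivially — and the more regular space $\WW_p^{(1,2) \cdot (1 - \frac{1}{2p})}(\mathcal{M}_T)$, which under the hypothesis $p > \frac{d^{\mathcal{M}}+3}{2}$ turns out to be a multiplication algebra. The target space $\WW_p^{(1,2) \cdot (\frac{1}{2} - \frac{1}{2p})}(\mathcal{M}_T)$ sits exactly on the interpolation segment between the two, which is what makes the argument close.

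First I would record the two endpoint bounds. On the base space one trivially has $\norm{\varphi g}_{\LL_p} \le \norm{\varphi}_{\LL_\infty}\,\norm{g}_{\LL_p}$, so $M_\varphi \in \B(\LL_p(\mathcal{M}_T))$ with operator norm at most $\norm{\varphi}_{\LL_\infty}$. For the second endpoint, observe that the anisotropic Sobolev index of $\WW_p^{(1,2) \cdot (1 - \frac{1}{2p})}(\mathcal{M}_T)$ equals $2(1-\frac{1}{2p}) - \frac{d^{\mathcal{M}}+2}{p} = 2 - \frac{d^{\mathcal{M}}+3}{p}$, which is strictly positive precisely when $p > \frac{d^{\mathcal{M}}+3}{2}$; hence this space embeds continuously into $\BUC(\mathcal{M}_T)$ and — by the sharper, index-based multiplication results behind Theorem~\ref{thm:algebra-full_space} (see \cite{Amann_1991}), transferred from $\R \times \R^{d^{\mathcal{M}}}$ to $\mathcal{M}_T$ by the localisation procedure in the proof of Corollary~\ref{cor:algebra-property-manifold} — it is a Banach algebra under pointwise multiplication. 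Consequently $M_\varphi \in \B(\WW_p^{(1,2) \cdot (1 - \frac{1}{2p})}(\mathcal{M}_T))$ with operator norm at most $C\,\norm{\varphi}_{\WW_p^{(1,2) \cdot (1 - \frac{1}{2p})}}$, where $C = C(\mathcal{M},T,p)$. (Incidentally, positivity of the index shows that the assumption $\varphi \in \LL_\infty(\mathcal{M}_T)$ is in fact redundant here, being already implied by $\varphi \in \WW_p^{(1,2) \cdot (1 - \frac{1}{2p})}(\mathcal{M}_T)$.)

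Next I would invoke the standard real interpolation identity for anisotropic Sobolev--Slobodetskii spaces,
 \[
  \big( \LL_p(\mathcal{M}_T), \, \WW_p^{(1,2) \cdot (1 - \frac{1}{2p})}(\mathcal{M}_T) \big)_{\theta, p}
   = \WW_p^{(1,2) \cdot (\frac{1}{2} - \frac{1}{2p})}(\mathcal{M}_T),
   \qquad
   \theta := \frac{p-1}{2p-1} \in (0,1),
 \]
 which holds because $\theta\,(1 - \frac{1}{2p}) = \frac{1}{2} - \frac{1}{2p}$ and none of the time- or space-regularity orders that occur (namely $1 - \frac{1}{2p}$, $2 - \frac{1}{p}$, $\frac{1}{2} - \frac{1}{2p}$, $1 - \frac{1}{p}$) is an integer, so that the usual Besov-type interpolation formula applies; as before one reduces to the Euclidean case by localisation. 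Since $\WW_p^{(1,2) \cdot (1 - \frac{1}{2p})}(\mathcal{M}_T) \hookrightarrow \LL_p(\mathcal{M}_T)$, these form a compatible interpolation couple on which $M_\varphi$ is a single operator bounded on both endpoints, and interpolation of operators gives
 \[
  \norm{M_\varphi}_{\B(\WW_p^{(1,2) \cdot (\frac{1}{2} - \frac{1}{2p})})}
   \le C\, \norm{M_\varphi}_{\B(\LL_p)}^{\,1-\theta}\; \norm{M_\varphi}_{\B(\WW_p^{(1,2) \cdot (1 - \frac{1}{2p})})}^{\,\theta}
   \le C\, \norm{\varphi}_{\LL_\infty}^{\,1-\theta}\; \norm{\varphi}_{\WW_p^{(1,2) \cdot (1 - \frac{1}{2p})}}^{\,\theta}.
 \]
 Estimating the last product by $C\,\norm{\varphi}_{\LL_\infty \cap \WW_p^{(1,2) \cdot (1 - \frac{1}{2p})}}$ (Young's inequality) and applying $M_\varphi$ to $g = f$ yields the asserted inequality; the stated continuity of the bilinear form $(\varphi,f) \mapsto \varphi f$ is then merely a reformulation of this bound.

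I expect the only genuinely delicate point to be the upper-endpoint estimate, i.e.\ that $\WW_p^{(1,2) \cdot (1 - \frac{1}{2p})}(\mathcal{M}_T)$ is a multiplication algebra. This is where the hypothesis $p > \frac{d^{\mathcal{M}}+3}{2}$ is used in an essential way — it is exactly the condition making the parabolic Sobolev index strictly positive — and it cannot be read off directly from the restricted version of the multiplication theorem stated as Theorem~\ref{thm:algebra-full_space}, whose integrability constraint $\tfrac{1}{p} \ge \sum_j \tfrac{1}{p_j}$ fails for $m=2$ with all $p_j = p$; one must appeal to the full, index-based multiplication statements of Amann \cite{Amann_1991} (equivalently, to the classical fact that a Sobolev--Slobodetskii space is a Banach algebra once its smoothness exceeds the critical threshold). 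The remaining ingredients — the trivial $\LL_p$-bound, the interpolation identity, the transfer from the Euclidean model space to $\mathcal{M}_T$, and the bilinear reformulation — are routine.
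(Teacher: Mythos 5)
Your proposal is correct, but it takes a genuinely different route from the paper. The paper proves the lemma by hand: after the trivial $\LL_p$-bound it splits $(\varphi f)(s,\cdot)-(\varphi f)(t,\cdot)$ (and the spatial analogue) and estimates the two Slobodetskii seminorms directly, using mixed-norm anisotropic embeddings such as $\WW_p^{(1,2)\cdot(1-\frac{1}{2p})}(\mathcal{M}_T)\hookrightarrow \WW_{q,p}^{(1,2)\cdot(\frac12-\frac1{2p})}(\mathcal{M}_T)$ and $\WW_p^{(1,2)\cdot(\frac12-\frac1{2p})}(\mathcal{M}_T)\hookrightarrow \LL_{q^\ast,\infty}(\mathcal{M}_T)$, the simultaneous solvability of the exponent relations being exactly the condition $p>\frac{d^\mathcal{M}+3}{2}$; this keeps the argument self-contained, in line with the remark following the lemma that the result is deliberately not deduced from a stronger multiplication theorem. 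You instead freeze $\varphi$, bound $M_\varphi$ on the two endpoints $\LL_p(\mathcal{M}_T)$ (by $\norm{\varphi}_{\LL_\infty}$) and $\WW_p^{(1,2)\cdot(1-\frac{1}{2p})}(\mathcal{M}_T)$ (by the algebra property, available precisely because the parabolic index $2-\frac{d^\mathcal{M}+3}{p}$ is positive), and conclude by real interpolation with $\theta=\frac{p-1}{2p-1}$. This is shorter, explains the threshold $p>\frac{d^\mathcal{M}+3}{2}$ conceptually as positivity of the index, and shows as a by-product that the $\LL_\infty$-hypothesis is redundant in this parameter range. The price is two imported facts, which you correctly flag but do not prove: (i) the Banach-algebra property of $\WW_p^{(1,2)\cdot(1-\frac{1}{2p})}(\mathcal{M}_T)$ under positive index, which indeed cannot be read off Theorem~\ref{thm:algebra-full_space} as stated (its integrability constraint fails for $p_1=p_2=p$) and requires the full index-based multiplication results of Amann resp.\ Köhne--Saal, transferred to $\mathcal{M}_T$ by the localisation of Corollary~\ref{cor:algebra-property-manifold}; and (ii) the real-interpolation identity $\big(\LL_p(\mathcal{M}_T),\WW_p^{(1,2)\cdot(1-\frac{1}{2p})}(\mathcal{M}_T)\big)_{\theta,p}=\WW_p^{(1,2)\cdot(\frac12-\frac1{2p})}(\mathcal{M}_T)$, which is standard (reiteration from the usual interpolation characterisations of the parabolic trace spaces, all occurring orders being non-integer) but deserves an explicit reference, since real interpolation of intersection spaces is not automatic. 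With these two references supplied, your argument is a valid alternative proof; the paper's version buys self-containedness at the cost of a longer computation.
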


\begin{remark}
 Note that this result is not just a direct consequence of Amann's multiplication lemma, since the integrability condition is not satisfied.
 However, using anisotropic versions of the Sobolev embedding theorems, we are able to demonstrate the result.
 \newline
 Also note that we obtain a function $\varphi \in \LL_\infty(\mathcal{M}_T) \cap \WW_p^{(1,2) \cdot (1 - \frac{1}{2p})}(\mathcal{M}_T)$, if we plug in a function $g \in \WW_p^{(1,2) \cdot (1 - \frac{1}{2p})}(\mathcal{M}_T; \R^m)$ into a bounded and Lipschitz continuous function $\Phi \in \WW_\infty^1(\R^m;\R)$, i.e.\ consider $\varphi := \Phi \circ g$.
\end{remark}

\begin{proof}[Proof of Lemma~\ref{lem:special_case_multiplication}]
 Obviously, $\varphi f \in \LL_p(\mathcal{M}_T)$ since
  \[
   \norm{\varphi f}_{\LL_p(\mathcal{M}_T)}
    \leq \norm{\varphi}_{\LL_\infty(\mathcal{M}_T)} \norm{f}_{\LL_p(\mathcal{M}_T)}.
  \]
 Let us set $\sigma := \frac{1}{2} - \frac{1}{2p} \in (0, \frac{1}{2})$.
 To estimate the seminorms $\seminorm{\varphi f}_{\WW_p^{(\sigma,0)}(\mathcal{M}_T)}$ and $\seminorm{\varphi f}_{\WW_p^{(0,2 \sigma)}(\mathcal{M}_T)}$, we use the anisotropic Sobolev embeddings
  \begin{align*}
   \WW_p^{(1,2) \cdot (1 - \frac{1}{2p})}(\mathcal{M}_T)
    &\hookrightarrow
    \WW_{q,p}^{(1,2) \cdot (\frac{1}{2} - \frac{1}{2p})}(\mathcal{M}_T)
    &&\quad \text{and} \quad&
   \WW_p^{(1,2) \cdot (\frac{1}{2} - \frac{1}{2p})}(\mathcal{M}_T)
    &\hookrightarrow
    \LL_{q^\ast,\infty}(\mathcal{M}_T),
    \\
   \WW_p^{(1,2) \cdot (1 - \frac{1}{2p})}(\mathcal{M}_T)
    &\hookrightarrow
    \WW_{p,r}^{(1,2) \cdot (\frac{1}{2} - \frac{1}{2p})}(\mathcal{M}_T)
    &&\quad \text{and} \quad&
   \WW_p^{(1,2) \cdot (\frac{1}{2} - \frac{1}{2p})}(\mathcal{M}_T)
    &\hookrightarrow
    \LL_{\infty,r^\ast}(\mathcal{M}_T),
  \end{align*}   
 which are valid if we can simultaneously satisfy the conditions
  \begin{align*}
   2 - \frac{d^\mathcal{M}+3}{p}
    &\geq 1 - \frac{d^\mathcal{M}+1}{p} - \frac{2}{q}
    &&\quad \text{and} \quad&
   1 - \frac{d^\mathcal{M}+3}{p}
    &> - \frac{2}{q^\ast},
    \\
   2 - \frac{d^\mathcal{M}+3}{p}
    &\geq 1 - \frac{3}{p} - \frac{d^\mathcal{M}}{r}
    &&\quad \text{and} \quad&
   1 - \frac{d^\mathcal{M}+3}{p}
    &> - \frac{d^\mathcal{M}}{r^\ast},
  \end{align*}
 and where $q, q^\ast, r, r^\ast \in (p, \infty)$ are chosen such that $\frac{1}{p} = \frac{1}{q} + \frac{1}{q^\ast} = \frac{1}{r} + \frac{1}{r^\ast}$.
 By summing up the conditions on $q$ and $q^\ast$, and on $r$ and $r^\ast$, respectively, we observe that such $q$ and $q^\ast$ can be chosen if and only if
  \[
   2 - \frac{d^\mathcal{M} + 3}{p}
    > 0
    \quad \Leftrightarrow \quad
   p
    > \frac{d^\mathcal{M} + 3}{2}.
  \]
 \newline
 For the seminorm $\seminorm{\varphi f}_{\WW_p^{(\sigma,0)}(\mathcal{M}_T)}$ we obtain the estimate
  \begin{align*}
   &\seminorm{\varphi f}_{\WW_p^{(\sigma,0)}}^p
    = \int_0^T \int_0^T \frac{ \norm{(\varphi f)(s,\cdot) - (\varphi f)(t,\cdot)}_{\LL_p(\mathcal{M})}^p }{ \abs{ s - t}^{\sigma p + 1} } \dd s \dd t
    \\
    &\leq 2^p \int_0^T \int_0^T \frac{ \norm{\varphi(s,\cdot)(f(s,\cdot) - f(t,\cdot))}_{\LL_p(\mathcal{M})}^p + \norm{(\varphi(s,\cdot) - \varphi(t,\cdot)) f(t,\cdot)}_{\LL_p(\mathcal{M})}^p  }{ \abs{ s - t}^{\sigma p + 1} } \dd s \dd t
    \\
    &\leq 2^p \int_0^T \int_0^T \frac{ \norm{\varphi(s,\cdot)}_{\LL_\infty}^p \norm{f(s,\cdot) - f(t,\cdot)}_{\LL_p(\mathcal{M})}^p + \norm{\varphi(s,\cdot) - \varphi(t,\cdot)}_{\LL_r(\mathcal{M})}^p \norm{f(t,\cdot)}_{\LL_{r^\ast}(\mathcal{M})}^p  }{ \abs{ s - t}^{\sigma p + 1} } \dd s \dd t
    \\
   &\leq 2^p \left( \norm{\varphi}_{\LL_\infty(\mathcal{M}_T)}^p \seminorm{f}_{\WW_p^{(\sigma,0)}(\mathcal{M}_T)}^p + \norm{\varphi}_{\WW_{p,r}^{(\sigma,0)}(\mathcal{M}_T)}^p \norm{f}_{\LL_{\infty,r^\ast}(\mathcal{M}_T)}^p \right)
   \\
   &\leq C \left( \norm{\varphi}_{\WW_p^{(1,2) \cdot (1 - \frac{1}{2p})}(\mathcal{M}_T) \cap \LL_\infty(\mathcal{M}_T)} \norm{f}_{\WW_p^{(1,2) \cdot (\frac{1}{2} - \frac{1}{2p})}(\mathcal{M}_T)} \right)^p.
  \end{align*}
 Similarly, we may estimate
  \begin{align*}
   &\seminorm{\varphi f}_{\WW_p^{(0,2\sigma)}}^p
    = \int_{\mathcal{M}} \int_{\mathcal{M}} \frac{ \norm{(\varphi f)(\cdot, \vec x) - (\varphi f)(\cdot, \vec y)}_{\LL_p(0,T)}^p}{\abs{\vec x - \vec y}^{2 \sigma p + d}} \, \dd \mu(\vec x) \, \dd \mu(\vec y)
    \\
    &\leq 2^p \int_{\mathcal{M}} \int_{\mathcal{M}} \frac{ \norm{\varphi(\cdot, \vec x)(f(\cdot, \vec x) - f(\cdot, \vec y))}_{\LL_p(0,T)}^p + \norm{(\varphi(\cdot, \vec x) - \varphi(\cdot, \vec y))f(\cdot, \vec y)}_{\LL_p(0,T)}^p}{\abs{\vec x - \vec y}^{2 \sigma p + d}} \, \dd \mu(\vec x) \, \dd \mu(\vec y)
    \\
    &\leq 2^p \int_{\mathcal{M}} \int_{\mathcal{M}} \frac{1}{\abs{\vec x - \vec y}^{2 \sigma p + d}} \cdot \big( \norm{\varphi(\cdot, \vec x)}_{\LL_\infty(0,T)}^p \norm{f(\cdot, \vec x) - f(\cdot, \vec y)}_{\LL_p(0,T)}^p
     \\ &\qquad
     + \norm{\varphi(\cdot, \vec x) - \varphi(\cdot, \vec y)}_{\LL_q(0,T)}^p \norm{f(\cdot, \vec y)}_{\LL_{q^\ast}(0,T)}^p \big) \, \dd \mu(\vec x) \, \dd \mu(\vec y)
     \\
   &\leq C \left( \norm{\varphi}_{\WW_p^{(1,2) \cdot (1 - \frac{1}{2p})} \cap \LL_\infty(\mathcal{M}_T)} \norm{f}_{\WW_p^{(1,2) \cdot (\frac{1}{2} - \frac{1}{2p})}(\mathcal{M}_T)} \right)^p.
  \end{align*}
\end{proof}

Therefore, we may conclude that in this case, the reference function $\vec r^\ast$ for the sorption rates defined in equation \eqref{eqn:riast} lies in the Neumann trace space $\fs G_p^\Sigma(T)$.
 Let us denote by $(\vec c^\ast,  \vec c^{\Sigma,\ast}) \in \fs E_p^\Omega(T) \times \fs E_p^\Sigma(T)$ the unique solution of the quasi-autonomous linear initial-boundary value problem
  \begin{align*}
   \mathcal{L}_{T,i}(c_i^\ast, c_i^{\Sigma,\ast})
    &= (0, 0, r_i^\ast)
    \\
   (c_i(0,\cdot), c_i^\Sigma(0,\cdot))
    &= (c^0_i, c^{\Sigma,0}_i),
  \end{align*}
 which will take the role of reference functions.
 We write $c_i = \bar c_i + c_i^\ast$ and $c_i^\Sigma = \bar c_i^\Sigma + c_i^{\Sigma,\ast}$ and observe that the functions $(\vec c, \vec c^\Sigma) \in \fs E_p^\Omega(T) \times \fs E_p^\Sigma(T)$ are solutions of the reaction-diffusion-advection-sorption model under consideration if and only if the remainder functions $(\vec {\bar c}, \vec {\bar c^\Sigma}) \in \fs E_p^\Omega(T) \times \fs E_p^\Sigma(T)$ solve the reduced semilinear \emph{zero time trace problem}
  \[
   \mathcal{L}^0_{T,i} (\bar c_i, \bar c_i^\Sigma)
    = \mathcal{N}_{T,i} (\bar c_i + c_i^\ast, \bar c_i^\Sigma + c_i^{\Sigma,\ast}) - (0,0,r_i^{\ast}),
  \]
 where $\mathcal{L}^0_T := \mathcal{L}_T|_{\mathring {\fs E}_p^\Omega(T) \times \mathring {\fs E}_p^\Sigma(T)}$ is the restriction of the linear parabolic operator $\mathcal{L}_T$ to zero time trace functions from the class
  \[
   \mathring {\fs E}_p^\Omega(T) \times \mathring{\fs E}_p^\Sigma(T)
    := \{(\vec c, \vec c^\Sigma) \in \fs E_p^\Omega(T) \times \fs E_p^\Sigma(T): \, (\vec c(0,\cdot), \vec c^\Sigma(0,\cdot)) = (\vec 0, \vec 0) \}.
  \]
 From here on, we may use $\LL_p$-maximal regularity and Banach's Contraction Mapping Theorem as powerful tools to establish local-in-time existence and uniqueness of strong $\WW_p^{(1,2)}$-solutions to \eqref{eqn:RDASS} and their continuous dependence on the initial data.

 \begin{theorem}[Local-in time well-posedness]
 \label{thm:local_existence}
  Let $J = (0,T') \subseteq \R$ for some $T' > 0$ and $p \in (1, \infty)$.
  Assume that assumptions \eqref{A^vel}, \eqref{A_F^sorp}, \eqref{A_M^sorp}, \eqref{A_B^sorp}, \eqref{A_F^ch}, \eqref{A_N^ch}, \eqref{A_P^ch} and \eqref{A_S^sorp} are satisfied, and $p \in [\frac{d+2}{2},\infty)$ satisfies one of the following conditions:
   \begin{itemize}
    \item
     $p > d$,
    \item
     $p \geq d - \frac{d+1-K^\Omega}{K^\Omega + K^\Sigma}$ and $p \geq \frac{d+2}{2}$ and $p \geq \frac{K^\Omega - 1}{2 K^\Omega - 1} (d + 2)$,
    \item
     $p \geq d - \frac{d+1-K^\Omega}{K^\Omega + K^\Sigma}$ and $p \geq \frac{d+2}{2} - \frac{d + 3}{4 (K^\Omega + K^\Sigma) - 2}$.
   \end{itemize}
  Then, for each initial data in the class
   \[
    c^0_i \in \WW^{2-\nicefrac{2}{p}}_p(\Omega),
     \quad
    c^{\Sigma,0}_i \in \WW^{2-\nicefrac{2}{p}}_p(\Sigma)
    \quad \text{for }
    i = 1, \ldots, N,
   \]
  which in case $p > 3$ additionally satisfy the compatibility conditions
   \[
    - d_i \partial_{\nu} c^0_i
     = s_i^\Sigma(c^0_i, c^{\Sigma,0}_i)
     \quad
     \text{on } \Sigma
     \quad \text{for } i = 1, \ldots, N,
   \]
  there are $T \in (0, T')$ and a unique local-in time strong solution $(\vec c, \vec c^\Sigma) \in \fs E_p^\Omega(T) \times \fs E_p^\Sigma(T)$ of system \eqref{eqn:RDASS}.
  The solution depends continuously w.r.t.\ $\norm{\cdot}_{\WW_p^{2-2/p}}$ on the initial data and can be extended in a unique way to a maximal-in-time solution on some time interval $(0, T_\mathrm{max})$ with $T_\mathrm{max} \in (0, \infty]$.
  If, additionally, $c^0_i \geq 0$ (a.e.), $c^{\Sigma,0}_i \geq 0$ ($\sigma$-a.e.) for all components $i = 1, \ldots, N$, then $c_i \geq 0$  a.e.\ on $\Omega_T$ and $c_i^\Sigma \geq 0$ $\sigma$-a.e.\ on $\Sigma_T$, for all $i = 1, \ldots, N$.
 \end{theorem}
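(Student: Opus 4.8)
\emph{The plan} is to solve the reduced zero-time-trace problem by a contraction argument. I would fix the reference functions $(\vec c^\ast, \vec c^{\Sigma,\ast})$ on the full interval $[0,T']$ as constructed above (here the compatibility condition for $p>3$ is exactly the solvability condition needed for these to exist with reference sorption flux $\vec r^\ast$ from \eqref{eqn:riast}), and note that, by Lemma~\ref{lem:maximal_regularity_scalar_case}, the restricted linear operator $\mathcal{L}^0_T$ is an isomorphism onto the (compatible part of the) data space with inverse $\mathcal{S}_T$ bounded uniformly in $T \in (0,T']$. I would then study the map
\[
 \Phi_T(\vec{\bar c}, \vec{\bar c}^\Sigma)
  := \mathcal{S}_T\big[\, \mathcal{N}_T(\vec{\bar c} + \vec c^\ast, \vec{\bar c}^\Sigma + \vec c^{\Sigma,\ast}) - (\vec 0, \vec 0, \vec r^\ast) \,\big]
\]
on a closed ball $\overline B_\rho$ in the zero-time-trace space $\mathring{\fs E}_p^\Omega(T) \times \mathring{\fs E}_p^\Sigma(T)$; its fixed points are precisely the remainders $(\vec{\bar c}, \vec{\bar c}^\Sigma)$ for which $(\vec{\bar c} + \vec c^\ast, \vec{\bar c}^\Sigma + \vec c^{\Sigma,\ast})$ is a strong $\WW_p^{(1,2)}$-solution of \eqref{eqn:RDASS}.

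The core of the argument is that every component of $\mathcal{N}_T$ is a locally Lipschitz Nemytskii- or multiplication-operator with values in the correct target space. For the bulk and surface chemistry this is Remark~\ref{rem:Nemytskii_Operators} composed with the anisotropic Sobolev embeddings $\fs E_p^\Omega(T) \hookrightarrow \LL_{p\gamma^\Omega}(\Omega_T)$ and $\fs E_p^\Sigma(T) \hookrightarrow \LL_{p\gamma^\Sigma}(\Sigma_T)$, which hold precisely under the growth restrictions on $\gamma^\Omega, \gamma^\Sigma$ in \eqref{A_P^bulk}, \eqref{A_P^ch} and which, away from the borderline index case, produce a factor $T^{\mu}$ ($\mu > 0$) on zero-time-trace functions (the borderline case requiring a minor separate argument). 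For the sorption term I would use the generalised polynomial structure of Corollary~\ref{cor:sorption_structure}: writing each $s_i^\Sigma$ as a finite sum of terms $\psi_0(\vec u, \vec u^\Sigma)\prod_{k=1}^{K^\Omega}\psi_k(u_i)\prod_{k=1}^{K^\Sigma}\psi_k^\Sigma(u_i^\Sigma)$, Corollary~\ref{cor:algebra_boundary} handles the product of the Lipschitz factors (after taking the surface trace $\fs E_p^\Omega(T) \hookrightarrow \WW_p^{(1,2)\cdot(1-\frac{1}{2p})}(\Sigma_T)$) and Lemma~\ref{lem:special_case_multiplication} absorbs the bounded Lipschitz prefactor $\psi_0$; together they show $(\vec u, \vec u^\Sigma) \mapsto \vec s^\Sigma(\vec u|_\Sigma, \vec u^\Sigma)$ maps $\fs E_p^\Omega(T) \times \fs E_p^\Sigma(T)$ continuously into the Neumann trace space $\fs G_p^\Sigma(T)$, locally Lipschitz. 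It is exactly here that the three admissibility conditions on $p$ in the statement are used: they are the hypotheses of Corollary~\ref{cor:algebra_boundary} reinforced by the regularity threshold of Lemma~\ref{lem:special_case_multiplication}. The same estimates also certify $\vec r^\ast \in \fs G_p^\Sigma(T)$, so that $\mathcal{L}^0_T$-solvability indeed applies to the right-hand side of $\Phi_T$.

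With these mapping properties, the self-mapping and contraction estimates follow by the standard splitting $\mathcal{N}_T(\vec{\bar c}+\vec c^\ast, \cdot) = \mathcal{N}_T(\vec c^\ast, \vec c^{\Sigma,\ast}) + \big(\mathcal{N}_T(\vec{\bar c}+\vec c^\ast, \cdot) - \mathcal{N}_T(\vec c^\ast, \vec c^{\Sigma,\ast})\big)$: the first term together with $-\vec r^\ast$ has norm tending to $0$ as $T \to 0$ by absolute continuity of the integral (each of $\vec r^\Omega(\vec c^\ast)$, $\vec r^\Sigma(\vec c^{\Sigma,\ast})$ and $\vec s^\Sigma(\vec c^\ast|_\Sigma, \vec c^{\Sigma,\ast}) - \vec r^\ast$ being a fixed function in the corresponding space over $[0,T']$, the last one with zero time trace when $p>3$), while the second term is controlled by the local Lipschitz constant on $\overline B_{\rho + \|(\vec c^\ast,\vec c^{\Sigma,\ast})\|}$ times $\rho$, reinforced by the $T^{\mu}$-gain and smallness of $\rho$. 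Choosing first $\rho$ and then $T$ small makes $\Phi_T$ a self-map and a strict contraction on $\overline B_\rho$, so Banach's fixed point theorem yields the unique local strong solution $(\vec c, \vec c^\Sigma)$ on $[0,T]$; uniqueness of any two strong solutions on their common interval follows by iterating the contraction on shrinking subintervals. Continuous dependence w.r.t.\ $\|\cdot\|_{\WW_p^{2-2/p}}$ comes from the Lipschitz dependence of $(\vec c^\ast, \vec c^{\Sigma,\ast})$, $\vec r^\ast$ and $\Phi_T$ on the initial data together with uniformity of the contraction; the maximal interval $(0,T_{\max})$ and uniqueness of the maximal solution follow by the usual continuation-and-gluing argument; and positivity for non-negative initial data is immediate from the positivity lemma of Section~\ref{sec:Positivity}. \textbf{The main obstacle} is the sorption nonlinearity: ensuring $(\vec u, \vec u^\Sigma) \mapsto \vec s^\Sigma(\vec u|_\Sigma, \vec u^\Sigma)$ is a well-defined, locally Lipschitz map into $\fs G_p^\Sigma(T)$ in the low-integrability regime $p < d$, where the merely Lipschitz and only generalised-polynomially-bounded nature of Langmuir-type adsorption forces the use of the delicate multiplication results Corollary~\ref{cor:algebra_boundary} and Lemma~\ref{lem:special_case_multiplication}, and with them the intricate conditions relating $p$, $K^\Omega$ and $K^\Sigma$.
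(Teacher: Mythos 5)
Your proposal is correct and follows essentially the same route as the paper: a Banach fixed-point argument in the $\LL_p$-maximal-regularity framework of Lemma~\ref{lem:maximal_regularity_scalar_case}, with reference solutions absorbing the inhomogeneous initial data, the Nemytskii/multiplication results (Remark~\ref{rem:Nemytskii_Operators}, Corollary~\ref{cor:sorption_structure}, Corollary~\ref{cor:algebra_boundary}, Lemma~\ref{lem:special_case_multiplication}) supplying the locally Lipschitz mapping properties into $\fs F_p^\Omega(T) \times \fs F_p^\Sigma(T) \times \fs G_p^\Sigma(T)$ with constants vanishing as $T \to 0$, followed by the standard uniqueness, continuation, continuous-dependence and positivity arguments. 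The only cosmetic difference is that you run the contraction on a ball in the zero-time-trace space via $(\mathcal{L}^0_T)^{-1}$, whereas the paper's proof iterates on the set $D_{\rho,\tau}$ of functions with the prescribed initial trace and subtracts the reference solutions inside the estimates; these are equivalent formulations of the same argument.
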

 
 \begin{proof}
  We employ the theory of $\LL_p$-maximal regularity and Banach's Contraction Mapping Principle.
  Let us fix some initial data $(\vec c^0, \vec c^{\Sigma,0}) \in \fs I_p^\Omega \times \fs I_p^\Sigma$ and choose positive numbers $\rho_0 > 0$ and $\tau_0 \in (0, T')$.
  For each $\tau \in (0, \tau_0]$ and $\rho \in (0, \rho_0]$, we define a nonlinear map $\Phi = \Phi_{\rho,\tau}: D_{\rho,\tau} \subseteq \fs E_p^\Omega(\tau) \times \fs E_p^\Sigma(\tau) \rightarrow \fs E_p^\Omega(\tau) \times \fs E_p^\Sigma(\tau)$ on the complete metric space (with the metric inherited from $\fs E_p^\Sigma(\tau)$)
   \begin{align*}
    D_{\rho,\tau}
     &= \{ (\vec c, \vec c^\Sigma) \in \fs E_p^\Omega(\tau) \times \fs E_p^\Sigma(\tau): \, (\vec c(0,\cdot), \vec c^\Sigma(0,\cdot)) = (\vec c^0, \vec c^{\Sigma,0}),
     \\ &\qquad
      \norm{ (\vec c - \ee^{t A} \vec c^0, \vec c^\Sigma - \ee^{t A^\Sigma} \vec c^{\Sigma,0}) }_{\fs E_p^\Omega(\tau) \times \fs E_p^\Sigma(\tau)} \leq \rho \}.
   \end{align*}
  Given $(\vec c, \vec c^\Sigma) \in D_{\rho,\tau}$, let $\Phi(\vec c, \vec c^\Sigma) := (\vec v, \vec v^\Sigma)$ be the unique solution to the quasi-autonomous, linear initial-boundary value problem
   \begin{alignat*}{2}
    (\partial_t + \vec v \cdot \nabla - \bb D \Delta) \vec v
     &= \vec r^\Omega(\vec c)
	 \qquad &
     &\text{in } (0,\tau) \times \Omega,
     \\
    (\partial_t - \bb D \Delta_\Sigma) \vec v^\Sigma
     &= \vec r^\Sigma(\vec c^\Sigma) + \vec s^\Sigma(\vec c, \vec c^\Sigma)
	 \qquad &
     &\text{on } (0, \tau) \times \Sigma,
     \\
    - \bb D \partial_{\vec \nu} \vec v|_{\Sigma}
     &= \vec s^\Sigma(\vec c, \vec c^\Sigma)
	 \qquad &
     &\text{on } (0, \tau) \times \Sigma,
     \\
    \vec v(0,\cdot)
     &= \vec c^0
	 \qquad &
     &\text{in } \Omega,
     \\
    \vec v^\Sigma(0,\cdot)
     &= \vec c^{\Sigma,0}
	 \qquad &
     &\text{on } \Sigma.
   \end{alignat*}
  By $\LL_p$-optimal regularity in Lemma~\ref{lem:maximal_regularity_scalar_case}, this problem has a unique solution in the class $(\vec v, \vec v^\Sigma) \in \fs E_p^\Omega(\tau) \times \fs E_p^\Sigma(\tau)$ if and only if
   \begin{align*}
    (\vec c^0, \vec c^{\Sigma,0}) 
     &\in \WW_p^{2-2/p}(\Omega)^N \times \WW_p^{2-2/p}(\Sigma)^N,
     \\
    \vec r^\Omega(\vec c)
     &\in \LL_p(\Omega_\tau;\R^N),
     \\
    \vec r^\Sigma(\vec c^\Sigma) + \vec s^\Sigma(\vec c, \vec c^\Sigma)
     &\in \LL_p(\Sigma_\tau;\R^N),
     \\
    \vec s^\Sigma(\vec c, \vec c^\Sigma)
     &\in \WW_p^{(1,2) \cdot (1/2 - 1/2p)}(\Sigma_\tau)
   \end{align*}
  and, additionally, if $p > 3$, the compatibility condition
   \[
    \vec s^0
     := \vec s^\Sigma(\vec c^0|_\Sigma, \vec c^{\Sigma,0})
     = - \bb D \partial_{\vec \nu} \vec c^0|_{\partial \Omega}
   \]
  in the trace sense is satisfied.
  We recall that $A = - \bb D \Delta$ is defined on the Neumann domain $\dom(A) = \{ \vec u \in \WW_p^2(\Omega;\R^N): \, - \bb D \partial_{\vec \nu} \vec u = \vec 0 \text{ on } \partial \Omega \}$ and $A^\Sigma = - \bb D^\Sigma \Delta_\Sigma$ on $\dom(A^\Sigma) = \WW_p^2(\Sigma;\R^N)$.
  To estimate the norm of $\vec v - \ee^{tA} \vec c^0$, we set 
   \[
    \vec s^\ast(t,\cdot)
     := \ee^{t A^\Sigma} \vec s^0
     \quad
     \text{for } t \geq 0,
   \]
  which defines a function $\vec s^\ast \in \WW_p^{(1,2) \cdot (1-1/2p)}(\Sigma_\tau)$, and let $\vec c^\ast \in \WW_p^{(1,2)}(\Omega_\tau)$ be the unique strong $\WW_p^{(1,2)}$-solution to the inhomogeneous initial-boundary value problem
   \begin{alignat*}{2}
    (\partial_t + \vec v \cdot \nabla - \bb D \Delta) \vec c^\ast
     &= \vec r^\Omega(\vec c^0)
     \quad &
     &\text{in } (0,\tau) \times \Omega,
     \\
    - \bb D \partial_{\vec \nu} \vec c^\ast|_{\Sigma}
     &= \vec s^\ast
     \quad &
     &\text{on } (0,\tau) \times \Sigma
     \\
     \vec c^\ast(0,\cdot)
      &= \vec c^0
      \quad &
      &\text{in } \Omega.
   \end{alignat*}
  Then the remainder $\vec w := \vec v - \vec c^\ast$ solves the initial-boundary value problem with zero initial data
   \begin{alignat*}{2}
    (\partial_t + \vec v \cdot \nabla - \bb D \Delta) \vec w
     &= \vec r^\Omega(\vec c) - \vec r^\Omega(\vec c^0)
     \quad &
     &\text{in } (0,\tau) \times \Omega,
     \\
    - \bb D \partial_{\vec \nu} \vec w|_\Sigma
     &= \vec s^\Sigma(\vec c, \vec c^\Sigma) - \vec s^\ast
     \quad &
     &\text{on } (0,\tau) \times \Sigma,
     \\
    \vec w(0,\cdot)
     &= \vec 0
     \quad &
     &\text{in } \Omega,
   \end{alignat*}
  where (in case $p > 3$) the boundary and initial data are compatible thanks to
   \[
    \vec s^\Sigma(\vec c|_\Sigma, \vec c^\Sigma)|_{t=0}
      = \vec s^\Sigma(\vec c^0|_\Sigma, \vec c^{\Sigma,0})
      = \vec s^0
      = \vec s^\ast|_{t=0}.
   \]
  Thus, given $(\vec c, \vec c^\Sigma)$ and $(\vec {\tilde c}, \vec {\tilde c}^\Sigma) \in D_{\rho,\tau}$ and corresponding solutions $(\vec v, \vec v^\Sigma) = \Phi(\vec c, \vec c^\Sigma)$ and $(\vec {\tilde v}, \vec {\tilde v}^\Sigma) = \Phi(\vec {\tilde c}, \vec {\tilde c}^\Sigma)$, the difference $\vec v - \vec {\tilde v}$ may be written as $\vec w - \tilde {\vec w}$, where
   \begin{alignat*}{2}
    (\partial_t + \vec v \cdot \nabla - \bb D \Delta) (\vec w - \vec {\tilde w})
     &= \vec r^\Omega(\vec c) - \vec r^\Omega(\vec {\tilde c})
     \quad &
     &\text{on } (0,\tau) \times \Omega,
     \\
    - \bb D \partial_{\vec \nu} (\vec w - \vec {\tilde w})|_\Sigma
     &= \vec s^\Sigma(\vec c, \vec c^\Sigma) - \vec s^\Sigma(\vec {\tilde c}, \vec {\tilde c}^\Sigma)
     \quad &
     &\text{on } (0,\tau) \times \Sigma,
     \\
    (\vec w - \vec {\tilde w})(0,\cdot)
     &= \vec 0
     \quad &
     &\text{in } \Omega.
   \end{alignat*}
  Since the latter two systems have \emph{zero initial data}, we may again employ Lemma~\ref{lem:maximal_regularity_scalar_case}, which gives us a maximal regularity constant $C = C(\tau_0) > 0$, independent of $\tau \in (0,\tau_0]$ such that for all $\tau \in (0, \tau_0]$ the estimates
   \[
    \norm{\vec w}_{\WW_p^{(1,2)}(\Omega_\tau)}
     \leq C \big( \norm{\vec r^\Omega(\vec c) - \vec r^\Omega(\vec c^0)}_{\LL_p(\Omega_\tau)} + \norm{\vec s(\vec c, \vec c^\Sigma) - \vec s^\ast}_{\WW_p^{(1,2) \cdot (1/2-1/2p)}(\Sigma_\tau)} \big)
   \]
  and
   \[
    \norm{\vec w - \vec {\tilde w}}_{\WW_p^{(1,2)}(\Omega_\tau)}
     \leq C \big( \norm{\vec r^\Omega(\vec c) - \vec r^\Omega(\vec {\tilde c})}_{\LL_p(\Omega_\tau)} + \norm{\vec s(\vec c, \vec c^\Sigma) - \vec s(\vec {\tilde c}, \vec {\tilde c}^\Sigma)}_{\WW_p^{(1,2) \cdot (1/2-1/2p)}(\Sigma_\tau)} \big)
   \]
  are valid.
  We observe that the maps $D_{\rho,\tau} \ni (\vec c, \vec c^\Sigma) \mapsto \vec r^\Omega(\vec c) \in \LL_p(\Omega_\tau)$ and $D_{\rho,\tau} \ni (\vec c, \vec c^\Sigma) \mapsto \vec s(\vec c, \vec c^\Sigma) \in \WW_p^{(1,2) \cdot (1/2 - 1/2p)}(\Sigma_\tau)$ are both Lipschitz continuous with Lipschitz constants which are uniformly bounded for $\rho \in (0, \rho_0]$ and $\tau \in (0, \tau_0]$, and which tend to zero as $\tau \rightarrow 0$ uniformly for all $\rho \in (0, \rho_0]$, e.g.\
   \begin{align*}
    \norm{\vec r^\Omega(\vec c) - \vec r^\Omega(\vec {\tilde c})}_{\LL_p(\Omega_\tau)}
     &= \norm{ \int_0^1 (\vec c - \vec {\tilde c}) \cdot (\vec r^\Omega)'(s \vec c + (1-s) \vec {\tilde c}) \dd s }_{\LL_p(\Omega_\tau)}
     \\
     &\leq C \norm{\vec c - \vec {\tilde c}}_{\LL_{p\gamma^\Omega}(\Omega_\tau)} \int_0^1 \big( \tau^{1 - \frac{1}{p \gamma^\Omega}} + \norm{s \vec c + (1-s) \vec {\tilde c}}_{\LL_{p\gamma^\Omega}(\Omega_\tau)}^{\gamma^\Omega - 1} \big) \dd s
     \\
     &\leq C \norm{\vec c - \vec {\tilde c}}_{\LL_{p\gamma^\Omega}(\Omega_\tau)} \big( \tau^{1 - \frac{1}{p \gamma^\Omega}} + \norm{\vec c}_{\LL_{p\gamma^\Omega - 1}(\Omega_\tau)}^{\gamma^\Omega} + \norm{\vec {\tilde c}}_{\LL_{p\gamma^\Omega}(\Omega_\tau)}^{\gamma^\Omega - 1} \big).
   \end{align*}
 Let us note that
  \[
   \mathring \WW_p^{(1,2)}(\Omega_\tau)
    \hookrightarrow \LL_{p\gamma^\Omega}(\Omega_\tau)
  \]
 embeds continuously if $p \geq \frac{(\gamma^\Omega - 1) \cdot (d+2)}{2 \gamma^\Omega}$, and the embedding constant may be chosen uniform for all $\tau \in (0,T')$.
 Moreover, if $p > \frac{(\gamma^\Omega - 1)(d + 2)}{2 \gamma^\Omega}$, the embedding constant tends to zero as $\tau \rightarrow 0+$.
 (Recall that $(\vec c(0,\cdot), \vec c^\Sigma(0,\cdot)) = (\vec c^0, \vec c^{\Sigma,0})$ for all $(\vec c, \vec c^\Sigma) \in D_{\rho,T}$, thus $\vec c - \vec {\tilde c} \in \mathring \WW_p^{(1,2)}(\Omega_\tau)$.)
 Similarly,
  \begin{align*}
   \norm{\vec r^\Sigma(\vec c^\Sigma) - \vec r^\Sigma(\vec {\tilde c}^\Sigma)}_{\LL_p(\Sigma_\tau)}
    &\leq C \norm{\vec c^\Sigma - \vec {\tilde c}^\Sigma}_{\LL_{p\gamma^\Sigma}(\Sigma_\tau)} \big( \tau^{1 - \frac{1}{p \gamma^\Omega}} + \norm{\vec c^\Sigma}_{\LL_{p\gamma^\Sigma - 1}(\Sigma_\tau)}^{\gamma^\Sigma} + \norm{\vec {\tilde c}^\Sigma}_{\LL_{p\gamma^\Sigma}(\Sigma_\tau)}^{\gamma^\Sigma - 1} \big)
  \end{align*}
 provided $p \geq \frac{\gamma^\Sigma - 1}{\gamma^\Sigma} \cdot \frac{d+1}{2}$.
 Note that both $p > \frac{(\gamma^\Omega - 1)(d + 2)}{2 \gamma^\Omega}$ and $p > \frac{(\gamma^\Sigma - 1)(d + 1)}{2 \gamma^\Sigma}$ are always satisfied, if $p \geq \frac{d+2}{2}$.
 This means that there are constants $C_{\tau,\rho} > 0$, which can be chosen uniformly bounded for $\tau \in (0, \tau_0]$ and $\rho \in (0, \rho_0]$, and tend to zero as $\tau \rightarrow 0+$, uniformly for all $\rho \in (0, \rho_0]$, such that
  \begin{align*}
   \norm{\vec w}_{\WW_p^{(1,2)}(\Omega_\tau)}
    &\leq C_{\tau,\rho} \big( \norm{\vec c}_{\WW_p^{(1,2)}(\Omega_\tau)} + \norm{\vec c^\Sigma}_{\WW_p^{(1,2)}(\Sigma_\tau)} + 1  \big)
    \quad \text{and}
    \\
   \norm{\vec v - \vec {\tilde v}}_{\WW_p^{(1,2)}(\Omega_\tau)}
    = \norm{\vec w - \vec {\tilde w}}_{\WW_p^{(1,2)}(\Omega_\tau)}
    &\leq C_{\tau,\rho} \big( \norm{\vec c - \vec {\tilde c}}_{\WW_p^{(1,2)}(\Omega_\tau)} + \norm{\vec c^\Sigma - \vec {\tilde c}^\Sigma}_{\WW_p^{(1,2)}(\Sigma_\tau)} \big),
  \end{align*}
 see Lemma~\ref{lem:tau-dep-embedding-constant} below.
 On the other hand, concerning the terms on the boundary, we may similarly consider a reference function $\vec c^{\Sigma,\ast} \in \WW_p^{(1,2)}(\Sigma_\tau)^N$ defined as the solution to the quasi-autonomous surface diffusion system with source terms
  \begin{alignat*}{2}
   (\partial_t - \bb D^\Sigma \Delta_\Sigma) \vec c^{\Sigma,\ast}
    &= \vec r^\Sigma(\vec c^{\Sigma,0}) + \vec s^\Sigma(\vec c^0, \vec c^{\Sigma,0})
	\qquad &
    &\text{on } \Sigma_\tau,
    \\
   \vec c^\Sigma(0,\cdot)
    &= \vec c^{\Sigma,0}
	\qquad &
    &\text{on } \Sigma.
  \end{alignat*}
 Then $\vec w^\Sigma := \vec v^\Sigma - \vec c^{\Sigma,\ast}$ must solve the initial value problem on the surface
  \begin{alignat*}{2}
   (\partial_t - \bb D^\Sigma \Delta_\Sigma) \vec w^\Sigma
    &= \vec r^\Omega(\vec c) - \vec r^\Omega(\vec c^0) + \vec s^\Sigma(\vec c, \vec c^\Sigma) - \vec s^\Sigma(\vec c^0, \vec c^{\Sigma,0})
	\qquad &
    &\text{on } \Sigma_\tau,
    \\
   \vec w^\Sigma(0,\cdot)
    &= \vec 0
	\qquad &
    &\text{on } \Sigma.
  \end{alignat*}
 For this quasi-autonomous linear system, similar to the situation in the bulk phase, we may derive estimates of the form
  \begin{align*}
   \norm{\vec w^\Sigma}_{\WW_p^{(1,2)}(\Sigma_\tau)}
    &\leq C_{\tau,\rho} \big( \norm{\vec c}_{\WW_p^{(1,2)}(\Omega)} + \norm{\vec c^\Sigma}_{\WW_p^{(1,2)}(\Sigma_\tau)} + 1 \big),
    \\
   \norm{\vec w^\Sigma - \vec {\tilde w}^\Sigma}_{\WW_p^{(1,2)}(\Sigma_\tau)}
    &\leq C_{\tau,\rho} \big( \norm{\vec c - \vec {\tilde c}}_{\WW_p^{(1,2)}(\Omega_\tau)} + \norm{\vec c^\Sigma - \vec {\tilde c}^\Sigma}_{\WW_p^{(1,2)}(\Sigma_\tau)} \big),
  \end{align*}
 where again the constants $C_{\rho,\tau} > 0$ may be chosen uniformly bounded for $\rho \in (0, \rho_0]$ and $\tau \in (0,\tau_0]$, and such that they tend to zero as $\tau \rightarrow 0+$, uniformly for $\rho \in (0, \rho_0]$.
 Putting these estimates together, we infer that for sufficiently small $\tau \in (0, \tau_0]$, the map $\Phi$ maps $D_{\rho,\tau}$ into $D_{\rho,\tau}$ and is a strict contraction.
 Hence, by Banach's contraction mapping principle, $\Phi$ has a unique fixed point $(\vec c, \vec c^\Sigma) \in D_{\rho,\tau}$, which then is the unique solution to the semilinear reaction-diffusion-sorption problem in the subclass $D_{\rho,\tau} \subseteq \fs E_p^\Omega(\tau) \times \fs E_p^\Sigma(\tau)$ for all sufficiently small $\tau \in (0, \tau_1]$ and $\rho \in (0, \rho_1]$.
 We may conclude uniqueness of the solution in the class $\fs E_p^\Omega(\tau) \times \fs E_p^\Sigma(\tau)$, using a contradiction argument and the observation that by above reasoning any two solutions will coincide at least on some small, but positive time interval:
 In fact, given two solutions $(\vec c, \vec c^\Sigma)$ and $(\vec {\tilde c}, \vec {\tilde c}^\Sigma) \in \fs E_p^\Omega(T) \times \fs E_p^\Sigma(T) \hookrightarrow \BUC([0,T]; \WW_p^{2-2/p}(\Omega)^N \times \WW_p^{2-2/p}(\Sigma)^N)$, set
  \[
   t_0
    := \sup \{ \tau \in [0,T]: \, (\vec c, \vec c^\Sigma)|_{t \in [0,\tau]} = (\vec {\tilde c}, \vec {\tilde c}^\Sigma)|_{t \in [0, \tau]} \},
  \]
 where $t_0 > 0$ follows from the previous considerations (choose $\tau > 0$ small enough, such that $(\vec c, \vec c^\Sigma)|_{t \in [0,\tau]}, (\vec {\tilde c}, \vec {\tilde c}^\Sigma)|_{t \in [0, \tau]} \in D_{\rho,\tau}$).
 Now, assume $t_0 < T$.
 Then, by continuity w.r.t.\ the $\norm{\cdot}_{\WW_p^{2-2/p}}$-norm, we find that
  \[
   (\vec c(t_0,\cdot), \vec c^\Sigma(t_0,\cdot))
    = \lim_{\tau \rightarrow t_0} (\vec c(\tau,\cdot), \vec c^\Sigma(\tau,\cdot))
    = (\vec {\tilde c}(t_0,\cdot), \vec {\tilde c}^\Sigma(t_0,\cdot)).
  \]
 By time-invariance of the problem, we then find a \emph{unique} (local-in time) solution to the problem for given initial data $(\vec c(t_0,\cdot), \vec c^\Sigma(t_0,\cdot))$, hence $(\vec c(\tau,\cdot), \vec c^\Sigma(\tau,\cdot))|_{\tau \in [t_0, t_0+\varepsilon]} = (\vec {\tilde c}(\tau,\cdot), \vec {\tilde c}^\Sigma(\tau,\cdot))|_{\tau \in [t_0, t_0+\varepsilon]}$ for some $\varepsilon > 0$, contradicting the choice of $t_0$!
 \newline
 The extension to a maximal-in-time solution can be achieved based on the standard technique employing local-in-time existence and uniqueness of strong solutions and Zorn's lemma.
 \newline
 Continuous dependence on the initial data is, essentially, a consequence of the inverse function theorem:
 As the terms $\vec s^\ast$, $\vec r^\Omega(\vec c^0)$, $\vec r^\Sigma(\vec c^{\Sigma,0})$ and $\vec s^\Sigma(\vec c^0, \vec c^{\Sigma,0})$ depend continuously on the initial data, so do the reference functions $\vec c^\ast$ and $\vec c^{\Sigma,\ast}$ as well as the remainder parts $(\vec w, \vec w^\Sigma)$, hence, ultimatively the solution $(\vec c, \vec c^\Sigma)$ itself.
 \end{proof}
 
In the proof of the local-in-time well-posedness result, we employed the following auxiliary result which tells us how the constant in the anisotropic embedding theorem depends on the time horizon $\tau > 0$.

\begin{lemma}[$\tau$-dependence of the Sobolev embedding constant]
\label{lem:tau-dep-embedding-constant}
 Let $\Omega \subseteq \R^d$ be a domain with the extension property.
 Moreover, let $p, q \in (1,\infty)$ such that $2 - \frac{d+2}{p} \geq - \frac{d+2}{q}$.
 Then, for all $\tau \in (0, \infty)$, the space $\WW_p^{(1,2)}(\Omega_\tau)$ continuously embeds into $\LL_q(\Omega_\tau)$.
 Moreover, for functions in the subclass
  \[
   \mathring \WW_p^{(1,2)}(\Omega_\tau)
    := \{ u \in \WW_p^{(1,2)}(\Omega_\tau): \, u|_{t = 0} = 0 \text{ in the trace sense} \}
  \]
 we find that
  \[
   \norm{u}_{\mathring \WW_p^{(1,2)}}
    := \norm{\partial_t u}_{\LL_p} + \norm{\nabla^2 u}_{\LL_p}
  \]
 is an equivalent norm, for which we find a $\tau$-independent constant $C = C(p, q, \Omega)$ such that
  \[
   \norm{u}_{\LL_q}
    \leq C \tau^{\frac{1}{q} - \frac{1}{p} + \frac{2}{d+2}} \norm{u}_{\mathring \WW_p^{(1,2)}}
    \quad
    \text{for all }
    \tau > 0, \, u \in \mathring \WW_p^{(1,2)}(\Omega_\tau).
  \]
\end{lemma}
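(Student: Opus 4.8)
The plan is to combine a Poincaré inequality in the time variable (this is where the vanishing initial trace enters), a spatial interpolation inequality, and a parabolic rescaling to the unit time interval, which turns the $\tau$-dependent embedding into the scale-invariant anisotropic Sobolev embedding on $\R\times\R^d$. For the equivalence of norms on $\mathring\WW_p^{(1,2)}(\Omega_\tau)$: since $u|_{t=0}=0$, the fundamental theorem of calculus and Hölder's inequality in $t$ give $\abs{u(t,\vec x)}\le\tau^{1-1/p}\big(\int_0^\tau\abs{\partial_s u(s,\vec x)}^p\dd s\big)^{1/p}$, hence the time Poincaré inequality $\norm{u}_{\LL_p(\Omega_\tau)}\le\tau\norm{\partial_t u}_{\LL_p(\Omega_\tau)}$; combined with the Ehrling-type inequality $\norm{\nabla w}_{\LL_p(\Omega)}\le\varepsilon\norm{\nabla^2 w}_{\LL_p(\Omega)}+C_\varepsilon(\Omega)\norm{w}_{\LL_p(\Omega)}$ (available since $\Omega$ has the extension property), applied for a.e.\ $t$ and integrated, this shows that $\norm{u}_{\LL_p(\Omega_\tau)}+\norm{\nabla u}_{\LL_p(\Omega_\tau)}$ is controlled by $\norm{\partial_t u}_{\LL_p(\Omega_\tau)}+\norm{\nabla^2 u}_{\LL_p(\Omega_\tau)}$, so that $\norm{\cdot}_{\mathring\WW_p^{(1,2)}}$ is indeed an equivalent norm on the zero-time-trace subspace.

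For the $\LL_q$-estimate I would first extend $u$ in space to $\widetilde u:=Eu$ on $\R^d_\tau:=(0,\tau)\times\R^d$, using a fixed extension operator $E$ that is bounded $\WW_p^2(\Omega)\to\WW_p^2(\R^d)$ and $\LL_p(\Omega)\to\LL_p(\R^d)$; acting only in $\vec x$, $E$ commutes with $\partial_t$, preserves the vanishing initial trace, and has $\tau$-independent norms. Applying the parabolic dilation $v(s,\vec y):=\widetilde u(\tau s,\sqrt\tau\,\vec y)$, which maps $\R^d_\tau$ onto the fixed cylinder $(0,1)\times\R^d$ (the spatial domain being dilation-invariant), one checks that $\norm{v}_{\LL_q}$ picks up the factor $\tau^{-(d+2)/(2q)}$ relative to $\norm{\widetilde u}_{\LL_q(\R^d_\tau)}$, while $\norm{\partial_s v}_{\LL_p}+\norm{\nabla^2_{\vec y}v}_{\LL_p}$ picks up $\tau^{1-(d+2)/(2p)}$ relative to $\norm{\partial_t\widetilde u}_{\LL_p(\R^d_\tau)}+\norm{\nabla^2\widetilde u}_{\LL_p(\R^d_\tau)}$. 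Since $v$ still vanishes at $s=0$, the anisotropic Sobolev embedding on $(0,1)\times\R^d$ — which holds precisely when $2-\tfrac{d+2}p\ge-\tfrac{d+2}q$ — together with the norm equivalence of the first step (with universal constants, the time interval now being fixed) yields $\norm{v}_{\LL_q}\le C(p,d)\,(\norm{\partial_s v}_{\LL_p}+\norm{\nabla^2_{\vec y}v}_{\LL_p})$. Undoing the scaling, controlling $\norm{\partial_t\widetilde u}_{\LL_p}$ and $\norm{\nabla^2\widetilde u}_{\LL_p}$ through $E$ and then reabsorbing the lower-order spatial terms by the first step, gives $\norm{u}_{\LL_q(\Omega_\tau)}\le\norm{\widetilde u}_{\LL_q(\R^d_\tau)}\le C\,\tau^{1+\frac{d+2}{2}(\frac1q-\frac1p)}\norm{u}_{\mathring\WW_p^{(1,2)}}$. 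Finally, the hypothesis $2-\tfrac{d+2}p\ge-\tfrac{d+2}q$ is exactly the inequality $1+\tfrac{d+2}{2}(\tfrac1q-\tfrac1p)\ge\tfrac1q-\tfrac1p+\tfrac2{d+2}$, so on a bounded time range the exponent may be weakened to the one asserted. (Alternatively, in the subcritical case, Hölder's inequality on the finite-measure set $\Omega_\tau$, $\norm{u}_{\LL_q(\Omega_\tau)}\le(\tau\abs\Omega)^{1/q-1/q_c}\norm{u}_{\LL_{q_c}(\Omega_\tau)}$ with $\tfrac1{q_c}=\tfrac1p-\tfrac2{d+2}$, combined with the scale-invariant critical embedding, produces the asserted power $\tau^{1/q-1/p+2/(d+2)}$ directly.)

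The main obstacle is the careful tracking of the $\tau$-dependence and its separation from the geometry of $\Omega$: the extension operator unavoidably reintroduces the lower-order spatial norms $\norm{u}_{\LL_p},\norm{\nabla u}_{\LL_p}$, which the reduced norm controls only up to a factor growing like $\tau$ (through the time Poincaré inequality), so the clean estimate with a constant $C=C(p,q,\Omega)$ and the precise power $\tau^{1/q-1/p+2/(d+2)}$ is the one obtained on bounded time intervals — which is exactly the regime in which the lemma is used in the contraction argument for Theorem~\ref{thm:local_existence}. A minor additional point is the borderline Sobolev index $2-\tfrac{d+2}p=0$: there one uses that the index-zero anisotropic space still embeds into every $\LL_r$, $r<\infty$, on $(0,1)\times\R^d$, so that the argument above applies verbatim with the target exponent $q$ itself.
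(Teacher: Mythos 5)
Your proposal is correct and, at its core, follows the same route as the paper's proof: extend spatially with a fixed ($t$-independent) extension operator, apply the parabolic rescaling $\lambda=\sqrt{\tau}$, invoke the anisotropic Sobolev embedding with only the reduced norm (legitimate because of the vanishing time trace), and undo the scaling, treating the subcritical case by H\"older on the finite-measure cylinder. The differences are worth recording. First, your main route applies the embedding at the level $(p,q)$ on the unit cylinder and therefore produces the exponent $1+\tfrac{d+2}{2}(\tfrac1q-\tfrac1p)$, which is $\tfrac{d+2}{2}$ times the asserted one; recovering the statement then needs your bounded-time-range remark, whereas your parenthetical variant (H\"older from the critical exponent $q_c$ with $\tfrac1{q_c}=\tfrac1p-\tfrac2{d+2}$) yields the asserted power $\tau^{\frac1q-\frac1p+\frac2{d+2}}$ directly and is essentially the paper's subcritical reduction -- the paper instead applies H\"older on both sides, passing to a critical pair $(p^\ast,q^\ast)$ with $p^\ast<p$, $q^\ast>q$. (For the extension step take an extension with support in a fixed neighbourhood of $\overline{\Omega}$, so that the restriction $q\ge p$ in the full-space embedding is harmless.) Second, you actually prove the claimed norm equivalence on the zero-trace subspace (time-Poincar\'e plus Ehrling), which the paper's proof does not address at all. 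Third, your closing caveat is substantive rather than cosmetic: the extension operator controls $\nabla^2(\mathcal{E}u)$ only through the full $\WW_p^2(\Omega)$-norm, so lower-order terms re-enter and are absorbed by the reduced norm only with a $\tau$-dependent factor; the paper's final step $\norm{U}_{\mathring \WW_p^{(1,2)}(\R\times\R^d)}\leq C'\norm{u}_{\mathring \WW_p^{(1,2)}(\Omega_\tau)}$ silently uses exactly this. Indeed, the literal claim of a constant uniform in \emph{all} $\tau>0$ fails for large $\tau$: for $u(t,\vec x)=t$ one has $\norm{u}_{\LL_q(\Omega_\tau)}\sim\tau^{1+1/q}$ while $\norm{u}_{\mathring\WW_p^{(1,2)}}\sim\tau^{1/p}$, and $1+\tfrac1q-\tfrac1p>\tfrac1q-\tfrac1p+\tfrac2{d+2}$ for $d\geq 1$. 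So the correct reading -- and the only one used in the contraction argument for Theorem~\ref{thm:local_existence} -- is uniformity for $\tau$ in a bounded interval $(0,\tau_0]$, precisely as you state.
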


\begin{proof}
 Let us first consider the case
  \[
   2 - \frac{d+2}{p}
    = - \frac{d+2}{q}.
  \]
 Continuity of the embedding then follows from the anisotropic version of the Sobolev embedding theorem.
 Let $\tau > 0$ be given and set $\lambda := \sqrt{\tau} > 0$.
 Denote by $\mathcal{E}$ an extension operator, which is both continuous from $\LL_q(\Omega)$ to $\LL_q(\R^d)$ and from $\WW_p^2(\Omega)$ to $\WW_p^2(\R^d)$.
 For any function $u \in \mathring \WW_p^{(1,2)}(\Omega_\tau)$, we denote by $U \in \WW_p^{(1,2)}(\R \times \R^d)$ the extension of $U$ defined as
  \[
   U(t, \vec x)
    := \begin{cases}
        0,
        &\text{if } t < 0 \text{ or } t > 2T,
        \\
        \mathcal{E}u(t,\vec x),
        &\text{if } t \in [0,T],
        \\
        \mathcal{E}u(2T-t,\vec x),
        &\text{if } t \in (T, 2T].
       \end{cases}
  \]
 Further, let us introduce
  \[
   \tilde U(s, \vec y)
    := U(\lambda^2 s, \lambda \vec y),
    \quad
    s \geq 0, \, \vec y \in \R^d.
  \]
 This defines functions $U, \tilde U \in \WW_p^{(1,2)}(\R \times \R^d)$ and by the scaling we have ensured that $\tilde U$ has support in $[0,2] \times \R^d$.
 We may estimate the $\LL_q$-norm of $u$ as follows
  \begin{align*}
   &\norm{u}_{\LL_q(\Omega_\tau)}
    \leq \norm{U}_{\LL_q(\R \times \R^d)}
    = \left( \int_{\R^{d+1}} \abs{U(t,\vec x)}^q \, \dd (t, \vec x) \right)^{1/q}
    \\
    &= \left( \int_{\R^{d+1}} \abs{\tilde U(\tfrac{t}{\lambda^2},\tfrac{\vec x}{\lambda})}^q \, \dd (t, \vec x) \right)^{1/q}
    = \lambda^{\frac{d+2}{q}} \left( \int_{\R^{d+1}} \abs{\tilde U(s,\vec y)}^q \, \dd (s, \vec y) \right)^{1/q}
    = \lambda^{\frac{d+2}{q}} \norm{\tilde U}_{\LL_q(\R^{d+1})}
    \\
    &\leq C_\mathrm{Sob} \lambda^{\frac{d+2}{q}} \norm{\tilde U}_{\mathring \WW_p^{(1,2)}(\R \times \R^d)}
    \\
    &= C_\mathrm{Sob} \lambda^{\frac{d+2}{q}} \big\{ \big( \int_{\R^{d+1}} \abs{\partial_s \tilde U(s,\vec y)}^p \, \dd (s, \vec y) \big)^{1/p} 
     + \big( \int_{\R^{d+1}} \abs{\nabla_{\vec y}^2 \tilde U(s,\vec y)}^p \, \dd (s, \vec y) \big)^{1/p} \big\}
     \\
    &= C_\mathrm{Sob} \lambda^{\frac{d+2}{q}} \cdot \lambda^2 \big\{ \big( \int_{\R^{d+1}} \abs{\partial_t U(\lambda^2 s, \lambda \vec y)}^p \, \dd (s, \vec y) \big)^{1/p} 
     + \big( \int_{\R^{d+1}} \abs{(\nabla_{\vec x}^2 U)(\lambda^2 s,\lambda \vec y)}^p \, \dd (s, \vec y) \big)^{1/p} \big\}
     \\
    &= C_\mathrm{Sob} \lambda^{\frac{d+2}{q}} \cdot \lambda^2 \cdot \lambda^{- \frac{d+2}{p}} \big\{ \big( \int_{\R^{d+1}} \abs{\partial_t U(t, \vec x)}^p \, \dd (t, \vec x) \big)^{1/p} 
     + \big( \int_{\R^{d+1}} \abs{(\nabla_{\vec x}^2 U)(t, \vec x)}^p \, \dd (t, \vec x) \big)^{1/p} \big\}
     \\
    &= C_\mathrm{Sob} \tau^{1 - \frac{d+2}{2p} + \frac{d+2}{2q}} \norm{U}_{\mathring \WW_p^{(1,2)}}
    \leq C' \tau^{1 - \frac{d+2}{2p} + \frac{d+2}{2q}} \norm{u}_{\mathring \WW_p^{(1,2)}}
    = C' \norm{u}_{\mathring \WW_p^{(1,2)}}.
  \end{align*}
 For the general case $2 - \frac{d+2}{p} > - \frac{d+2}{q}$, we choose $q^\ast \in (q,\infty)$ and $p^\ast \in (1, p)$ such that
  \[
   2 - \frac{d+2}{p^\ast}
    = - \frac{d+2}{q^\ast}.
  \]
 As we have just seen, then
  \[
   \norm{u}_{\LL_{q^\ast}(\Omega_\tau)}
    \leq C' \norm{u}_{\mathring \WW_{p^\ast}^{(1,2)}}.
  \]
 Thus, choosing $r, s \in (1, \infty]$ such that $\frac{1}{r} := \frac{1}{q} - \frac{1}{q^\ast}$ and $\frac{1}{s} = \frac{1}{p^\ast} - \frac{1}{p}$, we find by Hölder's inequality that
  \begin{align*}
   \norm{u}_{\LL_q(\Omega_\tau)}
    &\leq \norm{u}_{\LL_{q^\ast}(\Omega_\tau)} \norm{1}_{\LL_r(\Omega_\tau)}
    \\
    &\leq C' (\abs{\Omega} \cdot \tau)^{1/r} \norm{u}_{\mathring \WW_{p^\ast}^{(1,2)}}
    \leq C' (\abs{\Omega} \cdot \tau)^{\frac{1}{q} - \frac{1}{p} + \frac{2}{d+2}} \norm{u}_{\mathring \WW_p^{(1,2)}} \norm{1}_{\LL_s(\Omega_\tau)}
    \\
    &\leq C' (\abs{\Omega} \cdot \tau)^{1/r + 1/r} \norm{u}_{\mathring \WW_p^{(1,2)}}
    = C' (\abs{\Omega} \cdot \tau)^{\frac{1}{q} - \frac{1}{p} - \frac{2}{d+2}} \norm{u}_{\mathring \WW_p^{(1,2)}}.
  \end{align*}
\end{proof}
 
 \section{Auxiliary estimates for parabolic bulk-surface systems based on the duality method}
 \label{sec:auxiliary-results}
 
 Before we prove our global-in-time existence result, we provide some preliminary results, which we later on will employ to establish global-in time existence of strong solutions.
 Some of these results are well-known and have already proved very useful for reaction-diffusion-systems with, say, homogeneous Neumann boundary conditions.
 Others can be seen as a generalisation of auxiliary results in \cite{BKMS17} by which we remove some of the restrictions imposed there, especially when passing from duality-method based estimates in the bulk phase to estimates for the corresponding boundary trace terms.
 \newline
 We start with the comparison principle, which for the pure bulk case, is a well-established tool for proving global-in-time existence for parabolic equations, in particular for reaction-diffusion-systems with a triangular structure, cf.\ M.~Pierre's survey \cite{Pierre_2010}.

 \begin{lemma}[Positivity principle]
  \label{lem:comparison_principle}
  Let a time horizon $T > 0$, parameters $p \in (1, \infty)$, $\alpha, \beta > 0$, and strictly positive $d \in \CC^1(\overline{\Omega})$, $d^\Sigma \in \CC^1(\Sigma)$ be given.
   \begin{enumerate}
    \item
     Let $f \in \LL_p(\Omega_T)$, $g^\Sigma \in \fs \WW_p^{(1,2) \cdot (\frac{1}{2} - \frac{1}{2p})}(\Sigma_T)$, $v_0 \in \WW_p^{2-2/p}(\Omega)$ with $- \beta \partial_\nu v_0 = g^\Sigma(0,\cdot)$ (on $\Sigma$ and in the trace sense) if $p > 3$.
     If $v \in \WW_p^{(1,2)}(\Omega_T)$ is a strong $\WW_p^{(1,2)}$--solution of the initial boundary value problem
      \begin{alignat*}{2}
       \alpha \partial_t v + \vec v \cdot \nabla v - \beta \dv (d \nabla v)
        &= f
	    \qquad &
        &\text{on } \Omega_T,
        \\
       - \beta \partial_\nu v
        &= g^\Sigma
	    \qquad &
        &\text{on } \Sigma_T,
        \\
       v(0)
        &= v_0
	    \qquad &
        &\text{in } \Omega,
      \end{alignat*}
     where $\vec v \in \fs U_{\tilde p}^\Omega(T)$ with $\tilde p > d + 2$, then $f, v_0 \geq 0$ and $g^\Sigma \leq 0$ imply $v \geq 0$.
    \item
     Let $f^\Sigma \in \LL_p(\Sigma_T)$, $v_0 \in \WW_p^{2-2/p}(\Sigma)$.
     If $v^\Sigma \in \WW_p^{(1,2)}(\Sigma_T)$ is a strong solution of the initial value problem
      \begin{alignat*}{2}
       \alpha \partial_t v^\Sigma - \beta \dv_\Sigma (d^\Sigma \nabla_\Sigma v^\Sigma)
        &= f^\Sigma
	    \qquad &
        &\text{on } \Sigma_T,
        \\
       v^\Sigma(0,\cdot)
        &= v_0^\Sigma
	    \qquad &
        &\text{on } \Sigma,
      \end{alignat*}
     where $f^\Sigma, v_0^\Sigma \geq 0$, then $v^\Sigma \geq 0$.
   \end{enumerate}
 \end{lemma}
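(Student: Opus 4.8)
The plan is to prove both assertions by the standard energy method, testing each parabolic problem against (a regularised version of) the negative part of the solution and showing that its $\LL_2$-norm is non-increasing in time while vanishing at $t=0$. Since the solutions are only of class $\WW_p^{(1,2)}$ for a possibly small $p$ --- so that $\WW_p^{2-2/p}(\Omega)$ need not embed into $\LL_2$ or $\LL_\infty$ --- I would not work with $v^- := \max\{0,-v\}$ directly, but, in the same spirit as the regularisation used in the earlier positivity proof for the full system, with a convex $\CC^2$-surrogate: choose $\phi_\varepsilon \geq 0$ with $\phi_\varepsilon(s)=0$ for $s\geq 0$, $\phi_\varepsilon'\leq 0$ bounded and Lipschitz, and $\phi_\varepsilon(s)\to\tfrac12(s^-)^2$, $\phi_\varepsilon'(s)\to -s^-$ pointwise as $\varepsilon\to 0+$. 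Testing with $\phi_\varepsilon'(v)$ is legitimate since this function is bounded, hence pairs with $\partial_t v$, $\dv(d\nabla v)$ and $f$ in $\LL_p(\Omega_T)$, while $\phi_\varepsilon(v)$ and $\phi_\varepsilon'(v)$ inherit enough spatial Sobolev regularity from $v$ to integrate by parts and to take boundary traces; the estimate for $\phi_\varepsilon$ then passes to the limit by dominated convergence.

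For part (1), after testing with $\phi_\varepsilon'(v)$ and integrating over $\Omega$, the time-derivative term gives $\alpha\frac{d}{dt}\int_\Omega \phi_\varepsilon(v)\,\dd\vec x$; the diffusion term, integrated by parts, yields the non-negative bulk contribution $\beta\int_\Omega d\,\phi_\varepsilon''(v)\,\abs{\nabla v}^2\,\dd\vec x\geq 0$ (convexity of $\phi_\varepsilon$) and the boundary term $-\beta\int_\Sigma d\,(\partial_\nu v)\,\phi_\varepsilon'(v)\,\dd\sigma$, which in the limit becomes $-\int_\Sigma d\,g^\Sigma\,v^-\,\dd\sigma\geq 0$ because $-\beta\partial_\nu v = g^\Sigma\leq 0$, $d>0$ and $v^-\geq 0$; the reaction term contributes $\int_\Omega f\,\phi_\varepsilon'(v)\,\dd\vec x\leq 0$ since $f\geq 0$ and $\phi_\varepsilon'\leq 0$. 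The decisive point is the advective term: using $\phi_\varepsilon'(v)\nabla v = \nabla(\phi_\varepsilon(v))$ and integrating by parts,
\[
 \int_\Omega (\vec v\cdot\nabla v)\,\phi_\varepsilon'(v)\,\dd\vec x
  = \int_\Omega \vec v\cdot\nabla\big(\phi_\varepsilon(v)\big)\,\dd\vec x
  = -\int_\Omega (\dv\vec v)\,\phi_\varepsilon(v)\,\dd\vec x + \int_\Sigma (\vec v\cdot\vec\nu)\,\phi_\varepsilon(v)\,\dd\sigma = 0,
\]
both terms vanishing by incompressibility $\dv\vec v=0$ in $\Omega_T$ and the no-flux condition $\vec v\cdot\vec\nu=0$ on $\Sigma_T$ from \eqref{A^vel}. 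Collecting signs gives $\alpha\frac{d}{dt}\int_\Omega\phi_\varepsilon(v)\,\dd\vec x\leq 0$; letting $\varepsilon\to 0+$ we obtain $\frac{d}{dt}\norm{v^-(t)}_{\LL_2(\Omega)}^2\leq 0$, and since $v_0\geq 0$ forces $v^-(0)=0$, integration in time yields $v^-\equiv 0$, i.e.\ $v\geq 0$. Part (2) is entirely analogous and in fact simpler: there is no advective term, and, since $\Sigma=\partial\Omega$ is a closed manifold, the integration by parts in the diffusion term produces no boundary contribution at all, leaving only $\beta\int_\Sigma d^\Sigma\,\phi_\varepsilon''(v^\Sigma)\,\abs{\nabla_\Sigma v^\Sigma}^2\,\dd\sigma\geq 0$; with $\int_\Sigma f^\Sigma\,\phi_\varepsilon'(v^\Sigma)\,\dd\sigma\leq 0$ and $v_0^\Sigma\geq 0$ one concludes $v^\Sigma\geq 0$ as before.

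The main obstacle I anticipate is the low regularity: justifying rigorously the chain rule $\frac{d}{dt}\int_\Omega\phi_\varepsilon(v)\,\dd\vec x = \alpha^{-1}\big\langle \text{remaining terms of the equation},\,\phi_\varepsilon'(v)\big\rangle$ together with the two integrations by parts when $v$ is merely of class $\WW_p^{(1,2)}$ with $p$ possibly below $d$ --- in particular giving a meaning to the normal trace $\partial_\nu v$, which is encoded in the $\WW_p^{(1,2)}$-regularity and the boundary condition holding in $\fs G_p^\Sigma(T)$, and keeping the advective integrand and the term $\phi_\varepsilon''(v)\abs{\nabla v}^2$ controlled. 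I would deal with this by first establishing the identities on a dense class --- e.g.\ restricting to $t\in[\delta,T]$ and using the smoothing of the analytic semigroups from Lemma~\ref{lem:maximal_regularity_scalar_case}, or approximating $v$ in $\WW_p^{(1,2)}(\Omega_\tau)$ by smooth functions compatible with the boundary condition --- and then passing to the limit; the uniform boundedness of $\phi_\varepsilon'$ and $\phi_\varepsilon''$ is precisely what keeps every term integrable throughout.
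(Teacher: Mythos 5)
Your approach is essentially the paper's: the paper disposes of this lemma by the remark that it ``can be established analogously to \cite[Lemma 5.2]{BKMS17}'', and that argument is precisely the regularised negative-part testing you describe (the paper carries out the same computation explicitly in Section~\ref{sec:Positivity}, with the regulariser $\phi_\varepsilon(r) = -r\ee^{\varepsilon/r}\chi_{[r\le 0]}$). All the decisive points are present and correct in your sketch: convexity of the surrogate, the sign conditions on $f$, $g^\Sigma$, $v_0$ (resp.\ $f^\Sigma$, $v_0^\Sigma$), and the cancellation of the advective term via $\dv \vec v = 0$ in $\Omega_T$ and $\vec v \cdot \vec \nu = 0$ on $\Sigma_T$ from \eqref{A^vel}.

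One caveat on your final step. You regularise the \emph{quadratic} function $\tfrac12 (s^-)^2$, so your limiting quantities are $\norm{v^-(t)}_{\LL_2(\Omega)}^2$ and $\int_{\Sigma_\tau} d\, g^\Sigma v^- \dd\sigma$; for the small exponents the lemma must cover (it is invoked in the proof of Lemma~\ref{lem:L_p-L_q-estimates} for solutions of class $\WW_r^{(1,2)}$ with $r \in (1,\min\{p,3\})$), $\WW_p^{2-2/p}(\Omega)$ need not embed into $\LL_2(\Omega)$, so the statement ``$\frac{d}{dt}\norm{v^-(t)}_{\LL_2(\Omega)}^2 \le 0$, integrate in time'' is not literally available --- exactly the low-regularity obstruction you set out to avoid. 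This is repairable within your own scheme: your $\varepsilon$-level inequality already gives $\int_\Omega \phi_\varepsilon(v(t))\dd\vec x \le \int_\Omega \phi_\varepsilon(v_0)\dd\vec x = 0$ for each fixed $\varepsilon > 0$, and since $\phi_\varepsilon \ge 0$ with $\phi_\varepsilon > 0$ on $(-\infty,0)$ this yields $v(t) \ge 0$ a.e.\ without any passage to the limit in $\varepsilon$. Alternatively, use (as the paper and \cite{BKMS17} do) a regulariser of the \emph{linear} function $s^-$, such as $\phi_\varepsilon(r) = -r\ee^{\varepsilon/r}\chi_{[r\le 0]}$, whose derivative is uniformly bounded in $\varepsilon$ and tends to $-\chi_{[r<0]}$; then every limiting integral is finite for all $p \in (1,\infty)$, and the justification of the integration by parts is lighter, since the term $\phi_\varepsilon''(v)\abs{\nabla v}^2$ only ever needs to be discarded with its favourable sign, never controlled. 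With either repair, your argument (including the closed-manifold case in part (2), where no boundary term occurs) goes through.
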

 \begin{proof}
  This result can be established analogously to \cite[Lemma 5.2]{BKMS17}.
 \end{proof}
 
\ausgrauen{
 \begin{remark}
  An interesting question arises when one asks for less regularity, say $d_i \in \LL_\infty(\Omega)$ and $d_i^\Sigma \in \LL_\infty(\Sigma)$ both uniformly positive (a.e.) and wants to transfer the results in the manuscript to this more general case.
  Then $c_i \in \LL_p(\Omega_\tau)$ such that $\partial_t c_i$ and $\dv(d_i \nabla c_i) \in \LL_p(\Omega_\tau)$ is, in general, not equivalent to the condition $c_i \in \WW_p^{(1,2)}(\Omega_\tau)$, so that the maximal regularity spaces in the bulk phase and on the surface have to be adjusted accordingly to be
   \begin{align*}
    \fs E_p^\Omega(\tau)
     &= \{ \vec u \in \WW_p^1(\Omega_\tau;\R^N): \, d_i \nabla u_i \in \LL_p((0,\tau); \WW_p^1(\Omega;\R^N)) \},
     \\
    \fs E_p^\Sigma(\tau)
     &= \{ \vec u^\Sigma \in \WW_p^1(\Sigma_\tau;\R^N): \, d_i^\Sigma \nabla_\Sigma u_i^\Sigma \in \LL_p((0,\tau); \WW_p^1(\Sigma;\R^N)) \}.
   \end{align*}
  Accordingly, the boundary and time trace spaces have to be defined by abstract real interpolation, i.e.\
   \begin{align*}
    \fs I_p^\Omega
     &:= \big( \LL_p(\Omega;\R^N), \dom(A) \big)_{1-1/p,p},
     \\
    \fs I_p^\Sigma
     &:= \big( \LL_p(\Sigma;\R^N), \dom(A^\Sigma) \big)_{1-1/p,p},
     \\
    \fs G_p^\Sigma(\tau)
     &:= \big( \fs F_p^\Omega(\tau), \fs E_p^\Omega(\tau) \big)_{1/2-1/2p,p},
     \\
    \fs H_p^\Sigma(\tau)
     &:= \big( \fs F_p^\Sigma(\tau), \fs E_p^\Omega(\tau) \big)_{1-1/2p,p},
   \end{align*}
  where the linear operators $A$ and $A^\Sigma$ are defined on the maximal domains in the respective $\LL_p$-spaces
   \begin{align*}
    A \vec u
     &:= (\dv (d_i \nabla u_i))_{i=1}^N
     \quad \text{on }
     \dom(A)
      = \{ \vec u \in \LL_p(\Omega;\R^N): \, A \vec u \in \LL_p(\Omega;\R^N) \text{ with } \partial_{\vec \nu} \vec u|_\Sigma = \vec 0 \},
      \\
    A^\Sigma \vec u^\Sigma
     &:= (\dv_\Sigma (d_i^\Sigma \nabla_\Sigma u_i^\Sigma))_{i=1}^N
     \quad \text{on }
     \dom(A^\Sigma)
      = \{ \vec u^\Sigma \in \LL_p(\Sigma;\R^N): \, A^\Sigma \vec u^\Sigma \in \LL_p(\Sigma;\R^N) \}.
   \end{align*}
  For the Hilbert space case $p = 2$, much more general results are available through form methods, e.g.\ the works by Arendt, Haak, Ouhabaz etc., see \cite{HaaOuh15} and the references therein.
  For the Banach space case $p \neq 2$ the form method, however, is not available.
 \end{remark}
}
 
 The next auxiliary result may be seen as a slightly generalised version of \cite[Lemma 6.2]{BKMS17}.
 The main difference is that we do not have to restrict to the cases $p, q \geq 2$, but all $p, q > 1$ are allowed.
 To establish the result, we combine classical duality techniques, cf.\ M.~Pierre's survey \cite{Pierre_2010}, with interpolation methods.
 
 \begin{lemma}[Dual estimate]
  \label{lem:dual-estimate}
  Let $p, q \in (1, \infty]$, $T > 0$ and $\alpha, \beta > 0$ be fixed, and let $\tau \in (0,T]$.
  Assume that $\vec v \in \fs U_{\tilde p}^\Omega(T)$ with $\tilde p > d + 2$.
  Let $f \in \LL_p(\Omega_\tau)$, $g^\Sigma \in \WW_p^{(1,2) \cdot (\frac{1}{2} - \frac{1}{2p})}(\Sigma_\tau)$ and $v_0 \in \WW_p^{2-2/p}(\Omega)$ be given, and assume that the compatibility condition $- \beta \partial_{\vec \nu} v_0 = g^\Sigma(0)$ is satisfied if $p > 3$.
  Then there is a constant $C = C(q,T) > 0$ such that for every strong solution $v \in \WW_p^{(1,2)}(\Omega_\tau)$ of the initial-boundary value problem
    \begin{alignat}{2}
     \alpha \partial_t v + \vec v \cdot \nabla v - \beta \dv (d \nabla v)
      &= f
      \qquad &
      &\text{on } \Omega_\tau,
      \nonumber \\
     - \beta \partial_\nu v
      &= g^\Sigma
      \qquad &
      &\text{on } \Sigma_\tau,
      \label{eqn:dual-estimate}
      \\
     v(0)
      &= v_0
      \qquad &
      &\text{in } \Omega,
      \nonumber
    \end{alignat}
   the following estimate is valid:
    \[
     \norm{v}_{\LL_q(\Omega_\tau)} + \norm{v}_{\LL_q(\Sigma_\tau)} + \norm{v(\tau,\cdot)}_{\LL_q(\Omega)}
      \leq C \left( \norm{f}_{\LL_q(\Omega_\tau)} + \norm{g^\Sigma}_{\LL_q(\Sigma_\tau)} + \norm{v_0}_{\LL_q(\Omega)} \right).
    \]
 \end{lemma}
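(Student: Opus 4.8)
The plan is to prove the dual estimate by the classical duality method of M.~Pierre, first for the endpoint case $q = \infty$ (equivalently, pairing against an arbitrary $L_1$ datum), or rather more conveniently for a general $q' \in [1,\infty)$ the dual exponent, and then, if one prefers to argue directly for finite $q$, to use the same scheme with a slightly different test problem. Concretely, fix $q \in (1,\infty]$ and let $q' \in [1,\infty)$ be its conjugate. For any nonnegative $0 \le \varphi \in L_{q'}(\Omega_\tau)$, $0 \le \psi \in L_{q'}(\Sigma_\tau)$, $0 \le \chi \in L_{q'}(\Omega)$, I would solve the backward dual problem
 \begin{alignat*}{2}
  - \alpha \partial_t w - \dv(\vec v w) - \beta \dv(d \nabla w)
   &= \varphi
   \qquad &
   &\text{on } \Omega_\tau,
   \\
  \beta \partial_\nu w + (\vec v \cdot \vec \nu) w
   &= \psi
   \qquad &
   &\text{on } \Sigma_\tau,
   \\
  w(\tau, \cdot)
   &= \chi
   \qquad &
   &\text{in } \Omega,
 \end{alignat*}
which, after reversing time $t \mapsto \tau - t$, is of the form covered by Lemma~\ref{lem:maximal_regularity_scalar_case} (note $\dv \vec v = 0$ and $\vec v \cdot \vec \nu = 0$ on $\Sigma$ by $\vec v \in \fs U_{\tilde p}^\Omega(T)$, so the boundary term simplifies and the convective term can be treated as a lower-order perturbation in the maximal regularity estimate, using $\tilde p > d+2$). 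The key structural input is that, by the positivity principle (Lemma~\ref{lem:comparison_principle}, applied after the time reversal), nonnegativity of $\varphi, \psi, \chi$ forces $w \ge 0$ on $\Omega_\tau$ and on $\Sigma_\tau$.

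Next I would test the equation for $v$ against $w$ and integrate by parts over $\Omega_\tau$. The advection terms combine: $\int \vec v \cdot \nabla v \, w + \int v \, \dv(\vec v w) = \int_{\Sigma_\tau} (\vec v \cdot \vec \nu) v w = 0$ since $\vec v \cdot \vec \nu = 0$ on $\Sigma$. The diffusion terms, after two integrations by parts, produce the boundary terms $-\beta \int_{\Sigma_\tau} (\partial_\nu v) w + \beta \int_{\Sigma_\tau} v (\partial_\nu w)$; inserting the boundary conditions $-\beta \partial_\nu v = g^\Sigma$ and $\beta \partial_\nu w = \psi$ (here using $\vec v \cdot \vec\nu = 0$ again), these become $\int_{\Sigma_\tau} g^\Sigma w + \int_{\Sigma_\tau} v \psi$. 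The time derivative terms give $\int_\Omega v(\tau) \chi - \int_\Omega v_0 w(0)$. Collecting everything yields the identity
 \[
  \int_{\Omega_\tau} v \varphi + \int_{\Sigma_\tau} v \psi + \int_\Omega v(\tau) \chi
   = \int_{\Omega_\tau} f w + \int_\Omega v_0\, w(0,\cdot) - \int_{\Sigma_\tau} g^\Sigma w.
 \]
Because $w \ge 0$ and $g^\Sigma \le 0$ is \emph{not} assumed here (we are proving an $L_q$--$L_q$ estimate, not a one-sided bound), I simply bound the right-hand side in absolute value by $\norm{f}_{L_q(\Omega_\tau)} \norm{w}_{L_{q'}(\Omega_\tau)} + \norm{v_0}_{L_q(\Omega)} \norm{w(0)}_{L_{q'}(\Omega)} + \norm{g^\Sigma}_{L_q(\Sigma_\tau)} \norm{w}_{L_{q'}(\Sigma_\tau)}$ via Hölder. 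It therefore remains to bound the three dual norms of $w$ by $C(q,T)(\norm{\varphi}_{L_{q'}(\Omega_\tau)} + \norm{\psi}_{L_{q'}(\Sigma_\tau)} + \norm{\chi}_{L_{q'}(\Omega)})$, uniformly in $\tau \in (0,T]$; taking the supremum over admissible $\varphi,\psi,\chi$ of unit norm then gives the claim. For $q' = 1$ (i.e.\ $q = \infty$) and for $q' = 2$ such bounds on the \emph{solution operator} of the backward problem — measured in $L_{q'}(\Omega_\tau)$, $L_{q'}(\Sigma_\tau)$ and $L_{q'}(\Omega)$ at the terminal time — are exactly the classical duality/smoothing estimates; for general $q' \in (1,\infty)$ they follow by interpolating between the $L_1$ and the $L_2$ (energy) estimates, which is where the "interpolation methods" mentioned before the lemma enter. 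One must also be careful that the compatibility condition for $w$ at $t = \tau$ (needed if $q' > 3$) is met, but since we only need $q' \le 2 < 3$ after interpolation, or can approximate $\chi, \psi$ by smooth compatible data, this is a routine density argument.

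The main obstacle is obtaining the dual-norm estimates on $w$ that are \emph{uniform in $\tau$} and valid for all $q' \in (1,\infty)$, in particular the boundary trace norm $\norm{w}_{L_{q'}(\Sigma_\tau)}$ and the terminal-time norm $\norm{w(0,\cdot)}_{L_{q'}(\Omega)}$; the interior and terminal estimates are the standard heat-semigroup smoothing/duality estimates, but controlling the surface trace requires either a trace inequality combined with the $L_2$ energy estimate (multiply the dual equation by $w$ and integrate, using $\beta\int_{\Sigma_\tau}(\partial_\nu w)w = \int_{\Sigma_\tau}\psi w$ to pick up a nonnegative boundary contribution of $w$ on $\Sigma$) and then real interpolation against the $L_1$ bound, or a direct appeal to the $L_{q'}$-maximal regularity of Lemma~\ref{lem:maximal_regularity_scalar_case} together with the trace embedding $\fs E_{q'}^\Omega(\tau) \hookrightarrow L_{q'}(\Sigma_\tau)$ with $\tau$-uniform constant. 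The perturbative treatment of the convection term $\dv(\vec v w)$ in the backward problem — absorbing it into the maximal regularity estimate on small time intervals and then patching, using $\vec v \in \CC([0,T];\CC^1(\overline\Omega))$ from \eqref{A^vel} — is standard but needs to be done carefully to keep all constants independent of $\tau$. Finally, I would remark that the estimate for $v$ evaluated at $\Sigma_\tau$ in the statement is the boundary \emph{trace} of $v$, which is well-defined since $v \in \WW_p^{(1,2)}(\Omega_\tau) \hookrightarrow \fs H_p^\Sigma(\tau) \hookrightarrow L_p(\Sigma_\tau)$, and that the identity above continues to hold for such trace values by the usual approximation.
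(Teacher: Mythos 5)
Your proposal is a genuinely different route from the paper's: the paper establishes the bulk term by the classical Pierre-type duality argument (and the case $q=\infty$ as in \cite{BKMS17}), and then obtains the boundary term by testing the equation with $v\abs{v}^{q-2}$ -- carefully justified for $q<2$ by a regularisation $(v^2+\varepsilon^2)^{(q-2)/2}$ -- and invoking the trace--interpolation inequality of Lemma~\ref{lem:surface-term-estimate} for $\abs{v}^{q/2}$ in $\LL_2((0,\tau);\HH^s(\Omega))$, followed by Young's inequality. You instead propose a full duality scheme with data $(\varphi,\psi,\chi)$ placed in the bulk, on the boundary and at the terminal time.

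The gap in your scheme is that the estimates you need for the dual solution $w$, namely
\[
 \norm{w}_{\LL_{q'}(\Omega_\tau)}+\norm{w|_\Sigma}_{\LL_{q'}(\Sigma_\tau)}+\norm{w(0,\cdot)}_{\LL_{q'}(\Omega)}
  \leq C\big(\norm{\varphi}_{\LL_{q'}(\Omega_\tau)}+\norm{\psi}_{\LL_{q'}(\Sigma_\tau)}+\norm{\chi}_{\LL_{q'}(\Omega)}\big)
\]
uniformly in $\tau$, are -- after time reversal -- exactly the statement of the lemma with $q$ replaced by $q'$ (the dual problem has the same structure, inhomogeneous Neumann data included). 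So the argument is circular unless you prove a direct estimate for some range of exponents, which is precisely the work the paper does via Lemma~\ref{lem:surface-term-estimate}; you defer it to ``classical duality/smoothing estimates'' at $q'=1$ and $q'=2$. The $q'=2$ energy case is fine, but the $q'=1$ endpoint is not classical in the form you need: with $\psi\in\LL_1(\Sigma_\tau)$ as inhomogeneous Neumann datum, the bound on the \emph{boundary trace} $\norm{w|_\Sigma}_{\LL_1(\Sigma_\tau)}$ does not follow from integrating the equation (the boundary term produced there is the datum $\psi$, not the trace of $w$), and maximal regularity only controls it by the much stronger $\WW_{q'}^{(1,2)\cdot(\frac12-\frac1{2q'})}(\Sigma_\tau)$-norm of $\psi$ -- which is the very dependence you must avoid. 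Moreover, even granting both endpoints, interpolating between $\LL_1$ and $\LL_2$ data only reaches $q'\in[1,2]$, i.e.\ $q\in[2,\infty]$; the range $q\in(1,2)$ claimed in the lemma is not covered by your argument (you would additionally need an $\LL_\infty$-data endpoint for the dual problem and an interpolation of the full three-component solution map, neither of which you address). Finally, the positivity principle you invoke for $w$ plays no role in your eventual absolute-value estimate, so it can be dropped; the real work sits entirely in the unproven dual bounds.
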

 
 \begin{remark}
  Note that, in particular, the constant $C > 0$ does neither depend on the data $f$, $v_0$ and $g^\Sigma$, nor on the time horizon $\tau \in (0,T]$.
  \newline
  For the case $p, q \in [2, \infty]$, a quite similar result has been derived in \cite[Lemma 6.2]{BKMS17} (originally, for cylindrical domains, but the proof easily extends to general $\CC^{2-}$-domains). We employ the following strategy: For $p > \frac{d+2}{2}$, i.e.\ the space $\WW_p^{(1,2)}((0,T) \times \Omega)$ continuously embeds into the space of continuous functions on $[0,T] \times \overline{\Omega}$, we first consider the case $q \geq 2$ and distinguish the three subcases $q = 2$, $q = \infty$ and $q \in (2, \infty)$:
  Here, the Hilbert space case $q = 2$ can be handled by multiplying the first line of \eqref{eqn:dual-estimate} by $v$, integrating over $\Omega$ and using integration by parts. This gives an estimate on the bulk term $\norm{v}_{\LL_2(\Omega_\tau)}$. The boundary term $\norm{v}_{\LL_2(\Sigma_\tau)}$ can be estimated by employing trace results on Bessel potential spaces $\HH^s(\Omega) \hookrightarrow \HH^{s-\nicefrac{1}{2}}(\Sigma)$ for $s \in (\frac{1}{2}, \frac{3}{2})$, combined with an interpolation inequality for $s \in (\frac{1}{2}, 1)$ and the estimate established for the bulk term.
  The second case $q = \infty$ is easier and can be handled using the estimate $\norm{u}_{\LL_\infty(\Sigma_\tau)} \leq \norm{u}_{\LL_\infty(\Omega_\tau)}$, which is trivial for continuous functions $u$.
  By the Riesz--Thorin Interpolation Theorem, we refer to e.g.\ \cite[Theorem 2.6]{Lunardi_2018} for a version which is general enough to be applicable here, the estimates for intermediate interpolation parameters $q \in (2, \infty)$ then follow as well.
  \newline
  In the case $p \leq \frac{d+2}{2}$ we may approximate the initial and boundary data and the source terms by regularised data, so that the results for the case $p > \frac{d+2}{2}$ become applicable.
  \newline
  On the other hand, in several previous works on bulk reaction-diffusion-advection systems, dual estimates have been heavily used as a tool to establish global-in-time existence for particular classes of reaction-diffusion-advection systems, cf., e.g., \cite[Lemma 3.4]{Pierre_2010}.
 To estimate the bulk term $\norm{u}_{\LL_q(\Omega_\tau)}$ for the range $q \in (1, \infty)$ via the \emph{duality method}, we proceed as follows: We multiply the evolutionary PDE by a function $\theta$, which is chosen in such a way that it solves an evolutionary problem which is \emph{dual} to the original initial-boundary value problem.
 (In particular, it evolves \emph{backwards in time} and is subject to \emph{homogeneous boundary conditions}.)
  This method relies on the fact that the dual space $(\LL_q(\Omega_\tau))^\ast$ of the Lebesgue space $\LL_q(\Omega_\tau)$ is (up to an isometric isomorphism) the Lebesgue space $\LL_{q'}(\Omega_\tau)$ for the Hölder dual exponent $q' \in (1,\infty)$ which is given by $\frac{1}{q'} := 1 - \frac{1}{q}$, whenever $q \in (1, \infty)$ belongs to the reflexive range.
  \newline
  Combining these methods, we may allow for $p, q$ from the full reflexive range $(1, \infty)$: First, we employ the duality method to get the estimate on the bulk term. Thereafter, we adjust the method used in \cite[Lemma 6.2]{BKMS17} to cover the general case $q \in (1, \infty)$.
 \end{remark}
 \begin{proof}
  The case $q = \infty$ can be handled exactly as in the proof of \cite[Lemma 6.2.]{BKMS17}.
  Moreover, with the techniques used in the proof of \cite[Lemma 3.4]{Pierre_2010} and for $q \in (2, \infty)$, one may easily establish the (weaker) estimate $\norm{v}_{\LL_q(\Omega_\tau)} \leq C_{q,T} \big( \norm{v^0}_{\LL_q(\Omega)} + \norm{f}_{\LL_q(\Omega_\tau)} + \norm{g^\Sigma}_{\LL_q(\Sigma_\tau)} \big)$ for every $q \in (1, \infty)$ and $\tau \in (0,T]$.
  We, therefore, focus on the boundary term $\norm{v|_\Sigma}_{\LL_q(\Sigma_T)}$ and show that it can be estimated by the right-hand side as well.
  To this end, for the moment, we will additionally assume that $v \in \WW_q^{(1,2)}(\Omega_\tau)$, e.g.\ by approximating $f$, $g^\Sigma$ and $v_0$ by smooth data, and multiply the first line in \eqref{eqn:dual-estimate} by $v \abs{v}^{q-2} := \sign(v) \abs{v}^{q-1} \in \LL_{q'}(\Omega_\tau)$ (which, up to a scalar factor and identification of the dual space $\LL_q(\Omega_\tau)^\ast$ with $\LL_{q'}(\Omega_\tau)$, is the unique element in the duality set of $v \in \LL_q(\Omega_\tau)$), then integrate over $\Omega_\tau$ and use equation \eqref{eqn:RDASS} and integration by parts, to calculate that
   \begin{align*}
    &\int_{\Omega_\tau} f v \abs{v}^{q-2} \dd(t,\vec x)
     = \int_{\Omega_\tau} \big( \partial_t v + \vec v \cdot \nabla v - \beta \dv (d \nabla v) \big) v \abs{v}^{q-2} \dd(t,\vec x)
     \\
     &= \int_{\Omega_\tau} \big( \frac{1}{q} \partial_t \abs{v}^q 
      + \frac{1}{q} \vec v \cdot \nabla \abs{v}^q \big) \dd(t,\vec x)
      - \int_{\Omega_\tau} \beta \dv (d \nabla v) \cdot v \abs{v}^{q-2} \dd(t,\vec x)
      \\
     &= \left[ \frac{1}{q} \int_\Omega \abs{v}^q \dd \vec x \right]_{t=0}^{t=\tau}
      - \frac{1}{q} \int_{\Omega_\tau} \dv \vec v \cdot \abs{v}^q \dd(t,\vec x)
      + \frac{1}{q} \int_{\Sigma_\tau} (\vec v \cdot \vec \nu) \abs{v}^q \dd(t,\sigma(\vec x))
      \\
      & \quad
      + \int_{\Omega_\tau} \beta d \nabla v \cdot \nabla (v \cdot \abs{v}^{q-2}) \dd(t,\vec x)
      - \int_{\Sigma_\tau} \beta \partial_{\vec \nu} v \cdot v \abs{v}^{q-2} \dd(t,\sigma(\vec x))
      \\
     &= \frac{1}{q} \big[ \norm{v(\tau,\cdot)}_{\LL_q(\Omega)}^q - \norm{v(0,\cdot)}_{\LL_q(\Omega)}^q \big]
      + \int_{\Omega_\tau} \beta d (q-1) \abs{\nabla v}^2 \abs{v}^{q-2} \dd(t,\vec x)
      \\ &\qquad
      + \int_{\Sigma_\tau} g^\Sigma \, v \abs{v}^{q-2} \dd(t,\sigma(\vec x)),
   \end{align*}
  provided these integrals exist.
  The latter will follow for general $q \in (1, \infty)$ from Lemma~\ref{lem:surface-term-estimate} below.
  Hence, after rearrangement of the terms, we obtain the estimate
   \begin{align*}
    &\frac{1}{q} \norm{v(\tau,\cdot)}_{\LL_q(\Omega)}^q
     + \int_{\Omega_\tau} \beta d (q-1) \abs{\nabla v}^2 \abs{v}^{q-2} \dd(t,\vec x)
     \\
     &= \frac{1}{q} \norm{v(0,\cdot)}_{\LL_q(\Omega)}^q
     + \int_{\Omega_\tau} f v \abs{v}^{q-2} \dd(t,\vec x)
     - \int_{\Sigma_\tau} g^\Sigma \, v \abs{v}^{q-2} \dd(t,\sigma(\vec x))
     \\
     &\leq \frac{1}{q} \norm{v(0,\cdot)}_{\LL_q(\Omega)}^q
     + \int_{\Omega_\tau} \abs{f} \abs{v}^{q-1} \dd(t,\vec x)
     + \int_{\Sigma_\tau} \abs{g^\Sigma} \abs{v}^{q-1} \dd(t,\sigma(\vec x)).
   \end{align*}
  The desired estimate follows from this, if we can ensure that for every $\varepsilon > 0$ there is $C = C(q,\varepsilon) > 0$ (independent of $v$) such that
   \begin{equation}
    \varepsilon \norm{v|_{\Sigma_\tau}}_{\LL_q(\Sigma)}^q
     \leq \int_{\Omega_\tau} \abs{\nabla v}^2 \abs{v}^{q-2} \dd (t, \vec x)
      + C(q,\varepsilon) \norm{v}_{\LL_q(\Omega_\tau)}^q,
     \label{eqn:dual-estimate-2}
   \end{equation}
  as we may then employ the generalized Hölder inequality to establish
   \begin{align*}
    \int_{\Omega_\tau} \abs{f} \abs{v}^{q-1} \dd(t,x)
     &\leq \norm{f}_{\LL_q(\Omega_\tau)} \norm{v}_{\LL_q(\Omega_\tau)}^{q-1},
     \\
    \int_{\Sigma_\tau} \abs{g^\Sigma} \abs{v}^{q-1} \dd(t,\sigma(x))
     &\leq \norm{g^\Sigma}_{\LL_q(\Sigma_\tau)} \norm{v}_{\LL_q(\Sigma_\tau)}^{q-1}
   \end{align*}
  and the statement follows by Young's inequality for products.
  To get estimate \eqref{eqn:dual-estimate-2}, we may employ the subsequent Lemma \ref{lem:surface-term-estimate}.
  \newline
  Finally, since the constant $C > 0$ is independent of the particular choice of the data, by approximation these estimates are also valid for less smooth data $(f, g^\Sigma, v_0)$, i.e.\ the estimate is valid for all such strong $\WW_p^{(1,2)}$-solutions.
 \end{proof}
 
 \begin{lemma}
 \label{lem:surface-term-estimate}
  Let $q \in (1, \infty)$, $\tau > 0$ and $\Omega \subsetneq \R^d$ be a bounded $\CC^{1,1}$-domain with boundary $\Sigma = \partial \Omega$.
  Denote by $\cdot|_\Sigma: \WW^{(1,2)}_q(\Omega_\tau) \rightarrow \LL_q(\Sigma_\tau)$ the (continuous) spatial boundary trace operator.
  Then, for every $s \in (\frac{1}{2}, 1)$, there is a constant $C = C(q,s,\Omega) > 0$ such that
   \begin{align*}
    \norm{v|_\Sigma}_{\LL_q(\Sigma_\tau)}^q
     &\leq C \norm{ \abs{v}^{\nicefrac{q}{2}} }_{\LL_2((0,\tau);\HH^1(\Omega))}^{2s} \norm{v}_{\LL_q(\Omega_\tau)}^{(1-s)q}
     \\
     &\leq C \big( \big( \int_{\Omega_\tau} \abs{v}^{q-2} \abs{\nabla v}^2 \dd (t,\vec x) \big)^{1/2} + \norm{v}_{\LL_q(\Omega_\tau)}^{q/2} \big)^{2s} \norm{v}_{\LL_q(\Omega_\tau)}^{(1-s)q},
     \quad
     v \in \WW^{(1,2)}_q(\Omega_\tau).
   \end{align*}
 \end{lemma}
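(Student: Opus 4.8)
The plan is to linearise the power $\abs{v}^{q}$ by passing to the auxiliary function $w := \abs{v}^{q/2}$, which turns the desired $\LL_q(\Sigma_\tau)$-bound for the trace of $v$ into an $\LL_2$-trace estimate for $w$. The latter I would treat, at a.e.\ fixed time, by a boundary trace embedding combined with an interpolation inequality, and then integrate in time. Concretely, first I would record -- using Fubini's theorem and the identity $\big(\abs{v(t,\cdot)}^{q/2}\big)|_\Sigma = \abs{v(t,\cdot)|_\Sigma}^{q/2}$ (clear for continuous functions, hence valid in general by approximation and continuity of the spatial trace operator) -- that
\[
 \norm{v|_\Sigma}_{\LL_q(\Sigma_\tau)}^q
  = \int_0^\tau \norm{w(t,\cdot)|_\Sigma}_{\LL_2(\Sigma)}^2 \dd t
  \qquad \text{and} \qquad
 \norm{w}_{\LL_2(\Omega_\tau)}^2 = \norm{v}_{\LL_q(\Omega_\tau)}^q .
\]

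Next, fixing $s \in (\tfrac{1}{2},1)$, for a.e.\ $t$ I would combine the trace embedding $\HH^s(\Omega)\hookrightarrow\HH^{s-1/2}(\Sigma)\hookrightarrow\LL_2(\Sigma)$ -- available since $\Omega$ is a bounded $\CC^{1,1}$-domain and $s-\tfrac{1}{2}\in(0,\tfrac{1}{2})$ -- with the interpolation inequality for $\HH^s(\Omega)=(\LL_2(\Omega),\HH^1(\Omega))_{s,2}$ to obtain
\[
 \norm{w(t,\cdot)|_\Sigma}_{\LL_2(\Sigma)}^2
  \le C\,\norm{w(t,\cdot)}_{\HH^1(\Omega)}^{2s}\,\norm{w(t,\cdot)}_{\LL_2(\Omega)}^{2(1-s)} .
\]
Integrating over $(0,\tau)$ and applying Hölder's inequality with exponents $\tfrac{1}{s}$ and $\tfrac{1}{1-s}$ then yields $\norm{v|_\Sigma}_{\LL_q(\Sigma_\tau)}^q \le C\,\norm{w}_{\LL_2((0,\tau);\HH^1(\Omega))}^{2s}\,\norm{v}_{\LL_q(\Omega_\tau)}^{(1-s)q}$, which is the first asserted inequality. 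The second one then follows from $\norm{w}_{\LL_2((0,\tau);\HH^1(\Omega))}^2 \le C\big(\int_{\Omega_\tau}\abs{\nabla w}^2\dd(t,\vec x) + \norm{v}_{\LL_q(\Omega_\tau)}^q\big)$ together with the pointwise bound $\abs{\nabla w}^2\le\tfrac{q^2}{4}\abs{v}^{q-2}\abs{\nabla v}^2$ and the elementary estimate $\sqrt{a+b}\le\sqrt{a}+\sqrt{b}$.

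The step requiring genuine care -- and the one I expect to be the main obstacle -- is justifying that $w=\abs{v}^{q/2}$ really lies in $\LL_2((0,\tau);\HH^1(\Omega))$ with $\nabla w = \tfrac{q}{2}\sign(v)\abs{v}^{q/2-1}\nabla v$ a.e. For $q\ge 2$ the map $r\mapsto\abs{r}^{q/2}$ is locally Lipschitz, so the chain rule for Sobolev functions applies directly. For $q\in(1,2)$, where $r\mapsto\abs{r}^{q/2}$ is merely Hölder near the origin, I would instead work with the smooth, globally Lipschitz approximations $\phi_\varepsilon(r):=(r^2+\varepsilon^2)^{q/4}-\varepsilon^{q/2}$, which satisfy $\abs{\phi_\varepsilon'(r)}\le\tfrac{q}{2}\abs{r}(r^2+\varepsilon^2)^{q/4-1}\le\tfrac{q}{2}\abs{r}^{q/2-1}$, run the whole argument with $\phi_\varepsilon(v)$ in place of $w$, and then let $\varepsilon\to 0+$ via monotone and dominated convergence; if $\int_{\Omega_\tau}\abs{v}^{q-2}\abs{\nabla v}^2\dd(t,\vec x)=+\infty$ the claimed estimate is trivial and nothing needs to be proved.
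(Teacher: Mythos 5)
Your argument is correct and follows the same core route as the paper: pass to $w=\abs{v}^{\nicefrac{q}{2}}$, use the trace embedding $\HH^s(\Omega)\hookrightarrow\HH^{s-\nicefrac{1}{2}}(\Sigma)\hookrightarrow\LL_2(\Sigma)$ together with the interpolation $\HH^s(\Omega)=(\LL_2(\Omega),\HH^1(\Omega))_{s,2}$ at a.e.\ fixed time, and conclude by Hölder's inequality in time; this is precisely the paper's final chain of estimates, and your computation of $\nabla w$ and the elementary bound $\sqrt{a+b}\le\sqrt a+\sqrt b$ reproduce the second asserted inequality. The only genuine difference lies in how you justify that $\abs{v}^{\nicefrac{q}{2}}$ lies in $\LL_2((0,\tau);\HH^1(\Omega))$ with the expected weak gradient: the paper first establishes an integration-by-parts identity for test functions of the form $u\abs{u}^{q-2}$ (regularising $\abs{u}^{q-2}$ by $(u^2+\varepsilon^2)^{\frac{q-2}{2}}$ for $\CC^2$-functions, then approximating general $u\in\WW_q^{(1,2)}(\Omega_\tau)$), which in particular shows that $\int_{\Omega_\tau}\abs{v}^{q-2}\abs{\nabla v}^2\dd(t,\vec x)$ is \emph{finite} for every $v\in\WW_q^{(1,2)}(\Omega_\tau)$ and bounded by $C\norm{v}_{\WW_q^{(1,2)}(\Omega_\tau)}^q$; you instead invoke the locally Lipschitz chain rule for $q\ge 2$ and the global Lipschitz regularisation $\phi_\varepsilon(r)=(r^2+\varepsilon^2)^{\nicefrac{q}{4}}-\varepsilon^{\nicefrac{q}{2}}$ for $q\in(1,2)$, dismissing the degenerate case by noting that the inequality is vacuous when its right-hand side is infinite. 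Your route is leaner and fully sufficient for the lemma as stated; what the paper's longer detour buys is the integration-by-parts identity itself and the a priori finiteness of the weighted Dirichlet integral, both of which are reused in the proof of Lemma~\ref{lem:dual-estimate}, so you would have to supply that identity separately there (your $\phi_\varepsilon$-regularisation would serve equally well for that purpose).
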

 \begin{proof}
 As a preliminary step, we will derive the following identity, which is valid for every $u \in \CC^2(\overline{\Omega})$:
     \begin{equation}
   \int_{\Omega_\tau} \Delta u \cdot u \abs{u}^{q-2} \dd(t,\vec x)
    = - \int_{\Omega_\tau} \nabla u \cdot \nabla \big( u \abs{u}^{q-2} \big) \dd(t, \vec x)
     + \int_{\Sigma_\tau} \partial_{\vec \nu} u \cdot u \abs{u}^{q-2} \dd(t,\sigma(\vec x)).
   \label{eqn:test_function_CC^2}
  \end{equation}
 This identity does not immediately follow from the Gauß divergence theorem, since for $q \in (1,2)$ the term $u \abs{u}^{q-2}$ will in general \emph{not} be classically differentiable, nor can we ensure that its weak derivative lies in the space $\LL_{q'}(\Omega_\tau)$.
 Therefore, we will approximate the integrands by a regularised version and, instead of $\int_{\Omega_\tau} \nabla u \cdot \nabla (u \abs{u}^{q-2}) \dd (t,\vec x)$, we consider the integral
  \begin{align*}
   \int_{\Omega_\tau} \nabla u \cdot \nabla \big( u (u^2 + \varepsilon^2)^{\frac{q-2}{2}} \big) \dd (t,\vec x)
    &= \int_{\Omega_\tau} \abs{\nabla u}^2 (u^2 + \varepsilon^2)^{\frac{q-2}{2}} \big[ 1 + (q-2) \abs{u}^2 (u^2 + \varepsilon^2)^{-1} \big] \dd (t,\vec x).
  \end{align*}
 The terms in the integrand on the right-hand side are all positive for each $\varepsilon > 0$ and decrease in $\varepsilon > 0$, thus, for the critical case $q \in (1,2)$ we may use the Beppo--Levi theorem on montone convergence to conclude that
  \begin{align*}
   &\int_{\Omega_\tau} \abs{\nabla u}^2 (u^2 + \varepsilon^2)^{\frac{q-2}{2}} \big[ 1 + (q-2) \abs{u}^2 (u^2 + \varepsilon^2)^{-1} \big] \dd (t,\vec x)
   \\ &\qquad
   \xrightarrow{\varepsilon \rightarrow 0+} \int_{\Omega_\tau} \nabla u \cdot \nabla (u \abs{u}^{q-2}) \dd (t,\vec x)
   \in [0, \infty].
  \end{align*}
 For the regularised integral, we may use the Gauß divergence theorem, and obtain that
  \begin{align*}
   &\int_{\Omega_\tau} \nabla u \cdot \nabla \big( u (u^2 + \varepsilon^2)^{\frac{q-2}{2}} \big) \dd (t,\vec x)
    \\
    &= - \int_{\Omega_\tau} \Delta u \cdot u (u^2 + \varepsilon^2)^{\frac{q-2}{2}} \dd(t,\vec x)
     + \int_{\Sigma_\tau} \partial_{\vec \nu} u \cdot u (u^2 + \varepsilon^2)^{\frac{q-2}{2}} \dd(t,\sigma(\vec x)).
  \end{align*}
 Since we may estimate
  \begin{align*}
   \norm{u (u^2 + \varepsilon^2)^{\frac{q-2}{2}}}_{\LL_{q'}(\Omega_\tau)}^{q'}
    = \int_{\Omega_\tau} \big( \frac{\abs{u}}{\sqrt{u^2 + \varepsilon^2}} \big)^{q'} (u^2 + \varepsilon^2)^{\frac{q}{2}} \dd (t, \vec x)
    \leq \int_{\Omega_\tau} (u^2 + \varepsilon^2)^{\frac{q}{2}} \dd (t, \vec x)
  \end{align*}
  and, using that $q' = \frac{q}{q-1}$, so that $\frac{q-1}{2} \cdot q' = \frac{q}{2}$,
  \begin{align*}
   \norm{u (u^2 + \varepsilon^2)^{\frac{q-2}{2}}}_{\LL_{q'}(\Sigma_\tau)}^{q'}
    = \int_{\Sigma_\tau} \big( \frac{\abs{u}}{\sqrt{u^2 + \varepsilon^2}} \big)^{q'} (u^2 + \varepsilon^2)^{\frac{q}{2}} \dd(t,\sigma(\vec x))
    \leq \int_{\Sigma_\tau} (u^2 + \varepsilon^2)^{\frac{q}{2}} \dd(t,\sigma(\vec x)),
  \end{align*}
 we may conclude by Lebesgue's theorem on dominated convergence that
  \begin{align*}
   \int_{\Omega_\tau} \Delta u \cdot u (u^2 + \varepsilon^2)^{\frac{q-2}{2}} \dd(t,\vec x)
    &\xrightarrow{\varepsilon \rightarrow 0+}
    \int_{\Omega_\tau} \Delta u \cdot u \abs{u}^{q-2} \dd(t, \vec x),
    \\
   \int_{\Sigma_\tau} \partial_{\vec \nu} u \cdot u (u^2 + \varepsilon^2)^{\frac{q-2}{2}} \dd(t,\sigma(\vec x))
    &\xrightarrow{\varepsilon \rightarrow 0+}
    \int_{\Sigma_\tau} \partial_{\vec \nu} u \abs{u}^{q-2} \dd(t,\sigma(\vec x))
  \end{align*}
 and identity \eqref{eqn:test_function_CC^2} follows. 
 \newline
 Having identity \eqref{eqn:test_function_CC^2} at hand, we may proceed as follows: Via the boundary trace map, the space $\WW^{(1,2)}_q(\Omega_\tau)$ embeds continuously into $\LL_q((0,\tau);\WW_q^1(\Sigma)) \subseteq \WW_q^{(1/2,1)}(\Sigma_\tau)$ by comparing the parabolic Sobolev indices $2 - \frac{d+2}{q} \geq 1 - \frac{d+1}{q}$ (which is true for all $q \geq 1$) and, therefore, as $q' = \frac{q}{q-1}$ and $\abs{u \abs{u}^{q-2}}^{q'} = \abs{u}^q$ it follows that $u \abs{u}^{q-2} \in \LL_{q'}(\Sigma_\tau)$ for every $u \in \WW_q^{(1,2)}(\Omega_\tau)$ with $\norm{u \abs{u}^{q-2}}_{\LL_{q'}(\Sigma_\tau)} \leq C \norm{u}_{\WW_q^{(1,2)}(\Omega_\tau)}^{q-1}$ and $u \abs{u}^{q-2} \in \LL_{q'}(\Omega_\tau)$ with $\norm{u \abs{u}^{q-2}}_{\LL_{q'}(\Omega_\tau)} \leq C \norm{u}_{\LL_q(\Omega_\tau)}^{q-1}$.
 Further, $\nabla u \cdot \nabla (u \abs{u}^{q-2}) = (q-1) \abs{\nabla u}^2 \abs{u}^{q-2} \geq 0$.
 Approximating $u \in \WW_q^{(1,2)}(\Omega_\tau)$ by functions in $\CC^2(\overline{\Omega}_\tau)$ and employing Hölder's inequality for the left-hand side in \eqref{eqn:test_function_CC^2} and the boundary term on the right hand side, we may thus infer that the identity \eqref{eqn:test_function_CC^2} is valid for all $u \in \WW_q^{(1,2)}(\Omega_\tau)$, the integral $\int_{\Omega_\tau} \nabla u \cdot \nabla (u \abs{u}^{q-2}) \dd(t,\vec x)$ exists for all $u \in \WW_q^{(1,2)}(\Omega_\tau)$ and can be bounded by $C \norm{u}_{\WW_q^{(1,2)}(\Omega_\tau)}^q$.
  \newline
  It follows that $\abs{v}^{q/2} \in \LL_2((0,T);\WW^1_2(\Omega))$ with $\norm{ \abs{v}^{q/2} }_{\LL_2(\Omega_\tau)} = \norm{v}_{\LL_q(\Omega_\tau)}^{q/2}$ and $\nabla \abs{v}^{q/2} = \frac{q}{2} v \abs{v}^{q/2-2} \nabla v$ with $\LL_2$-norm $\norm{ \nabla \abs{v}^{q/2} }_{\LL_2(\Omega_\tau)} = \frac{q}{2} \norm{ \abs{v}^{q/2-1} \nabla v }_{\LL_2(\Omega_\tau)}$.
  \newline
  Moreover, the $\Sigma$-trace of $\abs{v}^{q/2} \in \LL_2((0,T);\WW_2^1(\Omega))$ is $\big( \abs{v}^{q/2} \big)|_\Sigma = \abs{v|_\Sigma}^{q/2} \in \LL_2(\Sigma_\tau)$, so that
   \begin{align*}
    \norm{v|_\Sigma}_{\LL_q(\Sigma_\tau)}^q
     &= \norm{\abs{v}^{\nicefrac{q}{2}}|_\Sigma}_{\LL_2(\Sigma_\tau)}^2
     \leq C \norm{\abs{v}^{\nicefrac{q}{2}}|_\Sigma}_{\LL_2((0,\tau);\HH^{s-1/2}(\Sigma))}^2
     \\
     &\leq C \norm{\abs{v}^{\nicefrac{q}{2}}}_{\LL_2((0,\tau);\HH^s(\Omega))}^2
     \leq C \norm{\abs{v}^{\nicefrac{q}{2}}}_{\LL_2((0,\tau);\HH^1(\Omega))}^{2s} \norm{\abs{v}^{\nicefrac{q}{2}}}_{\LL_2((0,\tau);\LL_2(\Omega))}^{2(1-s)}
     \\
     &= C \norm{\abs{v}^{\nicefrac{q}{2}}}_{\LL_2((0,\tau);\HH^1(\Omega))}^{2s} \norm{v}_{\LL_q(\Omega_\tau)}^{(1-s)q}.
   \end{align*}
 \end{proof}

Adjusting the norm for which we want to estimate the surface trace $v|_\Sigma$ in the previous result, we obtain the following variant using the duality method, cf.\ \cite{BoFiPiRo_2017}.

 \begin{lemma}[Dual estimate (Variant)]
  \label{lem:dual-estimate-variant}
  Let $p, q \in (1, \infty]$, $T > 0$ and $\alpha, \beta > 0$ be fixed, and let $\tau \in (0,T]$.
  Assume that $\vec v \in \fs U_{\tilde p}^\Omega(T)$ with $\tilde p > d + 2$.
  Let $f \in \LL_p(\Omega_\tau)$, $g^\Sigma \in \WW_p^{(1,2) \cdot (\frac{1}{2} - \frac{1}{2p})}(\Sigma_\tau)$ and $v_0 \in \WW_p^{2-2/p}(\Omega)$ be given and assume that $- \beta \partial_{\vec \nu} v_0|_\Sigma = g^\Sigma|_{t=0}$ if $p > 3$.
  Then there is a constant $C = C(q,T) > 0$ such that for every strong solution $v \in \WW_p^{(1,2)}(\Omega_\tau)$ of the initial-boundary value problem
    \begin{alignat}{2}
     \alpha \partial_t v + \vec v \cdot \nabla v - \dv (d \nabla v)
      &= f
	  \qquad &
      &\text{on } \Omega_\tau,
      \nonumber \\
     - d \partial_\nu v
      &= g^\Sigma
	  \qquad &
      &\text{on } \Sigma_\tau,
      \label{eqn:dual-estimate-variant}
      \\
     v(0)
      &= v_0
	  \qquad &
      &\text{in } \Omega,
      \nonumber
    \end{alignat}
   the following estimate is valid:
    \begin{align*}
     &\norm{v}_{\LL_q(\Omega_\tau)} + \norm{v|_\Sigma}_{\mathring \WW_{q'}^{(1,2) \cdot (\frac{1}{2} - \frac{1}{2q'})}(\Sigma_\tau)^\ast}
      \\ &\qquad
      \leq C \left( \norm{f}_{\WW_{q'}^{(1,2)}(\Omega_\tau)^\ast} + \norm{g^\Sigma}_{\mathring \WW_{q'}^{(1,2) \cdot (1 - \frac{1}{2q'})}(\Sigma_\tau)^\ast} + \norm{v^0}_{\WW_{q'}^{2-2/q'}(\Omega)^\ast} \right),
    \end{align*}
  where we write
   \begin{align*}
    \mathring \WW_{q'}^{(1,2) \cdot (\frac{1}{2} - \frac{1}{2q'})}(\Sigma_\tau)
     &:= \begin{cases}
          \{ g \in \WW_{q'}^{(1,2) \cdot (\frac{1}{2} - \frac{1}{2q'})}(\Sigma_\tau): \, g|_{t = \tau} = 0 \},
          &q < \frac{3}{2},
          \vspace{2pt}
          \\
          \WW_{q'}^{(1,2) \cdot (\frac{1}{2} - \frac{1}{2q'})}(\Sigma_\tau),
          &q \geq \frac{3}{2},
         \end{cases}
     \\
    \mathring \WW_{q'}^{(1,2) \cdot (1 - \frac{1}{2q'})}(\Sigma_\tau)
     &:= \begin{cases}
          \{ h \in \WW_{q'}^{(1,2) \cdot (1 - \frac{1}{2q'})}(\Sigma_\tau): \, h|_{t = \tau} = 0 \},
          &q < 3,
          \vspace{2pt}
          \\
          \WW_{q'}^{(1,2) \cdot (1 - \frac{1}{2q'})}(\Sigma_\tau),
          &q \geq 3.
         \end{cases}
   \end{align*}
 \end{lemma}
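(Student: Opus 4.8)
The plan is to run the \emph{duality method}, in the spirit of M.~Pierre's survey \cite{Pierre_2010} and of Lemma~\ref{lem:dual-estimate}, but now testing $v$ and its spatial trace against elements of the \emph{preduals} of the negative-order spaces appearing on the left-hand side; compare \cite{BoFiPiRo_2017} for estimates of this flavour. Write $q' \in [1,\infty)$ for the Hölder conjugate exponent of $q$, i.e.\ $\tfrac1q + \tfrac1{q'} = 1$, and treat first the case $q \in (1,\infty)$, so that $q'$ lies in the reflexive range; the endpoint $q = \infty$ is obtained afterwards by a limiting argument. Since the asserted inequality only sees the data $f$, $g^\Sigma$, $v_0$ through its right-hand side, one may by density assume throughout that $f$, $g^\Sigma$, $v_0$ -- and hence $v$ -- are smooth, the general case following by approximation.

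First I would fix test functions $\phi \in \LL_{q'}(\Omega_\tau)$ and $\psi \in \mathring{\fs G}_{q'}^\Sigma(\tau) := \mathring\WW_{q'}^{(1,2)\cdot(\frac12 - \frac1{2q'})}(\Sigma_\tau)$ and solve the \emph{backward} dual initial--boundary value problem
 \begin{alignat*}{2}
  - \alpha \partial_t \theta - \vec v \cdot \nabla \theta - \dv(d \nabla \theta)
   &= \phi
   \qquad &
   &\text{on } \Omega_\tau,
   \\
  - d \partial_\nu \theta
   &= \psi
   \qquad &
   &\text{on } \Sigma_\tau,
   \\
  \theta(\tau,\cdot)
   &= 0
   \qquad &
   &\text{in } \Omega,
 \end{alignat*}
whose differential operator is the formal adjoint of $\alpha \partial_t + \vec v \cdot \nabla - \dv(d\nabla\cdot)$, using $\dv \vec v = 0$ and $\vec v \cdot \vec\nu = 0$ on $\Sigma$. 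After the time reversal $t \mapsto \tau - t$ this becomes a forward parabolic problem of the type covered by Lemma~\ref{lem:maximal_regularity_scalar_case}: the reflected velocity field still lies in $\fs U_{\tilde p}^\Omega(T)$, the (then) initial datum vanishes, $\phi$ is an $\LL_{q'}$-source and $\psi$ belongs to the Neumann trace space. The compatibility condition demanded in Lemma~\ref{lem:maximal_regularity_scalar_case} when $q' > 3$ -- that is, precisely when $q < \tfrac32$ -- is $\psi|_{t=\tau} = 0$, and this is exactly the constraint encoded in $\mathring{\fs G}_{q'}^\Sigma(\tau)$; for $q \ge \tfrac32$ no such condition arises and $\mathring{\fs G}_{q'}^\Sigma(\tau) = \fs G_{q'}^\Sigma(\tau)$. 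Thus $\theta \in \WW_{q'}^{(1,2)}(\Omega_\tau)$ with
 \[
  \norm{\theta}_{\WW_{q'}^{(1,2)}(\Omega_\tau)}
   \le C(q,T) \big( \norm{\phi}_{\LL_{q'}(\Omega_\tau)} + \norm{\psi}_{\fs G_{q'}^\Sigma(\tau)} \big),
 \]
the constant being uniform in $\tau \in (0,T]$ since the $\tau^{-1/q'}$-factor in Lemma~\ref{lem:maximal_regularity_scalar_case} multiplies only the (vanishing) initial datum. One also records, via the standard trace embeddings, that $\theta(0,\cdot) \in \WW_{q'}^{2-2/q'}(\Omega)$ and that $\theta|_\Sigma$ lies in the Dirichlet trace space $\fs H_{q'}^\Sigma(\tau) = \WW_{q'}^{(1,2)\cdot(1 - \frac1{2q'})}(\Sigma_\tau)$ and -- since $\theta(\tau,\cdot) = 0$ -- even in $\mathring\WW_{q'}^{(1,2)\cdot(1 - \frac1{2q'})}(\Sigma_\tau)$, all with norms controlled by $\norm{\theta}_{\WW_{q'}^{(1,2)}(\Omega_\tau)}$.

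Next I would pair the dual equation with $v$ over $\Omega_\tau$ and integrate by parts in time (using $\theta(\tau,\cdot) = 0$) and in space (using $\dv \vec v = 0$, $\vec v \cdot \vec\nu = 0$, the Neumann conditions $-d\partial_\nu\theta = \psi$ and $-d\partial_\nu v = g^\Sigma$), then insert $\alpha\partial_t v + \vec v \cdot \nabla v - \dv(d\nabla v) = f$; after rearranging one obtains an identity of the shape
 \[
  \int_{\Omega_\tau} v\,\phi - \int_{\Sigma_\tau} v|_\Sigma\,\psi
   = \alpha \int_\Omega v_0\,\theta(0,\cdot) + \int_{\Omega_\tau} f\,\theta - \int_{\Sigma_\tau} g^\Sigma\,\theta|_\Sigma .
 \]
Estimating the three terms on the right through the duality pairings of $\WW_{q'}^{2-2/q'}(\Omega)^\ast$ with $\WW_{q'}^{2-2/q'}(\Omega)$, of $\WW_{q'}^{(1,2)}(\Omega_\tau)^\ast$ with $\WW_{q'}^{(1,2)}(\Omega_\tau)$, and of $\mathring\WW_{q'}^{(1,2)\cdot(1-\frac1{2q'})}(\Sigma_\tau)^\ast$ with $\mathring\WW_{q'}^{(1,2)\cdot(1-\frac1{2q'})}(\Sigma_\tau)$, and then plugging in the maximal regularity bound for $\theta$, one arrives at
 \[
  \Big| \int_{\Omega_\tau} v\,\phi - \int_{\Sigma_\tau} v|_\Sigma\,\psi \Big|
   \le C \, \mathcal{R} \, \big( \norm{\phi}_{\LL_{q'}(\Omega_\tau)} + \norm{\psi}_{\fs G_{q'}^\Sigma(\tau)} \big),
 \]
where $\mathcal{R}$ denotes the right-hand side of the asserted inequality. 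Taking $\psi = 0$ and the supremum over $\norm{\phi}_{\LL_{q'}(\Omega_\tau)} \le 1$ bounds $\norm{v}_{\LL_q(\Omega_\tau)}$ by $C\mathcal{R}$ (duality $\LL_q(\Omega_\tau) = \LL_{q'}(\Omega_\tau)^\ast$), while taking $\phi = 0$ and the supremum over $\psi \in \mathring{\fs G}_{q'}^\Sigma(\tau)$ with $\norm{\psi}_{\fs G_{q'}^\Sigma(\tau)} \le 1$ bounds $\norm{v|_\Sigma}_{\mathring\WW_{q'}^{(1,2)\cdot(\frac12 - \frac1{2q'})}(\Sigma_\tau)^\ast}$ by $C\mathcal{R}$; adding the two yields the claim for $q \in (1,\infty)$, and the endpoint $q = \infty$ follows by letting $q \to \infty$.

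The step I expect to be the genuine obstacle is the bookkeeping around the terminal time $t = \tau$: one must verify that the compatibility condition for the dual problem (present only in the regime $q' > 3$) is \emph{exactly} matched by the vanishing time-trace built into $\mathring{\fs G}_{q'}^\Sigma(\tau)$, that the Dirichlet trace $\theta|_\Sigma$ indeed belongs to $\mathring\WW_{q'}^{(1,2)\cdot(1-\frac1{2q'})}(\Sigma_\tau)$ so that the pairing against $g^\Sigma$ is meaningful, and that the maximal regularity constant stays uniform in $\tau \in (0,T]$ -- all three of which rely on the dual data vanishing at $t = \tau$, which is precisely why the $\mathring{}$-spaces are defined as stated, with the thresholds at $q = \tfrac32$ and $q = 3$. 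The remaining ingredients -- the integration-by-parts computation and the trace and embedding estimates -- are routine.
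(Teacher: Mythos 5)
Your proposal is correct and follows essentially the same route as the paper's proof: solve the time-reversed dual problem with zero terminal data by $\LL_{q'}$-maximal regularity (giving a $\tau$-uniform constant), integrate by parts to obtain the duality identity, estimate via the dual pairings, and take suprema over unit-ball test data, with the $\mathring{}$-spaces absorbing exactly the compatibility/time-trace conditions at $t=\tau$. Your sign bookkeeping in the duality identity is in fact more careful than the paper's, and the only difference — testing separately with $\psi=0$ and $\phi=0$ rather than taking the joint supremum — is immaterial.
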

 
\begin{proof}
 In the spirit of the duality technique, let arbitrary $\Theta \in \LL_{q'}(\Omega_\tau)$ and $\eta \in \WW_{q'}^{(1,2) \cdot (\frac{1}{2} - \frac{1}{2q'})}(\Sigma_\tau)$ such that (if $q < \frac{3}{2}$) $\eta|_{t = \tau} = 0$ be given and let $\theta \in \WW_{q'}^{(1,2)}(\Omega_\tau)$ be the unique solution to the (time-reversed) dual problem
  \[
   \begin{cases}
    - \alpha \partial_t \theta - \vec v \cdot \nabla \theta - \dv(d \nabla \theta)
     = \Theta \quad
     &\text{in } \Omega_\tau,
     \\
    - d \partial_{\vec \nu} \theta
     = \eta \quad
     &\text{on } \Sigma_\tau,
     \\
    \theta(\tau,\cdot)
     = 0 \quad
     &\text{in } \Omega.
   \end{cases}
  \]
 Thanks to $\LL_{q'}$-maximal regularity, this solution $\theta \in \WW_{q'}^{(1,2)}(\Omega_\tau)$ exists and is uniquely determined.
 Moreover, thanks to the condition $\theta(\tau,\cdot) = 0$ there is a constant $C = C(q,T)$, which is independent of $\tau \in (0,T]$ such that
  \[
   \norm{\theta}_{\WW_{q'}^{(1,2)}(\Omega_\tau)}
    \leq C \big( \norm{\Theta}_{\LL_{q'}(\Omega_\tau)} + \norm{\eta}_{\WW_{q'}^{(1,2) \cdot (\frac{1}{2} - \frac{1}{2q'})}(\Sigma_\tau)} \big).
  \]
 Using integration by parts and Green's formula, we may conclude that (recall that $\theta(\tau,\cdot) = 0$ in $\Omega$, $\dv \vec v = 0$ in $\Omega$ and $\partial_{\vec \nu} v \equiv 0$ on $\Sigma_\tau$)
  \begin{align*}
   &\int_{\Omega_\tau} v \Theta \dd (t,\vec x)
    + \int_{\Sigma_\tau} v \eta \dd (t, \sigma(\vec x))
    \\
    &= - \int_{\Omega_\tau} v (\alpha \partial_t \theta + \vec v \cdot \nabla \theta + \dv(d \nabla \theta)) \, \dd (t,\vec x)
     - \int_{\Sigma_\tau} v d \partial_{\vec \nu} \theta \, \dd (t, \sigma(\vec x))
     \\
    &= \int_{\Omega_\tau} (\alpha \partial_t v + \vec v \cdot \nabla v - \dv(d \nabla v)) \theta \dd (t, \vec x)
     - \int_{\Sigma_\tau} \theta d \partial_{\vec \nu} v \dd (t, \sigma(\vec x))
     - \int_{\Omega} v^0 \theta(0,\cdot) \dd \vec x
     \\
    &= \int_{\Omega_\tau} f \theta \dd (t, \vec x)
     + \int_{\Sigma_\tau} g^\Sigma \theta \dd (t, \sigma(\vec x))
     - \int_{\Omega} v^0 \theta(0,\cdot) \dd \vec x.
  \end{align*}
 The right-hand side can be estimated by
  \begin{align*}
   &\norm{f}_{\WW_{q'}^{(1,2)}(\Omega_\tau)^\ast} \norm{\theta}_{\WW_{q'}^{(1,2)}(\Omega_\tau)}
    + \norm{g^\Sigma}_{\mathring \WW_{q'}^{(1,2) \cdot (1 - \frac{1}{2q'})}(\Sigma_\tau)^\ast} \norm{\theta}_{\mathring \WW_{q'}^{(1,2) \cdot (1 - \frac{1}{2q'})}(\Sigma_\tau)}
    \\ &\qquad
    + \norm{v^0}_{\WW_{q'}^{2-2/q'}(\Omega)^\ast} \norm{\theta(0,\cdot)}_{\WW_{q'}^{2-2/q'}(\Omega)}
    \\
    &\leq
    C \big( \norm{f}_{\WW_{q'}^{(1,2)}(\Omega_\tau)^\ast} + \norm{g^\Sigma}_{\mathring \WW_{q'}^{(1,2) \cdot (1 - \frac{1}{2q'})}(\Sigma_\tau)^\ast} + \norm{v^0}_{\WW_{q'}^{2-2/q'}(\Omega)^\ast} \big)
     \\ &\qquad
     \cdot \big( \norm{\Theta}_{\LL_{q'}(\Omega_\tau)} + \norm{\eta}_{\mathring \WW_{q'}^{(1,2) \cdot (1 - \frac{1}{2q'})}(\Sigma_\tau)} \big).
  \end{align*}
 Taking the supremum over all such $\Theta$ and $\eta$ with norm less or equal $1$, and using that $\LL_q(\Omega_\tau)$ is isometrically isomorphic to $\LL_{q'}(\Omega_\tau)^\ast$, we find that
  \begin{align*}
   &\norm{v}_{\LL_q(\Omega_\tau)} + \norm{v|_\Sigma}_{\mathring \WW_{q'}^{(1,2) \cdot (1 - \frac{1}{2q'})}(\Sigma_\tau)^\ast}
    \\
    &\leq C \big( \norm{f}_{\WW_{q'}^{(1,2)}(\Omega_\tau)^\ast} + \norm{g^\Sigma}_{\mathring \WW_{q'}^{(1,2) \cdot (1 - \frac{1}{2q'})}(\Sigma_\tau)^\ast} + \norm{v^0}_{\WW_{q'}^{2-2/q'}(\Omega)^\ast} \big).
  \end{align*}
\end{proof}

Similar estimates are also valid for reaction-diffusion-advection systems on the surface $\Sigma = \partial \Omega$:

\begin{lemma}[Dual estimates on the surface]
  \label{lem:surface-dual-estimate-variant}
  Let $\Sigma$ be the boundary of a bounded $\CC^{3-}$-domain $\Omega \subseteq \R^d$, $p, q \in (1, \infty]$, $T > 0$ and $\alpha, \beta > 0$ be fixed, and let $\tau \in (0,T]$.
  Let $f^\Sigma \in \LL_p(\Sigma_\tau)$ and $v^{\Sigma,0} \in \WW_p^{2-2/p}(\Sigma)$.
  Then there is a constant $C = C(q,T) > 0$ such that for every strong solution $v^\Sigma \in \WW_p^{(1,2)}(\Sigma_\tau)$ of the initial value problem
    \begin{alignat}{2}
     \partial_t v^\Sigma - \dv_\Sigma (d^\Sigma \nabla_\Sigma v^\Sigma)
      &= f^\Sigma
	  \qquad &
      &\text{on } \Sigma_\tau,
      \label{eqn:surface-dual-estimate-variant}
      \\
     v^\Sigma(0,\cdot)
      &= v^{\Sigma,0}
	  \qquad &
      &\text{on } \Sigma
      \nonumber
    \end{alignat}
   the following estimate is valid:
    \[
     \norm{v^\Sigma}_{\LL_q(\Sigma_\tau)}
      \leq C \left( \norm{f^\Sigma}_{\WW_{q'}^{(1,2)}(\Sigma_\tau)^\ast} + \norm{v^{\Sigma,0}}_{\WW_{q'}^{2-2/q'}(\Sigma)^\ast} \right).
    \]
\end{lemma}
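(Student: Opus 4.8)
The plan is to follow the strategy of the proof of Lemma~\ref{lem:dual-estimate-variant}, the situation here being in fact simpler because $\Sigma = \partial\Omega$ is a closed manifold — so no boundary terms arise in the integration by parts — and because the surface equation \eqref{eqn:surface-dual-estimate-variant} carries no advection term. I would treat the reflexive range $q \in (1,\infty)$ in detail, the endpoint $q = \infty$ being obtained by a separate, more elementary argument (cf.\ the $q = \infty$ case of Lemma~\ref{lem:dual-estimate}).

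For fixed $q \in (1,\infty)$ with H\"older conjugate $q' = q/(q-1) \in (1,\infty)$, and for an arbitrary test function $\Theta \in \LL_{q'}(\Sigma_\tau)$, I would let $\theta \in \WW_{q'}^{(1,2)}(\Sigma_\tau)$ be the unique solution of the time-reversed dual problem
\begin{alignat*}{2}
 - \partial_t \theta - \dv_\Sigma(d^\Sigma \nabla_\Sigma \theta)
  &= \Theta
  \qquad &
  &\text{on } \Sigma_\tau,
  \\
 \theta(\tau,\cdot)
  &= 0
  \qquad &
  &\text{on } \Sigma.
\end{alignat*}
This solution exists and is uniquely determined by $\LL_{q'}$-maximal regularity of the surface operator $A^\Sigma = - \dv_\Sigma(d^\Sigma \nabla_\Sigma \cdot)$ on $\LL_{q'}(\Sigma)$ from Lemma~\ref{lem:maximal_regularity_scalar_case}, applied after the time reversal $t \mapsto \tau - t$, which turns the homogeneous final condition into a homogeneous initial condition. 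Crucially, for a problem with vanishing initial trace one may extend the right-hand side by zero to $(0,\infty) \times \Sigma$ and solve on the half-line, so that both the maximal-regularity constant and the constant in the time-trace embedding $\WW_{q'}^{(1,2)}(\Sigma_\tau) \hookrightarrow \BUC([0,\tau]; \WW_{q'}^{2-2/q'}(\Sigma))$ may be chosen independently of $\tau \in (0,T]$; that is,
\[
 \norm{\theta}_{\WW_{q'}^{(1,2)}(\Sigma_\tau)} + \norm{\theta(0,\cdot)}_{\WW_{q'}^{2-2/q'}(\Sigma)}
  \leq C(q,T)\, \norm{\Theta}_{\LL_{q'}(\Sigma_\tau)}.
\]

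Next I would test \eqref{eqn:surface-dual-estimate-variant} against $\theta$: integrating by parts once in time (using $\theta(\tau,\cdot) = 0$ and $v^\Sigma(0,\cdot) = v^{\Sigma,0}$) and twice in the space variable on the boundaryless manifold $\Sigma$ yields the identity
\[
 \int_{\Sigma_\tau} v^\Sigma \, \Theta \dd (t, \sigma(\vec x))
  = \int_{\Sigma_\tau} f^\Sigma \, \theta \dd (t, \sigma(\vec x))
  + \int_\Sigma v^{\Sigma,0} \, \theta(0,\cdot) \dd \sigma(\vec x).
\]
Since $v^\Sigma$ is only a strong $\WW_p^{(1,2)}$-solution and need not a priori lie in $\LL_q(\Sigma_\tau)$, I would first establish this identity for smooth data $f^\Sigma$, $v^{\Sigma,0}$ — where $v^\Sigma$ is classical and all pairings are unambiguous — and afterwards pass to the limit, which is legitimate because the final constant does not depend on the data. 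Estimating the right-hand side by the duality pairings
\[
 \Big| \int_{\Sigma_\tau} f^\Sigma \theta \dd(t,\sigma(\vec x)) \Big|
  \leq \norm{f^\Sigma}_{\WW_{q'}^{(1,2)}(\Sigma_\tau)^\ast} \norm{\theta}_{\WW_{q'}^{(1,2)}(\Sigma_\tau)},
 \qquad
 \Big| \int_\Sigma v^{\Sigma,0} \theta(0,\cdot) \dd\sigma(\vec x) \Big|
  \leq \norm{v^{\Sigma,0}}_{\WW_{q'}^{2-2/q'}(\Sigma)^\ast} \norm{\theta(0,\cdot)}_{\WW_{q'}^{2-2/q'}(\Sigma)},
\]
inserting the bound for $\theta$ from the previous step and then taking the supremum over all $\Theta$ with $\norm{\Theta}_{\LL_{q'}(\Sigma_\tau)} \leq 1$ while using the isometric isomorphism $\LL_q(\Sigma_\tau) \cong \LL_{q'}(\Sigma_\tau)^\ast$, gives the asserted estimate. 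The main obstacle — exactly as for Lemma~\ref{lem:dual-estimate-variant} — is not the bookkeeping of the duality identity but securing that the constant $C = C(q,T)$ is \emph{uniform in the time horizon} $\tau \in (0,T]$; this rests on the observation that the dual problem comes with a homogeneous \emph{final} condition, which upon time reversal becomes a homogeneous \emph{initial} condition, and that maximal-regularity estimates as well as time-trace embeddings for parabolic $\WW^{(1,2)}_{q'}$-functions with vanishing initial trace have $\tau$-independent constants. A secondary technical point is the justification of the testing identity for merely strong solutions, handled by the density/approximation argument indicated above; here the finiteness of $\sigma(\Sigma)$ and the regularity $\Sigma \in \CC^{3-}$ ensure that the relevant Sobolev embeddings and the $\LL_{q'}$-maximal regularity of $A^\Sigma$ are available.
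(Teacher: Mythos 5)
Your proposal is correct and follows essentially the same route as the paper's proof: the time-reversed dual problem with homogeneous final data, $\LL_{q'}$-maximal regularity giving a $\tau$-uniform constant, integration by parts on the closed manifold $\Sigma$, estimation via the dual-norm pairings, and a supremum over $\Theta$ with $\norm{\Theta}_{\LL_{q'}(\Sigma_\tau)} \leq 1$. The additional remarks on approximating strong solutions and on the endpoint $q=\infty$ are compatible refinements, not a different method.
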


\begin{proof}
 For arbitrary $\Theta^\Sigma \in \LL_{q'}(\Sigma_\tau)$, let us consider the dual, time-reversed problem
  \[
   \begin{cases}
    - \partial_t \Psi^\Sigma - \dv_\Sigma(d^\Sigma \nabla_\Sigma \Psi^\Sigma)
     = \Theta^\Sigma \quad
     &\text{on } \Sigma_\tau,
     \\
    \Psi^\Sigma(\tau,\cdot)
     = 0 \quad
     &\text{on } \Sigma,
   \end{cases}
  \]
 which admits $\LL_{q'}$-maximal regularity, hence, has a unique solution $\Psi^\Sigma \in \WW_{q'}^{(1,2)}(\Sigma_\tau)$ such that
  \[
   \norm{\Psi^\Sigma}_{\WW_{q'}^{(1,2)}(\Sigma_\tau)}
    \leq C \norm{\Theta^\Sigma}_{\LL_{q'}(\Sigma_\tau)}
  \]
 for some constant $C > 0$, which is not only independent of $\Theta^\Sigma$, but of $\tau \in (0, T]$ as well since we only consider zero final data $\Psi^\Sigma(\tau,\cdot) = 0$.
 We then calculate that
  \begin{align*}
   \int_{\Sigma_\tau} v^\Sigma \cdot \Theta^\Sigma \dd (t, \sigma(\vec x))
    &= - \int_{\Sigma_\tau} v^\Sigma (\partial_t \Psi^\Sigma + \dv_\Sigma (d^\Sigma \nabla_\Sigma \Psi^\Sigma)) \dd(t, \sigma(\vec x))
    \\
    &= \int_{\Sigma_\tau} (\partial_t v^\Sigma - \dv_\Sigma(d^\Sigma \nabla_\Sigma v^\Sigma)) \Psi^\Sigma \dd (t, \sigma(\vec x))
    + \int_\Sigma v^{\Sigma,0} \Psi^{\Sigma}(0,\cdot) \dd \sigma(\vec x)
    \\
    &= \int_{\Sigma_\tau} f^\Sigma \Psi^\Sigma \dd (t, \sigma(\vec x))
     + \int_\Sigma v^{\Sigma,0} \Psi^{\Sigma}(0,\cdot) \dd \sigma(\vec x),
  \end{align*}
 which we can estimate by
  \begin{align*}
   &\norm{f^\Sigma}_{\WW_{q'}^{(1,2)}(\Sigma_\tau)^\ast} \norm{\Psi^\Sigma}_{\WW_{q'}^{(1,2)}(\Sigma_\tau)}
    + \norm{v^{\Sigma,0}}_{\WW_{q'}^{2-2/q'}(\Sigma)^\ast} \norm{\Psi^\Sigma}_{\WW_{q'}^{2-2/q'}(\Sigma)}
    \\
   &\leq \big(\norm{f^\Sigma}_{\WW_{q'}^{(1,2)}(\Sigma_\tau)^\ast} + \norm{v^{\Sigma,0}}_{\WW_{q'}^{2-2/q'}(\Sigma)^\ast} \big) \norm{\Psi^\Sigma}_{\WW_{q'}^{2-2/q'}(\Sigma)^\ast},
  \end{align*}
 and the assertion follows by taking the supremum over all $\Theta^\Sigma \in \LL_{q'}(\Sigma_\tau)$ with $\norm{\Theta^\Sigma}_{\LL_{q'}(\Sigma_\tau)} \leq 1$.
\end{proof}

We will also need a version of the duality estimate related to the positive part.
Such positive versions are well-established for the pure bulk phase case, where we once again refer to M.~Pierre's survey \cite{Pierre_2010}.

 \begin{lemma}[Positive duality estimate]
 \label{lem:pos-dual-estimate-bulk-variant}
  Let $T > 0$, $q \in (1, \infty]$ and $\Omega \subseteq \R^d$ be a bounded domain with $\CC^{3-}$-boundary $\Sigma = \partial \Omega$.
  Let $\vec v \in \fs U_{\tilde p}^\Omega(T)$ for some $\tilde p > d+2$.
  Given $f, f_i \in \LL_q(\Omega_\tau)$, $g^\Sigma, g_i^\Sigma \in \WW_q^{(1,2) \cdot (\frac{1}{2} - \frac{1}{2p})}(\Sigma_\tau)$ and $v^0, v^0_i \in \WW_q^{2-2/q}(\Omega) $ (with compatibility condition $\partial_{\vec \nu} v^0 = g^\Sigma|_{t=0}$ and $\partial_{\vec \nu} v^0_i = g_i^\Sigma|_{t=0}$ if $q > 3$), let $v, v_i \in \WW_q^{(1,2)}(\Omega_\tau)$ be the (unique) strong $\WW_q^{(1,2)}$-solutions to the parabolic initial-boundary value problems
   \begin{alignat*}{2}
    \partial_t v + \vec v \cdot \nabla v - \dv( d \nabla v)
     &= f
	 \qquad &
     &\text{in } (0, \tau) \times \Omega,
     \\
    d \partial_{\vec \nu} v
     &= g^\Sigma
	 \qquad &
     &\text{on } (0, \tau) \times \Sigma,
     \\
    v(0,\cdot)
     &= v^0
     \qquad &
     &\text{in } \Omega
     \\
     \intertext{and}
    \alpha_i \partial_t v_i + \beta_i \vec v \cdot \nabla v_i - \dv( d_i \nabla v_i)
     &= f_i
	 \qquad &
     &\text{in } (0, \tau) \times \Omega,
     \\
    d_i \partial_{\vec \nu} v_i
     &= g_i^\Sigma
	 \qquad &
     &\text{on } (0, \tau) \times \Sigma,
     \\
    v_i(0,\cdot)
     &= v^0_i
	 \qquad &
     &\text{in } \Omega,
   \end{alignat*}
  where $\tau \in (0, T)$, and positive constants $d, \alpha_i, d_i > 0$ and $\beta_i \in \R$ are given.
  \newline
  Then there is a constant $C = C(q,\Omega,T) > 0$ such that
   \begin{align*}
    &\norm{v^+}_{\LL_q(\Omega_\tau)} + \norm{v^+|_\Sigma}_{\mathring \WW_{q'}^{(1,2) \cdot (\frac{1}{2} - \frac{1}{2q'})}(\Sigma_\tau)^\ast}
     \\
     &\leq C \big(\sum_{j=1}^N \norm{v_j}_{\LL_q(\Omega_\tau)} + \norm{v_j^+|_\Sigma}_{\mathring \WW_{q'}^{(1,2) \cdot (\frac{1}{2} - \frac{1}{2q'})}(\Sigma_\tau)^\ast} + \norm{(v^0 - \sum_{j=1}^N \alpha_j v^0_j)^+}_{\WW_{q'}^{2-2/q'}(\Omega)^\ast}
      \\ &\qquad
      + \norm{(f - \sum_{j=1}^N f_j)^+}_{\LL_q(\Omega_\tau)} + \norm{(g^\Sigma - \sum_{j=1}^N g_j^\Sigma)^+}_{\WW_{q'}^{(1,2) \cdot (1 - \frac{1}{2q'})}(\Sigma_\tau)^\ast} \big).
   \end{align*}
 \end{lemma}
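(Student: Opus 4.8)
The plan is to prove the estimate by the \emph{positive duality method}, in the spirit of M.~Pierre's survey \cite{Pierre_2010} and the variant of \cite{BoFiPiRo_2017}, transported to the coupled bulk--surface setting exactly as in Lemmas~\ref{lem:dual-estimate-variant} and~\ref{lem:surface-dual-estimate-variant}. First I would dispose of the non-reflexive endpoint: for $q = \infty$ (so $q' = 1$, where $\LL_{q'}$--maximal regularity is unavailable) the asserted inequality is an $\LL_\infty$--bound for $v^+$ and $v^+|_\Sigma$, which one obtains either by letting $q\to\infty$ in the estimate for finite $q$ (admissible since $\Omega_\tau$ and $\Sigma_\tau$ carry finite measure) or, more robustly, directly from the comparison principle Lemma~\ref{lem:comparison_principle}. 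Hence assume $q\in(1,\infty)$ from now on.

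The heart of the argument is the following dual construction. Fix an arbitrary nonnegative $\Theta\in\LL_{q'}(\Omega_\tau)$ with $\norm{\Theta}_{\LL_{q'}(\Omega_\tau)}\le 1$ and a nonnegative $\eta\in\mathring{\WW}_{q'}^{(1,2)\cdot(\frac{1}{2}-\frac{1}{2q'})}(\Sigma_\tau)$ with $\norm{\eta}\le 1$, and let $\theta\in\WW_{q'}^{(1,2)}(\Omega_\tau)$ be the unique solution of the time-reversed dual problem
\[
 -\partial_t\theta - \vec v\cdot\nabla\theta - \dv(d\nabla\theta) = \Theta \ \text{in }\Omega_\tau, \qquad d\,\partial_{\vec\nu}\theta = \eta \ \text{on }\Sigma_\tau, \qquad \theta(\tau,\cdot) = 0 .
\]
Existence, uniqueness, and a bound $\norm{\theta}_{\WW_{q'}^{(1,2)}(\Omega_\tau)}\le C(q,T)$ \emph{uniform in} $\tau\in(0,T]$ follow from $\LL_{q'}$--maximal regularity (Lemma~\ref{lem:maximal_regularity_scalar_case}), reversing time and using $\dv\vec v=0$, $\vec v\cdot\vec\nu=0$; the $\tau$--uniformity is exactly the mechanism already used in the proof of Lemma~\ref{lem:dual-estimate-variant} (vanishing \emph{final} data). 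Since $\Theta\ge 0$, $\eta\ge 0$, and the final datum is zero, the comparison principle Lemma~\ref{lem:comparison_principle} (applied to the time-reversed equation, whose drift again lies in $\fs U_{\tilde p}^\Omega(\tau)$) gives $\theta\ge 0$ on $\Omega_\tau$, hence $\theta|_\Sigma\ge 0$ and $\theta(0,\cdot)\ge 0$.

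Next I would test the equation for $v$ against $\theta$, test each equation for $v_j$ against $\theta$, and form the combination ``[$v$-identity]$\,-\,\sum_{j=1}^N$[$v_j$-identity]''. Two integrations by parts (moving all derivatives onto $\theta$, using $\theta(\tau,\cdot)=0$, $\dv\vec v=0$, $\vec v\cdot\vec\nu=0$, the boundary conditions, and $-\partial_t\theta-\vec v\cdot\nabla\theta-d\Delta\theta=\Theta$) produce, on the left-hand side, $\int_{\Omega_\tau}\bigl(v-\sum_j\alpha_j v_j\bigr)\Theta$ together with a surface pairing of $v|_\Sigma-\sum_j(d_j/d)\,v_j|_\Sigma$ against $\eta$; and on the right-hand side precisely $\int(f-\sum_j f_j)\theta+\int(g^\Sigma-\sum_j g_j^\Sigma)\theta+\int(v^0-\sum_j\alpha_j v_j^0)\theta(0,\cdot)$ \emph{plus} lower-order perturbation pairings of the form $\int_{\Omega_\tau}c_j\,v_j\,\Delta\theta$, $\int_{\Omega_\tau}c_j\,v_j\,(\vec v\cdot\nabla\theta)$ and $\int_{\Sigma_\tau}c_j\,v_j|_\Sigma\,\eta$ with bounded $c_j$, stemming from the mismatches $\alpha_j\neq 1$, $\beta_j\neq 1$, $d_j\neq d$. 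On the right-hand side I then use $\theta\ge 0$, $\theta(0,\cdot)\ge 0$ together with $h\,\theta\le h^+\theta$ to replace $f-\sum_j f_j$, $g^\Sigma-\sum_j g_j^\Sigma$ and $v^0-\sum_j\alpha_j v_j^0$ by their positive parts, then pair them against $\theta$, $\theta|_\Sigma\in\mathring{\WW}_{q'}^{(1,2)\cdot(1-\frac{1}{2q'})}(\Sigma_\tau)$ and $\theta(0,\cdot)\in\WW_{q'}^{2-2/q'}(\Omega)$ via the $\tau$--uniform bound; the bulk perturbation pairings are $\le C\sum_j\norm{v_j}_{\LL_q(\Omega_\tau)}\norm{\theta}_{\WW_{q'}^{(1,2)}}$, while the surface perturbations and the surface cross-term are $\le C\sum_j\norm{v_j^+|_\Sigma}_{\mathring{\WW}_{q'}^{(1,2)\cdot(\frac{1}{2}-\frac{1}{2q'})}(\Sigma_\tau)^\ast}$, using $\eta\ge 0$ and $v_j|_\Sigma\,\eta\le v_j^+|_\Sigma\,\eta$. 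Taking the supremum over all such nonnegative $\Theta$ and $\eta$ of unit norm yields — because $\sup_{0\le\Theta,\ \norm{\Theta}_{q'}\le1}\int_{\Omega_\tau}w\,\Theta=\norm{w^+}_{\LL_q(\Omega_\tau)}$ for any $w$, and a corresponding characterization of $\norm{w^+|_\Sigma}$ in the negative-order dual trace norm — the bound for $\bigl(v-\sum_j\alpha_j v_j\bigr)^+$ in $\LL_q(\Omega_\tau)$ and for its trace; the elementary pointwise inequality $v^+\le\bigl(v-\sum_j\alpha_j v_j\bigr)^+ +\sum_j\alpha_j v_j^+$ (also on $\Sigma_\tau$ after taking traces) then gives the assertion, the extra terms $\sum_j\alpha_j v_j^+$ producing exactly the $\sum_j(\norm{v_j}_{\LL_q}+\norm{v_j^+|_\Sigma}_{\cdot})$ on the right.

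The main obstacle, apart from the routine but lengthy bookkeeping of the many boundary integrals, is twofold: (i) establishing the supremum characterization of $\norm{w^+|_\Sigma}$ in the negative-order trace norm as a supremum over \emph{nonnegative} test functions — this needs a short argument exploiting the essentially disjoint supports of $(w|_\Sigma)^+$ and $(w|_\Sigma)^-$ and the boundedness of the positive-part operator on $\mathring{\WW}_{q'}^{(1,2)\cdot(\frac{1}{2}-\frac{1}{2q'})}(\Sigma_\tau)$, whose time- and space-regularity orders are strictly below $1$; and (ii) verifying that \emph{every} coefficient-mismatch contribution is absorbed by exactly the right-hand side norms occurring in the statement (full $\LL_q$ of $v_j$ in the bulk, positive-part dual norm of $v_j|_\Sigma$ on the surface), for which keeping all derivatives on the dual solution $\theta$ rather than on $v_j$ is essential, and $\eta\ge 0$, $\theta\ge 0$ are used throughout. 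The endpoint $q=\infty$ is treated separately, as indicated.
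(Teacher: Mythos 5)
Your proposal is correct and follows essentially the same route as the paper's own proof: a time-reversed dual problem with non-negative data $(\Theta,\eta)$ and zero final datum, positivity of the dual solution via the comparison principle, a $\tau$-uniform maximal regularity bound, testing with the equations for $v$ and the $v_j$ so that all derivatives land on the dual solution, estimating the surface cross-terms $\tfrac{d_j}{d}\int_{\Sigma_\tau}\eta\,v_j$ by $\norm{v_j^+|_\Sigma}$ in the dual trace norm, and finally taking the supremum over non-negative test data. Your reorganisation (subtracting the $v_j$-identities, splitting the dual operator into $\alpha_j\Theta$ plus coefficient-mismatch perturbations, and concluding via $v^+ \leq (v-\sum_j\alpha_j v_j)^+ + \sum_j\alpha_j v_j^+$) is only a cosmetic variant of the paper's bookkeeping, which instead keeps the full expression $\alpha_j\partial_t\Psi+\beta_j\vec v\cdot\nabla\Psi+\dv(d_j\nabla\Psi)$ and bounds its $\LL_{q'}$-norm directly by maximal regularity; your two flagged refinements (the non-reflexive endpoint $q=\infty$ and the supremum characterisation of the positive-part dual norm over non-negative test functions) address points the paper itself passes over silently.
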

 
\begin{proof}
 We proceed similar to the proof of the dual estimates, however, this time restrict ourselves to functions $\Theta \in \LL_{q'}(\Omega_\tau)$ and $\eta \in \mathring \WW_{q'}^{(1,2) \cdot (\frac{1}{2} - \frac{1}{2q'})}(\Sigma_\tau)$ which are \emph{non-negative}.
 As in the proof of Lemma~\ref{lem:dual-estimate-variant}, we consider the dual problem
  \[
   \begin{cases}
    - \partial_t \Psi - \vec v \cdot \nabla \Psi - \dv(d \nabla \Psi)
     = \Theta \quad
     &\text{in } \Omega_\tau,
     \\
    d \partial_{\vec \nu} \Psi
     = \eta \quad
     &\text{on } \Sigma_\tau,
     \\
    \Psi(\tau,\cdot)
     = 0 \quad
     &\text{in } \Omega.
   \end{cases}
  \]
 By $\LL_{q'}$-maximal regularity of this problem and the comparison principle for parabolic equations with inhomogeneous Neumann data, we obtain a unique and non-negative solution $\Psi \in \WW_{q'}^{(1,2)}(\Omega_\tau)$ such that
  \[
   \norm{\Psi}_{\WW_{q'}^{(1,2)}(\Omega_\tau)}
    \leq C \big( \norm{\Theta}_{\LL_{q'}(\Omega_\tau)} + \norm{\eta}_{\mathring \WW_{q'}^{(1,2) \cdot (\frac{1}{2} - \frac{1}{2q'})}(\Sigma_\tau)} \big)
  \]
 for a constant $C > 0$ which is independent of the time-horizon $\tau \in (0,T]$ (thanks to the homogeneous final condition $\Psi(\tau,\cdot) = 0$) and the data $(\Theta, \eta)$.
 Then, using the PDE solved by $\theta$ and $v$, and via integration by parts, we find that
  \begin{align*}
   &\int_{\Omega_\tau} v \Theta \dd (t, \vec x)
    + \int_{\Sigma_\tau} v \eta \dd (t, \sigma(\vec x))
    \\
    &= - \int_{\Omega_\tau} v (\partial_t \Psi + \vec v \cdot \nabla \Psi + \dv(d \nabla \Psi)) \dd (t, \vec x)
     - \int_{\Sigma_\tau} v \partial_{\vec \nu} \Psi \dd (t, \sigma(\vec x))
     \\
    &= \int_{\Omega_\tau} (\partial_t v + \vec v \cdot \nabla v - \dv(d \nabla v)) \Psi \dd (t, \vec x)
     - \int_{\Sigma_\tau} \Psi d \partial_{\vec \nu} v \dd (t, \sigma(\vec x))
     + \int_\Omega v^0 \Psi(0,\cdot) \dd \vec x
     \\
    &= \int_{\Omega_\tau} f \Psi \dd (t, \vec x)
     + \int_{\Sigma_\tau} g^\Sigma \Psi \dd (t, \sigma(\vec x))
     + \int_\Omega v^0 \Psi(0,\cdot) \dd \vec x.
  \end{align*}
 The latter terms can be written as
  \begin{align*}
   &\int_{\Omega_\tau} f \Psi \dd (t, \vec x)
    + \int_{\Sigma_\tau} g^\Sigma \Psi \dd (t, \sigma(\vec x))
    + \int_\Omega v^0 \Psi(0,\cdot) \dd \vec x
    \\
   &= \int_{\Omega_\tau} (f - \sum_{j=1}^m f_j) \Psi \dd (t, \vec x)
    + \int_{\Sigma_\tau} (g^\Sigma - \sum_{j=1}^m g_j^\Sigma) \Psi \dd (t, \sigma(\vec x))
    + \int_\Omega (v^0 - \sum_{j=1}^m \alpha_j v_j^0) \Psi(0,\cdot) \dd \vec x
    \\
    &\quad
    + \sum_{j=1}^m \big(
     \int_{\Omega_\tau} f_j \Psi \dd (t, \vec x)
     + \int_{\Sigma_\tau} g_j^\Sigma \Psi \dd (t, \sigma(\vec x))
     + \int_\Omega \alpha_j v_j^0 \Psi(0,\cdot) \dd \vec x
     \big).
  \end{align*}
 Using the PDEs for the functions $v_i$, we further obtain
  \begin{align*}
   &\int_{\Omega_\tau} f_j \Psi \dd (t, \vec x)
    + \int_{\Sigma_\tau} g_j^\Sigma \Psi \dd (t, \sigma(\vec x))
    + \int_\Omega \alpha_j v_j^0 \Psi(0,\cdot) \dd \vec x
    \\
   &= \int_{\Omega_\tau} (\alpha_j \partial_t v_j + \beta_j \vec v \cdot \nabla v_j - \dv( d_j \nabla v_j) \Psi \dd (t, \vec x)
    \\ &\quad
    - \int_{\Sigma_\tau} \Psi d_j \partial_{\vec \nu} v_j \dd (t, \sigma(\vec x))
    + \int_\Omega \alpha_j v_j^0 \Psi(0,\cdot) \dd \vec x
    \\
    &= - \int_{\Omega_\tau} v_j \big( \alpha_j \partial_t \Psi + \beta_j \vec v \cdot \nabla \Psi + \dv(d_j \nabla \Psi) \big) \dd (t, \vec x)
     + \frac{d_j}{d} \int_{\Sigma_\tau} \eta v_j \dd (t, \sigma(\vec x)).
  \end{align*}
 Together, this gives
  \begin{align*}
   &\int_{\Omega_\tau} v \Theta \dd (t, \vec x)
    + \int_{\Sigma_\tau} v \eta \dd (t, \sigma(\vec x))
    \\
   &= \int_{\Omega_\tau} (f - \sum_{j=1}^m f_j) \Psi \dd (t, \vec x)
    + \int_{\Sigma_\tau} (g^\Sigma - \sum_{j=1}^m g_j^\Sigma) \Psi \dd (t, \sigma(\vec x))
    + \int_\Omega (v^0 - \sum_{j=1}^m \alpha_j v_j^0) \Psi(0,\cdot) \dd \vec x
    \\ &\quad
    + \sum_{j=1}^m \big( \int_{\Omega_\tau} v_j \big( \alpha_j \partial_t \Psi + \beta_j \vec v \cdot \nabla \Psi + \dv(d_j \nabla \Psi) \big) \dd (t, \vec x)
    + \frac{d_j}{d} \int_{\Sigma_\tau} \eta v_j \dd (t, \sigma(\vec x)) \big)
    \\
   &\leq \norm{(f - \sum_{j=1}^m f_j)^+}_{\WW_{q'}^{(1,2)}(\Omega_\tau)^\ast} \norm{\Psi}_{\WW_{q'}^{(1,2)}(\Omega_\tau)}
    \\ &\qquad
    + \norm{(g^\Sigma - \sum_{j=1}^m g_j^\Sigma)^+}_{\mathring \WW_{q'}^{(1,2) \cdot (1 - \frac{1}{2q'})}(\Sigma_\tau)^\ast} \norm{\Psi|_\Sigma}_{\mathring \WW_{q'}^{(1,2) \cdot (1 - \frac{1}{2q'})}(\Sigma_\tau)}
     \\ &\qquad
    + \norm{(v^0 - \sum_{j=1}^m \alpha_j v_j^0)^+}_{\WW_{q'}^{2-2/q'}(\Omega)^\ast} \norm{\Psi(0,\cdot)}_{\WW_{q'}^{2-2/q'}(\Omega)}
     \\ &\qquad
    + \sum_{j=1}^m \norm{v_j}_{\LL_q(\Omega_\tau)} \norm{\alpha_j \partial_t \Psi + \beta_j \vec v \cdot \nabla \Psi + \dv(d_j \nabla \Psi)}_{\LL_{q'}(\Omega_\tau)}
    \\ &\qquad
    + \frac{d_j}{d} \norm{v_j^+}_{\mathring \WW_{q'}^{(1,2) \cdot (\frac{1}{2} - \frac{1}{2q'})}(\Sigma_\tau)^\ast} \norm{\eta}_{\WW_{q'}^{(1,2) \cdot (\frac{1}{2} - \frac{1}{2q'})}(\Sigma_\tau)},
  \end{align*}
 since we know that $\Psi \geq 0$ and $\eta \geq 0$.
 Here, by $\LL_{q'}$-maximal regularity, we find that
  \[
   \norm{\alpha_j \partial_t \Psi + \beta_j \vec v \cdot \nabla \Psi + \dv(d_j \nabla \Psi)}_{\LL_{q'}(\Omega_\tau)}
    \leq C \big( \norm{\Theta}_{\LL_{q'}(\Omega_\tau)} + \norm{\eta}_{\mathring \WW_{q'}^{(1,2) \cdot (\frac{1}{2} - \frac{1}{2q'})}(\Sigma_\tau)} \big),
  \]
 so that taking the supremum over all non-negative $\Theta$ and $\eta$ with norm less or equal $1$, we may conclude that
  \begin{align*}
   &\norm{v^+}_{\LL_q(\Omega_\tau)} + \norm{v^+|_\Sigma}_{\mathring \WW_{q'}^{(1,2) \cdot (\frac{1}{2} - \frac{1}{2q'})}(\Sigma_\tau)^\ast}
    \\
    &\leq C \big( \norm{(f - \sum_{j=1}^m f_j)^+}_{\WW_{q'}^{(1,2)}(\Omega_\tau)^\ast} + \norm{(g^\Sigma - \sum_{j=1}^m g_j^\Sigma)^+}_{\mathring \WW_{q'}^{(1,2) \cdot (1 - \frac{1}{2q'})}(\Sigma_\tau)^\ast}
     \\ &\quad
     + \norm{(v^0 - \sum_{j=1}^m \alpha_j v_j^0)^+}_{\WW_{q'}^{2-2/q'}(\Omega)^\ast} + \sum_{j=1}^m \norm{v_j^+|_\Sigma}_{\mathring \WW_{q'}^{(1,2) \cdot (1 - \frac{1}{2q'})}(\Sigma_\tau)^\ast} + \norm{v_j}_{\LL_q(\Omega_\tau)} \big).
  \end{align*}
\end{proof}
 
Similarly, a positive version of the dual estimate for parabolic equations on the surface $\Sigma = \partial \Omega$ can be derived:

\begin{lemma}[Positive dual estimate on the surface]
 \label{lem:pos_dual_estimate_surface}
 Let $\Sigma \subseteq \R^d$ be the boundary of a bounded $\CC^{3-}$-domain, $\mu, \beta_i \in \CC^1(\Sigma;(0,\infty))$ and $\alpha_i \in \R$, $i = 1, \ldots, m$.
 Moreover, let $p, q \in (1,\infty)$ and $T > 0$ be given.
 Assume that $v^\Sigma$, $v_i^\Sigma \in \WW_p^{(1,2)}(\Sigma_\tau)$ are solutions to the parabolic initial-value problems
  \[
   \begin{cases}
    \partial_t v^\Sigma - \dv_\Sigma (\mu \nabla_\Sigma v^\Sigma)
     = f^\Sigma
     \quad &
     \text{on } \Sigma_\tau,
     \\
    v^\Sigma(0,\cdot)
     = v^{\Sigma,0}
     \quad &
     \text{on } \Sigma
   \end{cases}
  \]
 and
  \[
   \begin{cases}
    \alpha_i \partial_t v_i^\Sigma - \dv_\Sigma(\beta_i \nabla_\Sigma v_i^\Sigma)
     = f_i^\Sigma
     \quad &
     \text{on } \Sigma_\tau,
     \\
    v_i^\Sigma(0,\cdot)
     = v_i^{\Sigma,0}
     \quad &
     \text{on } \Sigma
   \end{cases}
  \]
 for some $\tau \in (0,T]$ and $f^\Sigma, f_i^\Sigma \in \LL_p(\Sigma_\tau)$, $v^{\Sigma,0}$, $v_i^{\Sigma,0} \in \WW_p^{2-2/p}(\Sigma)$.
 \newline
 Then there is a constant $C = C(q,T)$, independent of $\tau \in (0,T]$ and the data, such that
  \begin{align*}
   &\norm{v^{\Sigma,+}}_{\LL_q(\Sigma_\tau)}
    \\
    &\leq C \big( \norm{(f^\Sigma - \sum_{j=1}^m f_j^\Sigma)^+}_{\WW_{q'}^{(1,2)}(\Sigma_\tau)^\ast} + \norm{(v^{\Sigma,0} - \sum_{j=1}^m \alpha_j v_j^{\Sigma,0})^+}_{\WW_{q'}^{2-2/q'}(\Sigma)^\ast} + \sum_{j=1}^m \norm{v_j^\Sigma}_{\LL_q(\Sigma_\tau)} \big).
  \end{align*}
\end{lemma}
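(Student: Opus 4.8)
The plan is to transfer the duality argument used for Lemma~\ref{lem:pos-dual-estimate-bulk-variant} to the closed manifold $\Sigma = \partial\Omega$, where the absence of a boundary of $\Sigma$ removes all surface-trace terms and all boundary conditions from the integrations by parts. First I would fix an arbitrary non-negative $\Theta^\Sigma \in \LL_{q'}(\Sigma_\tau)$ and let $\Psi^\Sigma \in \WW_{q'}^{(1,2)}(\Sigma_\tau)$ be the unique solution of the time-reversed dual problem
\[
 \begin{cases}
  - \partial_t \Psi^\Sigma - \dv_\Sigma(\mu \nabla_\Sigma \Psi^\Sigma) = \Theta^\Sigma & \text{on } \Sigma_\tau, \\
  \Psi^\Sigma(\tau,\cdot) = 0 & \text{on } \Sigma,
 \end{cases}
\]
which exists by $\LL_{q'}$-maximal regularity on the compact surface $\Sigma$ (Lemma~\ref{lem:maximal_regularity_scalar_case}). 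Because of the homogeneous final datum, the estimate $\norm{\Psi^\Sigma}_{\WW_{q'}^{(1,2)}(\Sigma_\tau)} \le C \norm{\Theta^\Sigma}_{\LL_{q'}(\Sigma_\tau)}$ holds with $C$ independent of $\tau \in (0,T]$, and, applying the comparison principle Lemma~\ref{lem:comparison_principle}(2) after the substitution $t \mapsto \tau - t$, we obtain $\Psi^\Sigma \ge 0$.

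Next I would test the equation for $v^\Sigma$ against $\Theta^\Sigma$. Integrating by parts in time (the $t = \tau$ boundary term drops because $\Psi^\Sigma(\tau,\cdot) = 0$, the $t = 0$ term produces $\int_\Sigma v^{\Sigma,0}\Psi^\Sigma(0,\cdot)\,\dd\sigma(\vec x)$) and on $\Sigma$ (no boundary contributions), one obtains
\[
 \int_{\Sigma_\tau} v^\Sigma \Theta^\Sigma \dd(t,\sigma(\vec x))
  = \int_{\Sigma_\tau} f^\Sigma \Psi^\Sigma \dd(t,\sigma(\vec x))
   + \int_\Sigma v^{\Sigma,0} \Psi^\Sigma(0,\cdot) \dd \sigma(\vec x).
\]
Then I would insert the decompositions $f^\Sigma = (f^\Sigma - \sum_{j} f_j^\Sigma) + \sum_j f_j^\Sigma$ and $v^{\Sigma,0} = (v^{\Sigma,0} - \sum_j \alpha_j v_j^{\Sigma,0}) + \sum_j \alpha_j v_j^{\Sigma,0}$. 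Since $\Psi^\Sigma \ge 0$, the "difference" contributions are bounded by $\norm{(f^\Sigma - \sum_j f_j^\Sigma)^+}_{\WW_{q'}^{(1,2)}(\Sigma_\tau)^\ast}\norm{\Psi^\Sigma}_{\WW_{q'}^{(1,2)}(\Sigma_\tau)}$ and by $\norm{(v^{\Sigma,0} - \sum_j\alpha_j v_j^{\Sigma,0})^+}_{\WW_{q'}^{2-2/q'}(\Sigma)^\ast}\norm{\Psi^\Sigma(0,\cdot)}_{\WW_{q'}^{2-2/q'}(\Sigma)}$, using the trace embedding $\WW_{q'}^{(1,2)}(\Sigma_\tau) \hookrightarrow \WW_{q'}^{2-2/q'}(\Sigma)$ at $t = 0$. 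For each remaining summand $\int_{\Sigma_\tau} f_j^\Sigma \Psi^\Sigma + \int_\Sigma \alpha_j v_j^{\Sigma,0}\Psi^\Sigma(0,\cdot)$, I would substitute the PDE satisfied by $v_j^\Sigma$ and integrate by parts back onto $\Psi^\Sigma$; the $t = 0$ boundary terms cancel, leaving $-\int_{\Sigma_\tau} v_j^\Sigma\big(\alpha_j\partial_t\Psi^\Sigma + \dv_\Sigma(\beta_j\nabla_\Sigma\Psi^\Sigma)\big)\,\dd(t,\sigma(\vec x))$, which is bounded by $\norm{v_j^\Sigma}_{\LL_q(\Sigma_\tau)}\norm{\alpha_j\partial_t\Psi^\Sigma + \dv_\Sigma(\beta_j\nabla_\Sigma\Psi^\Sigma)}_{\LL_{q'}(\Sigma_\tau)} \le C\norm{v_j^\Sigma}_{\LL_q(\Sigma_\tau)}\norm{\Theta^\Sigma}_{\LL_{q'}(\Sigma_\tau)}$. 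Collecting the bounds, dividing by $\norm{\Theta^\Sigma}_{\LL_{q'}(\Sigma_\tau)}$, and taking the supremum over all non-negative $\Theta^\Sigma$ in the unit ball of $\LL_{q'}(\Sigma_\tau)$, the identity $\sup\{\int_{\Sigma_\tau} v^\Sigma\Theta^\Sigma : \Theta^\Sigma \ge 0,\, \norm{\Theta^\Sigma}_{\LL_{q'}(\Sigma_\tau)} \le 1\} = \norm{v^{\Sigma,+}}_{\LL_q(\Sigma_\tau)}$, together with $\LL_q(\Sigma_\tau) \cong \LL_{q'}(\Sigma_\tau)^\ast$, yields the asserted estimate.

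I do not expect a genuine obstacle here; rather, the points requiring care are the usual ones. The integrations by parts are a priori only formal for strong $\WW_p^{(1,2)}$-solutions when $p \ne q$, so I would first prove the testing identity for smooth data (equivalently for $v^\Sigma, v_j^\Sigma \in \WW_q^{(1,2)}(\Sigma_\tau)$, obtained by approximating the data) and then pass to the limit by density, all constants being independent of the data. One also has to note that $\alpha_j\partial_t\Psi^\Sigma + \dv_\Sigma(\beta_j\nabla_\Sigma\Psi^\Sigma)$ is controlled in $\LL_{q'}(\Sigma_\tau)$ even though $(\alpha_j,\beta_j) \ne (1,\mu)$; this is immediate since $\partial_t\Psi^\Sigma$ and $\dv_\Sigma(\beta_j\nabla_\Sigma\Psi^\Sigma)$ are each bounded separately by $\norm{\Psi^\Sigma}_{\WW_{q'}^{(1,2)}(\Sigma_\tau)}$. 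Finally, positivity of the $v_j^\Sigma$ is never used, because the full norm $\norm{v_j^\Sigma}_{\LL_q(\Sigma_\tau)}$ is kept on the right-hand side, which is why only the first diffusion equation produces a positive-part term.
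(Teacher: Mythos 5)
Your proposal is correct and follows essentially the same route as the paper's proof: the same time-reversed dual problem with non-negative data $\Theta^\Sigma$, positivity and $\tau$-uniform maximal regularity of $\Psi^\Sigma$, the same splitting of $f^\Sigma$ and $v^{\Sigma,0}$ into a "difference" part (estimated against $\Psi^\Sigma \geq 0$ via the positive parts) and the summands $f_j^\Sigma$, $\alpha_j v_j^{\Sigma,0}$, which are converted by the PDEs for $v_j^\Sigma$ and integration by parts into $-\int_{\Sigma_\tau} v_j^\Sigma\big(\alpha_j\partial_t\Psi^\Sigma + \dv_\Sigma(\beta_j\nabla_\Sigma\Psi^\Sigma)\big)$, followed by the supremum over the non-negative unit ball of $\LL_{q'}(\Sigma_\tau)$. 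Your added remarks (approximation to justify the testing identity when $p \neq q$, and the observation that positivity of the $v_j^\Sigma$ is not needed) are consistent with, and slightly more explicit than, the paper's argument.
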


\begin{proof}
 Similar to the bulk case, we consider an arbitrary non-negative function $\Theta^\Sigma \in \LL_{q'}(\Sigma_\tau)$ and the time-reversed dual problem
  \[
   \begin{cases}
    - \partial_t \Psi^\Sigma - \dv( \mu \nabla_\Sigma \Psi^\Sigma)
     = \Theta^\Sigma \quad
     &\text{on } \Sigma_\tau,
     \\
    \Psi^\Sigma(\tau,\cdot)
     = 0 \quad
     &\text{on } \Sigma
   \end{cases}
  \]
 which has the property of $\LL_{q'}$-maximal regularity with a maximal regularity constant which, thanks to the zero final conditions, can be chosen independent of the time horizon $\tau \in (0,T]$, i.e.\
  \[
   \norm{\Psi^\Sigma}_{\WW_{q'}^{(1,2)}(\Sigma_\tau)}
    \leq C \norm{\Theta^\Sigma}_{\LL_{q'}(\Sigma_\tau)}.
  \]
 Moreover, as $\Theta^\Sigma \geq 0$, by the parabolic positivity principle, $\Psi^\Sigma \geq 0$, and, therefore, we may estimate
  \begin{align*}
   &\int_{\Sigma_\tau} v^\Sigma \Theta^\Sigma \dd (t, \sigma(\vec x))
    \\
    &= - \int_{\Sigma_\tau} (\partial_t \Psi^\Sigma + \dv_\Sigma( \mu \nabla_\Sigma \Psi^\Sigma)) v^\Sigma \dd (t, \sigma(\vec x))
    \\
    &= \int_{\Sigma_\tau} (\partial_t v^\Sigma - \dv_\Sigma (\mu \nabla_\Sigma v^\Sigma)) \Psi^\Sigma \dd (t, \sigma(\vec x))
     + \int_\Sigma v^{\Sigma,0} \Psi^{\Sigma}(0,\cdot) \dd \sigma(\vec x)
     \\
    &= \int_{\Sigma_\tau} f^\Sigma \Psi^\Sigma \dd (t, \sigma(\vec x))
     + \int_{\Sigma} v^{\Sigma,0} \Psi^{\Sigma}(0,\cdot) \dd \sigma(\vec x)
     \\
    &= \int_{\Sigma_\tau} (f^\Sigma - \sum_{j=1}^m f_j^\Sigma) \Psi^\Sigma \dd (t, \sigma(\vec x))
     + \int_{\Sigma} (v^{\Sigma,0} - \sum_{j=1}^m \alpha_j v_j^{\Sigma,0}) \Psi^\Sigma(0,\cdot) \dd \sigma(\vec x)
     \\ &\quad
     + \sum_{j=1}^m \big( \int_{\Sigma_\tau} f_j^\Sigma \Psi^\Sigma \dd (t,\sigma(\vec x)) + \int_{\Sigma} \alpha_j v_j^{\Sigma,0} \Psi^\Sigma(0,\cdot) \dd \sigma(\vec x) \big).
  \end{align*}
 Here, we may estimate the terms in the last line by
  \begin{align*}
   &\int_{\Sigma_\tau} f_j^\Sigma \Psi^\Sigma \dd (t,\sigma(\vec x)) + \int_{\Sigma} \alpha_j v_j^{\Sigma,0} \Psi^\Sigma(0,\cdot) \dd \sigma(\vec x)
    \\
    &= \int_{\Sigma_\tau} \big( \alpha_j \partial_t v_j^\Sigma \cdot \Psi^\Sigma
     - \dv_\Sigma (\beta_j \nabla_\Sigma v_j^\Sigma) \Psi^\Sigma \big) \dd (t, \sigma(\vec x))
     + \int_\Sigma \alpha_j v_j^\Sigma \Psi^\Sigma(0,\cdot) \dd \sigma(\vec x)
    \\
    &= - \int_{\Sigma_\tau} \big( \alpha_j v_j^\Sigma \partial_t \Psi^\Sigma + v_j^\Sigma \dv_\Sigma (\beta_j \nabla_\Sigma \Psi^\Sigma) \big) \dd (t,\sigma(\vec x))
    \\
    &\leq C \norm{v_j^\Sigma}_{\LL_q(\Omega_\tau)} \norm{\Theta^\Sigma}_{\LL_{q'}(\Omega_\tau)}.
  \end{align*}
 Taking the supremum over all non-negative $\Theta^\Sigma \in \LL_{q'}(\Omega_\tau)$ with norm less or equal one, we deduce that
  \begin{align*}
   &\norm{v^{\Sigma,+}}_{\LL_q(\Sigma_\tau)}
    \\
    &\leq C \big( \norm{(f^\Sigma - \sum_{j=1}^m f_j^\Sigma)^+}_{\WW_{q'}^{(1,2)}(\Sigma_\tau)^\ast} + \norm{(v^{\Sigma,0} - \sum_{j=1}^m \alpha_j v_j^{\Sigma,0})^+}_{\WW_{q'}^{2-2/q'}(\Sigma)^\ast} + \sum_{j=1}^m \norm{v_j^\Sigma}_{\LL_q(\Sigma_\tau)} \big).
  \end{align*}
\end{proof}

 \section{Global-in-time existence}
 \label{sec:global-existence}

 By comparing with blow-up results for ODEs modelling reaction networks, it is clear that for homogeneous Neumann boundary conditions global-in-time well-posedness of reaction-diffusion systems cannot be expected, in general, and blow-up may occur (but see \cite{Souplet_2018} for a quite general result under mass dissipation and an entropy type dissipation condition).
 Therefore, we cannot expect global-in-time existence of solutions without any further restriction on the structure of the chemical reaction networks in the bulk phase and on the surface.
 Also note the blow-up result in \cite{PieSch97} for \emph{inhomogeneous} Dirichlet boundary conditions.
 \begin{example}[Blow-up in the Henry model]
  Consider $N = 1$ and the Henry model $s^\Sigma(c|_\Sigma, c^\Sigma) = c - c^\Sigma$, chemical reaction models $r^\Omega(c) = r^{\Sigma}(c) = c^2$, and initial data $c^0 \equiv 1$, $c^{\Sigma,0} \equiv 1$.
  (Here, for simplicity, we set all physical parameters to be $1$.)  
  Then the unique solution $(c, c^\Sigma)(t,\cdot) \equiv (\frac{1}{1-t}, \frac{1}{1-t})$, $t \in [0,1)$, blows up in finite time.
 \end{example} 
 Although this example is of purely academic nature, it hints that also for systems with coupling between bulk and surface reaction-diffusion-advection systems, further structural assumptions are needed to ensure global-in-time existence of solutions.
 Throughout this section we, therefore, additionally demand the following of the chemical reaction models $\vec r^\Omega$ in the bulk phase and $\vec r^\Sigma$ on the active surface, respectively.
 Note that for the case of Maxwell--Stefan diffusion models, where the diffusive flux is modelled by a indirect balance equation of forces under a summation constraint, $\LL_\infty$-bounds are trivial, but do, in general, not suffice to establish global-in-time existence of solutions, cf.\ \cite{Bothe2015}.
  \begin{assumption}[Triangular structure of $\vec r^\Omega$ and $\vec r^\Sigma$ / Intermediate sum condition]
  \label{assmpt:triangular_structure}
   There are lower triangular matrices $\bb Q^\Omega, \bb Q^{\Sigma} \in \R_+^{N \times N}$ with strictly positive diagonal entries and $C_\mathrm{tr}$, $C_\mathrm{tr}^\Sigma > 0$ such that
    \begin{alignat}{2}
     \bb Q^\Omega \vec r^\Omega(\vec y)
      &\leq C_\mathrm{tr} \left(  1 + \sum_{j=1}^N y_j \right)^{\mu^\Omega} \vec e
	  \qquad &
      &\text{for all }
      \vec y \in [0, \infty)^N,
      \tag{$A_S^\mathrm{bulk}$}
      \label{A_S^bulk}
      \\
     \bb Q^\Sigma \vec r^\Sigma(\vec y^\Sigma)
      &\leq C_\mathrm{tr}^\Sigma \left(  1 + \sum_{j=1}^N y^\Sigma_j \right)^{\mu^\Sigma} \vec e
	  \qquad &
      &\text{for all }
      \vec y^\Sigma \in [0, \infty)^N,
      \tag{$A_S^{\mathrm{ch}}$}
      \label{A_S^ch}
    \end{alignat}
   for some exponents $\mu^\Omega, \mu^\Sigma \geq 0$, and where $\vec e = (1, \ldots, 1)^\mathsf{T} \in \R^N$  and we write $\vec v \leq \vec w$ if $v_i \leq w_i$ for all components $i = 1, \ldots, N$.
  \end{assumption}
  
To make a boot-strap argument feasible, and eventually establish $\LL_q$-growth bounds for the solution, we need $\LL_p$--$\LL_q$-estimates as in the following lemma, which heavily exploits the triangular structure of the chemical reaction networks.

 \begin{lemma}[$\LL_p$--$\LL_q$-estimates]
 \label{lem:L_p-L_q-estimates}
  Assume that, additionally to the standing Assumption~\ref{assmpt:general}, also the intermediate sum condition Assumption~\ref{assmpt:triangular_structure} is satisfied.
  Then, for
   \[
    p \in (\max \{ 1, \frac{(\mu^\Omega - 1)(d + 2)}{2 \mu^\Omega}, \frac{(\mu^\Sigma - 1)(d + 1)}{2 \mu^\Sigma} \}, \infty), \quad
    q \in (1, \infty]
    \quad \text{and} \quad
    T_0 > 0,
   \]
  we set $T^\ast := \min \{ T_0, T_\mathrm{max} \} \in (0, \infty)$, and there is a constant $C^\ast = C^\ast(\vec c^0, \vec c^{\Sigma,0}, p,q,T^\ast)$ such that for all $\tau \in (0,T^\ast]$
   \[
    \norm{\vec c}_{\LL_q(\Omega_\tau;\R^N)}
     + \norm{\vec c|_\Sigma}_{\mathring {\fs G}_{q'}^\Sigma(\tau)^\ast}
     + \norm{\vec c^\Sigma}_{\LL_q(\Sigma_\tau)}
     \leq C^\ast \left( 1
      + \norm{\vec c}_{\LL_p(\Omega_\tau;\R^N)}
%      + \norm{\vec c|_\Sigma}_{\LL_p(\Sigma_\tau;\R^N)}
      + \norm{\vec c^\Sigma}_{\LL_p(\Sigma_\tau;\R^N)} \right)^\mu,
   \]
  where $\mu = \mu(\mu^\Omega, \mu^\Sigma, p, q)$ will, in general, depend on $\mu^\Omega$, $\mu^\Sigma$ as well as on $p$ and $q$.
 \end{lemma}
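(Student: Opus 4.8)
The plan is to exploit the triangular (intermediate-sum) structure to obtain, for each "level" $\ell = 1, \dots, N$ of the triangular systems, an estimate for a suitable linear combination $w_\ell := \sum_{j \le \ell} (\bb Q^\Omega)_{\ell j} c_j$ in the bulk and $w_\ell^\Sigma := \sum_{j \le \ell} (\bb Q^\Sigma)_{\ell j} c_j^\Sigma$ on the surface, and to bootstrap through these levels. Concretely, by positivity of the solution (Theorem~\ref{thm:local_existence}) and the structural assumptions \eqref{A_S^bulk}, \eqref{A_S^ch}, the combination $w_\ell$ solves a parabolic problem whose right-hand side $f_\ell = \sum_{j \le \ell}(\bb Q^\Omega)_{\ell j} r_j^\Omega(\vec c)$ is bounded above by $C_\mathrm{tr}(1 + \sum_j c_j)^{\mu^\Omega}$, hence by $C(1 + w_{\ell-1} + \dots + w_1 + c_\ell)^{\mu^\Omega}$ after inverting the (triangular, positive-diagonal) matrix $\bb Q^\Omega$, modulo lower-order contributions already controlled at previous levels; similarly on $\Sigma$, plus the sorption flux term $s_\ell^\Sigma$, which by \eqref{A_B^sorp} is controlled linearly by $1 + |\vec c|$ in the bulk and $1 + |\vec c^\Sigma|$ on the surface and therefore does not worsen the polynomial exponent.

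First I would fix $p$ in the stated range so that the embedding constants behave as in Lemma~\ref{lem:tau-dep-embedding-constant}: the hypotheses $p > \frac{(\mu^\Omega - 1)(d+2)}{2\mu^\Omega}$ and $p > \frac{(\mu^\Sigma-1)(d+1)}{2\mu^\Sigma}$ are precisely what is needed so that $\mathring{\WW}_p^{(1,2)}(\Omega_\tau) \hookrightarrow \LL_{p\mu^\Omega}(\Omega_\tau)$ and $\mathring{\WW}_p^{(1,2)}(\Sigma_\tau) \hookrightarrow \LL_{p\mu^\Sigma}(\Sigma_\tau)$; this converts the nonlinear source $(1 + \cdot)^{\mu^\Omega}$, measured in $\LL_p$, into a power of the $\LL_{p\mu^\Omega}$-norm, which in turn is dominated by $\norm{\vec c}_{\WW_p^{(1,2)}}$ (after subtracting the reference semigroup term, which is controlled by the initial data). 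Then I would apply the dual estimates of Section~\ref{sec:auxiliary-results}: Lemma~\ref{lem:dual-estimate-variant} (and its surface analogue Lemma~\ref{lem:surface-dual-estimate-variant}) to pass from $\LL_p$-bounds on $(f_\ell, g_\ell^\Sigma, \text{initial data})$ to an $\LL_q(\Omega_\tau)$-bound on $w_\ell$ together with the dual-norm bound on the trace $w_\ell|_\Sigma$ in $\mathring{\fs G}_{q'}^\Sigma(\tau)^\ast$, and likewise an $\LL_q(\Sigma_\tau)$-bound on $w_\ell^\Sigma$; crucially, the constants there are independent of $\tau \in (0, T^\ast]$. Inverting $\bb Q^\Omega$ and $\bb Q^\Sigma$ recovers the estimates for the $c_j$, $c_j^\Sigma$ themselves.

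The induction on the level $\ell$ is the heart of the argument: at level $\ell$ the right-hand side involves $w_1, \dots, w_{\ell-1}$ (already bounded in $\LL_{q_{\ell-1}}$ with some exponent $\mu_{\ell-1}$) raised to the power $\mu^\Omega$, so one composes estimates and the exponent $\mu$ grows multiplicatively, $\mu_\ell \sim \mu^\Omega \mu_{\ell-1}$ (and analogously with $\mu^\Sigma$), which is why the final $\mu = \mu(\mu^\Omega, \mu^\Sigma, p, q)$ depends on everything. One must also track the integrability indices carefully: feeding the output $\LL_{q_{\ell-1}}$-bound back requires Hölder's inequality to land in $\LL_p$ of the new source, so $q_{\ell-1}$ must be chosen large enough at each stage, which is always possible since $q_{\ell-1}$ may be taken arbitrarily large (the estimate is for all $q \in (1,\infty]$). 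The case $q = \infty$ is handled at the end by interpolation / the $q = \infty$ versions of the dual estimates, exactly as in Lemma~\ref{lem:dual-estimate}.

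The main obstacle I expect is the coupling between bulk and surface through the sorption flux: the bulk trace $c_\ell|_\Sigma$ feeds $w_\ell^\Sigma$ via $s_\ell^\Sigma$, and conversely $s_\ell^\Sigma$ appears as Neumann data for $w_\ell$, so the bulk and surface estimates are not a pure triangular cascade but genuinely intertwined. The resolution is to absorb this using the \emph{variant} dual estimates, which measure the trace in the weak norm $\mathring{\fs G}_{q'}^\Sigma(\tau)^\ast$ and the flux data in $\mathring{\WW}_{q'}^{(1,2)\cdot(1-\frac{1}{2q'})}(\Sigma_\tau)^\ast$: since \eqref{A_B^sorp} gives $|s_\ell^\Sigma| \le C(1 + |\vec c| + |\vec c^\Sigma|)$, and an $\LL_q$-function on $\Sigma_\tau$ embeds into that negative-order space, the flux term contributes only the already-available $\LL_q$-norms of $\vec c$, $\vec c^\Sigma$ and does not raise the polynomial degree — so the bulk and surface recursions can be run simultaneously at each level $\ell$, closing the loop. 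Keeping the $\tau$-uniformity of all constants throughout (needed because $T^\ast \le T_\mathrm{max}$ may be finite but we iterate up to it) is the remaining bookkeeping point, and it is guaranteed by the homogeneous-final-time normalisation in the dual problems and by Lemma~\ref{lem:tau-dep-embedding-constant}.
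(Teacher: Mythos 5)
Your overall strategy is the paper's: a duality-method argument run inductively through the triangular structure, with the sorption coupling tamed by the one-sided linear bounds \eqref{A_B^sorp}, the boundary trace measured only in the weak norm $\mathring{\fs G}_{q'}^{\Sigma}(\tau)^\ast$ so that the bulk/surface loop closes, $\tau$-uniform constants coming from the zero final-time data in the dual problems, the restriction on $p$ entering through anisotropic embeddings, a multiplicatively growing exponent $\mu$, and $q=\infty$ handled separately. However, there is one step that fails as written: you form $w_\ell=\sum_{j\le\ell}(\bb Q^\Omega)_{\ell j}c_j$ and treat it as the solution of a scalar parabolic problem. Since the diffusivities $d_j$ are species-dependent, $\sum_j (\bb Q^\Omega)_{\ell j}\dv(d_j\nabla c_j)$ is not of the form $\dv(d\nabla w_\ell)$, so $w_\ell$ solves no single parabolic equation; consequently neither the comparison principle nor the single-equation dual estimates you cite (Lemma~\ref{lem:dual-estimate-variant}, Lemma~\ref{lem:surface-dual-estimate-variant}) can be applied to $w_\ell$, and the triangular cancellation is never actually harvested. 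The paper's route avoids this: first the comparison principle (Lemma~\ref{lem:comparison_principle}) together with \eqref{A_B^sorp} dominates $c_i\le z_i$, $c_i^\Sigma\le z_i^\Sigma$, where the majorants solve problems with \emph{linear, decoupled} sorption data ($-k_i^{\mathrm{de}}(1+c_i^\Sigma)$ resp.\ $k_i^{\mathrm{ad}}c_i|_\Sigma$); then $z_i=u_i+v_i+w_i$ is split and the \emph{positive} dual estimates (Lemma~\ref{lem:pos-dual-estimate-bulk-variant}, Lemma~\ref{lem:pos_dual_estimate_surface}) are applied to $v_i^+$, $v_i^{\Sigma,+}$. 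These lemmas are exactly the device that exploits $\bb Q^\Omega\vec r^\Omega\le C_{\mathrm{tr}}(1+\sum_j c_j)^{\mu^\Omega}\vec e$: one tests against the nonnegative dual solution and integrates each component equation, with its own $d_j$, by parts separately, which is what replaces your illegitimate "summed equation" and yields, componentwise by induction over $i$, the key bound \eqref{eqn:general_estimate}.

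A second, smaller point: the aside that the $\LL_{p\mu^\Omega}$-norm of the source "is dominated by $\norm{\vec c}_{\WW_p^{(1,2)}}$" must not enter the argument — that maximal-regularity norm is precisely what is unavailable, and using it would make the estimate circular (the target right-hand side contains only $\LL_p$-norms of $\vec c$, $\vec c^\Sigma$). The correct mechanism, which you do gesture at when discussing the negative-order spaces, is the one the paper uses after \eqref{eqn:general_estimate}: the conditions on $p$ (and the choice of $q$, iterated if necessary) guarantee embeddings of the type $\mathring\WW_{q'}^{(1,2)}(\Omega_\tau)\hookrightarrow\LL_{(p/\mu^\Omega)'}(\Omega_\tau)$, so that $\norm{c_j^{\mu^\Omega}}_{\mathring\WW_{q'}^{(1,2)}(\Omega_\tau)^\ast}\le C\norm{c_j}_{\LL_p(\Omega_\tau)}^{\mu^\Omega}$, and similarly on $\Sigma$; the sources are thus measured in dual data spaces controlled by $\LL_p$-norms of the solution only. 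Likewise, be careful with the sorption term: a pointwise bound $|s_i^\Sigma|\le C(1+|\vec c|+|\vec c^\Sigma|)$ does not transfer to a dual norm, and $\norm{\vec c|_\Sigma}_{\LL_q(\Sigma_\tau)}$ is not available inside this lemma (only the weak trace norm is); the paper sidesteps both issues by the comparison step above, which makes only $c_i^\Sigma$ appear in the bulk Neumann data and only $c_i|_\Sigma$ (entering through the weak norm) in the surface source. With these two repairs your outline coincides with the paper's proof.
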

 \begin{remark} 
  In case $\mu^\Omega = \mu^\Sigma = 1$, i.e.\ a \emph{linear} intermediate sum condition, all $p, q \in (1, \infty)$ are admissible for the $\LL_p$--$\LL_q$-estimates in this lemma.
 \end{remark}
 \begin{proof}[Proof of Lemma \ref{lem:L_p-L_q-estimates}]
  First, we consider the case where $q > p$ are such that $2 - \frac{d+2}{p} > - \frac{d+1}{q}$, $2 - \frac{d+2}{p} > - \frac{d+2}{\mu^\Omega q}$ and $2 - \frac{d+1}{p} > - \frac{d+1}{\mu^\Sigma q}$, so that the continuous embeddings $\WW_p^{(1,2)}(\Omega_T) \hookrightarrow \LL_q(\Sigma_T)$ (via the boundary trace map), $\WW_p^{(1,2)}(\Omega_\tau) \hookrightarrow \LL_{\mu^\Omega q}(\Omega_\tau)$ and $\WW_p^{(1,2)}(\Sigma_\tau) \hookrightarrow \LL_{\mu^\Sigma q}(\Sigma_\tau)$ are available.
 Here, the assumption that $p > \max \{ \frac{(\mu^\Omega - 1)(d + 2)}{2 \mu^\Omega}, \frac{(\mu^\Sigma - 1)(d + 1)}{2 \mu^\Sigma} \}$ enables us to find $q > p$ such that the latter two conditions are met.
  \newline
  We employ the triangular structure of the chemical reaction network in the bulk phase and on the active surface, by which there are constants $C_\mathrm{tr}, C_\mathrm{tr}^\Sigma > 0$ such that for $i = 1, \ldots, N$ and $r_{ij} := \frac{q_{ij}}{q_{ii}}, r_{ij}^\Sigma = \frac{q_{ij}^\Sigma}{q_{ii}^\Sigma} \in \R$, the inequalities
   \begin{align*}
    r^\Omega_i(\vec c)
     &\leq C_\mathrm{tr} (1 + \sum_{j=1}^N c_j)^{\mu^\Omega}
      - \sum_{j < i} r_{ij} r^\Omega_j(\vec c)
      \\
     &= C_\mathrm{tr} (1 + \sum_{j=1}^N c_j)^{\mu^\Omega}
      - \sum_{j < i} r_{ij} (\partial_t c_j + \vec v \cdot \nabla c_j - d_j \Delta c_j)
      \\
     r^\Sigma_i(\vec c^\Sigma)
      &\leq C_\mathrm{tr}^\Sigma (1 + \sum_{j=1} c_j^\Sigma)^{\mu^\Sigma}
       - \sum_{j < i} r_{ij}^\Sigma r^\Sigma_j(\vec c^\Sigma)
       \\
      &= C_\mathrm{tr}^\Sigma (1 + \sum_{j=1} c_j^\Sigma)^{\mu^\Sigma}
       - \sum_{j < i} r_{ij}^\Sigma (\partial_t c^\Sigma_j - d_j^\Sigma \Delta_\Sigma c^\Sigma_j - s_j(c_j|_\Sigma,c_j^\Sigma))
   \end{align*}
  are valid.
  \newline
  We use these estimates in combination with the positive version of the duality estimates stated in Lemma~\ref{lem:pos-dual-estimate-bulk-variant} and Lemma~\ref{lem:pos_dual_estimate_surface}:
  Fix any $T_0 > 0$, set $T^\ast := \min \{ T_0, T_\mathrm{max} \}$ and let $\tau \in (0, T^\ast)$, where $T_\mathrm{max} \in (0,\infty]$ is the maximal time of existence for the solution to the reaction-diffusion-advection-sorption system \eqref{eqn:RDASS} in the class $\WW_p^{(1,2)}$ for some $p \in (1,\infty)$ (and then, as it will turn out, away from $t = 0$ for \emph{every} $p \in (1, \infty)$, see Lemma~\ref{lem:smoothing_effect} below).
  For each component $i = 1, \ldots, N$ of the system, we consider the following auxiliary parabolic initial-boundary value problems:
   \begin{alignat*}{2}
    \partial_t z_i + \vec v \cdot \nabla z_i - d_i \Delta z_i
     &= r^\Omega_i(\vec c)
	 \qquad &
     &\text{in } (0, \tau) \times \Omega,
     \\
    - d_i \partial_{\vec \nu} z_i
     &= - k_i^\mathrm{de} (1 + c_i^\Sigma)
	 \qquad &
     &\text{on } (0, \tau) \times \Sigma,
     \label{a}
     \tag{a}
     \\
    z_i(0,\cdot)
     &= c^0_i
	 \qquad &
     &\text{on } \Omega
     \\
     \intertext{in the bulk phase and the parabolic initial value problems}
    \partial_t z_i^\Sigma - d_i \Delta_\Sigma z_i^\Sigma
     &= r^\Sigma_i(\vec c^\Sigma) + k_i^\mathrm{ad} c_i|_\Sigma
	 \qquad &
     &\text{on } (0, \tau) \times \Sigma,
     \label{b}
     \tag{b}
     \\
    z_i^\Sigma(0,\cdot)
     &= c^{\Sigma,0}_i
	 \qquad &
     &\text{on } \Sigma
   \end{alignat*}
  on the boundary.
  Choosing $r \in (1, \min \{p, 3 \})$ arbitrarily, the initial data do not necessarily have to satisfy the compatibility conditions, so that we obtain unique solutions $z_i \in \WW_r^{(1,2)}(\Omega_\tau)$ and $z_i^\Sigma \in \WW_r^{(1,2)}(\Sigma_\tau)$ of \eqref{a} and \eqref{b}, respectively.
  Moreover, since the lower and upper linear bounds $-k_i^\mathrm{de} (1 + c_i^\Sigma) \leq r^\mathrm{sorp}_i(c_i, c_i^\Sigma) \leq k_i^\mathrm{ad} (1 + c_i)$ are valid by assumption \eqref{A_B^sorp}, the comparison principle Lemma~\ref{lem:comparison_principle} provides the estimates
   \[
    0 \leq c_i \leq z_i,
     \quad
    0 \leq c_i^\Sigma \leq z_i^\Sigma
     \quad
     \text{for } i = 1, \ldots N.
   \]
  We exploit that the systems \eqref{a} and \eqref{b} are quasi-linear and write $z_i = u_i + v_i + w_i$ and $z_i^\Sigma = u_i^\Sigma + v_i^\Sigma + w_i^\Sigma$ as a sum of solutions to the following quasi-autonomous, linear systems:
   \begin{alignat*}{2}
    \partial_t u_i + \vec v \cdot \nabla u_i - d_i \Delta u_i
     &= C_\mathrm{tr} (1 + \sum_{j=1}^N c_j)^{\mu^\Omega}
	 \qquad &
     &\text{in } (0, \tau) \times \Omega,
     \\
    - d_i \partial_{\vec \nu} u_i
     &= 0
	 \qquad &
     &\text{on } (0, \tau) \times \Sigma,
     \tag{a1}
     \label{a1}
     \\
    u_i(0,\cdot)
     &= 0
	 \qquad &
     &\text{in } \Omega
     \\
     \intertext{and}
    \partial_t v_i + \vec v \cdot \nabla v_i - d_i \Delta v_i
     &= r_i^\Omega(\vec c) - C_\mathrm{tr} (1 + \sum_{j=1}^N c_j)^{\mu^\Omega}
	 \qquad &
     &\text{in } (0, \tau) \times \Omega,
     \\
    - d_i \partial_{\vec \nu} v_i
     &= - k_i^\mathrm{de} (1 + c_i^\Sigma)
	 \qquad &
     &\text{on } (0, \tau) \times \Sigma,
     \tag{a2}
     \label{a2}
     \\
    v_i(0,\cdot)
     &= 0
	 \qquad &
     &\text{in } \Omega
     \\
     \intertext{and}
    \partial_t w_i + \vec v \cdot \nabla w_i - d_i \Delta w_i
     &= 0
	 \qquad &
     &\text{in } (0, \tau) \times \Omega,
     \\
    - d_i \partial_{\vec \nu} w_i
     &= 0
	 \qquad &
     &\text{on } (0, \tau) \times \Sigma,
     \tag{a3}
     \label{a3}
     \\
    w_i(0,\cdot)
     &= c^0_i
	 \qquad &
     &\text{in } \Omega
   \end{alignat*}
  in the bulk phase, whereas on the surface, we consider the problems
   \begin{alignat*}{2}
    \partial_t u_i^\Sigma - d_i^\Sigma \Delta_\Sigma u_i^\Sigma
     &= C_\mathrm{tr}^\Sigma (1 + \sum_{j=1}^N c_j^\Sigma)^{\mu^\Sigma} + k_i^\mathrm{ad} (1 + c_i|_\Sigma)
	 \qquad &
     &\text{on } (0, \tau) \times \Sigma,
     \tag{b1}
     \label{b1}
     \\
    u_i^\Sigma(0,\cdot)
     &= 0
	 \qquad &
     &\text{on } (0, \tau) \times \Sigma,
     \\
     \intertext{and}
    \partial_t v_i^\Sigma - d_i^\Sigma \Delta_\Sigma v_i^\Sigma
     &= r^\Sigma_i(\vec c^\Sigma) - C_\mathrm{tr}^\Sigma (1 + \sum_{j=1}^N c_j^\Sigma)^{\mu^\Sigma}
	 \qquad &
     &\text{on } (0, \tau) \times \Sigma,
     \tag{b2}
     \label{b2}
     \\
    v_i^\Sigma(0,\cdot)
     &= 0
	 \qquad &
     &\text{on } (0, \tau) \times \Sigma,
     \\
     \intertext{and}
    \partial_t w_i^\Sigma - d_i^\Sigma \Delta_\Sigma w_i^\Sigma
     &= 0
	 \qquad &
     &\text{on } (0, \tau) \times \Sigma,
     \tag{b3}
     \label{b3}
     \\
    w_i^\Sigma(0,\cdot)
     &= c^{\Sigma,0}_i
	 \qquad &
     &\text{on } (0, \tau) \times \Sigma.
   \end{alignat*}
  Since the strong solutions $c_j, c_j^\Sigma \geq 0$ are non-negative, by the comparison principle, we immediately find that the solutions $u_i, w_i, u_i^\Sigma, w_i^\Sigma$ to the initial (boundary) value problems \eqref{a1}, \eqref{a3}, \eqref{b1} and \eqref{b3} are non-negative, whereas for the solutions $v_i$ and $v_i^\Sigma$ to \eqref{a2} and \eqref{b2} non-negativity cannot be guaranteed. However, we always have the inequalities
   \[
    0
     \leq c_i \leq z_i \leq u_i + v_i^+ + w_i,
     \quad
    0
     \leq c_i^\Sigma \leq z_i^\Sigma \leq u_i^\Sigma + v_i^{\Sigma,+} + w_i
     \quad
     \text{for } i = 1, \ldots, N.
   \]
  Let us collect the information which we can extract from the duality estimates and positive duality estimates:
   \[
    \norm{u_i}_{\LL_q(\Omega_\tau)} + \norm{u_i|_\Sigma}_{\mathring \WW_{q'}^{(1,2) \cdot (\frac{1}{2} - \frac{1}{2q'})}(\Sigma_\tau)^\ast}
     \leq C_{p,q,T^\ast} (1 + \sum_{j=1}^N \norm{c_j^{\mu^\Omega}}_{\mathring \WW_{q'}^{(1,2)}(\Omega_\tau)^\ast}),
   \]
  where the constant $C_{p,q,T^\ast} > 0$ may be chosen independent of $\tau < T^\ast$, since we consider zero initial data in \eqref{a1};
   \begin{align*}
    &\norm{v_i^+}_{\LL_q(\Omega_\tau)} + \norm{v_i^+|_\Sigma}_{\mathring \WW_{q'}^{(1,2) \cdot (\frac{1}{2} - \frac{1}{2q'})}(\Sigma_\tau)^\ast}
     \\
     &\qquad
     \leq C_{p,q,T} (1 + \norm{c_i^\Sigma}_{\mathring \WW_{q'}^{(1,2) \cdot (\frac{1}{2} - \frac{1}{2q'})}(\Sigma_\tau)^\ast} + \sum_{j=1}^N \norm{c_j^{\mu^\Omega}}_{\mathring \WW_{q'}^{(1,2)}(\Omega_\tau)^\ast} ),
     \\
    &\norm{w_i}_{\LL_q(\Omega_\tau)} + \norm{w_i|_\Sigma}_{\mathring \WW_{q'}^{(1,2) \cdot (\frac{1}{2} - \frac{1}{2q'})}(\Sigma_\tau)^\ast}
     \leq C_{q,T^\ast}.
    \end{align*}
   Here, in the latter two estimates the constants $C_{p,q,T^\ast}$ and $C_{q,T^\ast}$ may (and will) depend on the initial data, but not on the particular choice of $\tau \in (0, T^\ast)$.
   Similarly, on the surface we obtain the estimates
    \begin{align*}
     \norm{u_i^\Sigma}_{\LL_q(\Sigma_\tau)}
      &\leq C_{p,q,T} (1 + \norm{c_i|_\Sigma}_{\mathring \WW_{q'}^{(1,2)}(\Sigma_\tau)^\ast} + \sum_{j=1}^N \norm{(c_j^\Sigma)^{\mu^\Sigma}}_{\mathring \WW_{q'}^{(1,2)}(\Sigma_\tau)^\ast})
     \\
     &\leq C_{p,q,T} (1 + \norm{c_i|_\Sigma}_{\mathring \WW_{q'}^{(1,2) \cdot (\frac{1}{2} - \frac{1}{2q'})}(\Sigma_\tau)^\ast} + \sum_{j=1}^N \norm{(c_j^\Sigma)^{\mu^\Sigma}}_{\mathring \WW_{q'}^{(1,2)}(\Sigma_\tau)^\ast} )
     \\
     \intertext{again exploiting the zero initial data;}
    \norm{v_i^{\Sigma,+}}_{\LL_q(\Sigma_\tau)}
     &\leq C_{p,q,T^\ast} \sum_{j<i} \norm{(c_j^\Sigma)^{\mu^\Sigma}}_{\LL_q(\Sigma_\tau)};
     \\
    \norm{w_i^\Sigma}_{\LL_q(\Sigma_\tau)}
     &\leq C_{p,q,T^\ast}.
   \end{align*}
  We now bring together these estimates, proceeding iteratively:
  \newline
  \textit{i=1:}
  For the surface term, we find
   \begin{align*}
    \norm{c_1^\Sigma}_{\LL_q(\Sigma_\tau)}
     &\leq \norm{z_1^\Sigma}_{\LL_q(\Sigma_\tau)}
     \leq \norm{u_1^\Sigma}_{\LL_q(\Sigma_\tau)} + \norm{v_1^{\Sigma,+}}_{\LL_q(\Sigma_\tau)} + \norm{w_1^\Sigma}_{\LL_q(\Sigma_\tau)}
     \\
     &\leq C_{p,q,T^\ast} \big( 1 + \sum_{j=1}^N \norm{(c_j^\Sigma)^{\mu^\Sigma}}_{\mathring \WW_{q'}^{(1,2)}(\Sigma_\tau)^\ast} + \norm{c_1|_\Sigma}_{\mathring \WW_{q'}^{(1,2) \cdot (\frac{1}{2} - \frac{1}{2q'})}(\Sigma_\tau)^\ast} \big)
   \end{align*}
  and, for the bulk term,
   \begin{align*}
    &\norm{c_1}_{\LL_q(\Omega_\tau)} + \norm{c_1|_\Sigma}_{\mathring \WW_{q'}^{(1,2) \cdot (\frac{1}{2} - \frac{1}{2q'})}(\Sigma_\tau)^\ast}
     \\
     &\leq \norm{z_1}_{\LL_q(\Omega_\tau)} + \norm{z_1|_\Sigma}_{\mathring \WW_{q'}^{(1,2) \cdot (\frac{1}{2} - \frac{1}{2q'})}(\Sigma_\tau)^\ast}
     \\
     &\leq \norm{u_1}_{\LL_q(\Omega_\tau)} + \norm{u_1|_\Sigma}_{\mathring \WW_{q'}^{(1,2) \cdot (\frac{1}{2} - \frac{1}{2q'})}(\Sigma_\tau)^\ast}
      + \norm{v_1^+}_{\LL_q(\Omega_\tau)}
      \\ &\quad
      + \norm{v_1^+|_\Sigma}_{\WW_{q'}^{(1,2) \cdot (\frac{1}{2} - \frac{1}{2q'})}(\Sigma_\tau)^\ast}
      + \norm{w_1}_{\LL_q(\Omega_\tau)} + \norm{w_1|_\Sigma}_{\mathring \WW_{q'}^{(1,2) \cdot (\frac{1}{2} - \frac{1}{2q'})}(\Sigma_\tau)^\ast}
     \\
     &\leq C_{p,q,T^\ast} \big( 1 + \sum_{j=1}^N \norm{(c_j)^{\mu^\Omega}}_{\mathring \WW_{q'}^{(1,2)}(\Omega_\tau)^\ast} + \norm{c_1^\Sigma}_{\mathring \WW_{q'}^{(1,2)}(\Sigma_\tau)^\ast} \big).
    \end{align*}
   Together, this gives the estimate
    \begin{align*}
     &\norm{c_1}_{\LL_q(\Omega_\tau)}
      + \norm{c_1|_\Sigma}_{\mathring \WW_{q'}^{(1,2) \cdot (\frac{1}{2} - \frac{1}{2q'})}(\Sigma_\tau)^\ast}
      + \norm{c_1^\Sigma}_{\LL_q(\Omega_\tau)}
       \\
       &\leq C_{q,T^\ast} \big( 1 + \sum_{j=1}^N \norm{c_j^{\mu^\Omega}}_{\mathring \WW_{q'}^{(1,2)}(\Omega_\tau)^\ast} + \norm{(c_j^\Sigma)^{\mu^\Sigma}}_{\mathring \WW_{q'}^{(1,2)}(\Sigma_\tau)\ast} \big).
    \end{align*}
  \textit{$i \geq 2$:}
  Now, assume that $i \geq 2$ and for $k = 1, \ldots, i-1$ we have shown the estimates
    \begin{align*}
     &\norm{c_k}_{\LL_q(\Omega_\tau)}
      + \norm{c_k|_\Sigma}_{\mathring \WW_{q'}^{(1,2) \cdot (\frac{1}{2} - \frac{1}{2q'})}(\Sigma_\tau)^\ast}
      + \norm{c_k^\Sigma}_{\LL_q(\Omega_\tau)}
       \\
       &\leq C_{q,T^\ast} \big( 1 + \sum_{j=1}^N \norm{c_j^{\mu^\Omega}}_{\mathring \WW_{q'}^{(1,2)}(\Omega_\tau)^\ast} + \norm{(c_j^\Sigma)^{\mu^\Sigma}}_{\mathring \WW_{q'}^{(1,2)}(\Sigma_\tau)^\ast} \big).
    \end{align*}
  We then find
   \begin{align*}
    \norm{c_i^\Sigma}_{\LL_q(\Sigma_\tau)}
     &\leq \norm{z_i^\Sigma}_{\LL_q(\Sigma_\tau)}
     \leq \norm{u_i^\Sigma}_{\LL_q(\Sigma_\tau)}
      + \norm{v_i^{\Sigma,+}}_{\LL_q(\Sigma_\tau)}
      + \norm{w_i^\Sigma}_{\LL_q(\Sigma_\tau)}
     \\
     &\leq C_{q,T^\ast} \big( 1 + \sum_{j=1}^N \norm{(c_j^\Sigma)^{\mu^\Sigma}}_{\mathring \WW_{q'}^{(1,2)}(\Sigma_\tau)^\ast}
      + \norm{c_i|_\Sigma}_{\mathring \WW_{q'}^{(1,2) \cdot (\frac{1}{2} - \frac{1}{2q'})}(\Sigma_\tau)^\ast}
      + \sum_{j < i} \norm{c_j^\Sigma}_{\LL_q(\Sigma_\tau)} \big).
   \end{align*}
  and
   \begin{align*}
    &\norm{c_i}_{\LL_q(\Omega_\tau)} + \norm{c_i|_\Sigma}_{\mathring \WW_{q'}^{(1,2) \cdot (\frac{1}{2} - \frac{1}{2q'})}(\Sigma_\tau)^\ast}
     \\
     &\leq \norm{z_i}_{\LL_q(\Omega_\tau)} + \norm{z_i|_\Sigma}_{\mathring \WW_{q'}^{(1,2) \cdot (\frac{1}{2} - \frac{1}{2q'})}(\Sigma_\tau)^\ast}
     \\
     &\leq \norm{u_i}_{\LL_q(\Omega_\tau)} + \norm{u_i|_\Sigma}_{\mathring \WW_{q'}^{(1,2) \cdot (\frac{1}{2} - \frac{1}{2q'})}(\Sigma_\tau)^\ast} + \norm{v_i^+}_{\LL_q(\Omega_\tau)}
      \\ &\quad
       + \norm{v_i^+|_\Sigma}_{\mathring \WW_{q'}^{(1,2) \cdot (\frac{1}{2} - \frac{1}{2q'})}(\Sigma_\tau)^\ast} + \norm{w_i}_{\LL_q(\Sigma_\tau)} + \norm{w_i|_\Sigma}_{\mathring \WW_{q'}^{(1,2) \cdot (\frac{1}{2} - \frac{1}{2q'})}(\Sigma_\tau)^\ast}
     \\
     &\leq C_{q,T^\ast} \big( 1 + \sum_{j=1}^N \norm{c_j^{\mu^\Omega}}_{\mathring \WW_{q'}^{(1,2)}(\Omega_\tau)^\ast}
      + \sum_{j < i} \big( \norm{c_j}_{\LL_q(\Omega_\tau)} + \norm{c_j|_\Sigma}_{\mathring \WW_{q'}^{(1,2) \cdot (\frac{1}{2} - \frac{1}{2q'})}(\Sigma_\tau)^\ast}
      + \norm{c_j^\Sigma}_{\LL_q(\Omega_\tau)} \big) \big).
   \end{align*}
  Together with the induction hypothesis, this gives
   \begin{align*}
     &\norm{c_i}_{\LL_q(\Omega_\tau)}
      + \norm{c_i|_\Sigma}_{\mathring \WW_{q'}^{(1,2) \cdot (\frac{1}{2} - \frac{1}{2q'})}(\Sigma_\tau)^\ast}
      + \norm{c_i^\Sigma}_{\LL_q(\Omega_\tau)}
       \\
       &\leq C_{q,T^\ast} \big( 1 + \sum_{j=1}^N \norm{c_j^{\mu^\Omega}}_{\mathring \WW_{q'}^{(1,2)}(\Omega_\tau)^\ast} + \norm{(c_j^\Sigma)^{\mu^\Sigma}}_{\mathring \WW_{q'}^{(1,2)}(\Sigma_\tau)^\ast} \big),
   \end{align*}
  so that by iteration we obtain the estimate
   \begin{align}
     &\sum_{j=1}^N \big( \norm{c_j}_{\LL_q(\Omega_\tau)}
      + \norm{c_j|_\Sigma}_{\mathring \WW_{q'}^{(1,2) \cdot (\frac{1}{2} - \frac{1}{2q'})}(\Sigma_\tau)^\ast}
      + \norm{c_j^\Sigma}_{\LL_q(\Omega_\tau)} \big)
       \nonumber \\ &\quad
       \leq C_{q,T^\ast} \big( 1 + \sum_{j=1}^N \big( \norm{c_j^{\mu^\Omega}}_{\mathring \WW_{q'}^{(1,2)}(\Omega_\tau)^\ast} + \norm{(c_j^\Sigma)^{\mu^\Sigma}}_{\mathring \WW_{q'}^{(1,2)}(\Sigma_\tau)^\ast} \big) \big).
      \label{eqn:general_estimate}
   \end{align}
  From the estimate \eqref{eqn:general_estimate}, we may now derive the $\LL_p$--$\LL_q$-estimates.
  First, let us consider the case that $p < q \leq \infty$ (hence, $1 \leq q' < p'$ for the corresponding Hölder conjugates) such that $\frac{1}{q} \geq \frac{1}{p} - \frac{d+2}{2}$, $2 - \frac{d+2}{p} \geq - \frac{d+2}{\mu^\Sigma q}$.
  Then, we may employ the continuous and dense embeddings
   \[
    \mathring \WW_{q'}^{(1,2)}(\Omega_\tau)
     \hookrightarrow \LL_{(\mu^\Omega p)'}(\Omega_\tau)
     \quad \text{and} \quad
    \mathring \WW_{q'}^{(1,2)}(\Sigma_\tau)
     \hookrightarrow \LL_{(\mu^\Sigma p)'}(\Sigma_\tau),
   \]
  and, hence, we may conclude that
   \[
    \LL_p(\Omega_\tau)
     \hookrightarrow \mathring \WW_{q'}^{(1,2)}(\Omega_\tau)^\ast
     \quad \text{and} \quad
    \LL_p(\Sigma_\tau)
     \hookrightarrow \mathring \WW_{q'}^{(1,2)}(\Sigma_\tau)^\ast
   \]
  are continuously embedded.
  Therefore, we obtain that
   \begin{align*}
     &\sum_{j=1}^N \big( \norm{c_j}_{\LL_q(\Omega_\tau)}
      + \norm{c_j^\Sigma}_{\LL_q(\Omega_\tau)}
      + \norm{c_j|_\Sigma}_{\mathring \WW_{q'}^{(1,2) \cdot (\frac{1}{2} - \frac{1}{2q'})}(\Sigma_\tau)} \big)
       \\
       &\leq C_{p,q,T^*} \big( 1 + \sum_{j=1}^N \big( \norm{c_j}_{\LL_p(\Omega_\tau)}^{\mu^\Omega} + \norm{c_j^\Sigma}_{\LL_p(\Sigma_\tau)}^{\mu^\Sigma} \big) \big)
     \quad \text{for } p < q \leq \infty \text{ s.t.\ } \frac{1}{p} \geq \frac{1}{q} - \frac{d+2}{2}.
   \end{align*}
  Iterating this procedure, if necessary, we obtain that
   \begin{align*}
     &\sum_{j=1}^N \big( \norm{c_j}_{\LL_q(\Omega_\tau)}
      + \norm{c_j^\Sigma}_{\LL_q(\Omega_\tau)}
      + \norm{c_j|_\Sigma}_{\mathring \WW_{q'}^{(1,2) \cdot (\frac{1}{2} - \frac{1}{2q'})}(\Sigma_\tau)} \big)
      \\
       &\leq C_{p,q,T^*} \big( 1 + \sum_{j=1}^N \big( \norm{c_j}_{\LL_p(\Omega_\tau)} + \norm{c_j^\Sigma}_{\LL_p(\Sigma_\tau)} \big) \big)^\mu
   \end{align*}
  for all $p > \max \{ 1, \frac{(\mu^\Omega - 1)(d + 2)}{2 \mu^\Omega}, \frac{(\mu^\Sigma - 1)(d + 1)}{2 \mu^\Sigma} \}, \, q \in (1,\infty)$,
  where $\mu \geq 0$ will, in general, be a multiple of $\mu^\Omega$ or $\mu^\Sigma$.
 \end{proof}
 Regarding the term $c_j|_\Sigma$, this is not yet the estimate in the norm which we are actually looking for.
 To improve on the norm, we consider the bulk reaction-diffusion-advection problems
  \begin{alignat*}{2}
   (\partial_t + \vec v \cdot \nabla - d_i \Delta) c_i
    &= f_i := r_i^\Omega(\vec c)
	\qquad &
    &\text{in } \Omega_\tau,
    \\
   c_i
    &= h_i := c_i|_\Sigma
	\qquad &
    &\text{on } \Sigma_\tau,
    \\
   c_i(0,\cdot)
    &= c_i^0
	\qquad &
    &\text{in } \Omega.
  \end{alignat*}
For the initial data, we may and will w.l.o.g.\ assume that $c_i^0 \in \CC^2(\overline{\Omega})$.
Moreover, by the estimates which we have already established and polynomial boundedness of the chemical reaction rates, we may conclude that $r_i^\Omega(\vec c) \in \bigcap\limits_{q \in (1,\infty)} \LL_q(\Omega_\tau)$ and $h_i = c_i|_\Sigma \in \bigcap\limits_{q \in (1,\infty)} \mathring \WW_{q'}^{(1,2) \cdot (\frac{1}{2} - \frac{1}{2q'})}(\Sigma_\tau)^\ast$.
\newline
To obtain an estimate of the form
  \[
  \norm{c_i|_\Sigma}_{\LL_q(\Sigma_\tau)}
   \leq C \big( \norm{\vec c}_{\LL_{q \gamma^\Omega}(\Omega_\tau)}^{\gamma^\Omega} + \norm{c_i|_\Sigma}_{\mathring \WW_{q'}^{(1,2) \cdot (\frac{1}{2} - \frac{1}{2q'})}(\Sigma_\tau)^\ast} \big)
   \quad
   \text{for sufficiently large } q,
 \]
we need to make some additional considerations:
As a first step, let us consider the known optimal regularity estimate
 \[
  \norm{v_i}_{\WW_q^{(1,2)}(\Omega_\tau)}
   \leq C \big( \norm{h_i}_{\WW_q^{(1,2) \cdot (1 - \frac{1}{2q})}(\Sigma_\tau)} + \norm{v_i^0}_{\WW_q^{2-2/q}(\Omega)} + \norm{f_i}_{\LL_q(\Omega_\tau)}  \big)
 \]
for the reaction-diffusion-advection systems with inhomogeneous Dirichlet data
 \begin{alignat*}{2}
  (\partial_t - \vec v \cdot \nabla - d_i \Delta) v_i
   &= f_i
   \qquad
   &&\text{in } \Omega_\tau,
   \\
  v_i|_\Sigma
   &= h_i
   \qquad
   &&\text{on } \Sigma_\tau,
   \\
  v_i(0,\cdot)
   &= v_i^0
   \qquad
   &&\text{in } \Omega.
 \end{alignat*}
Via its dual problem
 \begin{alignat*}{2}
  - (\partial_t + \vec v \cdot \nabla + d_i \Delta) \theta_i
   &= \Theta_i
   \qquad
   &&\text{in } \Omega_\tau,
   \\
  \theta_i
   &= g_i
   \qquad
   &&\text{on } \Sigma_\tau,
   \\
  \theta_i(\tau,\cdot)
   &= 0
   \qquad
   &&\text{in } \Omega
 \end{alignat*}
we obtain the estimate
 \[
  \norm{v_i}_{\LL_q(\Omega_\tau)}
   \leq C \big( \norm{h_i}_{\mathring \WW_{q'}^{(1,2) \cdot (1 - \frac{1}{2q'})}(\Sigma_\tau)^\ast} + \norm{v_i^0}_{\WW_{q'}^{2-2/q'}(\Omega)^\ast} + \norm{f_i}_{\WW_{q'}^{(1,2)}(\Omega_\tau)^\ast}  \big).
 \]
 We now want to interpolate between these estimates.
 If we would proceed directly with the real interpolation method, we can estimate the $( \LL_q(\Omega_\tau), \WW_q^{(1,2)}(\Omega_\tau) )_{\theta,q}$-norm (which is equivalent to $\norm{\cdot}_{\BB_q^{(1,2) \cdot \theta}(\Omega_\tau)}$) by the interpolation space norms
  \begin{align*}
   &\norm{h_i}_{( \WW_{q'}^{(1,2) \cdot (1 - \frac{1}{2q'})}(\Sigma_\tau)^\ast, \WW_q^{(1,2) \cdot (1 - \frac{1}{2q})})_{\theta,q}},
   \quad
   \norm{v_i^0}_{(\WW_{q'}^{2-2/q'}(\Omega)^\ast, \WW_q^{2-2/q}(\Omega))_{\theta,q}}
   \\
   &\quad \text{and} \quad
   \norm{f_i}_{(\WW_q^{(1,2)}(\Omega_\tau)^\ast, \LL_q(\Omega_\tau))_{\theta,q}}
  \end{align*}
 of $h_i$, $v_i^0$ and $f_i$, respectively.
 This direct approach, however, runs into the problem of how to characterize these interpolation spaces, e.g.\ by identifying them with suitable Besov or, more general, Triebel--Lizorkin spaces, if possible.
 Unfortunately, the test functions $\Dcal(\Omega) = \CC_c^\infty(\Omega)$ are \emph{not dense} in $\WW_{q'}^{2-2/q'}(\Omega)$ for sufficiently large $q' > 1$ (i.e.\ for small $q > 1$), and, therefore, the characterizations which are valid for the full space $\R^d$ do, in general, not remain true if one replaces $\R^d$ by an open, regular domain $\Omega \subseteq \R^d$.
 In fact, in this case the dual spaces also include elements which are \emph{no distributions}.
 We refer to \cite{Ama09} for results in the context of \emph{corners}, i.e.\ intersections of open or closed half-spaces, in $\R^d$.
 In our situation, however, we want to estimate an element in the dual space for which will \emph{a-priori} know that it is represented by an -- at least locally -- integrable function; in particular a distribution.
 To avoid the problems sketched above, we use that thanks to the regularity of the domain there exist \emph{universal extension operators} $\Ecal_{\Omega \rightarrow \R^d}: \LL_1(\Omega) \rightarrow \LL_1(\R^d)$ and $\Ecal_{\Omega_\tau \rightarrow \R^{1+d}}: \LL_1(\Omega_\tau) \rightarrow \LL_1(\R^{1+d})$, cf.\ \cite{Rychkov_1999}, such that
  \begin{align*}
   \Ecal_{\Omega \rightarrow \R^d}|_{\WW_r^s(\Omega)}:
    \quad
    &\WW_r^s(\Omega) \rightarrow \WW_r^s(\R^d),
    \\
   \Ecal_{\Omega_\tau \rightarrow \R^d}|_{\WW_r^{(1,2) \cdot s}(\Omega_\tau)}:
    &\WW_r^{(1,2) \cdot s}(\Omega_\tau) \rightarrow \WW_r^{(1,2) \cdot s}(\R^{1+d}),
    \quad
    r \in (1,\infty), \, s \in [0,2],
  \end{align*}
 i.e.\ for each $r \in (1,\infty)$, $s \in [0,2]$, these are \emph{bounded linear operators}, and the action of $\Ecal_{\Omega \rightarrow \R^d}$ resp.\ $\Ecal_{\Omega_\tau \rightarrow \R^{1+d}}$ does not depend on $r$ and $s$, e.g.\ $\Ecal_{\Omega \rightarrow \R^d}^{(r,s)} u = \Ecal_{\Omega \rightarrow \R^d}^{(r',s')} u$, whenever $u \in \WW_r^s(\Omega) \cap \WW_{r'}^{s'}$ for some $r, r' \in (1,\infty)$ and $s, s' \in [0,2]$.
 Thanks to the regularity of the domain $\Omega$, these exist.
 Moreover, for the Dirichlet trace function, we consider a smooth decomposition $\{ \psi_k: \, k = 1, \ldots, m \}$ of unity in a neighbourhood subordinate to an open covering $\{ U_k: k = 1, \ldots, m \}$ of the compact set $\overline{\Omega}$, and $\CC^2$-isomorphisms $\Phi_k: U_k \rightarrow V_k \subseteq \R^d$ such that $\Phi_k(U_k \cap \Omega) = V_k \cap (\R^{d-1} \times (0,\infty))$ and $\Phi_k(U_k \cap \Sigma) = V_k \cap (\R^{d-1} \times \{0\})$, and the map $\Tcal$ defined by
  \[
   (\Tcal u)_k(t,\vec x')
    = \begin{cases}
     (\psi_k u)(t, \Phi_k^{-1}(\vec x',0)),
     &\text{if } (x',0) \in V_k,
     \\
     0,
     &\text{otherwise}.
    \end{cases}
  \]
 Note that $\mathcal{T}|_{\WW_r^{(1,2) \cdot s}(\Sigma_\tau)}$ is a bounded linear operator from $\WW_r^{(1,2) \cdot s}(\Sigma_\tau)$ to $[\WW_r^{(1,2) \cdot s}(\R \times \R^{d-1})]^m$ for each $r \in (1, \infty)$ and $s \in [0,2]$.
 In fact, it is a topologic linear isomorphism between $\WW_r^{(1,2) \cdot s}(\Sigma_\tau)$ and $\Tcal(\WW_r^{(1,2) \cdot s}(\Sigma_\tau)) \subseteq \WW_r^{(1,2) \cdot s}(\R \times \R^{d-1})$ for each $r \in (1,\infty)$ and $s \in [0,2]$.
 We then find that
  \begin{align*}
   &\norm{\Ecal_{\Omega_\tau \rightarrow \R^{1+d}} v_i}_{\LL_q(\R^{1+d})}
    \leq \norm{v_i}_{\LL_q(\Omega_\tau)}
    \\
    &\leq C \left( \norm{h_i}_{\mathring \WW_{q'}^{(1,2)} \cdot (1-\frac{1}{2q'})}(\Sigma_\tau) + \norm{v_i^0}_{\WW_{q'}^{2-2/q'}(\Omega)^\ast} + \norm{f_i}_{\WW_{q'}^{(1,2)}(\Omega_\tau)} \right)
    \\
    &\leq C \left( \norm{\Tcal h_i}_{\WW_{q}^{(1,2) \cdot (- \frac{1}{2q})}(\R^{1+d})} + \norm{\Ecal_{\Omega \rightarrow \R^d} v_i^0}_{\WW_q^{(1,2) \cdot (- 2/q)}(\R^d)} + \norm{\Ecal_{\Omega_\tau \rightarrow \R^{1+d}} f_i}_{\WW_q^{(1,2) \cdot (-1)}(\R^{1+d})} \right)
  \end{align*}
 and
  \begin{align*}
   &\norm{\Ecal_{\Omega_\tau \rightarrow \R^{1+d}} v_i}_{\WW_q^{(1,2)}(\Omega_\tau)}
    \leq C \norm{v_i}_{\WW_q^{(1,2)}(\Omega_\tau)}
    \\
    &\leq C \left( \norm{h_i}_{\WW_q^{(1,2) \cdot (1 - \frac{1}{2q})}(\Sigma_\tau)} + \norm{v_i^0}_{\WW_q^{2-2/q}(\Omega)} + \norm{f_i}_{\LL_q(\Omega_\tau)} \right)
    \\
    &\leq C \left( \norm{\Tcal h_i}_{\WW_q^{(1,2) \cdot (1 - \frac{1}{2q})}(\R^{1+(d-1)};\R^N)} + \norm{\Ecal_{\Omega \rightarrow \R^d} v_i^0}_{\WW_q^{2-2/q}(\R^d)} + \norm{\Ecal_{\Omega_\tau \rightarrow \R^{1+d}} f_i}_{\LL_q(\R^{1+d})} \right).
  \end{align*}
 We may now interpolate between these inequalities with the real interpolation functor $(\cdot,\cdot)_{\theta,q}$ for some $\theta \in (0,1)$ and obtain that
  \begin{align*}
   &\norm{\Ecal_{\Omega \rightarrow \R^{1+d}} v_i}_{\BB_q^{(1,2) \cdot \theta}(\R^{1+d})}
    \\
    &\leq C \left( \norm{\Tcal h_i}_{\BB_q^{(1,2) \cdot (\theta - \frac{1}{2q})}(\R^{1+d};\R^N)} + \norm{\Ecal_{\Omega \rightarrow \R^d} v_i^0}_{\BB_q^{2(\theta-\frac{1}{q})}(\R^d)} + \norm{f_i}_{\BB_q^{(1,2) \cdot (\theta - 1)}(\R^{1+d})} \right).
  \end{align*}
 For the particular choice $\theta = \frac{1}{2q}$, we find that the spatial trace space of $\BB_q^{(1,2) \cdot \frac{1}{2q}}(\R^{1+d})$ for the restriction to $\R \times \Sigma$ continuously embeds into $\LL_q(\R \times \Sigma)$, so that we may conclude that
  \begin{align*}
   &\norm{(\Ecal_{\Omega_\tau \rightarrow \R^{1+d}} v_i)|_\Sigma}_{\LL_q(\R \times \Sigma)}
    \leq C \norm{\Ecal_{\Omega_\tau \rightarrow \R^{1+d}} v_i}_{\BB_q^{(1,2) \cdot \theta}(\R^{1+d})}
    \\
    &\leq C \left( \norm{\Tcal h_i}_{\BB_q^{(1,2) \cdot (\theta - \frac{1}{2q})}(\R^{1+(d-1)}; \R^N)} + \norm{\Ecal_{\Omega \rightarrow \R^d} v_i^0}_{\BB_q^{2(\theta - \frac{1}{q})}(\R^d)} + \norm{f_i}_{\BB_q^{(1,2) \cdot (\theta-1)}(\R^{1+d})} \right)
    \\
    &\leq C \left( \norm{\Tcal h_i}_{\LL_q(\R^{1+(d-1)}; \R^N)} + \norm{\Ecal_{\Omega \rightarrow \R^d} v_i^0}_{\WW_q^{(1,2)}(\R^d)} + \norm{f_i}_{\LL_q(\R^{1+d})} \right)
    \\
    &\leq C \left( 1 + \norm{c_i|_\Sigma}_{\WW_{q'}^{(1,2) \cdot (\frac{1}{2} - \frac{1}{2q'})}(\Sigma_\tau)^\ast} + \norm{v_i^0}_{\WW_q^{(1,2)}(\Omega)} + \norm{c_i}_{\LL_{q\gamma^\Omega}(\Omega_\tau)}^{\gamma^\Omega} \right).
  \end{align*}
 
 \begin{lemma}[Parabolic smoothing]
  \label{lem:smoothing_effect}
  Let $(\vec c, \vec c^\Sigma) \in \WW_p^{(1,2)}(\Omega_T)^N \times \WW_p^{(1,2)}(\Sigma_T)^N$ be the solution to the reaction-diffusion-advection-sorption system \eqref{eqn:RDASS} with Langmuir adsorption for some $p \in ( \max \{1, \frac{\mu^\Omega - 1}{\mu^\Omega} \frac{d+2}{2}, \frac{\mu^\Sigma - 1}{\mu^\Sigma} \frac{d + 1}{2} \}, \infty )$ and some $T > 0$.
  Then, $(\vec c, \vec c^\Sigma) \in \WW_q^{(1,2)}((\varepsilon,T) \times \Omega)^N \times \WW_q^{(1,2)}((\varepsilon,T) \times \Sigma))^N$ for all $\varepsilon \in (0,T)$ and $q \in (1, \infty)$.
 \end{lemma}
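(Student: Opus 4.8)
\section*{Proof plan for Lemma~\ref{lem:smoothing_effect}}

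The plan is to run a parabolic bootstrap, using that away from $t=0$ the compatibility conditions become irrelevant. First I would fix $\varepsilon\in(0,T)$ and a cut-off $\chi\in\CC^\infty(\R)$ with $\chi\equiv 0$ on $(-\infty,\varepsilon/2]$ and $\chi\equiv 1$ on $[\varepsilon,\infty)$. Then $\chi\vec c$ and $\chi\vec c^\Sigma$ solve the same system on $(0,T)$ with \emph{zero} initial data, the bulk and surface source terms being replaced by $\chi\vec r^\Omega(\vec c)+\chi'\vec c$ and $\chi(\vec r^\Sigma(\vec c^\Sigma)+\vec s^\Sigma(\vec c|_\Sigma,\vec c^\Sigma))+\chi'\vec c^\Sigma$, and the Neumann datum by $\chi\vec s^\Sigma(\vec c|_\Sigma,\vec c^\Sigma)$; since all of these vanish for $t\le\varepsilon/2$, no compatibility conditions are needed. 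By Lemma~\ref{lem:L_p-L_q-estimates} together with the $\LL_q$-trace estimate for $c_i|_\Sigma$ established in the paragraph preceding this lemma, we already know that $\vec c\in\LL_q(\Omega_T;\R^N)$, $\vec c^\Sigma\in\LL_q(\Sigma_T;\R^N)$ and $\vec c|_\Sigma\in\LL_q(\Sigma_T;\R^N)$ for every $q\in(1,\infty)$.

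I would treat the surface first. Thanks to the polynomial bounds \eqref{A_P^ch} and \eqref{A_P^sorp} (with $K^\Omega=K^\Sigma=1$ in the Langmuir case) and Hölder's inequality, the modified surface source $\chi(\vec r^\Sigma(\vec c^\Sigma)+\vec s^\Sigma(\vec c|_\Sigma,\vec c^\Sigma))+\chi'\vec c^\Sigma$ lies in $\LL_q(\Sigma_T;\R^N)$ for every finite $q$, using only the $\LL_q$-bounds on $\vec c|_\Sigma$ and $\vec c^\Sigma$. Since $\Sigma=\partial\Omega$ is a compact manifold without boundary, $A_i^\Sigma$ has $\LL_q$-maximal regularity (Lemma~\ref{lem:maximal_regularity_scalar_case}), and the problem for $\chi\vec c^\Sigma$ has zero initial data; hence $\chi\vec c^\Sigma\in\WW_q^{(1,2)}(\Sigma_T;\R^N)$, that is $\vec c^\Sigma\in\WW_q^{(1,2)}((\varepsilon,T)\times\Sigma;\R^N)$, for every $q\in(1,\infty)$ and every $\varepsilon\in(0,T)$. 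Note that this step uses only $\LL$-integrability of the traces, so it is unconditional.

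For the bulk I would reformulate the Neumann condition as a Robin condition. Writing $\theta=\theta(\vec c^\Sigma)=\frac{1}{c_S^\Sigma}\sum_j\sigma_j c_j^\Sigma$, the Langmuir rate is $s_i^\Sigma(\vec c|_\Sigma,\vec c^\Sigma)=k_i^{\mathrm{ad}}(1-\theta)^+\,c_i|_\Sigma-k_i^{\mathrm{de}}c_i^\Sigma$, so, working on a slightly larger interval $(\varepsilon'',T)$ on which the surface regularity is already available, the boundary condition for $\chi c_i$ reads $-d_i\partial_\nu(\chi c_i)-b_i\,(\chi c_i)|_\Sigma=-k_i^{\mathrm{de}}\chi c_i^\Sigma$ with coefficient $b_i:=k_i^{\mathrm{ad}}(1-\theta(\vec c^\Sigma))^+\ge 0$. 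Since $\theta$ is a linear (hence $\WW_q^{(1,2)}$) function of the already-smoothed $\vec c^\Sigma$ and $(\cdot)^+$ is globally Lipschitz, $b_i$ is bounded with values in $[0,k_i^{\mathrm{ad}}]$, and by the argument of Corollary~\ref{cor:algebra_boundary} with $K^\Omega=K^\Sigma=1$ multiplication by $b_i$ maps $\WW_q^{(1,2)\cdot(1-\frac1{2q})}(\Sigma_T)$ boundedly into $\WW_q^{(1,2)\cdot(\frac12-\frac1{2q})}(\Sigma_T)$ for every $q$ satisfying the hypotheses of that corollary. Hence $c_i|_\Sigma\mapsto b_i c_i|_\Sigma$ is an admissible lower-order perturbation of the homogeneous Neumann realization of $A_i$, the corresponding parabolic problem has $\LL_q$-maximal regularity, the bulk source $\chi r_i^\Omega(\vec c)+\chi' c_i$ lies in $\LL_q(\Omega_T)$, and $-k_i^{\mathrm{de}}\chi c_i^\Sigma\in\WW_q^{(1,2)}(\Sigma_T)\hookrightarrow\WW_q^{(1,2)\cdot(\frac12-\frac1{2q})}(\Sigma_T)$ with vanishing time trace. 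Therefore $\chi c_i\in\WW_q^{(1,2)}(\Omega_T)$, i.e.\ $\vec c\in\WW_q^{(1,2)}((\varepsilon,T)\times\Omega;\R^N)$, first for all $q$ meeting the constraints of Corollary~\ref{cor:algebra_boundary} (for Langmuir, $q\ge\tfrac{d+2}{2}$ suffices), and then for all $q\in(1,\infty)$ via the embedding $\WW_{q_1}^{(1,2)}((\varepsilon,T)\times\Omega)\hookrightarrow\WW_q^{(1,2)}((\varepsilon,T)\times\Omega)$ for $q\le q_1$ on the bounded set $(\varepsilon,T)\times\Omega$. Letting $\varepsilon\downarrow 0$ completes the argument.

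I expect the main obstacle to be exactly the bulk--surface coupling: naively, $\WW_q^{(1,2)}$-regularity of $\vec c$ needs $\vec s^\Sigma\in\WW_q^{(1,2)\cdot(\frac12-\frac1{2q})}(\Sigma_T)$, which by the algebra property needs $\WW_q^{(1,2)\cdot(1-\frac1{2q})}$-regularity of the trace $\vec c|_\Sigma$, i.e.\ again $\WW_q^{(1,2)}$-regularity of $\vec c$; and anisotropic Sobolev embeddings cannot break this loop because they never raise the time-weighted order $1-\frac1{2q}$ of the Dirichlet trace space. The resolution is the Robin reformulation above, which is possible precisely because Langmuir adsorption is affine-linear in the bulk trace with a coefficient that inherits the regularity of the separately and unconditionally smoothed surface concentration. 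A secondary technical point is to keep all maximal-regularity constants independent of $\varepsilon$ (guaranteed by the zero initial/final data normalisation) and to check that the perturbation estimate from Corollary~\ref{cor:algebra_boundary} indeed holds at the integrability levels one passes through on the way up.
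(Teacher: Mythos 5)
Your proposal is correct and rests on the same two pillars as the paper's proof: the $\LL_q$-bounds on $\vec c$, $\vec c|_\Sigma$, $\vec c^\Sigma$ for every finite $q$ coming from Lemma~\ref{lem:L_p-L_q-estimates} and the subsequent trace estimate, and -- decisively -- the rewriting of the Langmuir boundary condition as a Robin condition with coefficient $k_i^\mathrm{ad}(1-\theta(\vec c^\Sigma))^+$ and inhomogeneity $k_i^\mathrm{de}c_i^\Sigma$, so that once the surface component has been smoothed the Neumann datum no longer involves the unknown trace $c_i|_\Sigma$ and the circular dependence you describe is indeed broken. The differences are in the bookkeeping. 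You neutralise the rough initial data by a smooth time cut-off, so every subproblem has zero (hence compatible) initial values and only $\LL_q$-commutators $\chi'c_i$, $\chi'c_i^\Sigma$ appear; the paper instead splits $c_i=u_i+v_i+w_i$, lets the homogeneous Neumann semigroup carry the initial datum (its analytic smoothing providing the gain on $(2\varepsilon,T)$) and introduces an auxiliary corrector $w_i^\varepsilon$ to restore compatibility for the Robin part. The paper also moves the advection term $\vec v\cdot\nabla c_i$ to the right-hand side, which forces an iteration over exponents $\tfrac1q\ge\tfrac1p-\tfrac1{d+2}$, whereas you keep it in the operator and reach small $q$ by inclusion on the bounded cylinder -- legitimate, but then the maximal regularity of the variable-coefficient Robin problem should be justified as in the proof of Theorem~\ref{thm:global_existence}, via condition (SB) of \cite{DeHiPr07} or a smallness-in-time perturbation argument exploiting the zero time trace, rather than solely by the multiplication property of Corollary~\ref{cor:algebra_boundary}, which by itself does not give the small constant needed to absorb the Robin term. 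With that point made explicit, your route is a clean, slightly more economical variant: it avoids the compatibility corrector and the $q$-iteration, at the price of the commutator terms and the cut-off.
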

 \begin{remark}
  One might suspect that the smoothing property of the reaction-diffusion-advection-sorption system under consideration is valid \emph{independently} of the particular triangular structure imposed on the reactive terms.
  Note, however, that we also include the case of rather low values of $p \in (1, \infty)$.
  In contrast to the case $p > \frac{d+2}{2}$, the solution space $\WW_p^{(1,2)}(\Omega_\tau) \times \WW_p^{(1,2)}(\Sigma_\tau)$ does, in general, not continuously embed into those spaces which would be required to employ Hölder regularity theory of reaction-diffusion equations.
  In fact, if $\WW_p^{(1,2)}(\Omega_\tau) \hookrightarrow \CC^{(1,2) \cdot \alpha}(\overline{\Omega_\tau})$, $\WW_p^{(1,2)}(\Omega_\tau) \hookrightarrow \CC^{(1,2) \cdot (\alpha + \frac{1}{2})}(\overline{\Sigma_\tau})$ (via the boundary trace map), and $\WW_p^{(1,2)}(\Sigma_\tau) \hookrightarrow \CC^{(1,2) \cdot \alpha}(\overline{\Sigma_\tau})$ embed continuously for some $\alpha \in (0, \frac{1}{2})$ (which is always true if $p > \frac{d+2}{2}$), we could exploit the theory of quasi-autonomous, linear parabolic equations to deduce that actually $(\vec c, \vec c^\Sigma) \in \CC^{(1,2) \cdot (1 + \alpha)}((\varepsilon,\tau);\R^N) \times \CC^{(1,2) \cdot (1 + \alpha)}((\varepsilon, \tau) \times \Sigma; \R^N)$ for every $\varepsilon > 0$.
  \newline
  In \cite{ShaMor16}, the smoothing property has been demonstrated for the case $p > d$.
  In fact, from their results if follows that if $p > d$ and $(\vec c^0, \vec c^{\Sigma,0}) \in \WW_p^2(\Omega;\R^N) \times \WW_p^2(\Sigma;\R^N)$ (satisfying the compatibility conditions), there is a unique classical solution in the class
   \begin{align*}
    (\vec c, \vec c^\Sigma)
     &\in \CC([0,\tau]; \LL_p(\Omega; \R^N) \times \LL_p(\Sigma;\R^N))
      \cap \CC^1((0,\tau]; \CC(\overline{\Omega};\R^N) \times \CC(\Sigma; \R^N))
      \\ &\qquad
      \cap \CC((0,\tau]; \CC^2(\overline{\Omega}; \R^N) \times \CC^2(\Sigma; \R^N)).
   \end{align*}
  Note that the condition $p > d$ is sufficient to imply that $\WW_p^2(\Omega) \hookrightarrow \CC^{1+\alpha}(\overline{\Omega})$ for $\alpha := 1 - \frac{d}{p} \in (0,1)$, and then $\WW_p^2(\Sigma) \hookrightarrow \CC^{1+\alpha}(\Sigma)$ as well, so that the tools from Hölder regularity theory for parabolic equations become applicable.
 \end{remark} 
 \begin{proof}[Proof of Lemma \ref{lem:smoothing_effect}]
 By the $\LL_p$--$\LL_q$-estimates of Lemma~\ref{lem:L_p-L_q-estimates} (which crucially depend on the triangular structure!), we know that $\vec c \in \LL_q(\Omega_T;\R^N)$ and $\vec c|_\Sigma$, $\vec c^\Sigma \in \LL_q(\Sigma_T;\R^N)$ for all $q \in (1, \infty)$.
 By the polynomial boundedness of the chemical reaction terms $\vec r^\Omega(\vec c)$ and $\vec r^\Sigma(\vec c^\Sigma)$ as well as the sorption model $\vec r^\mathrm{sorp}(\vec c|_\Sigma, \vec c^\Sigma)$, we may, thus, conclude that these are functions in $\LL_q(\Omega_T)$ and $\LL_q(\Sigma_T)$ for all $q \in (1, \infty)$ as well.
 Therefore, the solutions $c_i^\Sigma: [0,T] \rightarrow \R$ of the problems
  \begin{alignat*}{2}
    \partial_t c_i^\Sigma - d_i^\Sigma \Delta_\Sigma c_i^\Sigma
     &= F_i^\Sigma + G_i^\Sigma
     \qquad
     &&\text{on } \Sigma_T,
     \\
    c_i^\Sigma(0,\cdot)
     &= c_i^{\Sigma,0}
     \qquad
     &&\text{on } \Sigma,  
  \end{alignat*}
 where $F_i^\Sigma = r_i^\Sigma(\vec c^\Sigma)$ and $G_i^\Sigma = s_i^\Sigma(c_i, c_i^\Sigma) \in \LL_q((\varepsilon,T) \times \Sigma)$ lie in $\WW_q^{(1,2)}((\varepsilon,T) \times \Sigma)$ for all $q \in (1, \infty)$ and $\varepsilon \in (0,T)$.
 In fact, we may employ the representation
  \[
   c_i^\Sigma(t,\cdot)
    = \ee^{t A_i^\Sigma} c_i^{\Sigma,0} + \int_0^t \ee^{(t-s) A_i^\Sigma} (F_i^\Sigma + G_i^\Sigma)(s,\cdot) \dd s,
  \]
 where the inhomogeneous term is in $\WW_q^{(1,2)}(\Sigma_T)$ since $F_i^\Sigma + G_i^\Sigma \in \LL_q(\Sigma_T)$ and by $\LL_q$-maximal regularity, for all $q \in (1,\infty)$.
 For the homogeneous part $\ee^{t A_i^\Sigma} c_i^{\Sigma,0}$ we may use the smoothing effect of the analytic $C_0$-semigroup $(\ee^{t A_i^\Sigma})$ on the space $\LL_p(\Sigma)$, so that this term lies in $\WW_q^{(1,2)}((\varepsilon,T) \times \Sigma)$ for all $\varepsilon \in (0,T)$.
 In particular, $c_i^\Sigma \in \CC((0,T];\WW_q^{2-2/q}(\Sigma))$ for all $q \in (1,\infty)$.
 Concerning the bulk term, we see that $c_i|_{t \in [\varepsilon,T]}$ is the solution to the linear parabolic initial-boundary value problem
  \[
   \begin{cases}
    \partial_t c_i + \vec v \cdot \nabla c_i - d_i \Delta c_i
     = F_i
     &\text{in } (\varepsilon,T) \times \Omega,
     \\
    k_i^\mathrm{ad} (1 - \frac{c_i^{\Sigma,+}}{c_i^{\Sigma,\infty}})^+ + d_i \partial_{\vec \nu} c_i
     = G_i
     &\text{on } (\varepsilon,T) \times \Sigma,
     \\
    c_i(\varepsilon,0)
     = c_i^\varepsilon
     &\text{in } \overline{\Omega}
   \end{cases}
  \]
 for $c_i^\varepsilon := c_i(\varepsilon,0) \in \WW_p^{2-2/p}(\Omega)$, and $F_i = r_i^\Omega(\vec c) \in \LL_q((\varepsilon,T) \times \Omega)$ for all $q \in (1, \infty)$, $G_i = k_i^\mathrm{de} c_i^\Sigma \in \WW_q^{(1,2)}((\varepsilon,T) \times \Sigma)$ for all $q \in (q, \infty)$.
 Now, we may write $c_i|_{t \in [\varepsilon,T]} = u_i + v_i + w_i$, where $u_i \in \WW_q^{(1,2)}((\varepsilon,T) \times \Omega$) for all $q \in (1,\infty)$ with $\frac{1}{q} \geq \frac{1}{p} - \frac{1}{d+2}$ is the solution to the parabolic initial-boundary value problem
  \[
   \begin{cases}
    \partial_t u_i - d_i \Delta u_i
     = F_i - \vec v \cdot \nabla c_i
     &\text{in } (\varepsilon,T) \times \Omega,
     \\
   k_i^\mathrm{ad} (1 - \frac{c_i^{\Sigma,+}}{c_i^{\Sigma,\infty}})^+ u_i + d_i \partial_{\vec \nu} u_i
     = G_i
     &\text{on } (\varepsilon,T) \times \Sigma,
     \\
    u_i(\varepsilon,\cdot)
     = 0
     &\text{on } \overline{\Omega},
   \end{cases}
  \]
 and where we may use the fact that $c_i \in \WW_p^{(1,2)}(\Omega_T) \hookrightarrow \WW_q^{(1,2) \cdot \frac{1}{2}}(\Omega_T) \hookrightarrow \LL_q((\varepsilon,T);\LL_q(\Sigma))$ for $q \in (1,\infty)$ such that $\frac{1}{q} \geq \frac{1}{p} - \frac{1}{d+2}$.
 Moreover, the functions $v_i$ and $w_i$ are constructed by taking some $w_i^\varepsilon \in \bigcap_{q \in (1,\infty)} \WW_q^{2-2/q}(\Omega)$ such that $- d_i \partial_{\vec \nu} w_i = - k_i^\mathrm{ad} (1 - \frac{c_i^{\Sigma,+}(\varepsilon,\cdot)}{c_i^{\Sigma,\infty}})^+ (c_i(\varepsilon,0) + w_i^\varepsilon)$ and then defining $v_i$ and $w_i$ as the solutions to the initial-boundary value problems
   \[
    \begin{cases}
     \partial_t v_i - d_i \Delta v_i
      = 0
      &\text{in } (\varepsilon,T) \times \Omega,
      \\
     d_i \partial_{\vec \nu} v_i
      = 0
      &\text{on } (\varepsilon,T) \times \Sigma,
      \\
     v_i(\varepsilon,\cdot)
      = c_i(\varepsilon,0) - w_i^\varepsilon
      &\text{on } \overline{\Omega}
    \end{cases}
   \]
  and
   \[
    \begin{cases}
    \partial_t w_i - d_i \Delta w_i
     = 0
     &\text{in } (\varepsilon,T) \times \Omega,
     \\
    k_i^\mathrm{ad} (1 - \frac{c_i^{\Sigma,+}}{c_i^{\Sigma,\infty}})^+ w_i + d_i \partial_{\vec \nu} w_i
     = k_i^\mathrm{ad} (1 - \frac{c_i^{\Sigma,+}}{c_i^{\Sigma,\infty}})^+ v_i
     &\text{on } (\varepsilon,T) \times \Sigma,
     \\
    w_i(\varepsilon,\cdot)
     = w_i^\varepsilon
     &\text{on } \Sigma.
    \end{cases}
   \]
  Here, by $\LL_q$-maximal regularity, the latter problem has a solution $w_i \in \WW_q^{(1,2)}((\varepsilon,T) \times \Omega)$ for all $q \in (1,\infty)$.
  Moreover, by classical smoothing properties of the Neumann semigroup on $\LL_p(\Omega)$, we obtain $v_i \in \WW_q^{(1,2)}((2\varepsilon,T) \times \Omega)$ for all $q \in (1,\infty)$.
  Putting things together, this means that $c_i \in \WW_q^{(1,2)}((2\varepsilon,T) \times \Omega)$ for every $\varepsilon > 0$, and, in particular, $c_i \in \CC((0,T];\WW_q^{2-2/q}(\Omega))$ for every $q \in (1,\infty)$ such that $\frac{1}{q} \geq \frac{1}{p} - \frac{1}{d+2}$.
  Repeating the argument for $p' \in (1,\infty)$ such that $\frac{1}{p'} \geq \frac{1}{p} - \frac{1}{d+2}$ and possibly multiple iterations, we obtain that actually $c_i \in \WW_q^{(1,2)}((\varepsilon,T) \times \Omega)$ for all $\varepsilon > 0$, and thus $c_i \in \CC((0,T];\WW_q^{2-2/q}(\Omega))$ for \emph{all} $q \in (1, \infty)$. 
 \end{proof}
 
\begin{corollary}
\label{cor:max_existence_for_all_q}
 If the strong $\WW_p^{(1,2)}$-solution $(\vec c, \vec c^\Sigma) \in \WW_p^{(1,2)}(\Omega_T;\R^N) \times \WW_p^{(1,2)}(\Sigma_T;\R^N)$ exists for some $0 < T < \infty$ and $p \in (1, \infty)$, then its restriction to times $t \in (\varepsilon, T)$ is a $\WW_q^{(1,2)}$-solution $(\vec c, \vec c^\Sigma) \in \WW_q^{(1,2)}((\varepsilon, T) \times \Omega;\R^N) \times \WW_p^{(1,2)}((\varepsilon,T) \times \Sigma;\R^N)$ for all $q \in (1,\infty)$. In particular, the maximal time of existence $T_\mathrm{max} \in (0, \infty]$ does not depend on the choice of $p \in (1, \infty)$.
 In particular, if
  \[
   \sup_{t \in [\varepsilon,T]} \norm{(\vec c(t,\cdot), \vec c^\Sigma(t,\cdot))}_{\WW_p^{2-2/p}} 
    < \infty
  \]
for some $p \in (1,\infty)$, then $\sup_{t \in [\varepsilon,T]} \norm{(\vec c(t,\cdot), \vec c^\Sigma(t,\cdot))}_{\WW_q^{2-2/q}} < \infty$ for all $q \in (1, \infty)$.
\end{corollary}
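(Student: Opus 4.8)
The plan is to derive everything from the parabolic smoothing Lemma~\ref{lem:smoothing_effect} together with the uniqueness assertion of Theorem~\ref{thm:local_existence}. For the first claim, I would fix $\varepsilon\in(0,T)$ and apply Lemma~\ref{lem:smoothing_effect} to the given strong $\WW_p^{(1,2)}$-solution (which is legitimate precisely because, under the standing assumptions, the exponent $p$ lies in the range admitted by that lemma); this immediately yields $(\vec c,\vec c^\Sigma)\in\WW_q^{(1,2)}((\varepsilon,T)\times\Omega;\R^N)\times\WW_q^{(1,2)}((\varepsilon,T)\times\Sigma;\R^N)$ for every $q\in(1,\infty)$. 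Since the equations in \eqref{eqn:RDASS} hold a.e.\ also on the shorter time interval and a function of class $\WW_q^{(1,2)}$ attains its value at $t=\varepsilon$ in the $\WW_q^{2-2/q}$-trace sense, the restriction is a strong $\WW_q^{(1,2)}$-solution with initial datum $(\vec c(\varepsilon,\cdot),\vec c^\Sigma(\varepsilon,\cdot))\in\WW_q^{2-2/q}(\Omega;\R^N)\times\WW_q^{2-2/q}(\Sigma;\R^N)$.

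For the $p$-independence of the maximal existence time I would argue by contradiction. Suppose a common pair of initial data admits a maximal $\WW_p^{(1,2)}$-solution on $[0,T_\mathrm{max}^{(p)})$ and a maximal $\WW_q^{(1,2)}$-solution on $[0,T_\mathrm{max}^{(q)})$. By the first part, for every $\varepsilon>0$ both are $\WW_r^{(1,2)}$-solutions on $[\varepsilon,\min\{T_\mathrm{max}^{(p)},T_\mathrm{max}^{(q)}\})$ for all $r\in(1,\infty)$, hence coincide there by uniqueness applied with initial time $\varepsilon$; sending $\varepsilon\to0+$ and using continuity at $t=0$ in $\WW_p^{2-2/p}$ together with equality of the initial data, they coincide on $[0,\min\{T_\mathrm{max}^{(p)},T_\mathrm{max}^{(q)}\})$. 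If, say, $T_\mathrm{max}^{(p)}<T_\mathrm{max}^{(q)}$, the $\WW_q^{(1,2)}$-solution restricted to $t\geq\varepsilon$ is, by the first part (or, when $q>p$, directly by the embedding $\WW_q^{(1,2)}(\Omega_\tau)\hookrightarrow\WW_p^{(1,2)}(\Omega_\tau)$), a $\WW_p^{(1,2)}$-solution on $[\varepsilon,T_\mathrm{max}^{(q)})$; glued with the $\WW_p^{(1,2)}$-solution near $t=0$ this extends the latter past $T_\mathrm{max}^{(p)}$, contradicting maximality. The symmetric argument excludes $T_\mathrm{max}^{(q)}<T_\mathrm{max}^{(p)}$, so the two agree.

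For the quantitative addendum I would revisit the bootstrap underlying Lemma~\ref{lem:smoothing_effect} and make it uniform in the time origin. Concretely, each of its ingredients --- the $\LL_p$--$\LL_q$ estimates of Lemma~\ref{lem:L_p-L_q-estimates}, the polynomial growth of $\vec r^\Omega$, $\vec r^\Sigma$, $\vec s^\Sigma$, $\LL_q$-maximal regularity, and the smoothing of the Neumann and surface heat semigroups --- is to be run on short windows $(t-\delta,t)$, so that after finitely many iterations one obtains a bound for $\norm{(\vec c,\vec c^\Sigma)}_{\WW_q^{(1,2)}((t-\delta,t)\times\Omega)\times\WW_q^{(1,2)}((t-\delta,t)\times\Sigma)}$ in terms of $\norm{(\vec c(t-\delta,\cdot),\vec c^\Sigma(t-\delta,\cdot))}_{\WW_p^{2-2/p}}$ (and, on the first pass, of lower-order $\LL_p$-norms of the solution on $(0,t)$), with constants depending only on $q$, $\delta$ and the window length but not on $t$, by time-translation invariance. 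Combining with the embeddings $\WW_q^{(1,2)}((t-\delta,t)\times\Omega)\hookrightarrow\CC([t-\tfrac{\delta}{2},t];\WW_q^{2-2/q}(\Omega))$ and its surface analogue then turns a finite $\sup_{t\in[\varepsilon,T]}\norm{(\vec c,\vec c^\Sigma)(t,\cdot)}_{\WW_p^{2-2/p}}$ --- possibly with $T=+\infty$ --- into a finite $\sup_{t\in[\varepsilon,T]}\norm{(\vec c,\vec c^\Sigma)(t,\cdot)}_{\WW_q^{2-2/q}}$ for every $q\in(1,\infty)$.

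I expect the main obstacle to be exactly this last, uniform-in-time bootstrap: one must verify that every link of the iteration in Lemma~\ref{lem:smoothing_effect} can be localised to translated windows with constants that do not degenerate as the base point moves, so that the conclusion survives over unbounded intervals. This is precisely where the $\tau$-uniform forms of the $\LL_p$--$\LL_q$ estimate, of $\LL_p$-maximal regularity, and of the anisotropic Sobolev embedding constant (Lemma~\ref{lem:tau-dep-embedding-constant}) become indispensable; on a fixed compact time interval the statement is comparatively soft.
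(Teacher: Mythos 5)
Your overall route---deducing all three assertions from the parabolic smoothing Lemma~\ref{lem:smoothing_effect} together with the uniqueness part of Theorem~\ref{thm:local_existence}---is exactly how the corollary is meant to follow in the paper, which states it without a separate proof, and your first claim is handled correctly (modulo the caveat, inherited from the paper itself, that $p$ must lie in the range admitted by Lemma~\ref{lem:smoothing_effect}). In the second claim, however, the gluing argument as written is circular: you invoke ``uniqueness applied with initial time $\varepsilon$'' to conclude that the two maximal solutions coincide on $[\varepsilon,\min\{T_\mathrm{max}^{(p)},T_\mathrm{max}^{(q)}\})$, but uniqueness started at $t=\varepsilon$ presupposes that the two solutions take the same value at $t=\varepsilon$, which is precisely what is not yet known, and the subsequent limit $\varepsilon\to 0+$ does not repair this. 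The fix is the embedding you mention only in passing: on the bounded space-time cylinder one has $\WW_q^{(1,2)}\hookrightarrow\WW_r^{(1,2)}$ with $r:=\min\{p,q\}$, so both solutions lie in the common class $\fs E_r^\Omega(\tau)\times\fs E_r^\Sigma(\tau)$ with the same initial value at $t=0$, and the uniqueness assertion of Theorem~\ref{thm:local_existence} in that single class gives coincidence from $t=0$ on; your extension/contradiction argument then goes through.

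For the quantitative addendum you propose a uniform-in-time windowed bootstrap and yourself flag its uniformity as the unresolved main obstacle; as it stands this part is a sketch, not a proof, and under the stated hypotheses it is also unnecessary. Since $T<\infty$ and, by the first part, $(\vec c,\vec c^\Sigma)\in\WW_q^{(1,2)}((\varepsilon,T)\times\Omega;\R^N)\times\WW_q^{(1,2)}((\varepsilon,T)\times\Sigma;\R^N)$, the continuous embedding of the maximal regularity class into $\BUC([\varepsilon,T];\WW_q^{2-2/q})$ already yields $\sup_{t\in[\varepsilon,T]}\norm{(\vec c,\vec c^\Sigma)(t,\cdot)}_{\WW_q^{2-2/q}}<\infty$ for every $q$; no translation-invariant constants are needed on a finite time interval. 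If one instead wants the version that matters for the blow-up criterion, where the solution is only known on $[0,T)$ with $T=T_\mathrm{max}$ and only $\sup_{t\in[\varepsilon,T)}\norm{\cdot}_{\WW_p^{2-2/p}}<\infty$ is assumed, the economical route is the standard boundedness-implies-continuation argument within the fixed class $\WW_p^{(1,2)}$ (uniform local existence time for data bounded in $\fs I_p^\Omega\times\fs I_p^\Sigma$), which extends the solution slightly beyond $T$, after which Lemma~\ref{lem:smoothing_effect} on the enlarged interval gives the $\WW_q^{2-2/q}$-bounds up to $t=T$. Your windowed bootstrap would prove a stronger, genuinely uniform statement (relevant, e.g., for $T=\infty$), but to make it rigorous you would have to track how the constants of Lemma~\ref{lem:L_p-L_q-estimates} and of the maximal regularity estimates depend on the restarted initial data; that verification is exactly the part you have left open.
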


 Once we have the $\LL_p$--$\LL_q$-estimates at hand, we may easily infer exponential growth bounds for the $\LL_p$-norms of the solution on $\Omega_\tau$ and $\Sigma_\tau$, respectively.
 
 \begin{lemma}[$\LL_p$-growth bounds]
 \label{lem:exponential_bounds}
  For every $T_0 > 0$, $p \in (1, \infty]$, there are $M = M(p,T_0) > 0$, $\omega = \omega(p,T_0) \in \R$ (also depending on the initial data) such that
   \[
    \sum_{i=1}^N \big( \norm{c_i}_{\LL_p(\Omega_\tau)}
      + \norm{c_i|_\Sigma}_{\LL_p(\Sigma_\tau)}
      + \norm{c_i^\Sigma}_{\LL_p(\Sigma_\tau)} \big)
     \leq M \ee^{\omega \tau},
     \quad
     \tau \in (0, \min \{T_0, T_\mathrm{max}\}).
   \]
 \end{lemma}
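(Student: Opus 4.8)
The plan is to reduce the statement, for an arbitrary target exponent $p$, to the same bound for one fixed $p_0$ lying in the admissible range of Lemma~\ref{lem:L_p-L_q-estimates}, and to establish the bound for $p_0$ by a time-slab Gronwall argument built on the $\LL_p$--$\LL_q$-estimates and the positive duality estimates. First I would fix $T_0 > 0$, set $T^\ast := \min\{T_0, T_{\mathrm{max}}\}$, and dispose of the behaviour near $t = 0$: by Theorem~\ref{thm:local_existence} the solution lies in $\BUC([0,\varepsilon_0];\WW_p^{2-2/p}(\Omega)^N \times \WW_p^{2-2/p}(\Sigma)^N)$ for some small $\varepsilon_0 \in (0,T^\ast)$, so all norms over $(0,\varepsilon_0)$ are bounded by a constant depending only on the data; and by Lemma~\ref{lem:smoothing_effect} / Corollary~\ref{cor:max_existence_for_all_q} the restriction of $(\vec c,\vec c^\Sigma)$ to $[\varepsilon_0, T^\ast)$ is a $\WW_q^{(1,2)}$-solution for \emph{every} $q \in (1,\infty)$, so that on $[\varepsilon_0,T^\ast)$ the $\LL_p$--$\LL_q$-estimates of Lemma~\ref{lem:L_p-L_q-estimates} may be used freely for all $p,q$. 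It therefore suffices to bound the norms over $[\varepsilon_0,\tau]$ for $\tau \in (\varepsilon_0,T^\ast)$.

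Next, fix $p_0 \in (\max\{1, \tfrac{(\mu^\Omega-1)(d+2)}{2\mu^\Omega}, \tfrac{(\mu^\Sigma-1)(d+1)}{2\mu^\Sigma}\}, \infty)$ and suppose we have already shown $\sum_{i}(\norm{c_i}_{\LL_{p_0}(\Omega_\tau)} + \norm{c_i^\Sigma}_{\LL_{p_0}(\Sigma_\tau)}) \le M_0\ee^{\omega_0\tau}$ on $(0,T^\ast)$, with $M_0,\omega_0$ depending on the data and on $T_0$. For an arbitrary $p \in (1,\infty]$, applying Lemma~\ref{lem:L_p-L_q-estimates} with ``$p$'' $= p_0$ and ``$q$'' $= \max\{p,p_0\}$ bounds $\norm{\vec c}_{\LL_p(\Omega_\tau)}$ and $\norm{\vec c^\Sigma}_{\LL_p(\Sigma_\tau)}$ by $C^\ast(1 + M_0\ee^{\omega_0\tau})^\mu$, and the interpolation argument following the proof of Lemma~\ref{lem:L_p-L_q-estimates} (which turns the dual-norm control of $c_i|_\Sigma$ into an $\LL_p(\Sigma_\tau)$-bound at the price of the extra factor $\norm{\vec c}_{\LL_{p\gamma^\Omega}(\Omega_\tau)}^{\gamma^\Omega}$, the latter again controlled by Lemma~\ref{lem:L_p-L_q-estimates} with source $p_0$ and target $p\gamma^\Omega$) bounds $\norm{\vec c|_\Sigma}_{\LL_p(\Sigma_\tau)}$ likewise. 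Since $(1 + M_0\ee^{\omega_0\tau})^{\mu} \le M\ee^{\omega\tau}$ with $M = M(p,T_0)$ and $\omega = \mu\gamma^\Omega\omega_0$, the full assertion follows once the $\LL_{p_0}$-bound is known, so everything reduces to that base case.

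For the $\LL_{p_0}$-bound I would partition $[\varepsilon_0,T^\ast)$ into slabs $J_k := [\varepsilon_0 + kh, \varepsilon_0 + (k+1)h)$ of equal length $h > 0$ and run the scheme from the proof of Lemma~\ref{lem:L_p-L_q-estimates} on each slab, with ``initial'' data $(\vec c,\vec c^\Sigma)(\varepsilon_0 + kh,\cdot)$: testing the triangularly combined equations against dual solutions as in Lemmas~\ref{lem:pos-dual-estimate-bulk-variant} and~\ref{lem:pos_dual_estimate_surface} produces the analogue of estimate~\eqref{eqn:general_estimate} on $J_k$, and the ``reaction'' terms $\norm{c_j^{\mu^\Omega}}_{\mathring\WW_{p_0'}^{(1,2)}(\Omega \times J_k)^\ast}$, $\norm{(c_j^\Sigma)^{\mu^\Sigma}}_{\mathring\WW_{p_0'}^{(1,2)}(\Sigma \times J_k)^\ast}$ are estimated through embeddings $\LL_r(\Omega \times J_k) \hookrightarrow \mathring\WW_{p_0'}^{(1,2)}(\Omega \times J_k)^\ast$ with $r\mu^\Omega < p_0$ — available precisely because of the \emph{strict} inequality $p_0 > \tfrac{(\mu^\Omega-1)(d+2)}{2\mu^\Omega}$ — combined with Hölder's inequality over the short slab and the $\tau$-scaling of the embedding constants from Lemma~\ref{lem:tau-dep-embedding-constant}. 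This yields, schematically, a recursion
\[
 D_{k+1} \;\le\; C_1\big(D_k^{\mathrm{end}} + 1\big) \;+\; C_2\, h^{\gamma}\big(D_{k+1}^{\mu^\Omega} + D_{k+1}^{\mu^\Sigma}\big),
\]
where $D_{k+1}$ is the sum of the three slab norms of $(\vec c,\vec c^\Sigma)$ over $J_k$, $D_k^{\mathrm{end}}$ is the $\LL_{p_0}$-norm of the slab's initial value (which dominates the corresponding $\WW_{p_0'}^{2-2/p_0'}$-dual norm), $\gamma > 0$, and $C_1,C_2$ depend only on $p_0,T_0,\Omega$ and the uniform regularity of $\vec v$, \emph{not} on $k$ — the uniformity being exactly the point of working with zero-initial-trace reference functions. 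In the linear intermediate-sum case $\mu^\Omega = \mu^\Sigma = 1$ one fixes $h$ once and for all with $C_2 h^{\gamma} \le \tfrac12$, absorbs the last term, controls $D_k^{\mathrm{end}}$ by $D_k$ via $\LL_{p_0}$-maximal regularity on $J_{k-1}$, and obtains a linear recursion $D_{k+1} \le \lambda D_k + c$; summing the slab norms and using $t = \varepsilon_0 + kh$ gives $M_0\ee^{\omega_0\tau}$ with $\omega_0 \sim h^{-1}\log\lambda$.

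I expect two obstacles. The minor one is bookkeeping: verifying that the maximal-regularity and embedding constants entering each slab estimate are genuinely uniform in $k$ (hence the zero-initial-trace reference-function trick) and that $D_k^{\mathrm{end}}$ is controlled by $D_k$ uniformly. The main one is the nonlinear intermediate-sum case $\mu^\Omega > 1$ or $\mu^\Sigma > 1$: then the slab length at which the nonlinear term can be absorbed depends on the current size of the solution, and one must argue — using the a~priori finiteness of all $\LL_q$-norms on compact subintervals of $(0,T^\ast)$ from Corollary~\ref{cor:max_existence_for_all_q} together with a continuation argument in the slab index — that the recursion still closes on all of $(0,T^\ast)$ and produces a bound of the stated form; this is the source of the general exponent $\mu = \mu(\mu^\Omega,\mu^\Sigma,p,q)$ entering $\omega$.
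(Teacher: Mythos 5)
Your route is genuinely different from the paper's, and in its present form it contains a real gap. The paper proves the lemma in a few lines, without any slab decomposition: it applies Lemma~\ref{lem:L_p-L_q-estimates} once with $q=\infty$, so that $\sup_{t\le\tau}\big(\norm{\vec c(t,\cdot)}_{\LL_p(\Omega)}^p+\norm{\vec c^\Sigma(t,\cdot)}_{\LL_p(\Sigma)}^p\big)$ is bounded by $C\big(1+\norm{\vec c}_{\LL_p(\Omega_\tau)}^p+\norm{\vec c^\Sigma}_{\LL_p(\Sigma_\tau)}^p\big)$, i.e.\ by $C\big(1+\int_0^\tau\phi(t)\dd t\big)$ for $\phi(t):=\norm{\vec c(t,\cdot)}_{\LL_p(\Omega)}^p+\norm{\vec c^\Sigma(t,\cdot)}_{\LL_p(\Sigma)}^p$; Gronwall then yields $\phi(t)\le M'\ee^{\omega' t}$ with constants uniform in $\tau\in(0,T^\ast)$, integration in time gives the space--time bound, the case $p=\infty$ follows by one further application of the $\LL_p$--$\LL_\infty$ estimate, and the trace term $\norm{c_i|_\Sigma}_{\LL_p(\Sigma_\tau)}$ is recovered exactly as in the interpolation argument following the proof of Lemma~\ref{lem:L_p-L_q-estimates}. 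Because the Gronwall quantity is pointwise in time, the problems you have to fight (uniformity of constants under re-initialisation on slabs, control of the slab initial values) never arise. Your preliminary reduction to a base exponent $p_0$ and the treatment of $t$ near $0$ are fine and consistent with this.

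The genuine gap is the point you flag yourself and then leave open: for $\mu^\Omega>1$ or $\mu^\Sigma>1$ the recursion $D_{k+1}\le C_1(D_k^{\mathrm{end}}+1)+C_2h^\gamma\big(D_{k+1}^{\mu^\Omega}+D_{k+1}^{\mu^\Sigma}\big)$ can only be closed by taking $h$ small depending on the current size of $D_{k+1}$, and the appeal to Corollary~\ref{cor:max_existence_for_all_q} does not repair this: finiteness of all $\LL_q$-norms on compact subintervals $[\varepsilon,T]$ with $T<T^\ast$ gives constants depending on $T$, whereas Lemma~\ref{lem:exponential_bounds} must produce a bound with constants uniform up to $T^\ast=\min\{T_0,T_{\mathrm{max}}\}$ --- precisely the regime in which blow-up at $T_{\mathrm{max}}$ is to be excluded later on; a superlinear slabwise recursion of this kind is compatible with finite-time blow-up, so ``continuation in the slab index'' is not an argument but a restatement of what has to be proved. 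A second, smaller defect is the step ``$D_k^{\mathrm{end}}$ is controlled by $D_k$ via $\LL_{p_0}$-maximal regularity on $J_{k-1}$'': maximal regularity on a slab needs $\vec r^\Omega(\vec c)$, $\vec r^\Sigma(\vec c^\Sigma)$ and $\vec s^\Sigma$ in $\LL_{p_0}$ there, hence $\vec c\in\LL_{p_0\gamma^\Omega}$ and $\vec c^\Sigma\in\LL_{p_0\gamma^\Sigma}$ on the slab, so this has to be interleaved with further $\LL_p$--$\LL_q$ estimates (or replaced by the end-time term $\norm{v(\tau,\cdot)}_{\LL_q(\Omega)}$ in the dual estimate, Lemma~\ref{lem:dual-estimate}); as written, the coupling of $D_k$, $D_k^{\mathrm{end}}$ and $D_{k+1}$ is not yet a closed induction. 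Note, finally, that the exponent issue is not an artefact of your method: the paper's Gronwall step likewise hinges on the $\LL_\infty$-norms over $\Omega_\tau$ and $\Sigma_\tau$ being controlled (after taking $p$-th powers) \emph{linearly} by the $\LL_p$ space--time norms, i.e.\ on the favourable exponent in the relevant application of Lemma~\ref{lem:L_p-L_q-estimates}; the difference is that the paper exploits this once, globally in time, instead of slab by slab.
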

 \begin{proof}
  We start by employing the $\LL_p$--$\LL_q$-estimates of Lemma~\ref{lem:L_p-L_q-estimates} for the special case $p \in (1, \infty)$ and $q = \infty$, resulting in
   \begin{align*}
    &\norm{\vec c(\tau,\cdot)}_{\LL_p(\Omega)}^p
     + \norm{\vec c^\Sigma(\tau,\cdot)}_{\LL_p(\Sigma)}^p
     \\
     &\leq C \sup_{t \in [0, \tau]} \big(
      \norm{\vec c(t,\cdot)}_{\LL_p(\Omega)}^p
       + \norm{\vec c^\Sigma(t,\cdot)}_{\LL_p(\Sigma)}^p \big)
     \\
     &\leq C \big(
       \norm{\vec c}_{\LL_\infty(\Omega_\tau)}^p
       + \norm{\vec c^\Sigma}_{\LL_\infty(\Sigma_\tau)}^p \big)
     \\
     &\leq C \big( 1
      + \norm{\vec c}_{\LL_p(\Omega_\tau)}^p
      + \norm{\vec c^\Sigma}_{\LL_p(\Sigma_\tau)}^p \big)
     \\
     &\leq C \big[ 1
      + \int_0^\tau \big( \norm{\vec c(t,\cdot)}_{\LL_p(\Omega)}^p
      + \norm{\vec c^\Sigma(t,\cdot)}_{\LL_p(\Sigma)}^p \big) \dd t \big],
     \quad
     \tau \in (0, T_0] \cap (0, T_\mathrm{max}).
   \end{align*}
  By Gronwall's Lemma, see e.g.\ \cite[Lemma VII.1.15]{AE2}), there are, thus, constants $M' = M'_{p,T_0} > 0$ and $\omega' = \omega'_{p,T_0} > 0$ (also depending on the initial data through the constant $C$ in the $\LL_p$--$\LL_q$-estimates) such that
   \begin{equation}
    \norm{\vec c(t,\cdot)}_{\LL_p(\Omega)}^p
     + \norm{\vec c^\Sigma(t,\cdot)}_{\LL_p(\Sigma)}^p
     \leq M' \ee^{\omega' t},
     \quad
     t \in (0,T_0] \times (0, T_\mathrm{max}).
     \label{eqn:exponential-bounds-1}
   \end{equation}
  Integrating over $t \in (0, \tau)$, this proves the claim for $p < \infty$.
  The case $p = \infty$ follows by integrating equation \eqref{eqn:exponential-bounds-1} in time for some finite $p \in (1, \infty)$ and then employing the $\LL_p$--$\LL_\infty$-estimate provided by Lemma~\ref{lem:L_p-L_q-estimates}.
  \newline
  Finally, we may proceed just as in the remarks after the proof of Lemma~\ref{lem:L_p-L_q-estimates} to estimate the $\LL_p$-norm of $\vec c|_\Sigma$ as well.
 \end{proof}
 
\subsection{Statement and proof of the global existence result}
 
 After these preparations, we are ready to prove the following theorem on global-in-time existence of solutions to the bulk-surface reaction-diffusion-advection system \eqref{eqn:RDSS} for suitable initial data. 
 
 \begin{theorem}[Global-in-time well-posedness]
  \label{thm:global_existence}
  Let $J = \R_+ = [0, \infty)$, $p \in [\frac{d+2}{2}, \infty)$ and let Assumptions~\ref{assmpt:general} and~\ref{assmpt:triangular_structure} be valid.
   Moreover, assume that the sorption is modelled by the Langmuir sorption model.
  Then, for all compatible initial data
   \[
    (\vec c^0, \vec c^{\Sigma,0}) \in \fs I_p^\Omega \times \fs I_p^\Sigma
   \]
  with $(\vec c^0, \vec c^{\Sigma}) \geq (0, 0)$, the reaction-diffusion-advection-sorption system
   \begin{alignat*}{2}
    \partial_t c_i + \vec v \cdot \nabla c_i - \dv (d_i \nabla c_i)
     &= r^\Omega_i(\vec c)
	 \qquad &
     &\text{on } (0,\infty) \times \Omega,
     \\
    \partial_t c_i^\Sigma - \dv_\Sigma(d_i^\Sigma \nabla_\Sigma c_i^\Sigma)
     &= s_i^\Sigma(c_i, c_i^\Sigma) + r_i^\Sigma(\vec c^\Sigma)
	 \qquad &
     &\text{on } (0, \infty) \times \Sigma,
     \\
    - d_i \partial_\nu c_i
     &= s_i^\Sigma(c_i, c_i^\Sigma)
	 \qquad &
     &\text{on } (0, \infty) \times \Sigma,
     \\
    c_i(0, \cdot)
     &= c^0_i
     \qquad &
     &\text{in } \Omega,
     \\
    c_i^\Sigma(0, \cdot)
     &= c^{\Sigma,0}_i
	 \qquad &
     &\text{on } \Sigma,
   \end{alignat*}
  for $i = 1, \ldots, N$, has a unique global-in-time solution
   \[
    (\vec c, \vec c^\Sigma)
     \in \WW^1_{p,\mathrm{loc}}([0,\infty); \LL_p(\Omega;\R^N) \times \LL_p(\Sigma;\R^N)) \cap \LL_{p,\mathrm{loc}}([0, \infty); \WW_p^2(\Omega;\R^N) \times \WW_p^2(\Sigma;\R^N)).
   \]
  Moreover, for every $q \in (1, \infty)$,
   \[
    (\vec c, \vec c^\Sigma)
     \in \WW^1_{q,\mathrm{loc}}((0,\infty); \LL_q(\Omega;\R^N) \times \LL_q(\Sigma;\R^N)) \cap \LL_{q,\mathrm{loc}}((0, \infty); \WW_q^2(\Omega;\R^N) \times \WW_q^2(\Sigma;\R^N)).
   \]
 \end{theorem}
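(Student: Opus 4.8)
The plan is to combine the local well-posedness theorem with an a priori bound that rules out blow-up in finite time. By Theorem~\ref{thm:local_existence}, the given compatible, non-negative initial data admit a unique maximal strong solution $(\vec c,\vec c^\Sigma)$ with $(\vec c,\vec c^\Sigma)\in\WW_p^{(1,2)}(\Omega_\tau;\R^N)\times\WW_p^{(1,2)}(\Sigma_\tau;\R^N)$ for every $\tau<T_\mathrm{max}$, where $T_\mathrm{max}\in(0,\infty]$, and $c_i\geq0$, $c_i^\Sigma\geq0$ (by Corollary~\ref{cor:max_existence_for_all_q}, $T_\mathrm{max}$ is moreover independent of the choice of $p$). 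It therefore suffices to establish, in the case $T_\mathrm{max}<\infty$, the bound $\sup_{t\in[0,T_\mathrm{max})}\norm{(\vec c(t,\cdot),\vec c^\Sigma(t,\cdot))}_{\WW_p^{2-2/p}}<\infty$: since the length of the existence interval produced by Theorem~\ref{thm:local_existence} is bounded below on bounded subsets of the phase space $\fs I_p^\Omega\times\fs I_p^\Sigma$, such a bound would permit extending the solution past $T_\mathrm{max}$, contradicting maximality, whence $T_\mathrm{max}=\infty$. Note that the standing hypothesis $p\geq\frac{d+2}{2}$ places $p$ in the admissible range of Lemmas~\ref{lem:L_p-L_q-estimates}, \ref{lem:exponential_bounds} and~\ref{lem:smoothing_effect}, and — with $K^\Omega=K^\Sigma=1$ for the Langmuir model, cf.\ Example~\ref{exa:Langmuir} — of Corollary~\ref{cor:algebra_boundary} and Lemma~\ref{lem:special_case_multiplication}.

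Assume therefore $T_\mathrm{max}<\infty$. The first step is an $\LL_\infty$ a priori bound. Applying Lemma~\ref{lem:exponential_bounds} with $p=\infty$ and any $T_0>T_\mathrm{max}$ (so that $\min\{T_0,T_\mathrm{max}\}=T_\mathrm{max}$) yields constants $M,\omega$ with
$$\sum_{i=1}^N\big(\norm{c_i}_{\LL_\infty(\Omega_\tau)}+\norm{c_i|_\Sigma}_{\LL_\infty(\Sigma_\tau)}+\norm{c_i^\Sigma}_{\LL_\infty(\Sigma_\tau)}\big)\leq M\ee^{\omega\tau},\qquad\tau\in(0,T_\mathrm{max}),$$
whose right-hand side is $\leq M^\ast:=M\ee^{\omega T_\mathrm{max}}<\infty$; thus $\vec c\in\LL_\infty(\Omega_{T_\mathrm{max}};\R^N)$ and $\vec c^\Sigma\in\LL_\infty(\Sigma_{T_\mathrm{max}};\R^N)$ with norms at most $M^\ast$. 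This is the step that genuinely exploits the triangular structure Assumption~\ref{assmpt:triangular_structure}, since Lemma~\ref{lem:exponential_bounds} rests on the $\LL_p$--$\LL_q$-estimates of Lemma~\ref{lem:L_p-L_q-estimates}, obtained by the duality method applied component by component along the triangular ordering. By the polynomial growth bounds of Assumption~\ref{assmpt:general} (cf.\ Remark~\ref{rem:Nemytskii_Operators}), the functions $r_i^\Omega(\vec c)$, $r_i^\Sigma(\vec c^\Sigma)$ and $s_i^\Sigma(\vec c|_\Sigma,\vec c^\Sigma)$ are then bounded in $\LL_\infty$ on $\Omega_{T_\mathrm{max}}$, resp.\ $\Sigma_{T_\mathrm{max}}$, by a constant depending only on $M^\ast$ and the structural data.

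The main step is to bootstrap this to a uniform-in-$\tau$ bound in $\WW_q^{(1,2)}$ near $T_\mathrm{max}$. Fix $\varepsilon\in(0,T_\mathrm{max})$ and an arbitrary $q\in(1,\infty)$. By Lemma~\ref{lem:smoothing_effect} the restriction of the solution to $(\varepsilon/2,\tau)$ lies in $\WW_q^{(1,2)}$ for every $\tau<T_\mathrm{max}$; the point is that, with the $\LL_\infty$ bound of the previous step at hand, every constant occurring in the proof of Lemma~\ref{lem:smoothing_effect} can be chosen uniformly in $\tau\in[\varepsilon,T_\mathrm{max})$. Indeed, the right-hand sides are uniformly $\LL_q$-bounded, the data $(\vec c(\varepsilon/2,\cdot),\vec c^\Sigma(\varepsilon/2,\cdot))$ are fixed elements of $\WW_q^{2-2/q}$, and the $\LL_q$-maximal-regularity constants of Lemma~\ref{lem:maximal_regularity_scalar_case} on time intervals of length $\leq T_\mathrm{max}$ — bounded below once $\tau$ is close to $T_\mathrm{max}$ — together with the smoothing constants of the analytic Neumann and Laplace--Beltrami semigroups, are uniform. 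The genuinely delicate ingredient, already carried out in that proof and which I would reuse verbatim, is the treatment of the Langmuir normal-flux condition: one keeps the adsorption term on the boundary on the left, writing the condition as $d_i\partial_{\vec\nu}c_i+k_i^{\mathrm{ad}}(1-\theta(\vec c^\Sigma))^+c_i=k_i^{\mathrm{de}}c_i^\Sigma$ on $\Sigma$; the coefficient $k_i^{\mathrm{ad}}(1-\theta(\vec c^\Sigma))^+$ is non-negative, bounded, and, as a bounded Lipschitz function of $\vec c^\Sigma\in\WW_q^{(1,2)}$, lies in $\LL_\infty\cap\WW_q^{(1,2)\cdot(1-\frac{1}{2q})}$ (remark after Lemma~\ref{lem:special_case_multiplication}), while $k_i^{\mathrm{de}}c_i^\Sigma$ is controlled in $\WW_q^{(1,2)}$ and hence in the Neumann trace space; the zeroth-order boundary term is then absorbed as a lower-order perturbation of the Neumann problem over a partition of $[\varepsilon/2,T_\mathrm{max}]$ into finitely many short subintervals, using Lemma~\ref{lem:special_case_multiplication} and the fact that the relevant Dirichlet-trace embedding constant on the zero-initial-trace subspace tends to zero with the interval length. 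Carrying this out yields $\sup_{\tau<T_\mathrm{max}}\big(\norm{\vec c}_{\WW_q^{(1,2)}((\varepsilon/2,\tau)\times\Omega)}+\norm{\vec c^\Sigma}_{\WW_q^{(1,2)}((\varepsilon/2,\tau)\times\Sigma)}\big)<\infty$. Taking $q\geq p$, this space embeds into $\BUC([\varepsilon,T_\mathrm{max});\WW_p^{2-2/p}(\Omega)^N\times\WW_p^{2-2/p}(\Sigma)^N)$, so $\sup_{t\in[\varepsilon,T_\mathrm{max})}\norm{(\vec c(t,\cdot),\vec c^\Sigma(t,\cdot))}_{\WW_p^{2-2/p}}<\infty$, and together with continuity of the solution on the compact interval $[0,\varepsilon]$ this gives the bound on $[0,T_\mathrm{max})$ required above, contradicting maximality. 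I expect precisely this absorption argument — keeping the $\LL_q$-maximal-regularity constants and the nonlinear trace term controlled uniformly up to $T_\mathrm{max}$ — to be the crux; the remainder is bookkeeping with the anisotropic Sobolev embeddings and the multiplication lemmas already established.

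Consequently $T_\mathrm{max}=\infty$, which gives the asserted global $\WW_p^{(1,2)}$-solution in $\WW^1_{p,\loc}([0,\infty);\LL_p)\cap\LL_{p,\loc}([0,\infty);\WW_p^2)$, uniqueness being inherited from Theorem~\ref{thm:local_existence}; and the additional regularity $(\vec c,\vec c^\Sigma)\in\WW^1_{q,\loc}((0,\infty);\LL_q)\cap\LL_{q,\loc}((0,\infty);\WW_q^2)$ for every $q\in(1,\infty)$ is then precisely Lemma~\ref{lem:smoothing_effect} (equivalently Corollary~\ref{cor:max_existence_for_all_q}) applied on each bounded interval.
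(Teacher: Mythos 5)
Your proposal is correct and follows essentially the same route as the paper: uniform $\LL_\infty$ bounds up to the maximal time via the duality-based $\LL_p$--$\LL_q$ estimates and exponential growth bounds (which is exactly where the triangular structure enters), then a maximal-regularity bootstrap — surface equation first, then the bulk problem with the Langmuir flux rewritten as a Robin condition with bounded variable coefficient $k_i^{\mathrm{ad}}(1-\theta)^+$ — followed by the blow-up criterion and the parabolic smoothing argument to handle initial data merely in $\fs I_p^\Omega\times\fs I_p^\Sigma$. The only minor deviations are cosmetic: the paper argues with $T^\ast=\min\{T_0,T_\mathrm{max}\}$ for arbitrary $T_0$ rather than by contradiction, and it obtains maximal regularity for the variable-coefficient Robin problem by verifying condition (SB) of Denk--Hieber--Prüss, whereas you absorb the zeroth-order boundary term over finitely many short subintervals; both are legitimate.
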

 
  The result is based on the comparison principle and the dual estimates for scalar reaction-diffusion-advection equations, i.e.\ a technique heavily used especially by the French school around Michel Pierre, cf.\ the survey \cite{Pierre_2010} and related papers.
  Note that we need regularity of class $\WW_p^{2-2/p}$ for some $p \geq \frac{d+2}{2}$ to get a local-in time solution, which then immediately regularises, so that for any positive times $t > 0$ the solution lies in all spaces of class $\WW_q^{2-2/q}$ for all $q \in (1, \infty)$.

 \begin{proof}
  We employ the following strategy to establish the global-in-time existence result, where thanks to the preparations done up to now, we will be able to go through each of these steps at fast pace:
   \begin{enumerate}
    \item
     We start with smooth initial data $(\vec c, \vec c^\Sigma) \in \bigcap_{q>1} (\fs I_q^\Omega \times \fs I_q^\Sigma)$. More general initial data can be handled later on by exploiting the \emph{smoothing effect} of the parabolic system, cf.\ Lemma~\ref{lem:smoothing_effect}.
    \item
     We fix an arbitrary time horizon $T_0 > 0$ and consider $T^\ast = \min \{T_0, T_\mathrm{max} \}$ for the maximal time of existence $T_\mathrm{max} \in (0,\infty]$ from the local-in-time existence result.
     By the unique-continuation property, it suffices to show that the solution extends to $[0,T^\ast]$ for all such $T^\ast > 0$.
    \item
     As a preliminary step, we demonstrate that the solution is in $\LL_q(\Omega_{T^\ast};\R^N) \times \LL_q(\Sigma_{T^\ast};\R^N)$ for all $q \in (1,\infty)$.
    \item
     In combination with polynomial boundedness of the chemical reaction and surface ad- and desorption models, we may then derive $\LL_p$-growth bounds for the reactive and sorption terms $\vec r^\Omega(\vec c)$, $\vec r^\Sigma(\vec c^\Sigma)$ and $\vec s^\Sigma(\vec c|_\Sigma, \vec c^\Sigma)$.
    \item
     By $\LL_q$-maximal regularity of the linearised reaction-diffusion system on the surface, we can then show that the surface part actually lies in the class $\vec c^\Sigma \in \fs E_q^\Sigma(T^\ast)$ for all $q \in (1, \infty)$.
    \item
     Only thereafter, we may exploit the $\LL_q$-maximal regularity of the reaction-diffusion-advection system in the bulk phase with inhomogeneous Neumann boundary conditions which arises from linearisation, and obtain that also $\vec c \in \fs E_q^\Omega(T^\ast)$ for all $q \in (1,\infty)$.
     \item 
      We may then invoke the blow-up criterion from the local-in-time existence result to find that $T^\ast = T_0$ for every choice of $T_0 > 0$, thus $T_\mathrm{max} = \infty$, i.e.\ global-in-time existence.
   \end{enumerate}
  Let us, for the moment, assume that $(\vec c, \vec c^\Sigma) \in \fs I_q^\Omega \times \fs I_q^\Sigma$ for all $q \in (1, \infty)$.
  To show global existence, i.e.\ $T_\mathrm{max} = + \infty$, it suffices to show that there is a strong solution on each finite time interval $[0,T_0]$, where $T_0 > 0$ can be chosen arbitrary large.
  Thus, let $T_0 > 0$ be arbitrary and consider $T^\ast = \min \{ T_0, T_\mathrm{max} \}$.
  Then, by definition of $T_\mathrm{max}$, the strong solution exists on $[0,T^\ast)$, and for each $T \in (0,T^\ast)$, its restriction to times $t \in [0,T]$ is in the class $\fs E_p^\Omega(T) \times \fs E_p^\Sigma(T)$.
  Moreover, from the local-in-time existence result for our class of reaction-diffusion-advection-sorption systems, employing the blow-up criterion and Corollary~\ref{cor:max_existence_for_all_q}, we may conclude that
   \[
    T^\ast
     = T_\mathrm{max}
      \quad \Leftrightarrow \quad      
      \limsup_{t \rightarrow T^\ast} \norm{(\vec c, \vec c^\Sigma)(t,\cdot)}_{\WW_q^{2-2/q}(\Omega) \times \WW_q^{2-2/q}(\Sigma)}
       = \infty
       \quad
       \text{for some } q \in (1, \infty)
   \]
  and, therefore, it remains to prove that $(\vec c, \vec c^\Sigma) \in \BUC([0,T^\ast); \WW_q^{2-2/q}(\Omega)^N \times \WW_q^{2-2/q}(\Sigma)^N)$ for some $q \in (1, \infty)$.
  \newline
  As a preliminary step, note that $(\vec c, \vec c^\Sigma) \in \BUC([0,T^\ast); \LL_q(\Omega;\R^N) \times \LL_q(\Sigma;\R^N))$ for all $q \in (1,\infty)$.
  In fact, by Lemma~\ref{lem:L_p-L_q-estimates} and Lemma~\ref{lem:exponential_bounds}, for every $p \in (1,\infty]$ there is a constant $C_{p,T^\ast} > 0$ such that
   \[
    \norm{\vec c}_{\LL_p(\Omega_\tau)} + \norm{\vec c|_\Sigma}_{\LL_p(\Sigma_\tau)} + \norm{\vec c^\Sigma}_{\LL_p(\Sigma_\tau)}
     \leq C_{p,T^\ast}
     \quad
     \text{for every } \tau \in (0,T^\ast).
   \]
  By the polynomial boundedness of the chemical reaction rates and Remark~\ref{rem:Nemytskii_Operators}, thus
   \begin{alignat*}{2}
    \norm{\vec r^\Omega(\vec c)}_{\LL_p(\Omega_\tau)}
     &\leq M^\Omega (1 + \norm{\vec c}_{\LL_{p\gamma^\Omega}(\Omega_\tau)})^{\gamma^\Omega}
     \leq C_{p\gamma^\Omega, T^\ast},
     \quad &
     &\tau \in (0,T^\ast),
     \\
    \norm{\vec r^\Sigma(\vec c^\Sigma)}_{\LL_p(\Sigma_\tau)}
     &\leq M^\Omega (1 + \norm{\vec c^\Sigma}_{\LL_{p\gamma^\Omega}(\Sigma_\tau)})^{\gamma^\Sigma}
     \leq C_{p\gamma^\Sigma,T^\ast},
     \quad &
     &\tau \in (0, T^\ast).
   \end{alignat*}
 Moreover, by the partial lower and upper linear bounds on the sorption model,
  \[
   \norm{s_i^\Sigma(c_i|_\Sigma, c_i^\Sigma)}_{\LL_p(\Sigma_\tau)}
    \leq k_i^\mathrm{ad} \norm{c_i|_\Sigma}_{\LL_p(\Sigma_\tau)} + k_i^\mathrm{de} \norm{c_i^\Sigma}_{\LL_p(\Sigma_\tau)}
    \leq C_{p,T^\ast},
    \quad
    \tau \in (0, T^\ast).
  \]   
  By $\LL_p$-maximal regularity of the linearised reaction-diffusion-system on the surface
   \[
    \begin{cases}
     \partial_t c_i^\Sigma - d_i \Delta_\Sigma c_i^\Sigma
      = F_i^\Sigma + G_i^\Sigma
      &\text{on } \Sigma_\tau,
      \\
     c_i^\Sigma(0,\cdot)
      = c_i^{\Sigma,0}
      &\text{on } \Sigma
    \end{cases}
   \]
  for $F_i^\Sigma := r_i^\Sigma(\vec c^\Sigma)$ and $G_i^\Sigma := s_i^\Sigma(c_i|_\Sigma, c_i^\Sigma) \in \LL_q(\Sigma_\tau)$, we may conclude that
   \[
    \norm{c_i^\Sigma}_{\WW_{q}^{(1,2)}(\Omega_\tau)}
     \leq C \big( \norm{c_i^{\Sigma,0}}_{\WW_q^{2-2/q}(\Sigma)} + \norm{F_i^\Sigma + G_i^\Sigma}_{\LL_q(\Sigma_\tau)} \big)
     \leq C_{q,T^\ast}
     \quad
     \text{for all } \tau \in (0, T^\ast).
   \]
  Hence, $c_i^\Sigma \in \WW_q^{(1,2)}(\Sigma_{T^\ast})$ for all $q \in (1, \infty)$ and $i = 1, \ldots, N$, which, in particular, already implies that
   \[
    \sup_{t \in [0,T^\ast)} \norm{c_i^\Sigma(t,\cdot)}_{\WW_q^{2-2/q}(\Sigma)}
     < \infty,
   \]
  as $\WW_q^{(1,2)}(\Sigma_{T^\ast}) \hookrightarrow \BUC([0,T^\ast); \WW_q^{2-2/q}(\Omega))$ embeds continuously.
  \newline
  With this knowledge at hand, let us take a look on the reaction-diffusion-advection-system in the bulk phase
   \[
    \begin{cases}
     \partial_t c_i + \vec v \cdot \nabla c_i - d_i \Delta c_i
      = F_i
      &\text{in } \Omega_{T^\ast},
      \\
     k_i^\mathrm{ad} (1 - \frac{c_i^{\Sigma,+}}{c_i^{\Sigma,\infty}})^+ c_i + d_i \partial_{\vec \nu} c_i
      = G_i
      &\text{on } \Sigma_{T^\ast},
      \\
     c_i(0,\cdot)
      = c_i^0
      &\text{on } \overline{\Omega},
    \end{cases}
   \]
 where $F_i := r_i^\Omega(\vec c)$ and $G_i := k_i^\mathrm{ad} c_i^\Sigma$.
 Since $\beta_i := k_i^\mathrm{ad} (1 - \frac{c_i^{\Sigma,+}}{c_i^{\Sigma,\infty}})^+ \in \bigcap_{q > 1} \WW_q^1(0,T^\ast;\LL_q(\Sigma)) \cap \LL_q(0,T^\ast;\WW_\infty^1(\Sigma))$ (as we already know that $c_i^\Sigma \in \WW_{q}^{(1,2)}(\Sigma_{T^\ast})$ for all $q \in (1,\infty)$), by \cite{DeHiPr07} this problem admits $\LL_q$-maximal regularity for all $q \in (1,\infty)$:
 Indeed, to satisfy condition (SB) in \cite{DeHiPr07}, we need to ensure that there are $s_{i0}, r_{i0} \geq p$ with $\frac{2}{s_{i0}} + \frac{d-1}{r_{i0}} < 1 - \frac{1}{p} - 1 = - \frac{1}{p}$ such that $\beta_i \in \WW_{s_{i0}}^{\frac{1}{2} - \frac{1}{2p}}((0,T^\ast); \LL_{r_{i0}}(\Sigma)) \cap \LL_{s_{i0}}((0,T^\ast); \WW_{r_{i0}}^{1 - \frac{1}{p}}(\Sigma))$, which in our case is obviously true.
 Therefore, $c_i \in \WW_q^{(1,2)}(\Omega_{T^\ast})$ with
  \[
   \norm{c_i}_{\WW_q^{(1,2)}(\Omega_{T^\ast})}
    \leq C \big( \norm{F_i}_{\LL_q(\Omega_{T^\ast})} + \norm{c_i^0}_{\WW_q^{2-2/q}(\Omega)} + \norm{c_i^\Sigma}_{\WW_q^{(1,2) \cdot (\frac{1}{2} - \frac{1}{2q})}(\Sigma_{T^\ast})} \big)
    \leq C_{p,T^\ast},
    \quad q > 1.
  \]
 This, in particular, implies that
  \[
   (c_i, c_i^\Sigma)
    \in \BUC([0,T^\ast);\WW_q^{2-2/q}(\Omega) \times \WW_q^{2-2/q}(\Sigma)).
  \]
 Hence, by the blow-up criterion, $T^\ast = T_0 < T_\mathrm{max}$ for each choice of $T_0 > 0$, leading to $T_\mathrm{max} = \infty$.
 \newline
 For more general initial data note that, by Lemma~\ref{lem:smoothing_effect}, $(\vec c(\varepsilon,\cdot), \vec c^\Sigma(\varepsilon,\cdot)) \in \bigcap_{q \in (1, \infty)} \fs I_q^\Omega \times \fs I_q^\Sigma$.
 Thus we may simply consider the problem for initial data at time $t = \varepsilon > 0$.
 \end{proof}

 \section*{Funding}
 The authors gratefully acknowledge financial support by the Deutsche Forschungsgemeinschaft (DFG, German Research Foundation), Project-ID 265191195, SFB 1194.

\end{document}